\newcommand\setn{\ensuremath{\{1,\ldots,n\}}}
\newcommand\R{\ensuremath{\mathbb{R}}}
\newcommand\N{\ensuremath{\mathbb{N}}}
\newcommand\F{\ensuremath{\mathbb{F}}}
\newcommand\Z{\ensuremath{\mathbb{Z}}}
\newcommand\C{\ensuremath{\mathbb{C}}}
\newcommand\Stab{\ensuremath{\mathrm{Stab}}}
\newcommand\cod{\ensuremath{\mathrm{Cod}(\theta)}}
\DeclareMathOperator{\Ad}{\ensuremath{\rm{Ad}}}
\DeclareMathOperator{\ad}{\ensuremath{\rm{ad}}}
\newcommand{\refttb}[1]{(TTB{\ref{#1}})}
\newcommand{\reftw}[1]{(Tw{\ref{#1}})}
\newcommand{\refkmg}[1]{(KMG{\ref{#1}})}
\newcommand{\reftbn}[1]{(TBN{\ref{#1}})}
\newcommand{\refrgd}[1]{(RGD{\ref{#1}})}
\newcommand\goth{\ensuremath{\mathfrak{h}}}
\newcommand\gotg{\ensuremath{\mathfrak{g}}}
\newcommand\calD{\ensuremath{\mathcal{D}}}
\newcommand\calF{\ensuremath{\mathcal{F}}}
\newcommand\calG{\ensuremath{\mathcal{G}}}
\newcommand\calU{\ensuremath{\mathcal{U}}}
\newcommand\KP{\ensuremath{\tau_{{KP}}}}
\newcommand\filt{{\mrm{filt}}}
\DeclareMathOperator{\Hom}{\mathrm{Hom}}
\DeclareMathOperator{\Aut}{\mathrm{Aut}}
\DeclareMathOperator{\proj}{\mathrm{proj}}
\renewcommand\phi{\ensuremath{\varphi}}
\renewcommand\epsilon{\ensuremath{\varepsilon}}
\newcommand\mrm[1]{\mathrm{#1}}
\newcommand{\pro}{{\rm proj}}
\newcommand{\ra}{\to}
\newcommand{\Gall}{{\rm Gall}}
\newtheorem{thm}{Theorem}[section]
\newtheorem{cor}[thm]{Corollary}
\newtheorem{lem}[thm]{Lemma}
\newtheorem{prop}[thm]{Proposition}
\newtheorem{main}{Theorem}
\theoremstyle{remark}
\theoremstyle{definition}
\newtheorem{defin}[thm]{Definition}
\newtheorem{ex}[thm]{Example}
\newtheorem{rem}[thm]{Remark}
\numberwithin{equation}{section}
\renewcommand{\emph}{\textbf}
\numberwithin{equation}{section}
\begin{document}

\date{\today}
\title{{\bf On topological twin buildings and topological split Kac--Moody
groups}}
\author[Hartnick]{Tobias Hartnick}
\address{Technion \\
Department of Mathematics \\
Haifa \\
Israel}
\email{hartnick@tx.technion.ac.il}
\author[K\"ohl]{Ralf K\"ohl}
\address{Universit\"at Gie\ss en \\ Mathematisches Institut \\ Arndtstra\ss e 2
\\ 35392 Gie\ss en \\ Germany}
\email{ralf.koehl@math.uni-giessen.de}
\thanks{n\'e Gramlich}
\author[Mars]{Andreas Mars}
\address{TU Darmstadt \\
Fachbereich Mathematik \\
Schlo\ss gartenstra\ss e 7 \\
64289 Darmstadt \\
Germany}
\email{mars@mathematik.tu-darmstadt.de}

\maketitle

\begin{abstract}
\noindent
We prove that a two-spherical split Kac--Moody group over a local field
naturally provides a topological twin building in the sense of 
\cite{kramer2002}. This existence result and the local-to-global principle for twin building topologies combined with the theory of Moufang foundations as introduced and studied by M\"uhlherr, Ronan, and Tits allows one to immediately obtain a classification of two-spherical split Moufang topological twin buildings whose underlying Coxeter diagram contains no loop and no isolated vertices. 
%
%
%
%
%
%
%
\end{abstract}

\section{Introduction}
The objective of topological geometry is to study (incidence) geometries,
whose underlying sets are equipped with a topology with respect to which the
natural geometric operations are continuous. Among the most prominent 
examples are the {compact projective planes}, i.e., projective planes whose
point and line sets are compact Hausdorff spaces such that the maps that   
assign to two distinct points the unique line joining them and to two      
distinct lines the unique point incident to both are continuous maps. A    
detailled account on connected compact projective planes can be found in   
\cite{salzmann}. 

The gate property of Tits buildings 
(\cite[Section 4.9]{abramenko-brown}) and the resulting projection maps between
opposite panels 
generalize these geometric operations of joining points and intersecting 
lines in projective planes, thus leading to the concept of {compact 
generalized polygons} and, more generally, {topological (spherical) 
buildings}; see \cite{burns-spatzier}, \cite{kramer-polygons}. Various
subclasses of 
these topological geometries have been classified, see for example
\cite{burns-spatzier}, \cite{Grundhoefer/Knarr/Kramer:1995}, \cite{Grundhoefer/Knarr/Kramer:2000}, \cite{salzmann}.

An important extension of the class of spherical buildings is given by the class
of    
{twin buildings} \cite{tits-twin-buildings}. By definition, a twin building
consists of a pair of (possibly        
non-spherical) buildings together with a codistance function (or twinning),
which allows one to define the notion of opposition and   
(co-)projections from one half of the twin building to 
the other, extending the corresponding notions in the spherical case. While
spherical twin buildings are just another way of looking at
spherical buildings (\cite[Proposition 1]{tits-twin-buildings}), in the 
non-spherical case it is a very rigid property for a building to be part of
a twin building. The existence of co-projections in twin buildings opens the
door for the development of a
theory of topological twin buildings, by requiring co-projections to be
continuous. In order to be able to develop a rich theory, it seems necessary to
maintain some compactness assumption, for instance 
compactness of the panels. In the spherical case this assumption is equivalent
to compactness of the whole building, whereas in the non-spherical
case it is not.

An axiomatic definition of topological twin buildings along
the lines just described was first given by Kramer in \cite{kramer2002}, where he 
provides a geometric underpinning for the 
proof of Bott periodicity in \cite{Mitchell:1988}, which was based on a somewhat
ad
hoc notion of a topological $BN$ pair.
While the article \cite{kramer2002} describes in detail an explicit model of the
twin building
associated with a loop group, the theory is developed in an abstract way which
is independent of the existence of such a model and hence
does not rely on the underlying twin building being affine.

At the time of writing of
\cite{kramer2002} examples of non-discrete, non-affine, non-spherical
topological twin buildings were not well-understood---although in principle available through work by Kac and Peterson \cite{kac-peterson-topology} and by Tits \cite{tits-algebra}. During the last decade the theory
of Kac--Moody groups (R\'emy \cite{remy})
 and their group topologies (Gl\"ockner, Hartnick, K\"ohl \cite{final-group-topologies}) has been developed to a point where many such examples
can be described without problem. 

The purpose of the present article is twofold: on the one
hand, to show that the examples of twin buildings associated with two-spherical split
Kac--Moody group over local fields are indeed topological twin buildings in the
sense of Kramer; on the other hand, to revisit and extend the general theory in
light of these new examples up to the point necessary to obtain partial
classification results. An important feature of our presentation is that we
treat the connected and the totally-disconnected case as well as the
characteristic $0$ and the positive characteristic case largely simultaneously.

The idea to use group topologies on certain Kac--Moody groups to construct new
examples of topological
twin
buidlings was already suggested in the first author's master thesis
\cite{hartnick}, where also a group-theoretic criterion for twin building topologies
based on the theory
of RGD systems is stated (see \cite[Section~4.2]{hartnick}). However, carrying out this program---even in the case
of complex
Kac--Moody groups---became only possible after some rather technical insights
concerning direct limit topologies had been obtained in
\cite{final-group-topologies}. Using a minor variation of the results from \cite{final-group-topologies} it is straight-forward to carry out the program suggested in \cite{hartnick} for Kac--Moody groups over any local field of characteristic $0$, and to associate with every such group a topological twin building. Finally, using additional techniques from \cite{remy} one can extend the result to local fields of positive characteristics. One thereby arrives at the following result:

\medskip
\noindent {\bf Theorem \ref{HartnickConjecture}.}
{\em Let $G$ be a
two-spherical simply connected split
Kac--Moody group over a local field
and let $\KP$ be the Kac--Peterson topology on $G$. Then the associated twin
building endowed with the quotient topology is a strong topological twin building.

If the local field equals the field of real or of complex numbers, then $G$ is connected, otherwise totally disconnected.
%
}

\medskip
Theorem \ref{HartnickConjecture} provides a rich supply of topological twin
buildings; for definitions we refer to Section~\ref{section3.1} and to Definition~\ref{strongtop}. In the course of the proof of Theorem \ref{HartnickConjecture} we will explicitly construct and study the \emph{Kac--Peterson topology}; we refer to Definition~\ref{kpdef} and Remark~\ref{KacPetersonOriginal} for its definition, to \cite[Section~4G]{kac-peterson-regular-functions} for its original appearance and to \cite{kumar} (where it is called the \emph{analytic topology}) and to \cite{final-group-topologies} for subsequent discussions in the literature.

Following the latter, we will provide a universal characterization of the Kac--Peterson topology in the general case. We emphasize that in case of a non-spherical Kac--Moody group $G$ the Kac--Peterson topology is $k_\omega$, but not locally compact and not metrizable; cf.\ Proposition~\ref{universalKP} and Remark~\ref{notlocallycompact}. In case of a spherical Kac--Moody group, the Kac--Peterson topology coincides with the Lie group topology; cf.\ Corollary \ref{cor-panels-cpt} and Remark \ref{Rem:SphericalSubgroups}. Furthermore, the subspace topology induced on bounded subgroups turns these into algebraic Lie groups; cf.\ Corollary~\ref{bounded}.

The hypothesis in Theorem~\ref{HartnickConjecture} that the Kac--Moody group be two-spherical stems from Proposition~\ref{prop:multiplication-open}. That result has been announced in \cite[Section~4G]{kac-peterson-regular-functions} without the requirement that the Kac--Moody group be two-spherical. However, to the best of our knowledge there is no published proof of that statement available in the literature. Our own proof, which is based on a combinatorial argument by M\"uhlherr given in the appendix, unfortunately requires two-sphericity.

\medskip
It is natural to ask whether a classification of topological twin buildings is possible under
some natural conditions, mimicking known classification results in the spherical
case. We will provide such a classification result in the following restricted setting: Let us call a topological twin building \emph{$k$-split}, if all its
rank 
two residues are {\em compact Moufang polygons} and its rank one residues 
(considered as topological Moufang sets) are {\em projective lines over $k$}. We say that a $k$-split topological twin building is \emph{of tree type} if the underlying Coxeter diagram is a tree. In particular, such twin buildings are two-spherical and Moufang. 

Given a $k$-split topological twin building of tree type we can associate a Dynkin diagram in the usual way, using the fact that rank two residues are either generalized triangles or 
quadrangles or hexagons. Given a topological twin building $\Delta$ we denote by $\mathcal D(\Delta)$ the associated Dynkin diagram. With this notation understood our classification result can be formulated as follows:
\medskip

\noindent {\bf Theorem \ref{ClassificationIntro}.}
{\em Let $k$ be a local field. Then the map $[\Delta]\mapsto [\mathcal D(\Delta)]$ induces a bijection
between isomorphism classes of $k$-split topological twin buildings of tree type and
isomorphism classes of simply connected simple $\{3,4,6\}$-labelled graphs, where edges
labelled $4$ or $6$ are directed. }

\medskip
Note that the classification of abstract $k$-split twin buildings over fields is based on the theory of abstract foundations as developed by M\"uhlherr
\cite{muehlherr-locally-split}, M\"uhlherr--Ronan \cite{Muehlherr/Ronan:1995}, Ronan--Tits \cite{Ronan/Tits:1987}. The uniqueness part of our classification result is based on a topological formulation of this theory (cf.\ Theorem~\ref{ClassificationMain}), whereas the existence part is a direct consequence of Theorem \ref{HartnickConjecture}. 

Using the concept of normal coverings of Coxeter diagrams from \cite{muehlherr-locally-split} it is actually possible to extend Theorem \ref{ClassificationIntro} to $k$-split topological twin buildings of arbitrary type, but we refrain from doing so in the present article. We simply note that for arbitrary diagrams the Moufang foundation will not uniquely determine the twin building, so the precise classification statement becomes necessarily more complicated (cf.\ also the discussion on page~\pageref{discussion} before Theorem~\ref{ClassificationMain}). 

\medskip

This article is organized as follows: In Section \ref{SecTwin} we recall basic
definitions concerning twin buildings and RGD systems. We then establish a
couple of basic combinatorial properties of twin buildings. The main original result of that section (which stems from the first author's master thesis \cite{hartnick})
is 
Theorem~\ref{thm:projection-formula}, where we provide an explicit formula for
co-projections in the twin building associated with an RGD system in terms of
group data. In Section~\ref{section3} we introduce topological twin buildings and
develop their basic point-set topological properties. Our main contribution
in that section is the local-to-global result Theorem~\ref{LTG}, which was previously only
known in the spherical case. Our proof is based on the concept of Bott--Samelson desingularizations of Schubert varieties taken from \cite{kramer2002}. Section~\ref{SecClass} applies this local-to-global result to establish the uniqueness part of Theorem \ref{ClassificationIntro} (cf.\ Corollary \ref{CorTreeSuff}). We then digress in Section \ref{SecConnected} to re-visit \cite{Knarr:1990}, \cite{kramer-polygons}, \cite{kramer2002}, \cite[Theorem~3.3.10]{hartnick} and to discuss the topology of connected topological twin buildings. The main result here is Theorem~\ref{topsoltits}, which is a topological version of the Solomon--Tits theorem. The remainder of the article is then devoted to the explicit construction of topological twin buildings. In Section \ref{section:3.7} we develop the theory of topological groups with RGD system. In particular, we provide a list of conditions on the
topology of such a group, which guarantee that the associated twin building
becomes a topological twin building when equipped with the quotient topology
(see Theorem~\ref{thm:top-twin-building}).
In Section~\ref{section6} this result is applied to the case of split Kac--Moody
groups over a local field. We use the Kac--Peterson topology on these groups in order to establish Theorem~\ref{HartnickConjecture} and the existence part of Theorem~\ref{ClassificationIntro}. We close with some remarks concerning closure relations in Kac--Moody symmetric spaces.

\medskip
\noindent
{\bf Acknowledgements.} The authors thank Peter Abramenko, Helge Gl\"ockner,
Guntram Hainke,
Aloysius Helminck, Max Horn, Linus Kramer, Rupert McCallum, Bernhard M\"uhlherr, Andrei Rapinchuk, Nils Rosehr, Markus Stroppel and
Stefan Witzel for
several valuable comments, questions and discussions concerning the topic
of the present article. Moreover, they are indebted to Rupert McCallum and to an anonymous referee for a thorough proof-reading that resulted in many extremely helpful comments, suggestions and corrections. Furthermore, they are indebted to Bernhard M\"uhlherr for providing the combinatorial argument in the appendix.

The first author was partially supported by  ERC grant agreement n° 306706 {\em Ergodic Group Theory}. He also acknowledges the hospitality of 
Institut Mittag-Leffler 
during part of the revision of the article. The second and third authors gratefully acknowledge financial support by the Deutsche Forschungsgemeinschaft through funding of the research proposals GR 2077/5 and GR 2077/7.

\section{Twin buildings and RGD systems}\label{SecTwin}

\subsection{Twin buildings and their combinatorics} \label{2.1}

Buildings can be studied from the point of view of simplicial complexes (as done
in \cite{tits-finite-bn-pairs}) or, equivalently, from the point of view of
chamber systems (as introduced in \cite{Tits:1981}).  The book
\cite{abramenko-brown} is a comprehensive
introduction into the theory of buildings that explains both concepts in
detail, also including the theory of twin buildings.

In the present article we will study twin building topologies using the chamber
system approach to buildings. Throughout this article we reserve the letters $(W, S)$ to denote a Coxeter
system, which is always assumed to be of finite rank $|S|$.
We then denote by $\leq$ the associated Bruhat order and by $l = l_S$ the
associated length function on $W$. Given $J \subset S$ we denote by $W_J$ the subgroup of $W$ generated by $J$.
\begin{defin}
Let $(W, S)$ be a Coxeter system. A \index{building}\textbf{building} of type
$(W, S)$ is a pair $(\Delta, \delta)$ consisting of a set of chambers $\Delta$
together with a distance function $\delta: \Delta \times \Delta \to W$
satisfying the following axioms, where $x, y \in \Delta$ and $\delta(x,y) = w$:
\begin{enumerate}[(Bu1)]
 \item $w = 1$ if and only if $x = y$,
 \item if $z \in \Delta$ such that $\delta(y,z) = s \in S$, then $\delta(x,z)
\in \{ws, w\}$. If additionally $l(ws) > l(w)$, then $\delta(x,z) = ws$.
 \item If $s \in S$, there exists $z \in \Delta$ such that $\delta(y,z) = s$ and
$\delta(x,z) = ws$.
\end{enumerate}
\end{defin}
A building is called \textbf{spherical} if $W$ is finite. If $\Delta$ is
spherical, then $c, d \in \Delta$ are called \index{opposite}\textbf{opposite},
if $\delta(c,d) = w_0$, where $w_0$ denotes the longest element of $(W, S)$. 

For
every $c \in
\Delta$ and every subset $S'
\subseteq S$ we define the $S'$-{\bf residue}
$R_{S'}(c)$ to be
\[R_{S'}(c) := \{ d \in \Delta \mid \delta(c,d) \in W_{S'} = \langle s \mid s \in S'
\rangle \};\]
the collection of all $S'$-residues in $\Delta$ will be denoted  
${\rm Res}_{S'}(\Delta)$. 

\begin{rem}
Using the above definition, a building of rank one
is simply a set without further structure. In order to be able to develop a
meaningful theory for such buildings, one has to require additional
properties, such as the existence of a prescribed rank one group of
automorphisms; cf.\ \cite{Medts/Segev:2009}, \cite{Timmesfeld:2001}. In the
present article we will not deal with this situation and therefore only study
buildings whose Coxeter systems/Dynkin diagrams do not admit isolated points. 
\end{rem}

\begin{lem}[{\cite[Lemma~5.16 and
Corollary~5.30]{abramenko-brown}}]\label{residuesdisjoint}
Any residue of a building $\Delta$ is again a building. For any $S' \subseteq
S$ the elements of 
${\rm Res}_{S'}(\Delta)$ partition $\Delta$.  
\end{lem}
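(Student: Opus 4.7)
The plan is to verify that both assertions follow from the three building axioms \refttb{} (Bu1)--(Bu3) together with one preliminary symmetry fact, namely that $\delta(y,x) = \delta(x,y)^{-1}$ for all $x,y \in \Delta$. This symmetry is standard: pick a minimal gallery from $x$ to $y$, run it backwards, and use (Bu2) inductively to show that the distance along the reversed gallery is the inverse of the original.

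With symmetry in hand, the key technical step is the following \emph{closure property}: if $x,y,z \in \Delta$ and $\delta(x,y), \delta(y,z) \in W_{S'}$, then $\delta(x,z) \in W_{S'}$. First I would prove this by induction on $\ell_S(\delta(y,z))$ written as a reduced word in $S'$. The base case $\delta(y,z) = 1$ is immediate by (Bu1). For the inductive step, factor $\delta(y,z) = v s$ with $s \in S'$ and $\ell(vs) > \ell(v)$; choose an intermediate chamber $y'$ with $\delta(y,y') = v$ and $\delta(y',z) = s$ (existence of $y'$ follows from iterated application of (Bu3) along a reduced decomposition). By induction $\delta(x,y') \in W_{S'}$, and then (Bu2) applied to the $s$-neighbours $y',z$ forces $\delta(x,z) \in \{\delta(x,y'), \delta(x,y')s\} \subset W_{S'}$.

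Given this closure property, the partition claim is immediate: the relation $x \sim_{S'} y :\Longleftrightarrow \delta(x,y) \in W_{S'}$ is reflexive by (Bu1), symmetric by the preliminary fact together with $W_{S'}$ being a subgroup of $W$, and transitive by the closure property. The $\sim_{S'}$-equivalence class of $c$ is by definition $R_{S'}(c)$, so the residues $\mathrm{Res}_{S'}(\Delta)$ form a partition of $\Delta$.

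It remains to verify that $(R_{S'}(c), \delta|_{R_{S'}(c) \times R_{S'}(c)})$ is itself a building of type $(W_{S'},S')$. Axiom (Bu1) is inherited trivially. For (Bu2), if $x,y,z \in R_{S'}(c)$ with $\delta(y,z) = s \in S'$, then (Bu2) in $\Delta$ gives $\delta(x,z) \in \{\delta(x,y)s, \delta(x,y)\}$, and both values lie in $W_{S'}$, so the statement transports verbatim. The only point needing attention is (Bu3): given $x \in R_{S'}(c)$ and $s \in S'$, (Bu3) in $\Delta$ produces some $z \in \Delta$ with $\delta(x,z) = s$; but then $\delta(c,z) \in W_{S'}$ by the closure property, so in fact $z \in R_{S'}(c)$. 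The main (and only mildly delicate) obstacle in this argument is the closure property, since it simultaneously drives transitivity of $\sim_{S'}$ and the verification that residues are closed under the rank-one neighbour relations required by (Bu3).
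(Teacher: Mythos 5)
The paper offers no proof of this lemma; it simply cites Abramenko--Brown (Lemma~5.16 and Corollary~5.30), so there is no ``paper approach'' for you to match. Your reconstruction from the axioms (Bu1)--(Bu3) is correct and follows the standard route in the literature: isolate the closure property $\delta(x,y),\delta(y,z)\in W_{S'}\Rightarrow\delta(x,z)\in W_{S'}$, deduce from it both the partition (via the equivalence relation $\sim_{S'}$) and the closure of residues under (Bu3), and inherit (Bu1)--(Bu2) trivially. Two small points are glossed over. First, the preliminary symmetry $\delta(y,x)=\delta(x,y)^{-1}$ and the existence of galleries/intermediate chambers bootstrap each other: one must first establish the rank-one case (using (Bu2) applied to the triple $(y,x,y)$, where $\delta(y,y)=1$ forces $\delta(y,x)=s$), then get gallery existence via iterated (Bu3), and only then prove full symmetry by an induction along the reversed gallery; your sketch ``run it backwards'' compresses this. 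Second, your argument repeatedly uses that $(W_{S'},S')$ is itself a Coxeter system with $\ell_{S'}=\ell_S|_{W_{S'}}$ --- this is needed both to factor $\delta(y,z)=vs$ with $s\in S'$ and $\ell(vs)>\ell(v)$ in the closure induction, and to verify that (Bu2) for the residue holds with respect to the intrinsic length function $\ell_{S'}$. This is a standard fact about standard parabolic subgroups of Coxeter groups, but since it is doing real work in two places it deserves an explicit mention. With those provisos spelled out, the proof is complete and essentially the one in Abramenko--Brown.
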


The building $\Delta$ will be called \emph{$k$-spherical} if all residues of rank $\leq k$ are spherical buildings. Every building is one-spherical, and a building of type $(W,S)$ is $|S|$-spherical if and only if it is spherical. For our purposes the class of two-spherical buildings will play a key role.

\medskip

The role residues of rank or co-rank one play is a particularly important one. Those of rank one, i.e.,
the elements of
${\rm Pan}_s(\Delta) := {\rm Res}_{\{s\}}(\Delta)$ are called
\textbf{$s$-panels}; as a convention, we write $P_s(c)$ instead of
$R_{\{s\}}(c)$. The residues of co-rank one, i.e., the elements of $\mathcal V_s
:= {\rm Res}_{S \setminus\{s\}}(\Delta)$ are called $s$-{\bf vertices}. There is
a canonical embedding \label{VertexEmbedding}
\begin{eqnarray*}
\iota : \Delta & \hookrightarrow & \prod_{s \in S} \mathcal V_s  \\ c & \mapsto
& \left(R_{S \backslash \{ s \}}(c)\right)_{s \in S},
\end{eqnarray*} 
which, from the simplicial complexes point of view on buildings, simply maps a
maximal simplex onto the tuple consisting of its vertices.

\medskip

A building is {\bf thin}, if each panel contains
exactly two elements, and {\bf thick}, if each panel contains at least three
elements. For a given chamber $c$ and a residue $R$ there exists a unique 
chamber $d \in R$ such that $$l(\delta(c,d)) = \min\{l(\delta(c,x)) \mid 
x \in R\},$$ see \cite[Proposition 5.34]{abramenko-brown}. This chamber $d$ is
called the \textbf{projection} of $c$ 
onto $R$ and is denoted by $\proj_R(c)$.
\begin{ex} \label{ex:coxeter-complex}
Let $(W, S)$ be a Coxeter system. Then $\Delta := W$ and
$\delta: \Delta \times \Delta \to W : (x,y) \mapsto x^{-1}y$ yields a 
(thin) building of type $(W, S)$, denoted by $\Delta(W,S)$. For any three
chambers $x, y, z \in \Delta$ one has
$\delta(x,z) = x^{-1}z = x^{-1}yy^{-1}z = \delta(x,y)\delta(y,z)$; see also
\cite[Lemma~5.55]{abramenko-brown}. Any thin building of type $(W, S)$ is 
isometric to $\Delta(W,S)$, cf.\ \cite[Exercise 4.12]{abramenko-brown}.
\end{ex}
Let $\Delta$ be a building of type $(W, S)$. A subset of $\Delta$ which is 
isometric to $\Delta(W,S)$ is 
called an \textbf{apartment} of $\Delta$.

\begin{defin}
A \index{twin buildings}\textbf{twin building} of type $(W, S)$ is a triple 
$((\Delta_+, \delta_+), (\Delta_-, \delta_-), \delta^*)$ consisting of two
buildings $(\Delta_+, \delta_+)$ and $(\Delta_-, \delta_-)$ of type $(W, S)$ 
and a \textbf{codistance} function $\delta^*: (\Delta_+ \times \Delta_-) 
\cup (\Delta_- \times \Delta_+) \to W$ subject to the following conditions, 
where $x \in \Delta_\pm$, $y \in 
\Delta_\mp$ and $\delta^*(x,y) = w$:
\begin{enumerate}[(Tw1)]
 \item \label{item:tw-1} $\delta^*(y,x) = w^{-1}$,
 \item \label{item:tw-2} if $z \in \Delta_\mp$ such that $\delta_\mp(y,z) = s
\in S$, and $l(ws) < l(w)$, then $\delta^*(x,z) = ws$, and
 \item \label{item:tw-3} if $s \in S$, then there exists $z \in \Delta_\mp$ such
that $\delta_\mp(y,z) = s$ and $\delta^*(x,z) = ws$. \qedhere
\end{enumerate}
A twin building is called \emph{spherical}, resp.\ \emph{$k$-spherical} if both of its halves have the corresponding property.
\end{defin}
Morphisms of twin buildings are defined as follows:
\begin{defin}
For $j \in \{ 1,2\}$ let $\Delta^{(j)} =  ((\Delta_+^{(j)}, \delta^{(j)}_+), 
(\Delta_-^{(j)},\delta^{(j)}_-), \delta^{*, (j)})$ be twin buildings
such that the type $(W^{(1)}, S^{(1)})$ is a sub-Coxeter system of the type
$(W^{(2)},S^{(2)})$. 
A \emph{morphism} $\phi: \Delta^{(1)} \to \Delta^{(2)}$
is an isometry
\[\phi: \Delta^{(1)}_+ \cup  \Delta^{(1)}_- \to  \Delta^{(2)}_+ \cup
\Delta^{(2)}_-,\]
i.e., a map that
preserves distances and codistances. 
\end{defin}
\begin{ex}\label{thintwinbuilding}
Let $(W,S)$ be any Coxeter system and let $\Delta_{\pm} := W$ and $\delta_{\pm}$
as in Example \ref{ex:coxeter-complex}. Moreover, define
$\delta^* : (\Delta_+ \times
\Delta_-) \cup (\Delta_- \times \Delta_+) \to W$ by $\delta^*(v,w) := v^{-1}w$.
Then
$((\Delta_+,\delta_+),(\Delta_-,\delta_-),\delta^*)$ is a thin twin building,
and any thin twin
building of type $(W,S)$ is isometric to this twin building, cf.\
\cite[Exercise~5.164]{abramenko-brown}. In this case the distance and the
codistance function are related by
the formula \[\delta^*(x,z) = x^{-1}z = x^{-1}yy^{-1}z = 
\delta^*(x,y)\delta_\mp(y,z), \quad {x \in \Delta_{\pm}, y,z \in
\Delta_{\mp}};\] see also
\cite[Lemma~5.173(4)]{abramenko-brown}. 
\end{ex}

A subset of a twin building isomorphic to a thin twin building is called a {\bf
twin apartment}. Twin apartments can be described efficiently via the notion of
opposition. Here two chambers $c \in \Delta_{\pm}$, $d \in \Delta_{\mp}$ are
called \textbf{opposite}, if $\delta^*(c,d) = 1$. Every pair of opposite
chambers is contained in a unique twin apartment by
\cite[Proposition~5.179(1)]{abramenko-brown}. Conversely, a pair of apartments
$(\Sigma_+, \Sigma_-)$ forms a twin apartment if and only if each chamber in
$\Sigma_{\pm}$ is opposite to 
exactly one chamber of $\Sigma_{\mp}$; cf.\
\cite[Proposition~5.173(5)]{abramenko-brown}. 

The notion of opposition can be extended
to residues by calling two residues \textbf{opposite}, if they have the 
same type and contain a pair of opposite chambers. Any pair of opposite 
residues is a twin building with respect to the restrictions of the 
distance and co-distance functions, cf.\
\cite[Exercise~5.166]{abramenko-brown}.

Given a spherical residue $R \subseteq \Delta_{\pm}$ and a chamber 
$c \in \Delta_{\mp}$, there exists a unique chamber $d \in R$ such 
that $\delta^*(c,d)$ is maximal in the set $\delta^*(c, R)$ with
respect to the Bruhat order, 
cf.~\cite[Lemma 5.149]{abramenko-brown}. This chamber 
$d$ is called the \textbf{co-projection} of $c$ onto $R$ and is
denoted by $\proj^*_R(c)$.

Given a chamber $c \in \Delta_{\pm}$ and an element $w \in W$ we denote by
\begin{align*} 
	E_w(c) & := \{ d \in \Delta_{\pm} \mid \delta_\epsilon(c, d) = w \in
W\},\\
	E^*_w(c) & := \{ d \in \Delta_{\mp}\mid \delta^*(c, d) = w \in W\}
\end{align*}	
the \textbf{Schubert cell}, respectively, \textbf{co-Schubert cell} of radius
$w$ and centre $c$. The sets $E_{\leq w}(c)$, $E_{< w}(c)$, $E_{\leq w}^*(c)$, $E_{<
w}^*(c)$ are defined accordingly. $E_{\leq w}(c)$ and $E_{\leq w}^*(c)$ are
called  \textbf{Schubert varieties}, respectively, \textbf{co-Schubert
varieties}.  Moreover, we define
\[\Delta_w := \{ (c,d) \in (\Delta_+ \times 
\Delta_{-}) \cup (\Delta_- \times \Delta_+) \mid \delta^*(c,d)=w \}.\]
The following combinatorial observations concerning twin buildings are quite
useful.

\begin{lem}\label{doubleprojection}
Let $\Delta$ be a twin building, let $s, t \in S$ be distinct, let $c_1, c_2 \in \Delta_+$
such that $\delta_+(c_1,c_2) = s$, and let $d \in \Delta_-$ such that $\{(c_1,
d), (c_2, d)\} \subset \Delta_1$. Then $\pro^*_{P_t(d)}(c_1) =
\pro^*_{P_t(d)}(c_2)$.
\end{lem}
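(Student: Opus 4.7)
My plan is to show that the chamber $e := \pro^*_{P_t(d)}(c_1)$ is also the co-projection of $c_2$ onto $P_t(d)$. The key intermediate step is the following characterization: for any chamber $c$ opposite to $d$ (i.e., $\delta^*(c, d) = 1$), the co-projection $\pro^*_{P_t(d)}(c)$ is the unique chamber $z \in P_t(d)$ satisfying $\delta^*(c, z) = t$. Indeed, existence of such a $z$ follows from (Tw3) applied with $w = 1$ and $s = t$, and uniqueness (together with the fact that $\delta^*(c, \cdot)$ takes no value strictly greater than $t$ on $P_t(d)$) follows by applying (Tw2) with $w = t$ to any two distinct such candidates, since $l(tt) < l(t)$ would force the codistance of one of them to drop to $1$. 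In particular $\delta^*(c_1, e) = t$, and it suffices to prove $\delta^*(c_2, e) = t$.

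To determine $\delta^*(c_2, e)$ I will combine two constraints, each of which restricts it to a two-element set. The first comes from $\delta^*(c_2, d) = 1$ together with $\delta_-(d, e) = t$ (the latter being forced by $\delta^*(c_1, e) = t \ne 1 = \delta^*(c_1, d)$, which ensures $e \ne d$): exactly the same (Tw3)/(Tw2) argument as in the characterization above, applied to $c_2$ in place of $c_1$, yields $\delta^*(c_2, e) \in \{1, t\}$. The second comes from $\delta^*(c_1, e) = t$ together with $\delta_+(c_1, c_2) = s$: by (Tw1) we may rewrite the former as $\delta^*(e, c_1) = t$, and then (Tw3) produces a chamber $c_0 \in P_s(c_1)$ with $\delta^*(e, c_0) = ts$, while (Tw2) with $w = ts$ (noting $l(tss) = l(t) < l(ts)$) forces $\delta^*(e, c_2) = t$ whenever $c_0 \ne c_2$; applying (Tw1) once more gives $\delta^*(c_2, e) \in \{t, st\}$.

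It only remains to intersect the two possibility sets. Since $s \ne 1$ and $s \ne t$ we have $st \notin \{1, t\}$, and of course $1 \ne t$, so $\{1, t\} \cap \{t, st\} = \{t\}$ and therefore $\delta^*(c_2, e) = t$. The characterization from the first paragraph then forces $e = \pro^*_{P_t(d)}(c_2)$, which is the desired conclusion. The main bookkeeping obstacle I anticipate is keeping the orientations of $\delta^*$ straight when switching between the two halves of the twin building in the two constraint computations, which is why (Tw1) is invoked twice; once that is under control, the actual application of (Tw2) and (Tw3) is mechanical and the final intersection of $\{1, t\}$ with $\{t, st\}$ is immediate.
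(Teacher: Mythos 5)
Your proof is correct and follows essentially the same route as the paper's: both obtain the two constraints $\delta^*(c_2, e) \in \{1,t\}$ (from $\delta^*(c_2,d)=1$ and $e\in P_t(d)$) and $\delta^*(c_2,e)\in\{t,st\}$ (from $\delta^*(c_1,e)=t$ and $\delta_+(c_1,c_2)=s$) and intersect them. The only difference is that the paper cites \cite[Lemma~5.139]{abramenko-brown} for the two constraints, while you re-derive them directly from the axioms (Tw2)/(Tw3).
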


\begin{proof}
Let $a_1 := \pro^*_{P_t(d)}(c_1)$. Then $\delta^*(c_1,a_1) = t$, whence
$\delta^*(c_2,a_1) \in \{ t, st \}$ by \cite[Lemma~5.139]{abramenko-brown}. On
the other hand, $\delta^*(c_2,d) = 1$ and, thus, $\delta^*(c_2,a_1) \in \{ 1, t
\}$ by \cite[Lemma~5.139]{abramenko-brown}. We conclude $\delta^*(c_2,a_1) = t$
and so $\pro^*_{P_t(d)}(c_1) = a_1 = \pro^*_{P_t(d)}(c_2)$.
\end{proof}

\begin{lem}[{\cite[Lemma~5.156]{abramenko-brown}}]\label{Tits1}
Let $\Delta$ be a thick twin building. Then for every pair $c_1, c_2 \in
\Delta_{\pm}$ there exists $d \in \Delta_{\mp}$ such that $\{(c_1, d), (c_2,
d)\} \subset \Delta_1$.
\end{lem}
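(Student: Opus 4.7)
The plan is to argue by induction on $n := l(\delta_\pm(c_1,c_2))$. The base case $n = 0$ reduces to the existence of some chamber opposite to $c_1$, which can be obtained by starting with any $e \in \Delta_\mp$, setting $w := \delta^*(c_1,e)$, and iteratively applying \reftw{item:tw-3} along a descent of $w$ so as to reduce the codistance to $1$ in finitely many steps.

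For the inductive step, let $c_2' \in \Delta_\pm$ be the penultimate chamber of a minimal gallery from $c_1$ to $c_2$, so that $l(\delta_\pm(c_1,c_2')) = n-1$ and $s := \delta_\pm(c_2',c_2) \in S$. The inductive hypothesis applied to $(c_1,c_2')$ yields $d' \in \Delta_\mp$ opposite both $c_1$ and $c_2'$. Applying \reftw{item:tw-3} to $x = d'$, $y = c_2'$, $w = 1$ produces $\tilde c_2 \in P_s(c_2') \setminus \{c_2'\}$ with $\delta^*(d',\tilde c_2) = s$. If $\tilde c_2 \neq c_2$, then $\delta_\pm(\tilde c_2, c_2) = s$ and an application of \reftw{item:tw-2} to $(\tilde c_2, c_2)$ (using $l(s \cdot s) < l(s)$) gives $\delta^*(d',c_2) = 1$; in this case one may take $d := d'$. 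The remaining case is $\tilde c_2 = c_2$, i.e.\ $\delta^*(c_2,d') = s$, and the argument then concludes inside the $s$-panel $P_s(d')$.

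In this remaining case the claim is that the chambers of $P_s(d') \setminus \{d',d_1\}$ are simultaneously opposite to $c_1$ and to $c_2$ for a single exceptional chamber $d_1 \in P_s(d')$. Since $\delta^*(c_2,d') = s$ and $l(s \cdot s) < l(s)$, axiom \reftw{item:tw-2} forces $\delta^*(c_2,d) = 1$ for every $d \in P_s(d') \setminus \{d'\}$. On the other hand, since $\delta^*(c_1,d') = 1$, axiom \reftw{item:tw-3} provides some $d_1 \in P_s(d') \setminus \{d'\}$ with $\delta^*(c_1,d_1) = s$; a second application of \reftw{item:tw-2} between two hypothetical such chambers in $P_s(d')$ rules out the existence of more than one, and $d_1 \neq d'$ since their codistances to $c_1$ differ. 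Consequently every $d \in P_s(d') \setminus \{d',d_1\}$ satisfies $\delta^*(c_1,d) = \delta^*(c_2,d) = 1$, and thickness ($|P_s(d')| \geq 3$) guarantees the existence of such a $d$.

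The principal obstacle is the careful bookkeeping of codistances inside $P_s(d')$: one must combine \reftw{item:tw-2} and \reftw{item:tw-3} consistently for both $c_1$ and $c_2$ in order to isolate exactly two exceptional chambers $d'$ and $d_1$ and to verify $d' \neq d_1$. The thickness hypothesis is used only at the very last step, to secure a chamber of $P_s(d')$ distinct from both exceptions.
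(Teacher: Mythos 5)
The paper contains no proof of this lemma; it cites \cite[Lemma~5.156]{abramenko-brown} and moves on. Your induction on $l(\delta_\pm(c_1,c_2))$ is essentially the standard argument, and it is correct. One step is phrased a bit loosely: after showing uniqueness of the chamber $d_1 \in P_s(d')$ at codistance $s$ from $c_1$, you assert that \emph{consequently} every other chamber of $P_s(d')$ is opposite $c_1$. This tacitly uses that the codistance from $c_1$ to a chamber of $P_s(d')$ can only be $1$ or $s$ --- true, by Lemma~\ref{lemma:building-combinatorics} or \cite[Lemma 5.139]{abramenko-brown}, but not one of the raw axioms \reftw{item:tw-1}--\reftw{item:tw-3}, and uniqueness of the codistance-$s$ chamber alone does not preclude some $d \in P_s(d')$ at codistance $w \notin \{1,s\}$. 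A cleaner route that avoids both the uniqueness detour and the appeal to the range fact: apply \reftw{item:tw-2} directly to the triple $(c_1, d_1, d)$ for any $d \in P_s(d') \setminus \{d_1\}$ --- since $\delta^*(c_1,d_1)=s$, $\delta_\mp(d_1,d)=s$, and $l(ss)<l(s)$, this yields $\delta^*(c_1,d)=1$ at once, with uniqueness of $d_1$ as a byproduct. After that, thickness of $P_s(d')$ finishes the argument exactly as you describe, so with this small adjustment the proof is complete and self-contained.
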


 Let $c \in \Delta_{\pm}$ be a chamber and let $\Sigma$ be a twin apartment of
$\Delta$ containing $c$. Then the map $\rho = \rho_{c,\Sigma}: \Delta 
\to \Sigma$ which fixes $c$ pointwise and maps every twin apartment containing 
$c$ isometrically onto $\Sigma$ is called the \textbf{retraction} onto 
$\Sigma$ centred at $c$.

Since every two chambers are contained in a common twin apartment
(\cite[Proposition~5.179(3)]{abramenko-brown}), the retraction
$\rho$ preserves distances from $c$. Moreover, $\rho$ is distance-decreasing, 
i.e., $\delta(\rho(d), \rho(e)) \leq \delta(d,e)$ for any two chambers $d,
e \in\Delta$, where $\delta$ is to be interpreted as $\delta_+$, $\delta_-$ or
$\delta^*$, whichever one makes sense.

\begin{lem}[{\cite[Lemma~5.140(1)]{abramenko-brown}}]
\label{lemma:building-combinatorics}
Let $c \in \Delta_\pm$, $d, e \in \Delta_\mp$ be chambers, let 
$\delta^*(c,d)=w$, and let
$\delta_\mp(d,e) = v$.
Then $\delta^*(c,e) = wv'$, where $v'$ is a subexpression of $v$.
\end{lem}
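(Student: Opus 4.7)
The plan is to proceed by induction on $\ell(v)$, reducing the general case to a rank-one statement which is then extracted directly from the twin-building axioms.

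The key technical step is the following rank-one assertion: if $\delta^*(c,d) = w$ and $\delta_\mp(d,e) = s$ for some $s \in S$, then $\delta^*(c,e) \in \{w, ws\}$. To prove it, I would split into two cases. If $l(ws) < l(w)$, then axiom \reftw{item:tw-2} applied to $(x,y,z) = (c,d,e)$ yields $\delta^*(c,e) = ws$ at once. If instead $l(ws) > l(w)$, axiom \reftw{item:tw-3} produces some $z \in \Delta_\mp$ with $\delta_\mp(d,z) = s$ and $\delta^*(c,z) = ws$. If $z = e$ there is nothing to show; otherwise $e$ and $z$ both lie in the $s$-panel $P_s(d)$ and are distinct, so $\delta_\mp(e,z) = s$, and since $l((ws)s) = l(w) < l(ws)$, axiom \reftw{item:tw-2} applied to the triple $(c,z,e)$ gives $\delta^*(c,e) = (ws)s = w$.

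For the induction, pick a reduced expression $v = s_1 \cdots s_n$ and use the basic building axioms \refttb{item:tw-2}, \refttb{item:tw-3} (which are the ordinary building axioms for $\Delta_\mp$) to produce a minimal gallery $d = d_0, d_1, \ldots, d_n = e$ with $\delta_\mp(d_{i-1}, d_i) = s_i$. Applying the rank-one step inductively along this gallery, one obtains exponents $\epsilon_i \in \{0,1\}$ such that
\[
\delta^*(c, d_i) \;=\; w \cdot s_1^{\epsilon_1} s_2^{\epsilon_2} \cdots s_i^{\epsilon_i}
\]
for every $i$, with $\epsilon_i = 1$ forced whenever the length drops. Setting $i = n$ yields $\delta^*(c,e) = w v'$ where $v' := s_1^{\epsilon_1} \cdots s_n^{\epsilon_n}$ is by construction a subexpression of $v$.

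The only genuinely subtle point is the rank-one sub-lemma, where axiom \reftw{item:tw-2} covers just the length-decreasing direction and one must combine \reftw{item:tw-3} with a second application of \reftw{item:tw-2} to rule out spurious values of $\delta^*(c,e)$ in the length-increasing case; once this is in hand, the inductive assembly along a minimal gallery is routine and independent of the choice of reduced expression for $v$.
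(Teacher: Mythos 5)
Your proof is correct, but it takes a genuinely different route from the paper's. The paper's argument uses the retraction $\rho_{c,\Sigma}$ onto a twin apartment $\Sigma$ containing $c$, exploiting two structural facts: $\rho$ preserves (co)distances from the centre and is distance-decreasing, and in the thin twin building $\Sigma$ the codistance is multiplicative, $\delta^*(x,z)=\delta^*(x,y)\delta_\mp(y,z)$ (Example~\ref{thintwinbuilding}). You instead proceed by induction on $\ell(v)$ along a minimal gallery of reduced type, reducing to a rank-one sub-lemma proved directly from axioms \reftw{item:tw-2} and \reftw{item:tw-3}. Your rank-one step is in substance \cite[Lemma~5.139]{abramenko-brown}, which the paper invokes without proof elsewhere (e.g.\ in Lemma~\ref{doubleprojection}); you re-derive it from scratch, which is fine and your Case 2 argument (pairing \reftw{item:tw-3} with a second application of \reftw{item:tw-2} to the chamber it produces) is exactly how that is done. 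Your approach is more elementary and self-contained, requiring only the axioms; the paper's is shorter given the retraction machinery it has already set up, and dovetails with the twin-apartment formalism used elsewhere in Section~\ref{SecTwin}. One cosmetic slip: when invoking the existence of a minimal gallery of type $(s_1,\dots,s_n)$ from $d$ to $e$, you should cite the ordinary building axioms (Bu2), (Bu3), not the topological twin building axioms as your macros currently render; the intent is clear from your parenthetical, but the reference is mislabeled.
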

\begin{proof}
Let $\rho = \rho_{c, \Sigma}$ be the retraction map onto some twin apartment 
$\Sigma$ containing $c$. Then 
$\delta^*(c,\rho(d)) = \delta^*(c,d) = w$ as $\rho$ preserves distances 
from $c$. Since $\rho$ is 
distance-decreasing, one has $\delta_\mp(\rho(d), \rho(e)) 
\leq v$. We conclude
\begin{eqnarray*}
	\delta^*(c,e) & = & \delta^*(c,\rho(e)) \\
& \stackrel{\ref{thintwinbuilding}}{=} & \delta^*(c,\rho(d))
\delta_\mp(\rho(d),\rho(e)) \in \{ wv' \mid v' \leq v\}.
\qedhere
\end{eqnarray*}
\end{proof}

\begin{lem} \label{lemma:nice-chamber-exists2}
Let $\Delta$ be a thick twin building, let $1 \neq w = s_1 \cdots s_k \in W$ be
reduced, and let $c_\pm \in \Delta_\pm$ be opposite chambers. 
Then there exists a chamber $d \in \Delta_-$ with 
$\delta^*(c_+, d) = 1$ and $\delta^*(E^*_w(c_-), d) = \{s_k\}$.
\end{lem}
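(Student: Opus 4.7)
My plan is to proceed by induction on $k$, strengthening the statement to additionally require $\delta_-(c_-, d) = s_1 \cdots s_{k-1}$. This extra control is precisely what makes the inductive step work.

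For the base case $k = 1$, I would take $d := c_-$. Then $\delta^*(c_+, d) = 1$ holds by the opposition of $c_\pm$; for any $e \in E^*_{s_1}(c_-)$, axiom \reftw{item:tw-1} gives $\delta^*(e, d) = \delta^*(c_-, e)^{-1} = s_1^{-1} = s_1$; and $\delta_-(c_-, d) = 1$ is the empty word.

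For the inductive step from $k-1$ to $k$, I would set $w' := s_1 \cdots s_{k-1}$ and apply the (strengthened) inductive hypothesis to $w'$, obtaining $d' \in \Delta_-$ with $\delta^*(c_+, d') = 1$, $\delta^*(E^*_{w'}(c_-), d') = \{s_{k-1}\}$, and $\delta_-(c_-, d') = s_1 \cdots s_{k-2}$. Since $\pro^*_{P_{s_{k-1}}(d')}(c_+)$ is the unique chamber of $P_{s_{k-1}}(d')$ at codistance $s_{k-1}$ from $c_+$, and $|P_{s_{k-1}}(d')| \geq 3$ by thickness, I would choose $d \in P_{s_{k-1}}(d')$ with $d \neq d'$ and $\delta^*(c_+, d) = 1$. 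Axiom (Bu2) applied to $c_-, d', d$ then forces $\delta_-(c_-, d) = s_1 \cdots s_{k-1}$ (the length strictly increases), and axiom \reftw{item:tw-2} applied to any $e' \in E^*_{w'}(c_-)$ together with the edge $d', d$ gives $\delta^*(e', d) = s_{k-1} \cdot s_{k-1} = 1$.

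It remains to verify $\delta^*(e, d) = s_k$ for every $e \in E^*_w(c_-)$, which I would establish by intersecting two applications of Lemma~\ref{lemma:building-combinatorics}. Applied to $e, c_-, d$ with $\delta^*(e, c_-) = w^{-1}$ and $\delta_-(c_-, d) = w'$, the lemma yields $\delta^*(e, d) \in \{w^{-1} v' : v' \leq w'\}$; since $\ell(v') \leq k-1 < k = \ell(w)$, the choice $v' = w$ is impossible, so $1$ is excluded from this set. On the other hand, picking any $e' \in P_{s_k}(e) \setminus \{e\}$---which lies in $E^*_{w'}(c_-)$ by \reftw{item:tw-2}---we have $\delta^*(d, e') = 1$, and Lemma~\ref{lemma:building-combinatorics} applied to $d, e', e$ gives $\delta^*(e, d) \in \{1, s_k\}$. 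Combining the two estimates forces $\delta^*(e, d) = s_k$, as required.

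The main subtlety is recognizing that the strengthening $\delta_-(c_-, d) = s_1 \cdots s_{k-1}$ is needed to drive the induction: without it, the first application of Lemma~\ref{lemma:building-combinatorics} could admit $1$ among its possible values and the intersection argument would break down.
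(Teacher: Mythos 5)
Your proof is correct, and the chamber $d$ you construct is in fact the very same one the paper constructs: in both arguments one starts at $c_-$ and walks a gallery of type $(s_1,\dots,s_{k-1})$, at each step choosing a chamber in the current $s_i$-panel different from both the previous chamber and the co-projection of $c_+$ (thickness guarantees such a choice). What differs is the invariant carried along the gallery. The paper's iteration maintains directly that $\delta^*(a_i,x)=s_{i+1}\cdots s_k$ for every $x\in E^*_w(c_-)$, which is a one-line application of \reftw{item:tw-2} at each step and makes the final assertion immediate at $i=k-1$. You instead strengthen the induction hypothesis by the \emph{distance} $\delta_-(c_-,d)=s_1\cdots s_{k-1}$, and then have to recover the codistance $\delta^*(E^*_w(c_-),d)=\{s_k\}$ after the fact by sandwiching: one application of Lemma~\ref{lemma:building-combinatorics} through $c_-$ rules out codistance $1$ (using the length bound $l(v')\leq k-1<l(w)$), and a second application through an $s_k$-neighbour $e'\in E^*_{w'}(c_-)$ of $e$ narrows the possibilities to $\{1,s_k\}$. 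Both are legitimate; the paper's invariant is more directly tailored to the conclusion and avoids the closing sandwich argument, whereas yours cleanly isolates the rigidity coming from the length inequality. One small remark: the invariant $\delta_-(c_-,d)=s_1\cdots s_{k-1}$ is actually forced by the construction once you know $\delta^*(c_+,d)=1$ and $\delta_-(c_-,d')=s_1\cdots s_{k-2}$, since $d\neq d'$ gives $\delta_-(d',d)=s_{k-1}$ and then (Bu2) decides the length; so the "extra strengthening" is free information rather than an additional obstacle to propagate.
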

\begin{proof}
By the definition of co-projections (see above or \cite[Lemma
5.149]{abramenko-brown}) there 
is a unique chamber $a \in P_{s_1}(c_-)$ such that $\delta^*(c_+,a) =
s_1$.
Since 
$\Delta$ is thick, there exists $a_1 \in P_{s_1}(c_-) 
\backslash \{c_-, a\}$. Then $\delta^*(c_+,a_1) =
1$ and, by axiom 
(Tw2), for all 
$x \in E^*_w(c_-)$ one has $\delta^*(a_1,x) = s_2 \cdots s_{k}$.
By induction we obtain a gallery $a_1$, \ldots, $a_{k-1}$ 
such that $\delta^*(c_+, a_i) = 1$ and such that for all $x \in E^*_w(c_-)$ 
one has $\delta^*(a_i,x) = s_{i+1} \cdots s_{k}$. Thus the chamber
$d := a_{k-1}$ has the desired properties.
\end{proof}

\subsection{Twin buildings from RGD systems}\label{RGD system}

A group $G$ \textbf{acts by isometries} on a twin building 
$\Delta = ((\Delta_+, \delta_+), (\Delta_-, \delta_-), \delta^*)$ if it 
acts on each half and preserves the distances and the 
codistance. A twin building is called \textbf{homogeneous} if it admits a 
group action by isometries which is transitive on each half. 

In this section we describe a class of homogeneous twin buildings using group
theory. 
For the necessary background information on reflection groups and their
associated root
systems we refer to \cite[Sections~1.5, 3.4]{abramenko-brown} or to
\cite{humphreys-coxeter}. For more details on RGD systems we strongly recommend
to consult \cite[Chapters 7, 8]{abramenko-brown} or \cite{caprace-lecture-notes}

\begin{defin}
Let $G$ be a group and let $\{U_\alpha\}_{\alpha \in \Phi}$ be a family of 
subgroups of $G$, indexed by some \index{root system}root system $\Phi$ of 
type $(W, S)$, let $\Phi^+$ be a subset of positive roots, and let $T$ be a
subgroup of $G$. The triple $(G, \{U_\alpha\}_{\alpha \in \Phi}, T)$ is called
an \index{RGD system}\textbf{RGD system} of type $(W, S)$ if it satisfies
the following assertions.
\begin{enumerate}[({RGD}1)]
\item [(RGD0)]For each root $\alpha \in \Phi$, one has $U_\alpha \neq \{1\}$.
\item For each \index{prenilpotent pair of roots}prenilpotent pair 
$\{\alpha, \beta\} \subseteq \Phi$ of distinct roots, one has 
$[U_\alpha, U_\beta] \subseteq \langle U_\gamma \mid \gamma \in ]\alpha,
\beta[ \rangle$. \\ (Cf.\ \cite[Sections~8.5.2, 8.5.3]{abramenko-brown} for a definition of a prenilpotent pair, the ``closed'' interval $[\alpha,\beta]$ and the ``open'' interval $]\alpha,\beta[$.)
\item \label{item:mu}
For each $s \in S$ there exists a function $\mu_s : U_{\alpha_s} \backslash
\{ 1 \} \to G$ such that for all $u \in U_{\alpha_s} \backslash \{1\}$ and
$\alpha \in \Phi$ one has $\mu_s(u) \in U_{-\alpha_s}uU_{-\alpha_s}$ and
$\mu_s(u)U_\alpha\mu_s(u)^{-1} = U_{s(\alpha)}$.
\item For each $s \in S$ one has $U_{-\alpha_s} \nsubseteq U_+ := 
\langle U_\alpha \mid \alpha \in \Phi^+ \rangle$.
\item $G = T.\langle U_\alpha \mid \alpha \in \Phi \rangle$.
\item The group $T$ normalises every $U_\alpha$.
\end{enumerate}
The tuple $(\{U_\alpha\}_{\alpha \in \Phi}, T)$ is called a {\bf root group
datum}, the $U_\alpha$ are called the {\bf root subgroups}, and the 
$G_\alpha := \langle U_{\pm\alpha} \rangle$ are called the {\bf rank one
subgroups}. 
\end{defin}
Occasionally, for pairwise distinct simple roots $\alpha_1$, ..., $\alpha_r$, we use the notation $G_{\alpha_1, \dots, \alpha_r}$ for the group generated by $G_{\alpha_1} \cup \dots \cup G_{\alpha_r}$. These groups are then referred to as {\bf fundamental rank $r$ subgroups}.

A root group datum $(\{U_\alpha\}_{\alpha \in \Phi}, T)$ is called 
\textbf{$\F$-locally split} if $T$ is abelian and if there is a field $\F$ such 
that $G_\alpha \cong \mrm{(P)SL}_2(\F)$ and $\{U_\alpha, U_{-\alpha}\}$ is 
isomorphic to the canonical root group datum of $\mrm{(P)SL}_2(\F)$. The 
RGD system is called \textbf{centred} if $G$ is generated by its root
subgroups, i.e., if $G = \langle U_\alpha \mid \alpha \in \Phi \rangle$. 

Root group data give rise to $BN$-pairs in the sense of the following
definition:
\begin{defin}
Let $G$ be a group and let $B, N$ be subgroups of $G$. The pair $(B, N)$ is 
called a {\textbf{$BN$-pair}} for $G$, if $G$ is generated by $B$ and $N$, the 
intersection $T := B \cap N$ is normal in $N$, and the quotient group 
$W := N/T$ admits a set of generating involutions $S$ such that
\begin{enumerate}[(BN1)]
 \item \label{item:bn-1}for all $w \in W$ and $s \in S$ one has 
$wBs \subseteq BwsB \cup BwB$, and
 \item \label{item:bn-2}$sBs \nsubseteq B$ for each $s \in S$.
\end{enumerate}
Two $BN$-pairs $(B_+, N)$ and $(B_-, N)$ of the same group $G$ satisfying 
$B_+ \cap N = B_- \cap N$ yield a {\bf twin $BN$-pair} $(B_+, B_-, N)$, if 
the following additional assertions hold:
\begin{enumerate}[(TBN1)]
 \item \label{item:tbn-1}for $\epsilon \in \{+, -\}$ and all $w \in W$, 
$s \in S$ such that $l(sw) < l(w)$, one has 
$B_\epsilon s B_\epsilon w B_{-\epsilon} = B_\epsilon sw B_{-\epsilon}$, and
 \item \label{item:tbn-2}for each $s \in S$ one has $B_+s \cap B_- = \emptyset$.
\end{enumerate}
\end{defin}
If $B, N$ is a $BN$-pair for $G$ and $S$ is as above then the quadruple $(G, B,
N, S)$ is called a
\index{Tits system}\textbf{Tits system} with {\bf Weyl group} $W$. The notion 
of a \textbf{twin Tits system} $(G, B_+, B_-, N, S)$ is defined 
accordingly. We remark that the pair $(W, S)$ is a Coxeter system; cf.\
\cite[Theorem~6.56(1)]{abramenko-brown}.

A group $G$  with a
$BN$-pair admits a \textbf{Bruhat decomposition} 
$G = \bigsqcup_{w \in W} BwB$, cf.~\cite[Theorems 6.17 and
6.56(1)]{abramenko-brown}, and 
a group $G$ with a twin $BN$-pair
admits a \textbf{Birkhoff decomposition} 
$G = \bigsqcup_{w \in W} B_\epsilon w B_{-\epsilon}$, cf.~\cite[Proposition
6.81]{abramenko-brown}.
The groups $B_+$, $B_-$ and their conjugates are called {\bf Borel
subgroups}. 

Important examples arise from root group data:
\begin{prop}[{\cite[Theorem 8.80]{abramenko-brown}}] \label{prop:rgd-twin-tits}
\index{twin $BN$-pair!from RGD systems}
Let $G$ be a group with a root group datum 
$(\{U_\alpha\}_{\alpha \in \Phi}, T)$ of type $(W,S)$ and for each $s \in S$ 
let 
$\mu_s: U_{\alpha_s} \backslash \{ 1 \} \to U_{-\alpha_s}U_{\alpha_s} 
U_{-\alpha_s}$ be the map provided by 
{\rm \refrgd{item:mu}}. Then the groups
\begin{eqnarray*}
	N   & := & T . \langle \mu_s(u) \mid u \in U_\alpha \backslash \{1\},
 s \in S \rangle, \\
	B_+ & := & T . U_+,\\
	B_- & := & T . U_-
\end{eqnarray*}
yield a twin $BN$-pair $(B_+, B_-, N)$ of the group $G$.
\end{prop}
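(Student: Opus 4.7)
The plan is to verify each of the axioms \refbn{item:bn-1}, \refbn{item:bn-2}, \reftbn{item:tbn-1}, \reftbn{item:tbn-2} together with the underlying group-theoretic hypotheses for both $(B_+,N)$ and $(B_-,N)$. Since the statement is essentially the content of \cite[Theorem~8.80]{abramenko-brown}, my sketch mirrors the standard Bruhat/Birkhoff strategy; the work is in extracting the requisite decompositions from the RGD axioms.

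First I would set up the quotient $W':=N/T$ and the map $W'\to W$. Using \refrgd{item:mu} repeatedly one checks that the elements $\mu_s(u)$ normalise $T$ and act on the family $\{U_\alpha\}$ by the Weyl reflection $s$; together with \refrgd{}(v)-(vi) this shows $T\triangleleft N$ and gives a well-defined surjection $N/T\twoheadrightarrow W$ sending $\mu_s(u)T\mapsto s$. Independence from the choice of $u$ and injectivity are the delicate points: they follow from axiom \refrgd{}(iv) (which prevents $U_{-\alpha_s}$ from collapsing into $U_+$) combined with the prenilpotent commutation relations of \refrgd{}(ii), giving a faithful action of $W$ on $\Phi$ compatible with the conjugation action of $N/T$. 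Next, using \refrgd{}(ii) I would prove the decomposition $U_+=(U_+\cap wU_+w^{-1})\cdot (U_+\cap wU_-w^{-1})$ for each $w\in W$, which is the key technical lemma that organises the $U_\alpha$ into positive/negative halves relative to any chamber.

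With these preparations the Bruhat decomposition $G=\bigsqcup_{w\in W}B_+wB_+$ (and its $-$-analogue) follows by a standard induction on $l(w)$: the generation statement \refrgd{}(v) reduces everything to products of $\mu_{s}(u)$'s and $T$, and the commutation relations together with the decomposition above let one collect factors into a single coset $B_+ wB_+$. Disjointness uses that $B_+\cap N\subseteq T$, which again boils down to \refrgd{}(iv). Axiom \refbn{item:bn-1} is now the classical computation
\[
wB_+s \;=\; w\,T\,U_+\,s \;\subseteq\; B_+ws B_+ \cup B_+ w B_+,
\]
which I would verify by splitting $U_+$ via the decomposition of the previous step and moving one factor of $U_{\alpha_s}$ across $s$ using the $\mu_s$-identity of \refrgd{item:mu}; the two-case dichotomy corresponds to whether $l(ws)>l(w)$ or $l(ws)<l(w)$. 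Axiom \refbn{item:bn-2} is immediate from \refrgd{}(iv), since $sB_+s\supseteq s U_{\alpha_s}s^{-1}=U_{-\alpha_s}\nsubseteq U_+\subseteq B_+$.

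For the twinning I would establish the Birkhoff decomposition $G=\bigsqcup_{w\in W}B_\epsilon w B_{-\epsilon}$ by a parallel induction, the crucial new input being that $U_+$ and $U_-$ are ``opposite'' in the sense that $U_+\cap U_-=\{1\}$; this is again a consequence of \refrgd{}(iv) together with \refrgd{}(ii). Once the Birkhoff decomposition is in place, \reftbn{item:tbn-1} is proved by an $l(w)$-induction that rewrites $B_\epsilon sB_\epsilon wB_{-\epsilon}$ using \refbn{item:bn-1} and then collapses one of the two cases via the fact that $l(sw)<l(w)$ forces the relevant positive root group to be swallowed by $B_{-\epsilon}$. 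Finally, \reftbn{item:tbn-2} is a direct consequence of the \emph{disjointness} in the Birkhoff decomposition: $B_+s\cap B_-\neq\emptyset$ would place $s\in B_-B_+=B_-\cdot 1\cdot B_+$, contradicting $s\neq 1$ in $W$.

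The main obstacle is the well-definedness and injectivity of $N/T\to W$, that is, showing that no non-trivial word in the $\mu_s(u)$'s lies in $T$ unless its image in $W$ is trivial; this is the heart of the construction and is where one most heavily uses axiom \refrgd{}(iv) together with the prenilpotent commutation relations \refrgd{}(ii). Everything else is organisational book-keeping with the two decompositions.
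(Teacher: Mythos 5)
The paper does not reproduce a proof of this proposition but simply cites Abramenko--Brown, Theorem~8.80, and your sketch accurately reflects that source's Bruhat/Birkhoff strategy: setting up $N/T\to W$, proving the $U_+$-splitting lemma $U_+=(U_+\cap wU_+w^{-1})(U_+\cap wU_-w^{-1})$, deriving the two double-coset decompositions by induction on $l(w)$, and correctly identifying injectivity of $N/T\to W$ as the delicate point. One small caveat: for (BN2) you conclude $U_{-\alpha_s}\nsubseteq B_+$ directly from the axiom $U_{-\alpha_s}\nsubseteq U_+$, but since $B_+=TU_+$ this actually needs the slightly stronger fact $U_{-\alpha_s}\cap B_+=\{1\}$, which in the reference is derived from the RGD axioms rather than being immediate from a single one.
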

\begin{defin}
Let $G$ be a group with a root group datum $(\{U_\alpha\}_{\alpha \in \Phi}, T)$ of type $(W,S)$, let $(B_+, B_-, N)$ be the associated twin $BN$-pair and let $\Pi \subset \Phi$ be a set of simple roots. We say that the root group datum has the \emph{finite prolongation property} if the following holds: For every finite sequence $(\alpha_1, \dots, \alpha_n) \in (\Pi \cup -\Pi)^n$ there exists a finite prolongation $(\alpha_1, \dots, \alpha_N) \in (\Pi \cup -\Pi)^N$ such that
\[B_+B_- \cap (U_{\alpha_1}\cdots U_{\alpha_n}) \subset (U_+ \cap (U_{\alpha_1}\cdots U_{\alpha_N}))\cdot(T\cap (U_{\alpha_1}\cdots U_{\alpha_N}))\cdot(U_- \cap (U_{\alpha_1}\cdots U_{\alpha_N})).\]  
\end{defin}

For a twin Tits system $(G, B_+, B_-, N, S)$ with Weyl group $W$ define 
$\Delta_{\pm} := G/B_{\pm}$. Given $gB_{\pm},hB_{\pm} \in \Delta_{\pm}$ using
the Bruhat decomposition let 
$$\delta_{\pm}(gB_{\pm}, hB_{\pm}) := w \in W \quad \mbox{if and only if}
\quad B_{\pm}g^{-1}hB_{\pm} = B_{\pm}wB_{\pm}.$$ Similarly using the
Birkhoff decomposition instead, given $gB_{\pm} \in \Delta_{\pm}$ and 
$hB_{\mp} \in \Delta_{\mp}$ let $$\delta^*(gB_{\pm}, hB_{\mp}) := w \in W
\quad \mbox{if and only if} \quad B_{\pm}g^{-1}hB_{\mp} = B_{\pm}wB_{\mp}.$$
Then $((\Delta_+, \delta_+), (\Delta_-, \delta_-),
\delta^*)$ is a twin building of type $(W,S)$, see 
\cite[Theorem 6.56 and Definition 6.82]{abramenko-brown}.
\begin{defin} The above twin building is denoted
\[\Delta(G, B_+, B_-, N, S) := ((\Delta_+, \delta_+), (\Delta_-, \delta_-),
\delta^*)\] and referred to as the twin building
\textbf{associated with} the twin Tits system $(G, B_+, B_-, N, S)$. 
If the twin 
Tits system arises from a RGD system $(\{U_\alpha\}_{\alpha \in \Phi}, T)$, 
then the associated twin building is also denoted by $\Delta(G,
\{U_\alpha\}_{\alpha \in \Phi}, T)$. 
\end{defin} 

The following observation is due to Bernhard M\"uhlherr. It relies on the property $(dco)$ for Moufang polygons, cf.\ \cite[Definition~5.1]{m}: For $d \in \mathbb{N}$, a Moufang polygons admits property $(dco)$ if, given an arbitrary chamber $c$, any pair of chambers opposite $c$ can be joined by a gallery of length at most $d$ consisting of chambers opposite $c$ only.    

\begin{prop}\label{lemma:finiteprolongation}
Every two-spherical root group datum for which there exists $d \in \mathbb{N}$ such that each residue of rank two of the associated twin building satisfies $(dco)$ has the finite prolongation property.

For infinite fields $\mathbb{F}$, this in particular applies to any $\mathbb{F}$-locally split two-spherical root group datum without generalized octagons as residues.  
\end{prop}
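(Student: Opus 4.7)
The plan is to recast the finite prolongation property as a gallery rerouting problem in $\Delta_-$. Let $c_+ \in \Delta_+$ and $c_- \in \Delta_-$ denote the fundamental chambers, which are opposite. For an element $g := u_{\alpha_1}\cdots u_{\alpha_n}$ with $\alpha_i \in \Pi \cup -\Pi$, the partial products produce a gallery $c_0 := c_-$, $c_i := u_{\alpha_1}\cdots u_{\alpha_i}\cdot c_-$ in $\Delta_-$ whose consecutive chambers share simple panels. The hypothesis $g \in B_+B_-$ is equivalent to $c_n$ being opposite $c_+$, while $c_0$ is opposite $c_+$ by construction. The aim is to exhibit a prolongation $(\alpha_1,\dots,\alpha_N) \in (\Pi \cup -\Pi)^N$ realising $g$ by a longer gallery in which every intermediate chamber is opposite $c_+$. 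Once such a gallery is produced, applying the rank-one Birkhoff decomposition $G_{\alpha} \cap B_+B_- \subseteq U_\alpha T U_{-\alpha}$ at each step and then sorting the accumulated positive, toral and negative contributions via the prenilpotent commutator relation yields $g = u_+ t u_-$ with each factor expressible from letters of the prolonged sequence, as required.

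The core step is a local detour argument. If some intermediate chamber $c_i$ along the current gallery fails to be opposite $c_+$, then by two-sphericity the subgallery $c_{i-1}, c_i, c_{i+1}$ lies inside a rank-2 residue $R$, which by hypothesis is a spherical Moufang polygon satisfying $(dco)$. The chambers of $R$ opposite $c_+$ in the twin building coincide with those of $R$ opposite a suitable projection chamber $c_R \in R$, so $(dco)$ applies and yields a replacement subgallery inside $R$ of length at most $d$ joining $c_{i-1}$ to $c_{i+1}$ through chambers opposite $c_+$. Transcribed to the group language, this substitutes the subword $u_{\alpha_i}u_{\alpha_{i+1}}$ by a longer word in simple root group elements of $G_{\alpha_i,\alpha_{i+1}}$, lengthening the indexing sequence within $\Pi\cup -\Pi$ while preserving the global product $g$.

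Iterating this operation, the key observation is that each detour is confined to a single rank-2 residue, so opposition status at chambers outside that residue remains unchanged. A monovariant such as the lexicographic pair (first index where opposition fails, current gallery length) should then show that the procedure terminates after finitely many steps with $N$ bounded in terms of $n$ and $d$. The main obstacle I anticipate is precisely this combinatorial termination together with careful bookkeeping of how successive detours in neighbouring rank-2 residues interact; this is the content of M\"uhlherr's argument in the appendix. For the second assertion, over any infinite field the Moufang triangles, quadrangles and hexagons are known to satisfy $(dco)$ with a uniform constant (cf.\ \cite{m}), while generalized octagons are excluded by hypothesis; the hypotheses of the first part are therefore satisfied.
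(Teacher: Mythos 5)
Your proposal correctly recognizes that the substance of the first statement is M\"uhlherr's combinatorial argument in the appendix \cite{m}, and your gallery-rerouting sketch using $(dco)$ in rank-two residues to detour around chambers that fail to be opposite $c_+$ is a fair account of what lies behind it; the paper's own proof is exactly the citation of \cite[Theorem~4.5]{m} together with the explicit prolongation bound $n(2(d+16)^n+1)$ from \cite[Section~5]{m}, so both proofs defer the termination/bookkeeping to the same source. Your treatment of the second assertion (Moufang triangles, quadrangles and hexagons over infinite fields satisfy $(dco)$, cf.\ \cite[Lemma~5.2]{m}, while octagons are excluded by hypothesis) likewise matches the paper.
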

\begin{proof}
The first statement is \cite[Theorem~4.5]{m}; it suffices to define $\alpha_{n+1}, ..., \alpha_N$ in such a way that it contains any sequence of length $n(2(d+16)^n+1)$ of positive or negative simple roots, cf.\ \cite[Section~5]{m}.

The second statement follows from \cite[Lemma~5.2]{m} and the comment thereafter. 
\end{proof}

\subsection{A formula for co-projections onto panels}
Let $G$ be a group with root group datum $(\{U_\alpha\}_{\alpha \in \Phi}, T)$
and 
let \[\Delta := \Delta(G, \{U_\alpha\}_{\alpha \in \Phi}, T) = ((\Delta_+,
\delta_+), (\Delta_-, \delta_-), \delta^*)\] be the associated 
twin building. The goal of this section is to derive a formula for 
co-projections (cf.\ \cite[Lemma~5.149]{abramenko-brown}) onto panels of 
$\Delta$ in terms of the group structure of $G$. 

\begin{lem} \label{lemma:b-orbits}
Let $B_+$, $B_-$ be the opposite Borel subgroups provided by 
Proposition~\ref{prop:rgd-twin-tits}, let $c_-$ be the chamber of $\Delta_-$ fixed by $B_-$,
let $c \in \Delta_+$ be a chamber, and let $\delta^*(c_-,c) = w$. Then 
$B_-.c = B_- w B_+$.
In particular, $B_-.c$ is represented by a unique double coset of the 
Birkhoff decomposition and every such double coset corresponds to a $B_-$-orbit.
\end{lem}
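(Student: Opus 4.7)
The plan is to unwind the definitions directly, using the identification $\Delta_{\pm} = G/B_{\pm}$ with $G$ acting by left multiplication, together with the formula for $\delta^*$ in terms of the Birkhoff decomposition. First, I observe that the chamber of $\Delta_- = G/B_-$ fixed by $B_-$ is the identity coset, so $c_-$ corresponds to $1 \cdot B_- \in G/B_-$. Writing $c = gB_+$ for some $g \in G$, the defining condition $\delta^*(c_-, c) = w$ becomes, by the formula recalled just before the lemma,
\[
  B_-\, 1^{-1} g\, B_+ \;=\; B_- w B_+, \qquad \text{i.e.,} \qquad g \in B_- w B_+.
\]

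Next I would compute the $B_-$-orbit of $c$ in $G/B_+$. By definition,
\[
  B_-.c \;=\; \{\, b g B_+ \mid b \in B_- \,\},
\]
which is precisely the image of the set $B_- g B_+ \subset G$ under the canonical projection $\pi : G \to G/B_+$. Since $g \in B_- w B_+$, we have $B_- g B_+ = B_- w B_+$, so $B_-.c = \pi(B_- w B_+)$, which is what the statement $B_-.c = B_- w B_+$ means after identifying subsets of $\Delta_+$ with the corresponding unions of $B_+$-cosets in $G$.

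For the ``in particular'' addendum, I would invoke the Birkhoff decomposition $G = \bigsqcup_{w \in W} B_- w B_+$ from Proposition~\ref{prop:rgd-twin-tits} (and the surrounding discussion). Pushing this partition forward along $\pi : G \to G/B_+$ yields a partition $\Delta_+ = \bigsqcup_{w \in W} \pi(B_- w B_+)$, and the argument above shows that each piece $\pi(B_- w B_+)$ is a single $B_-$-orbit (it is the $B_-$-orbit of any chamber at codistance $w$ from $c_-$, which exists by axiom (Tw3)). Hence the assignment $w \mapsto \pi(B_- w B_+)$ is a bijection between $W$ and the set of $B_-$-orbits in $\Delta_+$, establishing both claims.

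There is no genuine obstacle here; the content is purely a translation between the combinatorial description of $\delta^*$ and the group-theoretic Birkhoff decomposition. The only point requiring a bit of care is bookkeeping: distinguishing the chamber $c \in \Delta_+ = G/B_+$ from a representative $g \in G$, and correctly identifying the $B_-$-orbit in $\Delta_+$ with the image of a single Birkhoff double coset in $G/B_+$.
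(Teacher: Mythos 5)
Your proof is correct and follows essentially the same route as the paper: unwinding the definition of $\delta^*$ via the Birkhoff decomposition to show $B_-.c = B_-wB_+$, and then reading off the ``in particular'' part. The only difference is that where the paper cites an external result (\cite[Lemma~6.70]{abramenko-brown}) for transitivity of $B_-$ on $E_w^*(c_-)$, you derive it directly from your set-theoretic identity $B_-.c = \pi(B_-wB_+)$, which is self-contained and arguably cleaner.
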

\begin{proof}
The first statement is evident, as $\delta^*(B_-,gB_+) = \delta^*(c_-,c) = w$ if
and only if $B_-.c = B_-gB_+ = B_-wB_+$. By \cite[Lemma 6.70]{abramenko-brown}
the group $B_-$ acts transitively on the chambers at codistance $w$ from $c_-$,
which implies the second statement.
\end{proof}

\begin{lem} \label{lemma:combinatorics-of-borel-groups}
For each $w \in W$ one has 
$w^{-1}B_+wB_- \subseteq B_+B_-$.
\end{lem}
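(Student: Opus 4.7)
My plan is to reduce the lemma to a root-theoretic factorization using the RGD structure from Proposition~\ref{prop:rgd-twin-tits}. First I observe that it suffices to prove the stronger containment $w^{-1}B_+ w \subseteq B_+B_-$, since multiplying on the right by $B_-$ and using that $B_-$ is a group then yields the stated inclusion. Writing $B_+ = T \cdot U_+$ with $U_+ = \langle U_\alpha \mid \alpha \in \Phi^+\rangle$ and picking a representative $n \in N$ of $w$, I note $n^{-1}Tn = T \subseteq B_+$ (as $T$ is normalized by $N$), so the task further reduces to showing
\[
	n^{-1}U_+ n \subseteq U_+ \cdot U_-.
\]

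Next I iterate the $\mu_s$-axiom \refrgd{item:mu} to get $n^{-1}U_\alpha n = U_{w^{-1}\alpha}$ for every root $\alpha$, so $n^{-1}U_+ n$ is generated by $\{U_{w^{-1}\alpha} \mid \alpha \in \Phi^+\}$. Partition $\Phi^+ = \Phi_1 \sqcup \Phi_2$ according to whether $w^{-1}\alpha$ is positive or negative; the set $\Phi_2$ has cardinality $l(w)$ and is in particular finite. The generators indexed by $\Phi_1$ lie in $U_+$ and those indexed by $\Phi_2$ lie in $U_-$, so the problem becomes one of re-bracketing the product of generators so that all $U_+$-contributions precede all $U_-$-contributions.

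The crux is the factorization
\[
	n^{-1}U_+ n \;=\; \bigl(n^{-1}U_+ n \cap U_+\bigr) \cdot \bigl(n^{-1}U_+ n \cap U_-\bigr),
\]
which is the standard ``big cell'' factorization in the RGD setting. It can be established by induction on $l(w)$, commuting a single $U_{-\alpha_s}$-factor past the already-collected $U_+$-factors via the commutator formula for prenilpotent pairs, the resulting commutators lying in root subgroups $U_\gamma$ for $\gamma$ in an open interval $]-\alpha_s,\beta[$ and thus absorbable into the positive side. Given the factorization we conclude $w^{-1}B_+ w = T \cdot n^{-1}U_+ n \subseteq T U_+ U_- = B_+ U_- \subseteq B_+ B_-$, as desired.

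The principal obstacle is justifying the factorization step. While standard in the structure theory of RGD systems, it requires nontrivial use of the commutator relations, and in the non-spherical case some care since $\Phi^+$ is infinite. In practice one either cites the classical account (cf.\ \cite[Ch.~8]{abramenko-brown}) or includes a short preliminary lemma; a direct combinatorial proof from the twin $BN$-pair axioms alone seems to demand a more delicate nested induction and does not appear to simplify matters.
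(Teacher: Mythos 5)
Your proposal is correct in outline and would prove the lemma, but it takes a genuinely different and substantially heavier route than the paper does. The paper's proof is a three-line induction on $l(w)$ relying only on the twin $BN$-pair axiom (TBN1): writing $w = sw'$ with $l(w') = l(w)-1$, for $x \in B_+wB_-$ one has $sx \in B_+sB_+wB_- = B_+swB_- = B_+w'B_-$, hence $w^{-1}x = w'^{-1}sx \in w'^{-1}B_+w'B_- \subseteq B_+B_-$ by the inductive hypothesis. Your route instead reduces everything to the root-group factorization $n^{-1}U_+ n = (n^{-1}U_+ n \cap U_+)\,(n^{-1}U_+ n \cap U_-)$, a strictly stronger statement that needs its own argument; you defer it to the literature, and your sketch of the induction glosses over the bookkeeping needed to see that the commutators $[U_{-\alpha_s},U_\beta] \subseteq \langle U_\gamma \mid \gamma \in ]-\alpha_s,\beta[\rangle$ from (RGD1) actually sort themselves to the correct side. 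So your approach buys a stronger structural fact at the price of machinery the lemma does not need. Finally, your closing remark that ``a direct combinatorial proof from the twin $BN$-pair axioms alone \ldots does not appear to simplify matters'' is the opposite of what happens here --- the paper's (TBN1)-induction is precisely such a direct combinatorial proof, and it is considerably shorter than yours.
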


\begin{proof}
We proceed by an induction on $l(w)$. As the case $l(w) = 0$ is trivial, we
may assume $l(w) > 0$. Then there exist $s \in S$, $w' \in W$ such that $w =
sw'$ 
and $l(w) = l(w') + 1$.
For $x \in B_+wB_-$ we have 
$sx \in B_+sB_+wB_- \stackrel{\reftbn{item:tbn-1}}{=} B_+swB_- = B_+w'B_-$.
Therefore, by induction, $w^{-1}x = {w'}^{-1} s x \in {w'}^{-1}B_+w'B_-
\subseteq B_+B_-$.
\end{proof}

\begin{rem} \label{rem:rho-continuous}
The multiplication map 
$m : U_+ \times T \times U_- \to B_+B_- : (u_+,t,u_-) \mapsto u_+tu_-$ 
is bijective by
\cite[Section~8.8]{abramenko-brown}.
Therefore also  
\begin{eqnarray*}
\psi\colon B_- \hookrightarrow B_+B_- & \to & U_+ 
\backslash B_+B_- \\  b_- & \mapsto & U_+b_-
\end{eqnarray*}
 is a bijection and allows one to define a map
\begin{eqnarray*}
\pi\colon B_+B_-  & \to & B_-  \\
x & \mapsto & \psi^{-1}(U_+x).
\end{eqnarray*}
Finally, for $w \in W$, Lemma~\ref{lemma:combinatorics-of-borel-groups}
allows one to define
\begin{eqnarray*}
	\rho_w\colon B_+wB_- & \to & B_- \\
	x & \mapsto & \pi(w^{-1}x).
\end{eqnarray*}
\end{rem}

\begin{prop} \label{prop:prop-of-rho-w}
Let $x \in B_+wB_-$. Then $x \in B_+w\rho_w(x)$.
\end{prop}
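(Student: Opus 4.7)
The plan is to unpack $\rho_w$ via the uniqueness of the $B_+B_-$ factorisation and then to refine the Birkhoff presentation of $x$ using a Levi-like decomposition of $U_+$ relative to $w$.

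By definition, $\rho_w(x) = \pi(w^{-1}x)$ is characterised by $w^{-1}x = u_+\rho_w(x)$ for some $u_+ \in U_+$, which gives $x = wu_+\rho_w(x)$. For the conclusion $x \in B_+w\rho_w(x)$ one needs $wu_+w^{-1} \in B_+$. A generic element of $U_+$ would not satisfy this, since conjugation by $w$ typically sends some positive root groups into negative ones; so the key is to show that the particular $u_+$ produced here lies in the smaller subgroup $U_+ \cap w^{-1}U_+w = \langle U_\alpha \mid \alpha > 0,\ w(\alpha) > 0 \rangle$, for which $wu_+w^{-1}$ does lie in $U_+ \subseteq B_+$.

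To establish this, I would start from the Birkhoff presentation $x = u \cdot w \cdot b_-$ with $u \in U_+$ and $b_- \in B_-$, valid because $T$ normalises $U_+$ and $wT = Tw$ yield $B_+wB_- = U_+wB_-$. I would then invoke the standard factorisation $U_+ = (U_+ \cap wU_+w^{-1}) \cdot (U_+ \cap wU_-w^{-1})$ from the theory of root group data to write $u = u_1u_2$ accordingly. Since $u_2 \in wU_-w^{-1}$, we have $w^{-1}u_2w \in U_-$, so $b_-' := (w^{-1}u_2w)b_- \in B_-$; substituting gives $x = u_1 \cdot w \cdot b_-'$ with $u_1 \in U_+ \cap wU_+w^{-1}$. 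Conjugating back, $w^{-1}x = (w^{-1}u_1w) \cdot b_-'$ with $w^{-1}u_1w \in U_+$, which by the bijectivity of $m$ from Remark~\ref{rem:rho-continuous} identifies $\rho_w(x) = b_-'$. Therefore $x = u_1w\rho_w(x) \in U_+w\rho_w(x) \subseteq B_+w\rho_w(x)$, as required.

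The main obstacle I anticipate is justifying the factorisation $U_+ = (U_+ \cap wU_+w^{-1}) \cdot (U_+ \cap wU_-w^{-1})$ at the level of a general RGD system. This is classical, provable by induction on $l(w)$ using the commutator relations of the RGD axioms together with the partition of positive roots into those sent positive, respectively negative, by $w^{-1}$; for a self-contained reference I would point to the treatment of root group data in \cite[Chapter~8]{abramenko-brown}. The remainder of the argument is then routine bookkeeping with the definitions of $\pi$, $\psi$, and $\rho_w$ from Remark~\ref{rem:rho-continuous}.
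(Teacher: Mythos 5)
Your proof is correct and is essentially the paper's own argument, merely conjugated by $w$: the paper writes the Birkhoff presentation $x = u_+wtu_-$, observes $w^{-1}u_+w \in (U_+\cap w^{-1}U_+w)(U_-\cap w^{-1}U_+w)$, and reads off $\rho_w(x)$ from there, which is exactly your decomposition $U_+ = (U_+\cap wU_+w^{-1})(U_+\cap wU_-w^{-1})$ viewed on the other side of $w$. The factorisation you flag as the main obstacle is indeed the same lemma the paper relies on (implicitly, via \cite[Section~8.8]{abramenko-brown}), so there is no substantive difference between the two proofs.
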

\begin{proof}
By the Birkhoff decomposition of $G$ there exist $u_\epsilon \in U_\epsilon$, $w
\in W$, $t \in T$ such that $x = u_+ wtu_-$.
Since $w^{-1}u_+ w \in U_+U_-$, there exist $u^1_\epsilon \in U_\epsilon \cap 
w^{-1} U_+ w$ such that $w^{-1}u_+ w = u^1_+u^1_-$, whence $x = w w^{-1}u_+
wtu_- = wu^1_+u^1_-tu_-$.
Thus $w^{-1}x = u^1_+u^1_-tu_-$, so $\rho_w(x) = u^1_-tu_-$, and therefore $x =
w u^1_+\rho_w(x)$.
As $u^1_+ \in U_+ \cap  w^{-1} U_+ w$, there exists $u_2 \in U_+$ such that
$u^1_+ = w^{-1}u_2 w$.
We conclude $x = w u^1_+ \rho_w(x) = u_2 w \rho_w(x) \in B_+w\rho_w(x)$.
\end{proof}

We can now establish an explicit formula for co-projections onto panels:

\begin{thm}[{\cite[Theorem~4.3.5]{hartnick}}] \label{thm:projection-formula}
Let $(G, \{U_\alpha\}_{\alpha \in \Phi})$ be an RGD system, let $\Delta$ be the
associated twin building, let $(W,S)$ be the associated Weyl group, let $c_+ =
gB_+ \in \Delta_+$, let $c_- = hB_- \in \Delta_-$, let $\delta^*(c_+, c_-) = w
\in W$, and let $s \in S$ such that $l(ws) > l(w)$. Then \[\pro^*_{P_s(c_-)}(c_+)
= h \rho_w(g^{-1}h)^{-1} sB_-.\]
\end{thm}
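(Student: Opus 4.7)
The plan is to exhibit the candidate chamber $d := h\rho_w(g^{-1}h)^{-1}sB_-$, verify that it lies in the panel $P_s(c_-)$, compute $\delta^*(c_+,d)$, and then invoke uniqueness of the co-projection. Note first that $\delta^*(c_+,c_-) = w$ translates into $g^{-1}h \in B_+wB_-$, so that $\rho_w(g^{-1}h)$ is defined by Remark \ref{rem:rho-continuous}.

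For the first step I would show $d \in P_s(c_-)$ by computing $\delta_-(c_-,d)$. By the definition of $\rho_w$ (via $\pi$ and $\psi^{-1}$), we have $\rho_w(g^{-1}h) \in B_-$, hence $B_-\rho_w(g^{-1}h)^{-1}sB_- = B_-sB_-$, so that $\delta_-(c_-,d) = s$ by the Bruhat description of $\delta_-$. In particular $d \in P_s(c_-)$.

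For the second step I would apply Proposition \ref{prop:prop-of-rho-w} to $x := g^{-1}h \in B_+wB_-$ to obtain $x \in B_+w\rho_w(x)$, i.e., $g^{-1}h\rho_w(g^{-1}h)^{-1} \in B_+w$. Multiplying on the right by $s$ and then by $B_-$ yields
\[
g^{-1}h\rho_w(g^{-1}h)^{-1}sB_- \subseteq B_+wsB_-,
\]
so $\delta^*(c_+, d) = ws$ (using $l(ws) > l(w)$ and hence that $B_+wsB_-$ is a single Birkhoff double coset). To conclude, I would observe that by Lemma \ref{lemma:building-combinatorics} every chamber $e \in P_s(c_-)$ satisfies $\delta^*(c_+, e) \in \{w, ws\}$, and the hypothesis $l(ws) > l(w)$ makes $ws$ strictly larger than $w$ in the Bruhat order. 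Therefore $d$ is the unique chamber in $P_s(c_-)$ maximising $\delta^*(c_+,\cdot)$ with respect to the Bruhat order, which by definition is $\pro^*_{P_s(c_-)}(c_+)$.

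The argument is essentially a bookkeeping exercise once the maps $\pi$ and $\rho_w$ from Remark \ref{rem:rho-continuous} and Proposition \ref{prop:prop-of-rho-w} are in place; the only mild subtlety is keeping track of which side various coset representatives act on and making sure that $\rho_w(g^{-1}h) \in B_-$ (so that left-multiplying by $h$ stays within the natural $B_-$-torsor of $c_-$ when passing to panels). I do not anticipate any serious obstacle beyond this careful chasing of double cosets.
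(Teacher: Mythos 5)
Your proof is correct and follows essentially the same route as the paper's: both verify $\delta_-(c_-,d)=s$ and $\delta^*(c_+,d)=ws$ using Proposition \ref{prop:prop-of-rho-w} and then invoke the defining maximality property of the co-projection. The only difference is that you spell out explicitly, via Lemma \ref{lemma:building-combinatorics}, why showing these two facts identifies $d$ as $\pro^*_{P_s(c_-)}(c_+)$, a step the paper leaves implicit.
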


\begin{proof}
One needs to prove that \[\delta^*(gB_+,h \rho_w(g^{-1}h)^{-1} sB_-) = ws\] and
\[\delta_-(hB_-, h \rho_w(g^{-1}h)^{-1} sB_-) = s.\]
Since $\delta^*(gB_+,hB_-) = w$, we have $g^{-1}h \in B_+wB_-$.
Proposition~\ref{prop:prop-of-rho-w} allows us to conclude that there exists
$b_+ \in B_+$ such that
$g^{-1}h = b_+ w \rho_w(g^{-1}h)$, whence $g^{-1} = b_+ w \rho_w(g^{-1}h)
h^{-1}$.
Therefore \[g^{-1}h \rho_w(g^{-1}h)^{-1} s = b_+ w \rho_w(g^{-1}h)h^{-1}h
\rho_w(g^{-1}h)^{-1} s = b_+ w s \in B_+wsB_-,\]
which shows that $\delta^*(gB_+,h \rho_w(g^{-1}h)^{-1} sB_-) = ws$.
Similarly, \[h^{-1}h \rho_w(g^{-1}h)^{-1} s = \rho_w(g^{-1}h)^{-1} s \in
B_-sB_-,\]
and so $\delta_-(hB_-, h \rho_w(g^{-1}h)^{-1} sB_-) = s$.
\end{proof}

\begin{cor}[{\cite[Corollary~4.3.6]{hartnick}}]
Let $(G, \{U_\alpha\}_{\alpha \in \Phi})$ be an RGD system, let $\Delta$ be 
the associated twin building, let $(W,S)$ be the associated Weyl group, 
 let $c_+ = gB_+ \in \Delta_+$, and let $c_- = hB_- \in \Delta_-$.  
If $\delta^*(c_+, c_-) = 1 \in W$, then for each $s \in S$ one has
\[\pro^*_{P_s(c_-)}(c_+) = h \pi(g^{-1}h)^{-1} sB_-.\]
\end{cor}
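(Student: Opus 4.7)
The plan is to obtain this corollary as the special case $w = 1$ of Theorem~\ref{thm:projection-formula}. First I would verify that the hypotheses of that theorem are met: the codistance assumption $\delta^*(c_+, c_-) = w$ is satisfied with $w = 1$ by the standing hypothesis, and the length condition $l(ws) > l(w)$ reduces to $l(s) = 1 > 0 = l(1)$, which holds for every $s \in S$.

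Next I would unravel the relation between $\rho_w$ and $\pi$ from Remark~\ref{rem:rho-continuous}. By definition $\rho_w(x) = \pi(w^{-1}x)$ on the domain $B_+wB_-$. Specialising to $w = 1$ the domain becomes $B_+ \cdot 1 \cdot B_- = B_+B_-$, which is precisely the domain of $\pi$, and on this set $\rho_1(x) = \pi(x)$. Moreover $g^{-1}h$ indeed lies in $B_+B_-$ since $B_+ g^{-1}h B_- = B_+ \cdot 1 \cdot B_-$, so $\pi(g^{-1}h)$ is well-defined.

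Substituting $w = 1$ and $\rho_1 = \pi$ into the conclusion $\pro^*_{P_s(c_-)}(c_+) = h\rho_w(g^{-1}h)^{-1} sB_-$ of Theorem~\ref{thm:projection-formula} yields $\pro^*_{P_s(c_-)}(c_+) = h\pi(g^{-1}h)^{-1} sB_-$, as claimed. There is essentially no obstacle here: the result is a pure specialisation, and the only point requiring any care is the identification $\rho_1 = \pi$ on $B_+B_-$, which is immediate from the definition of $\rho_w$.
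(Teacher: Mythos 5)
Your proof is correct and matches the paper's own argument exactly: both obtain the corollary as the $w=1$ specialisation of Theorem~\ref{thm:projection-formula}, using the identification $\rho_1 = \pi$. The extra checks you spell out (the length condition and membership of $g^{-1}h$ in $B_+B_-$) are routine and are left implicit in the paper.
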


\begin{proof}
This follows from Theorem~\ref{thm:projection-formula}, as $\rho_1 = \pi$.
\end{proof}
In Sections \ref{section3} and \ref{section6} we will use these projection
formulae in 
order to derive the continuity of co-projections; in that context the 
following observation will become important:

\begin{lem}\label{ContinuityFoldingMain}
Let $\tau$ be a group topology on $G$ and equip $T$, $U_{\pm}$ and $B_+B_-$ with the 
subspace topologies. Assume that the 
continuous bijection $m : U_+ \times T \times U_- \to B_+B_- : (u_+,t,u_-) \mapsto u_+tu_-$ is 
open, i.e., a homeomorphism. Then the map $\rho_w$ introduced in Remark~\ref{rem:rho-continuous} is continuous for every $w \in W$.
\end{lem}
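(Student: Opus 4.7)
The plan is to factor $\rho_w$ as a composition of continuous maps, with the only non-trivial continuity assertion being that of the map $\pi$ itself. Since $\tau$ is a group topology, for any lift $\dot w \in N$ of $w \in W = N/T$, left multiplication $L_{\dot w^{-1}}\colon G \to G$, $x \mapsto \dot w^{-1} x$, is continuous. By Lemma~\ref{lemma:combinatorics-of-borel-groups}, $L_{\dot w^{-1}}$ restricts to a continuous map $B_+ w B_- \to B_+B_-$. Hence once we know $\pi \colon B_+B_- \to B_-$ is continuous, the identity $\rho_w = \pi \circ L_{\dot w^{-1}}|_{B_+wB_-}$ finishes the argument.

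The main step is therefore to give an explicit continuous formula for $\pi$. For $x \in B_+B_-$, the hypothesis that $m$ is a homeomorphism lets us unambiguously extract the factors $u_+ \in U_+$, $t \in T$, $u_- \in U_-$ with $x = u_+ t u_-$. Then $U_+ x = U_+ t u_-$, and since $t u_- \in T U_- = B_-$, one has $\psi(t u_-) = U_+ t u_- = U_+ x$, so $\pi(x) = t u_-$ by the definition of $\pi$ in Remark~\ref{rem:rho-continuous}. In other words, $\pi$ admits the factorization
\[
B_+B_- \xrightarrow{m^{-1}} U_+ \times T \times U_- \xrightarrow{\;p\;} T \times U_- \xrightarrow{\;\mu\;} B_-,
\]
where $p$ is the projection onto the last two coordinates and $\mu(t,u_-) := t u_-$. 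By hypothesis $m^{-1}$ is continuous, $p$ is trivially continuous, and $\mu$ is continuous because $\tau$ restricts to a group topology on $G$ and hence on the subspace $B_- \supseteq TU_-$. Consequently $\pi$ is continuous, and combining with the previous paragraph we get continuity of $\rho_w$ on $B_+ w B_-$ for every $w$.

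I do not foresee a genuine obstacle; the assumption that $m$ is a homeomorphism is precisely tailored so that the components of the $U_+ T U_-$-decomposition depend continuously on their product. The only mild subtlety is the passage from $w \in W$ to a lift $\dot w \in N$, but this is harmless since different choices of $\dot w$ differ by left multiplication by an element of $T \subseteq B_+$, which does not affect the double coset $B_+ \dot w B_-$ nor the output of $\pi$ after the subsequent projection, and in any case each choice of lift is an element of $G$ on which left translation is continuous.
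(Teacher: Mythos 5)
Your proof is correct and, modulo presentation, it follows the same route as the paper: use the hypothesis that $m$ is a homeomorphism to see that the "forgetful" map $\pi$ is continuous, then compose with left translation by a lift of $w^{-1}$ to handle $\rho_w$. The paper's one-line argument passes through the intermediate observation that $\psi$ is open (which is equivalent to $\psi^{-1}$, hence $\pi=\psi^{-1}\circ q$, being continuous); you instead write $\pi$ directly as $\mu\circ p\circ m^{-1}$, which makes the continuity transparent and is arguably cleaner. One very small inaccuracy in your closing remark: different lifts $\dot w$ of $w$ \emph{do} change the value $\rho_w(x)$ — if $\dot w$ is replaced by $t\dot w$ then $\rho_w(x)$ gets multiplied on the left by $t^{-1}$, using that $T$ normalizes $U_+$ — but each choice still yields a continuous map, which is all that is claimed, so the lemma is unaffected.
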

\begin{proof}
If $m$ is open, also $\psi$ is open and, therefore, $\pi$ and $\rho_w$ are
continuous; cf.\ Remark~\ref{rem:rho-continuous}.
\end{proof}

\section{Topological twin buildings} \label{section3}

\subsection{Axioms for topological twin buildings}\label{section3.1}
Throughout this section let $\Delta = ((\Delta_+,\delta_+), (\Delta_-,\delta_-),
\delta^*)$ be a thick
twin building of 
type $(W, S)$. In order to avoid pathologies we will always assume that the Coxeter diagram of $(W,S)$ has no isolated vertices. By a \emph{topology $\tau$ on $\Delta$} we will always mean a pair of topologies $\tau_{\pm}$ on $\Delta_{\pm}$. Given such topologies we equip the set $\Delta_+ \cup \Delta_-$ with the direct sum topology, i.e.\ $\Delta_+$ and $\Delta_-$ are clopen subsets of $\Delta_+ \cup \Delta_-$.

We recall
that a space $X$ is the {\bf direct limit} of subspaces $X_i$, denoted $X = \lim_\to
X_i$, if (a) $X = \bigcup_i X_i$ and (b) $U \subset X$ is open if and only if $U \cap X_i$ is open for all $i$.

\begin{defin}\label{def:top-twin}
Let $\Delta$ be a thick twin building (of type $(W,S)$ without isolated vertices in the Coxeter diagram) and $\tau$ a topology on $\Delta$. Then the pair $(\Delta, \tau)$ is called a 
\textbf{topological twin building} if it satisfies the following axioms:
\begin{enumerate}[(TTB1)]
\item \label{item:ttb-1}$\tau$ is a Hausdorff topology.
\item For each $s \in S$ and each
$c \in \Delta_\pm$ the map
\begin{eqnarray*}
E_1^*(c) & \to &
\Delta_+ \cup \Delta_- \\
d & \mapsto & \pro^*_{P_s(c)}(d)
\end{eqnarray*}
is continuous.
\item \label{item:ttb-3} There exist chambers $c_\pm \in \Delta_\pm$ such that
\[\Delta_{\pm} = \lim_\to E_{\leq w}(c_\pm).\]
\item \label{item:ttb-4} For each $s \in S$ there exists a compact panel
$P \in {\rm Pan}_s(\Delta_{\pm})$.
\end{enumerate}
A \emph{morphism} of topological twin buildings is a morphism of the
underlying
twin buildings that, additionally, is continuous with respect to the twin
building topologies.
\end{defin}
Our definition of a topological twin building is chosen in such a way that it assumes only a minimal set of axioms which we need to develop a non-trivial theory. Our definition is slightly different from Linus Kramer's original definition \cite[p.~169]{kramer2002}. Depending on the applications one has in mind one may want to add further axioms. We discuss various possible alternative axiomatizations in Section \ref{SecAxiomsDiscussion} below.

\subsection{Basic point-set topology of topological twin buildings}
In this section we investigate basic point-set topological properties of
topological twin buildings. Initially we will not make use of the compactness assumption (TTB4); the results in this section will hold
for every thick twin building $\Delta =
((\Delta_+,\delta_+),(\Delta_-,\delta_-), \delta^*)$, which is endowed with a
topology satisfying axioms (TTB1), (TTB2), and (TTB3).
\begin{lem} \label{lemma:nice-chamber-exists}
Let $c_\pm \in \Delta_\pm$ be opposite chambers and let 
$w \in W\setminus\{1\}$. 
Then there exists an open neighbourhood $U$ of $c_+$ in $\Delta_+$, which does
not
intersect $E_w^*(c_-)$.
\end{lem}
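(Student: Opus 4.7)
The plan is to separate $c_+$ from $E_w^*(c_-)$ via a continuous co-projection onto a carefully chosen panel. Fix a reduced expression $w = s_1 \cdots s_k$ and apply Lemma~\ref{lemma:nice-chamber-exists2} to obtain a chamber $d \in \Delta_-$ satisfying $\delta^*(c_+, d) = 1$ and $\delta^*(x, d) = s_k$ for every $x \in E_w^*(c_-)$. Axiom (TTB2) then furnishes a continuous map
\[
\phi := \pro^*_{P_{s_k}(d)} \colon E_1^*(d) \to \Delta_-.
\]
A direct computation using \reftw{item:tw-2} together with the uniqueness of maximum-codistance chambers in a spherical residue (cf.\ \cite[Lemma~5.149]{abramenko-brown}) shows that $\phi(c_+) = a$, where $a$ is the unique chamber of $P_{s_k}(d) \setminus \{d\}$ at codistance $s_k$ from $c_+$; in particular $a \neq d$.

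By Hausdorffness \refttb{item:ttb-1}, choose disjoint open neighbourhoods $V_a \ni a$ and $V_d \ni d$ in $\Delta_-$. The preimage $\phi^{-1}(V_a)$ is open in $E_1^*(d)$ and contains $c_+$, so some open $U \subseteq \Delta_+$ satisfies $U \cap E_1^*(d) = \phi^{-1}(V_a)$. Since every $x \in E_w^*(c_-)$ has $\delta^*(x, d) = s_k \neq 1$, the cell $E_w^*(c_-)$ is disjoint from $E_1^*(d)$, and hence $U \cap E_1^*(d)$ already misses $E_w^*(c_-)$.

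The main obstacle is that the open set $U$ might extend beyond $E_1^*(d)$ and so, a priori, could still meet $E_w^*(c_-)$. This is overcome by invoking the direct-limit axiom \refttb{item:ttb-3} with respect to chambers $c'_\pm$ witnessing it: inside any single Schubert variety $E_{\leq v}(c'_+)$, Lemma~\ref{lemma:building-combinatorics} bounds the length of $\delta^*(c_-, \cdot)$, so only finitely many co-Schubert cells $E_{w'}^*(c_-)$ meet $E_{\leq v}(c'_+)$. Running the above construction once for each such $w' \neq 1$ (choosing a new chamber $d_{w'}$ via Lemma~\ref{lemma:nice-chamber-exists2} in each case) and intersecting the resulting subspace-open sets produces a neighbourhood of $c_+$ inside $E_{\leq v}(c'_+)$ disjoint from $E_w^*(c_-)$; assembling these Schubert-stratum data via \refttb{item:ttb-3} yields the required open neighbourhood of $c_+$ in $\Delta_+$.
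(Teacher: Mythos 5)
Your first step (invoking Lemma~\ref{lemma:nice-chamber-exists2} to get $d$ with $\delta^*(c_+,d)=1$ and $\delta^*(x,d)=s_k$ for $x\in E_w^*(c_-)$) matches the paper, and you correctly identify the difficulty: the separation happens only inside $E_1^*(d)$, while the lemma asks for an open set in $\Delta_+$. However, the attempted repair via \refttb{item:ttb-3} does not close the gap, and in fact the argument becomes circular. Your plan would need $E_1^*(d_{w'})\cap E_{\leq v}(c'_+)$ to be relatively open in $E_{\leq v}(c'_+)$ in order for the sets $U_{w'}\cap E_1^*(d_{w'})$ to patch to an open neighbourhood in a Schubert variety --- but that is precisely the content of Proposition~\ref{prop:co-schubert-cell-is-open}, whose proof uses the present lemma. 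Moreover, once you observe that $E_w^*(c_-)$ is disjoint from $E_1^*(d)$, the separation data you constructed with $\phi$, $V_a$, $V_d$ no longer does anything: \emph{any} open $U\ni c_+$ has $U\cap E_1^*(d)$ disjoint from $E_w^*(c_-)$, so nothing constrains $U$ outside the co-Schubert cell.

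The missing idea is to move to a \emph{third} chamber of the panel $P_{s_k}(d)$. By thickness, choose $d'\in P_{s_k}(d)\setminus\{d,\;\pro^*_{P_{s_k}(d)}(c_+)\}$. Since $\delta^*(c_+,d)=1$ and $d'\neq\pro^*_{P_{s_k}(d)}(c_+)$, one has $\delta^*(c_+,d')=1$; and for $x\in E_w^*(c_-)$, from $\delta^*(x,d)=s_k$ and $d'\neq d$ one gets $\delta^*(x,d')=1$ by \reftw{item:tw-2}. Thus $\{c_+\}\cup E_w^*(c_-)\subset E_1^*(d')$, so both the point you want to protect and the set you want to avoid lie in the domain of the continuous co-projection $f:=\pro^*_{P_{s_k}(d')}:E_1^*(d')\to P_{s_k}(d')$. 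Now $f(c_+)=\pro^*_{P_{s_k}(d)}(c_+)\neq d$, whereas $f(E_w^*(c_-))=\{d\}$; Hausdorffness of the panel gives an open $V\ni f(c_+)$ with $d\notin V$, and any $U$ open in $\Delta_+$ with $U\cap E_1^*(d')=f^{-1}(V)$ works: because $E_w^*(c_-)\subset E_1^*(d')$, one has $U\cap E_w^*(c_-)=f^{-1}(V)\cap E_w^*(c_-)=\emptyset$. No appeal to \refttb{item:ttb-3} or to the openness of co-Schubert cells is needed.
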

\begin{proof} Fix a reduced expression $w= s_1 \cdots s_k$ with $s_j \in S$.
By Lemma \ref{lemma:nice-chamber-exists2} there exists $d_- \in \Delta_-$ with
$\delta^*(c_+, d_-) = 1$ and $\delta^*(x, d_-) = s_k$ for all $x \in
E^*_w(c_-)$. 
For every $d' \in P_{s_k}(d_-) \setminus \{d_-, \pro^*_{P_{s_k}(d_-)}(c_+)\}$ we
have
\[\{c_+\} \cup E^*_w(c_-) \subset E_1^*(d').\]
By (TTB2) the restriction of $\pro^*_{P_{s_k}(d')}$
defines a 
continuous map $$f: E^*_1(d') \to P_{s_k}(d') = P_{s_k}(d_-).$$ Therefore the
lemma 
follows from the fact that $P_{s_k}(d')$ is Hausdorff, \refttb{item:ttb-1}, and
that $E^*_w(c_-) \subseteq f^{-1}(d_-)$.
\end{proof}

\begin{prop}\label{BigCoCellOpen}\label{prop:co-schubert-cell-is-open}
For every $c \in \Delta_{\pm}$ the co-Schubert cell $E_1^*(c)$ is open.
\end{prop}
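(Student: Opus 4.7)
The plan is to leverage the direct limit axiom \refttb{item:ttb-3} in order to reduce the openness of $E_1^*(c)$ to a finite problem on each Schubert variety, where Lemma~\ref{lemma:nice-chamber-exists} is applicable. By the $\pm$-symmetry of the axioms I may assume $c \in \Delta_+$, so $E_1^*(c) \subseteq \Delta_-$. Let $c_-^{(0)} \in \Delta_-$ be the chamber provided by \refttb{item:ttb-3}. By the definition of the direct limit topology, it suffices to show that, for every $u \in W$, the set $E_1^*(c) \cap E_{\leq u}(c_-^{(0)})$ is open in $E_{\leq u}(c_-^{(0)})$.

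The crucial combinatorial input will be Lemma~\ref{lemma:building-combinatorics}. Fix $u \in W$ and set $v_0 := \delta^*(c, c_-^{(0)})$. Applied to $c \in \Delta_+$, $c_-^{(0)}$ and any $d' \in E_{\leq u}(c_-^{(0)}) \subseteq \Delta_-$, the lemma yields $\delta^*(c, d') = v_0 v'$ for some subexpression $v'$ of $\delta_-(c_-^{(0)}, d')$. Since $\delta_-(c_-^{(0)}, d') \leq u$, the element $v'$ lies in the finite Bruhat interval $\{w \in W : w \leq u\}$. Consequently there is a finite set $V \subset W$ with $\delta^*(c, d') \in V$ for every $d' \in E_{\leq u}(c_-^{(0)})$, and the Schubert variety partitions as $E_{\leq u}(c_-^{(0)}) = \bigsqcup_{v \in V} \bigl(E_v^*(c) \cap E_{\leq u}(c_-^{(0)})\bigr)$.

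With this finite decomposition in hand, I fix $d \in E_1^*(c) \cap E_{\leq u}(c_-^{(0)})$; since $\delta^*(c,d) = 1$, the pair $(c,d)$ is opposite. For each $v \in V \setminus \{1\}$, Lemma~\ref{lemma:nice-chamber-exists}, applied with the roles of $\Delta_+$ and $\Delta_-$ reversed (legitimate, as the statement and its ingredients are fully symmetric), yields an open neighbourhood $U_v$ of $d$ in $\Delta_-$ disjoint from $E_v^*(c)$. Because $V \setminus \{1\}$ is finite, $U := \bigcap_{v \in V \setminus \{1\}} U_v$ is open in $\Delta_-$, and the partition above forces $U \cap E_{\leq u}(c_-^{(0)}) \subseteq E_1^*(c) \cap E_{\leq u}(c_-^{(0)})$. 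This establishes openness in $E_{\leq u}(c_-^{(0)})$ and hence, via \refttb{item:ttb-3}, in $\Delta_-$.

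The main obstacle I anticipate is that $W$ may be infinite, so Lemma~\ref{lemma:nice-chamber-exists}, which separates $d$ from only one bad co-Schubert cell at a time, is by itself insufficient: an arbitrary intersection of open sets need not remain open. The direct limit axiom \refttb{item:ttb-3}, paired with the Bruhat-interval finiteness extracted from Lemma~\ref{lemma:building-combinatorics}, is precisely what converts this pointwise separation into genuine local openness through a finite intersection. Notably, the compactness axiom \refttb{item:ttb-4} plays no role in the argument.
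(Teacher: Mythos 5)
Your proof is correct and takes essentially the same approach as the paper: reduce to Schubert varieties via (TTB3), use Lemma~\ref{lemma:building-combinatorics} to obtain finiteness of the codistance values on a Schubert variety, and then apply Lemma~\ref{lemma:nice-chamber-exists} finitely many times to separate an opposite chamber from each non-trivial co-Schubert cell. Your closing observation that (TTB4) plays no role also matches the paper, which develops this section explicitly without the compactness assumption.
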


\begin{proof} By symmetry we may assume $c \in \Delta_-$. By
\refttb{item:ttb-3} it suffices to show that for $c_0 \in \Delta_+$ the set
$E_1^*(c)$ is relatively open in $E_{\leq w}(c_0)$ for all $w \in W$, i.e., any
$c_+ \in E_{\leq w}(c_0) \cap E_1^*(c)$ is an interior point. By
Lemma~\ref{lemma:building-combinatorics}
the function $\delta^*(\cdot, c)$ takes only finitely many values on $E_{\leq
w}(c_0)$, and for each non-trivial value $w_j$ Lemma
\ref{lemma:nice-chamber-exists} produces an open neighbourhood $U_j$ of $c_+$ in
$\Delta_+$ with $U_j \cap E_{w_j}^*(c) = \emptyset$.  Then $E_{\leq w}(c_0) \cap
\bigcap U_j$ is an open subset of $E_{\leq w}(c_0)$ containing $c_+$ and
contained in $E_{1}^*(c)$, i.e., $c_+$ is an interior point of $E_{\leq w}(c_0)
\cap E_{1}^*(c)$.
\end{proof}
As a first application we deduce:
\begin{lem} \label{residueclosed}
Let $J \subset S$. Then every  $J$-residue in $\Delta_{\pm}$ is closed. 
\end{lem}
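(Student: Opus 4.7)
The plan is to reduce to the case of $s$-vertices and then combine axiom \refttb{item:ttb-3} with the ideas behind Proposition~\ref{BigCoCellOpen}. Since $W_J = \bigcap_{s \notin J} W_{S \setminus \{s\}}$, one has
\[R_J(c_0) = \bigcap_{s \notin J} R_{S \setminus \{s\}}(c_0),\]
so a finite intersection reduces the claim to showing that every $s$-vertex $V := R_{S \setminus \{s\}}(c_0)$ is closed. By axiom \refttb{item:ttb-3}, this in turn amounts to showing $V \cap E_{\leq w}(c_+)$ is closed in $E_{\leq w}(c_+)$ for every $w \in W$.

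The key finiteness observation is that, by the triangle inequality $l(\delta_+(c_0,c)) \leq l(\delta_+(c_0,c_+)) + l(w)$ valid for every $c \in E_{\leq w}(c_+)$, the function $\delta_+(c_0, \cdot)$ assumes only finitely many values on the Schubert variety $E_{\leq w}(c_+)$. Hence the complement $E_{\leq w}(c_+) \setminus V$ is a finite union of strata $E_v(c_0) \cap E_{\leq w}(c_+)$ with $v \notin W_{S \setminus \{s\}}$. To exhibit a chamber $c$ in this complement as an interior point, I would pick $d_- \in \Delta_-$ opposite both $c_0$ and $c$ via Lemma~\ref{Tits1}, so that Proposition~\ref{BigCoCellOpen} provides an open neighbourhood $E_1^*(d_-)$ of $c$, and then use the continuous panel co-projection $\pro^*_{P_s(d_-)} \colon E_1^*(d_-) \to P_s(d_-)$ from \refttb{item:ttb-2}.

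By an application of Lemma~\ref{lemma:building-combinatorics}, every chamber $c' \in V \cap E_1^*(d_-)$ satisfies $\pro^*_{P_s(d_-)}(c') = d_0$, where $d_0$ is the unique chamber of $P_s(d_-)$ at codistance $s$ from $c_0$; the point is that the subexpression bound forces $\delta^*(d_0, c')$ to lie in $sW_{S \setminus \{s\}} \cap \{1,s\} = \{s\}$. For $c$ itself, the hypothesis $\delta_+(c_0, c) \notin W_{S \setminus \{s\}}$ allows, possibly after adjusting $d_-$ in the spirit of Lemma~\ref{lemma:nice-chamber-exists2}, to arrange $\pro^*_{P_s(d_-)}(c) \neq d_0$. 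Hausdorffness \refttb{item:ttb-1} of the panel $P_s(d_-)$ then separates the two images, and pulling back via the continuous co-projection yields an open neighbourhood of $c$ inside $E_1^*(d_-)$ disjoint from $V \cap E_1^*(d_-)$; the finitely many chambers of $V \cap E_{\leq w}(c_+)$ not opposite $d_-$ can be handled by further applications of Lemma~\ref{lemma:nice-chamber-exists} to their codistance strata. The main obstacle lies in carrying out the adjustment of $d_-$ so that the co-projection genuinely separates $c$ from $V$; this is the technical heart of the argument and the place where the full combinatorial force of $\delta_+(c_0, c) \notin W_{S \setminus \{s\}}$ is used decisively.
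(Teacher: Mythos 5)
Your proposal stalls at precisely the point you flag as ``the technical heart of the argument'': you never establish that $d_-$ can be chosen so that $\pro^*_{P_s(d_-)}(c)$ differs from the common image $d_0$ of the chambers of $V\cap E_1^*(d_-)$, and this is not a routine gap. If $d_-$ is opposite both $c_0$ and $c$, then Lemma~\ref{lemma:building-combinatorics} gives $\delta^*(d_0,c)=s\,v'$ with $v'$ a subexpression of $\delta_+(c_0,c)$, and the constraint $\delta^*(d_0,c)\in\{1,s\}$ forces $v'\in\{1,s\}$ --- but which of the two occurs is determined by the building, not by $\delta_+(c_0,c)\notin W_{S\setminus\{s\}}$ alone, so the hypothesis is \emph{not} obviously being ``used decisively''. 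You would need a genuine additional argument selecting $d_-$, and it is unclear how Lemma~\ref{lemma:nice-chamber-exists2} helps here. (Also, minor but worth noting: ``the finitely many chambers of $V\cap E_{\leq w}(c_+)$ not opposite $d_-$'' should read ``the finitely many codistance strata''; that set of chambers can be infinite.)

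The paper's proof avoids this difficulty altogether by \emph{not} requiring $d_-$ (called $e$ there) to be opposite any chamber of the residue. Instead, fix $d\in R$, take a twin apartment $\Sigma$ through $c$ and $d$, and let $e\in\Sigma_-$ be the chamber opposite $c$. Then a short computation with Lemma~\ref{lemma:building-combinatorics} and the thin-twin-building formula shows that for every $f\in R$, $\delta^*(f,e)\in \langle J\rangle\,\delta_+(d,c)$, and since $\delta_+(d,c)\notin\langle J\rangle$ this coset misses $1$. Hence $R\cap E_1^*(e)=\emptyset$ while $c\in E_1^*(e)$, and Proposition~\ref{BigCoCellOpen} finishes the job --- no co-projection, no separation argument, no reduction to vertices. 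Your decomposition $R_J(c_0)=\bigcap_{s\notin J}R_{S\setminus\{s\}}(c_0)$ is correct (it follows from $W_I\cap W_K=W_{I\cap K}$), but the direct argument handles an arbitrary $J$-residue in one stroke, so the reduction buys nothing.
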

\begin{proof} We will prove that for a $J$-residue $R \subset \Delta_+$ the
set $\Delta_+ \backslash R$ is open by showing that an arbitrary 
$c \in \Delta_+ \backslash R$ is an interior point.
Let 
$d \in R$ so that $ \delta_+(d,c) \not \in \langle J \rangle$, let
$(\Sigma_+,\Sigma_-)$ be a twin apartment that contains both $c$ and $d$, and 
denote by $e$ the unique chamber in $\Sigma_- \cap E_1^*(c)$. For every $f\in R$
we have
\[\delta^*(f,e) \stackrel{\ref{lemma:building-combinatorics}}{\in} \langle J
\rangle \delta^*(d,e)
\stackrel{\ref{thintwinbuilding}}{=} \langle J \rangle \delta_+(d,c)
\delta^*(c,e) = \langle J \rangle \delta_+(d,c),\]
whence there exists $s \in S \backslash J$ with $\delta^*(f,e) \geq s$ for all 
$f \in R$. This shows $c \in E_1^*(e) \subset \Delta_+ \backslash R$, i.e.,
by Proposition \ref{prop:co-schubert-cell-is-open} $c$ is an interior point of
$\Delta_+ \backslash R$.
\end{proof}

Given a panel $P \subset \Delta_{\pm}$ and a chamber $c \in P$ we denote by
$P^\times = P^\times_c$ the \emph{pointed panel} $P \setminus \{c\}$. 
A pointed panel is open in its ambient panel by \refttb{item:ttb-1}.
 
\begin{prop}\label{PanelsHomeo1}
Let $P \subset \Delta_{\pm}$ and $Q \subset \Delta_{\mp}$ be opposite panels. 
Then the map $c \mapsto \pro^*_Q(c)$ restricts to a homeomorphism $p^*_{PQ}: P
\to Q$.
\end{prop}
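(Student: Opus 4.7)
The plan is to split the proof into two essentially independent parts: a combinatorial argument showing that $p^*_{PQ}$ is a set-theoretic bijection with inverse $p^*_{QP}$, and a local topological argument showing that both $p^*_{PQ}$ and $p^*_{QP}$ are continuous.

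For bijectivity, observe that $(P,Q)$ is itself a rank-one spherical twin building (\cite[Exercise~5.166]{abramenko-brown}), so by Lemma~\ref{lemma:building-combinatorics} the codistance of any pair $(c,d) \in P \times Q$ lies in $\{1,s\}$, where $s \in S$ denotes the common type of $P$ and $Q$. The uniqueness of co-projection onto spherical residues then forces the relation $\{(c,d) \in P \times Q \mid \delta^*(c,d) = s\}$ to be the graph of a function in either direction: each $c \in P$ has a unique associated $d = p^*_{PQ}(c) \in Q$ at codistance $s$, and symmetrically each $d \in Q$ has a unique associated $c = p^*_{QP}(d) \in P$. Since $s$ is an involution and $\delta^*(c,d) = \delta^*(d,c)^{-1}$, both directions encode the same relation, so $p^*_{PQ}$ and $p^*_{QP}$ are mutually inverse bijections.

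For continuity of $p^*_{PQ}$ at an arbitrary $d \in P$, I would exploit thickness of $Q$ (so $|Q| \geq 3$) to pick a chamber $e \in Q$ distinct from $p^*_{PQ}(d)$. Then $\delta^*(d,e) = 1$, so $d \in E_1^*(e)$, which is open in $\Delta_\pm$ by Proposition~\ref{BigCoCellOpen}. Hence $U := P \cap E_1^*(e)$ is an open neighbourhood of $d$ in $P$, and since $P_s(e) = Q$, the restriction of $p^*_{PQ}$ to $U$ coincides with the continuous map $c \mapsto \pro^*_{P_s(e)}(c)$ supplied by axiom \refttb{item:ttb-2}. Hence $p^*_{PQ}$ is continuous at $d$, and the analogous argument applied to $p^*_{QP}$ shows that it too is continuous; combined with bijectivity, this yields the asserted homeomorphism.

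The main subtlety is reconciling the pointwise form of axiom \refttb{item:ttb-2}, which only yields a continuous co-projection map on the co-Schubert cell of a fixed chamber $e$, with the global definition of $p^*_{PQ}$; this is resolved by the openness of co-Schubert cells together with thickness of $Q$, which jointly ensure that a suitable witness chamber $e$ can always be chosen in a neighbourhood of any prescribed point of $P$.
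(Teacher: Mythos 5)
Your continuity argument is essentially the same as the paper's: both exploit the openness of co-Schubert cells (Proposition~\ref{BigCoCellOpen}) to produce, around each chamber of $P$, a pointed-panel-shaped open set on which $p^*_{PQ}$ agrees with a map that (TTB2) declares continuous, and then observe these open sets cover $P$. The only difference is cosmetic (the paper covers $P$ by $\{P^\times_c\,|\,c\in P\}$ with witnesses $d := p^*_{PQ}(c)$, while you verify continuity pointwise by choosing a witness $e \in Q$ opposite a given $d\in P$). For the bijectivity step the paper simply cites \cite[Proposition~5.152]{abramenko-brown}, whereas you supply a short self-contained combinatorial argument using Lemma~\ref{lemma:building-combinatorics} and the symmetry of the codistance relation $\delta^*(c,d)=s$; this is correct and slightly more self-contained than the paper's citation.
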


\begin{proof} By \cite[Proposition~5.152]{abramenko-brown} the maps 
$p^*_{PQ}$ and $p^*_{QP}$ are mutually inverse bijections. Hence it remains 
only to establish their continuity. For this let $c \in P$ and 
$d := p^*_{PQ}(c) \in Q$ its projection. Then $P^\times_c \subset E_1^*(d)$, 
whence the restriction of $p^*_{PQ}$ to $P^\times_c$ is continuous by 
(TTB2). Since the open subsets $\{P^\times_c \,|\, c
\in P\}$ 
cover $P$, this implies continuity of $p^*_{PQ}$.
\end{proof}

Combining this with Lemma \ref{Tits1} we obtain:
\begin{cor}\label{PanelsHomeo}
Panels of the same type are pairwise homeomorphic.
\end{cor}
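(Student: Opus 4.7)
The plan is to reduce the statement to the opposite case already handled by Proposition \ref{PanelsHomeo1}. The key additional ingredient will be Lemma \ref{Tits1}, which in a thick twin building guarantees that any two chambers in the same half admit a common opposite in the other half; combined with the fact that opposite chambers of the same type span opposite panels, this will let us connect arbitrary panels of type $s$ by a short chain of mutually opposite panels.

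First I would handle the case in which both panels lie in the same half, say $P_1, P_2 \subset \Delta_+$, both of type $s$. Choosing chambers $c_i \in P_i$ and applying Lemma \ref{Tits1}, I obtain a chamber $d \in \Delta_-$ with $\delta^*(c_1,d) = \delta^*(c_2,d) = 1$. The panel $Q := P_s(d) \subset \Delta_-$ is then opposite to both $P_1 = P_s(c_1)$ and $P_2 = P_s(c_2)$, and Proposition \ref{PanelsHomeo1} supplies homeomorphisms $P_1 \cong Q \cong P_2$.

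For the remaining case $P_1 \subset \Delta_+$, $P_2 \subset \Delta_-$, I would insert an intermediate panel in $\Delta_+$: starting from any $c_2 \in P_2$, an opposite chamber $c_3 \in \Delta_+$ exists (by iterated application of axiom \reftw{item:tw-3} to lower the codistance to $1$), and $P_3 := P_s(c_3)$ is then a panel of $\Delta_+$ opposite to $P_2$. The previous step gives $P_1 \cong P_3$, and Proposition \ref{PanelsHomeo1} again gives $P_3 \cong P_2$.

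No serious obstacle is to be expected: the one thing to verify carefully is that the common-opposite chamber $d$ produced by Lemma \ref{Tits1} really yields a panel $P_s(d)$ which is opposite to \emph{each} of $P_s(c_1)$ and $P_s(c_2)$ in the residue sense, but this is immediate from the definition of opposite residues and the existence of the opposite-chamber pairs $(c_i,d)$.
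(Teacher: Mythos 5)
Your proof is correct and follows the route the paper intends: the paper simply states ``Combining this with Lemma~\ref{Tits1} we obtain'' the corollary, and what you have written out is exactly that combination, including the necessary extra step for two panels lying in opposite halves (which Lemma~\ref{Tits1} alone does not directly cover). Your verification that the common opposite chamber $d$ produces a panel $P_s(d)$ opposite to each $P_s(c_i)$ is indeed immediate from the paper's definition of opposite residues.
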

In particular, we deduce that --- in the presence of axioms (TTB1), (TTB2) and (TTB3) --- axiom (TTB4) is equivalent to the following, a priori stronger, axiom:
\begin{itemize}
\item[{\rm (TTB4$+$)}] For each $s \in S$ every panel
$P \in {\rm Pan}_s(\Delta_{\pm})$ is compact.
\end{itemize}
As another application of the corollary we now define a functor ${\rm type}$ from the category of pointed topological
twin buildings of a fixed type $(W,S)$ to the category of topological spaces as
follows: Given a topological twin building $\Delta$ we set
\[{\rm type}(\Delta, c) := \bigcup_{s \in S} P_s(c)\]
and refer to ${\rm type}(\Delta, c)$ as the \emph{topological type} of the twin 
building $\Delta$ at $c$. Every based morphism $\phi$ of (pointed) topological
twin buildings then induces a continuous map ${\rm type}(\phi)$ between the
corresponding topological types by restriction. Corollary~\ref{PanelsHomeo} ensures that, up to homeomorphism, ${\rm type}(\Delta, c)$ does not depend on
the choice of the basepoint $c$. The local-to-global principle established in Section~\ref{3.4} below shows that the
topology of $\Delta$ is uniquely determined by its topological type.

As another application of Proposition \ref{prop:co-schubert-cell-is-open} we
show:
\begin{prop}\label{ResiduesTTB} Let $\Delta$ be a twin building with a topology $\tau$ and $\Delta'$ a pair of opposite residues in $\Delta$.
\begin{itemize}
\item[(i)] $\Delta'$ is a twin building with respect to the restricted distance and codistance.
\item[(ii)] If $(\Delta, \tau)$ satisfies (TTB1), (TTB2), (TTB3), then so does $(\Delta', \tau|_{\Delta'})$.
\item[(iii)] If $(\Delta, \tau)$ is a topological twin building, then so is $(\Delta', \tau|_{\Delta'})$.
\end{itemize}
\end{prop}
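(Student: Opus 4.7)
The plan is to verify the three parts in turn. Throughout, write $\Delta' = (R_+, R_-)$ for the pair of opposite $J$-residues, $J \subseteq S$, and let $W_J$ denote the corresponding parabolic subgroup. Part~(i) is precisely \cite[Exercise~5.166]{abramenko-brown}, already invoked in Section~\ref{2.1}.

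For part~(ii), axioms (TTB1) and (TTB2) for $\Delta'$ are straightforward. Indeed, the subspace topology on $R_\pm$ is Hausdorff, and for $c \in R_\pm$ and $s \in J$ the $s$-panel $P_s(c)$ in $\Delta'$ coincides with the corresponding panel in $\Delta$, with codistances agreeing on the relevant domain; hence the co-projection map in (TTB2) for $\Delta'$ is a restriction of the continuous map from (TTB2) for $\Delta$.

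The technical heart is (TTB3). My first step would be to show that (TTB3) for $\Delta$ is independent of the choice of basepoint: for any $c_\pm, c'_\pm \in \Delta_\pm$, the triangle inequality
\[l(\delta_\pm(c_\pm, d)) \leq l(\delta_\pm(c_\pm, c'_\pm)) + l(\delta_\pm(c'_\pm, d))\]
together with directedness of $W$ under Bruhat order shows that the systems $\{E_{\leq w}(c_\pm)\}_{w \in W}$ and $\{E_{\leq w}(c'_\pm)\}_{w \in W}$ are mutually cofinal and yield the same direct limit topology. I would then pick any $c'_\pm \in R_\pm$ (not necessarily opposite) and apply (TTB3) for $\Delta$ with basepoint $c'_\pm$. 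Since $R_\pm$ is closed in $\Delta_\pm$ by Lemma~\ref{residueclosed}, the general topological principle that a closed subspace $Y$ of a direct limit $X = \lim_\to X_i$ satisfies $Y = \lim_\to (Y \cap X_i)$ (the needed closedness of $Y \cap X_i$ in $X_i$ being automatic) yields
\[R_\pm = \lim_\to \bigl(R_\pm \cap E_{\leq v}(c'_\pm)\bigr)_{v \in W}.\]
Finally, using that $W_J$ is closed under the Bruhat order in $W$ (since any subexpression of an element of $W_J$ again lies in $W_J$), one obtains
\[R_\pm \cap E_{\leq v}(c'_\pm) = \bigcup_{w \in W_J,\, w \leq v} E_{\leq w}(c'_\pm),\]
a finite union of Schubert varieties entirely contained in $R_\pm$. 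Conversely, $E_{\leq w}(c'_\pm) = R_\pm \cap E_{\leq w}(c'_\pm)$ for each $w \in W_J$. By directedness of $W_J$ under Bruhat order, the subsystem $\{E_{\leq w}(c'_\pm)\}_{w \in W_J}$ is cofinal in $\{R_\pm \cap E_{\leq v}(c'_\pm)\}_{v \in W}$, so the two direct limits agree, giving (TTB3) for $\Delta'$.

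For part~(iii) it remains to verify (TTB4) for $\Delta'$. Every panel of $R_\pm$ of type $s \in J$ is a panel of $\Delta_\pm$ of the same type, and since (TTB4) is equivalent to the a priori stronger (TTB4+) under (TTB1)-(TTB3) as noted after Corollary~\ref{PanelsHomeo}, compactness of $s$-panels in $\Delta_\pm$ transfers to $R_\pm$. The main obstacle throughout is the (TTB3) argument, which requires the interplay of three ingredients: change-of-basepoint cofinality, the general closed-subspace-of-direct-limit principle, and the identification of the $W_J$-subsystem as cofinal.
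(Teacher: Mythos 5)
Your proof is correct and follows the same plan as the paper's: (TTB1), (TTB2) restrict to arbitrary subsets, (TTB3) is transferred via the closedness of residues (Lemma~\ref{residueclosed}) together with the closed-subspace-of-direct-limit principle, and (TTB4) via (TTB4$+$). The paper's proof simply asserts that ``(TTB3) descends to arbitrary closed subsets''; your version usefully makes this precise by spelling out the base-point independence (which rests on directedness of the Bruhat order on a Coxeter group together with the retraction estimate $\delta(c',d)\in\{u v' : v'\leq w\}$) and the identification of $R_\pm \cap E_{\leq v}(c'_\pm)$ with a finite union of Schubert varieties indexed by $W_J$, and it does so without invoking (TTB4), which is important since part~(ii) does not assume it.
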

\begin{proof} For (i) see e.g.\ \cite[Exercise~5.166]{abramenko-brown}. For (ii) and (iii) observe first that axioms \refttb{item:ttb-1} and (TTB2) descend from $\Delta$ to arbitrary subsets, while (TTB3) descends to arbitrary closed subsets. By Lemma \ref{residueclosed} it descends in particular to $\Delta'$. Finally, if $(\Delta, \tau)$ is a topological twin building, then it satisfies not only (TTB4) but also (TTB4+), and this property obviously descends to $(\Delta', \tau|_{\Delta'})$.
\end{proof}

\subsection{Gallery spaces and Bott--Samelson desingularizations}\label{BSdes}

In this section we provide tools that will allow us to study the global 
point-set topology of topological twin buildings. Throughout this section we fix a topological twin budiling $(\Delta, \tau)$.

For a reduced word $s_1\cdots s_k \in W$ define a \emph{gallery} of type $(s_1,
\dots, s_k)$  as a tuple $(c_0, \dots, c_k) \in (\Delta_{\pm})^{k+1}$ satisfying
$c_{i} \in {\rm Pan}_{s_{i}}(c_{i-1})$ for $i= 1,\dots, k$. The chamber $c_0 \in
\Delta_{\pm}$ is called the {\bf initial chamber} of the gallery.
The set of all galleries of type $(s_1, \dots, s_k)$ with initial chamber $c_0$
is denoted by $\Gall(s_1, \dots, s_k; c_0)$; it is endowed with the subspace
topology induced from $(\Delta_{\pm})^{k+1}$. 

The natural \emph{projection} and \emph{stammering maps} allow one to pass
between different gallery spaces:
\begin{eqnarray*}
\pi_{s_1, \dots, s_k; c_0}: \Gall(s_1, \dots, s_k; c_0) & \to & \Gall(s_1,
\dots, s_{k-1}; c_0) \\ (c_0, \dots, c_{k}) & \mapsto & (c_0, \dots, c_{k-1}),\\
s_{s_1, \dots, s_k; c_0}: \Gall(s_1, \dots, s_{k-1}; c_0) & \to & \Gall(s_1,
\dots, s_{k}; c_0) \\ (c_0, \dots, c_{k-1}) & \mapsto & (c_0, \dots, c_{k-1},
c_{k-1}).
\end{eqnarray*}

The following observation of Linus Kramer's provides a key insight into the topological
structure of topological twin buildings. Recall from Corollary~\ref{PanelsHomeo}
that panels of the same type are pairwise homeomorphic.

\begin{prop}[{\cite[p.~170,~171]{kramer2002}}]\label{KBS}
For every $c_0 \in \Delta_{\pm}$ the gallery space $\Gall(s_1, \dots, s_k; c_0)$
is a locally trivial fibre bundle over $ \Gall(s_1, \dots, s_{k-1}; c_0)$ with
fibre $P_{s_k}(c_0)$ via $\pi_{s_1, \dots, s_k; c_0}$. The stammering map
$s_{s_1, \dots, s_k; c_0}$ defines a global section of this bundle.
\end{prop}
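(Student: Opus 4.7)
The plan is to proceed by induction on $k$, constructing explicit local trivialisations of $\pi_k := \pi_{s_1, \dots, s_k; c_0}$ in the Bott--Samelson spirit. The base case $k=1$ is immediate, since $\Gall(s_1; c_0)$ is canonically identified with the panel $P_{s_1}(c_0)$ fibred over the one-point space $\Gall(\emptyset; c_0) = \{c_0\}$. In every $k$, the stammering map $s_{s_1, \dots, s_k; c_0}(\gamma') = (\gamma', c_{k-1}')$ is by construction a set-theoretic section of $\pi_k$, and its continuity is immediate because the appended coordinate is the continuous last-chamber projection of~$\gamma'$.

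For the inductive step at a point $\gamma_0 = (c_0, \dots, c_{k-1})$, I would fix an auxiliary chamber $e \in \Delta_\mp$ opposite to $c_{k-1}$ (available via Lemma~\ref{Tits1}) and set $Q := P_{s_k}(e)$. By Proposition~\ref{prop:co-schubert-cell-is-open} combined with continuity of the last-chamber projection, the set
\[
U := \{\gamma' = (c_0', \dots, c_{k-1}') \mid c_{k-1}' \in E_1^*(e)\}
\]
is an open neighbourhood of $\gamma_0$ in $\Gall(s_1, \dots, s_{k-1}; c_0)$. For every $\gamma' \in U$ the panels $P_{s_k}(c_{k-1}')$ and $Q$ are opposite, so by Proposition~\ref{PanelsHomeo1} the restriction of $\pro^*_Q$ to $P_{s_k}(c_{k-1}')$ is a homeomorphism onto $Q$; note that $Q$ is homeomorphic to $P_{s_k}(c_0)$ by Corollary~\ref{PanelsHomeo}. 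The natural candidate for the trivialisation is then
\[
\Phi \colon \pi_k^{-1}(U) \to U \times Q, \qquad \Phi(\gamma', c_k') := \bigl(\gamma', \pro^*_Q(c_k')\bigr),
\]
a fibrewise bijection commuting with projection onto $U$.

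The main technical obstacle is the joint continuity of both $\Phi$ and $\Phi^{-1}$, since axiom (TTB2) provides continuity of co-projection only onto a \emph{fixed} panel. For $\Phi$ itself the plan is to cover $\pi_k^{-1}(U)$ by two open pieces according to whether $c_k' \in E_1^*(e)$: on the first piece, (TTB2) applied with $c = e$ gives continuity of $\pro^*_Q = \pro^*_{P_{s_k}(e)}$ directly; on the complement, a short combinatorial argument combining Lemma~\ref{lemma:building-combinatorics} with axiom (Tw2) shows $\delta^*(c_k', \bar e) = 1$ for every $\bar e \in Q \setminus \{e\}$, so (TTB2) applied with any such $\bar e$ (which exists by thickness of $\Delta$) yields continuity on the open set $\{(\gamma', c_k') \in \pi_k^{-1}(U) \mid c_k' \in E_1^*(\bar e)\}$. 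For the inverse $\Phi^{-1}(\gamma', d) = (\gamma', \pro^*_{P_{s_k}(c_{k-1}')}(d))$ the target panel of the co-projection varies with $\gamma'$, which is the genuine technical heart of the argument: the plan is to isolate the closed degenerate locus $\{(\gamma', d) \mid d = \pro^*_Q(c_{k-1}')\}$, which is the graph of the continuous map $\gamma' \mapsto \pro^*_Q(c_{k-1}')$ (via (TTB2) with $c = e$) and hence closed by \refttb{item:ttb-1}, and on its open complement, where $d$ is genuinely opposite $c_{k-1}'$, to reparametrise each fibre via an auxiliary reference chamber in $Q \setminus \{d\}$ and obtain joint continuity by one final application of (TTB2) together with the inverse homeomorphism furnished by Proposition~\ref{PanelsHomeo1}.
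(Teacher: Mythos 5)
Your construction mirrors the paper's proof exactly: the same open cover $U_e$ via Proposition~\ref{prop:co-schubert-cell-is-open}, the same trivialising bijection by co-projection, the same identification of the fibre via Corollary~\ref{PanelsHomeo}, and the observation that stammering is a continuous section. Where you go further is in scrutinising the \emph{joint} continuity of the trivialisation, which the text attributes to Proposition~\ref{PanelsHomeo1} without elaboration (that proposition only gives fibrewise homeomorphisms between two fixed opposite panels). Your argument for continuity of $\Phi$ (co-projection onto the \emph{fixed} panel $Q = P_{s_k}(e)$) is correct and complete: the two pieces $\{c_k' \in E_1^*(e)\}$ and $\{c_k' \in E_1^*(\bar e)\}$ are open by Proposition~\ref{prop:co-schubert-cell-is-open}, they cover by (Tw2), and (TTB2) applies on each separately.

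The gap is in the continuity of $\Phi^{-1} = h_e$, where the target panel $P_{s_k}(c_{k-1}')$ \emph{varies} with $\gamma'$. You propose to split off the closed degenerate locus $D = \{(\gamma',d): d = \pro^*_Q(c_{k-1}')\}$ and argue on its open complement; but since $D$ is closed rather than open, continuity on $D$ and on $D^{\mathrm{c}}$ separately does not yield continuity on their union --- you would still have to show that $\pro^*_{P_{s_k}(c_{k-1}')}(d)$ tends to $c_{k-1,0}'$ as $(\gamma',d)$ approaches a point of $D$ through the complement. Moreover the ``auxiliary reference chamber in $Q \setminus \{d\}$'' step does not actually circumvent the obstruction: (TTB2) controls co-projection onto a \emph{fixed} panel, and replacing $d$ by some $\bar d \in Q$ leaves the varying panel $P_{s_k}(c_{k-1}')$ unchanged. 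What is being invoked tacitly here is exactly the joint-continuity axiom (TTB2$+$), which is not among the standing axioms (TTB1)--(TTB4). To close the argument you would need either to supply this missing step explicitly (for instance by a compactness/properness argument showing the already-continuous bijection $\Phi$ is a closed map) or to state a reliance on (TTB2$+$).
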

\begin{proof} By symmetry we may assume $c_0 \in \Delta_+$. For each $e \in
\Delta_-$ define 
\[U_e := \{(c_0, \dots, c_{k-1}) \in  \Gall(s_1, \dots, s_{k-1}; c_0)\,|\,
c_{k-1} \in E^*_1(e)\}.\]
By Proposition \ref{BigCoCellOpen} the family $(U_e)_{e \in \Delta_-}$ provides
an open covering of $\Gall(s_1, \dots, s_{k-1}; c_0)$. By Proposition
\ref{PanelsHomeo1}, for each $e \in \Delta_-$ the map
\begin{eqnarray}
h_e: U_e \times P_{s_k}(e) & \to &  \pi_{s_1, \dots, s_k; c_0}^{-1}(U_e) \notag
\\
(c_0, \dots, c_{k-1}, d) & \mapsto & (c_0, \dots, c_{k-1},
\pro^*_{P_{s_k}(c_{k-1})}(d)) \label{HomeosGall}
\end{eqnarray}
is a homeomorphism, which in view of Corollary \ref{PanelsHomeo} provides the
desired local trivialization. The final claim is obvious.
\end{proof}

\begin{rem} \label{ttb4needed}
By \refttb{item:ttb-4} and Corollary \ref{PanelsHomeo} panels are compact, thus
so are the gallery spaces by Proposition~\ref{KBS}. Hence for each reduced word
$s_1 \cdots s_k$ the (surjective) endpoint map
\begin{eqnarray*}
p_{s_1, \dots, s_k; c_0}: \Gall(s_1, \dots, s_k; c_0) & \to & E_{\leq s_1\cdots
s_k}(c_0)\\ (c_0, \dots, c_k) & \mapsto & c_k
\end{eqnarray*}
is a quotient map by \refttb{item:ttb-1}. 
\end{rem}

This remark implies:

\begin{cor}\label{SchubertCompact}
Schubert varieties --- in particular, spherical residues --- are compact. 
\end{cor}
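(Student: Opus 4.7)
The plan is to read off the corollary essentially from Proposition~\ref{KBS} and Remark~\ref{ttb4needed}, using a Bruhat-order characterization of spherical residues at the end.

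First I would handle an arbitrary Schubert variety $E_{\leq w}(c_0)$ with $c_0 \in \Delta_\pm$. Choose any reduced expression $w = s_1 \cdots s_k$. The gallery space $\Gall(s_1,\dots,s_k;c_0)$ is compact: starting from $\Gall(\emptyset; c_0) = \{c_0\}$, Proposition~\ref{KBS} exhibits each $\Gall(s_1,\dots,s_j;c_0)$ as the total space of a locally trivial fibre bundle over $\Gall(s_1,\dots,s_{j-1};c_0)$ with fibre homeomorphic to $P_{s_j}(c_0)$. Panels are compact by \refttb{item:ttb-4} together with Corollary~\ref{PanelsHomeo}, and a locally trivial bundle with compact Hausdorff base and compact fibre has compact total space, so compactness of $\Gall(s_1,\dots,s_k;c_0)$ follows by induction on $k$. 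The endpoint map
\[p_{s_1,\dots,s_k;c_0}\colon \Gall(s_1,\dots,s_k;c_0) \longrightarrow E_{\leq w}(c_0)\]
is continuous and surjective (Remark~\ref{ttb4needed}), so $E_{\leq w}(c_0)$ is the continuous image of a compact space and hence compact.

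For the second assertion, I would observe that any spherical residue $R$ of type $J \subseteq S$ containing a chamber $c$ coincides with a Schubert variety based at $c$. Indeed, $R = R_J(c) = \{d \mid \delta(c,d) \in W_J\}$, and since $W_J$ is finite it has a unique longest element $w_0^J$. For spherical Coxeter groups one has $W_J = \{w \in W \mid w \leq w_0^J\}$ in the Bruhat order (every reduced expression in the letters of $J$ is a subword of some reduced expression of $w_0^J$), hence
\[R = R_J(c) = \bigcup_{w \in W_J} E_w(c) = E_{\leq w_0^J}(c),\]
which is compact by the first part.

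The only thing to verify carefully is compactness of iterated fibre bundles; otherwise the corollary is just a repackaging of what is already recorded in Remark~\ref{ttb4needed}, so I do not expect any serious obstacle. There is no need to invoke \refttb{item:ttb-3}; compactness here is a purely local phenomenon, powered by compact panels and the Bott--Samelson style desingularization of Proposition~\ref{KBS}.
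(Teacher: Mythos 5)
Your argument is correct and is essentially the same as the paper's: the paper establishes the corollary precisely via Remark~\ref{ttb4needed}, which records that gallery spaces are compact (by iterating the bundle structure of Proposition~\ref{KBS} over compact panels) and surject continuously onto Schubert varieties. Your added observation that a spherical $J$-residue is the Schubert variety $E_{\leq w_0^J}(c)$ via the Bruhat-interval identity $W_J = [e, w_0^J]$, and your remark that (TTB3) is not needed, are accurate and merely make explicit what the paper leaves implicit.
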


The corollary has important consequences for the point-set topology of
topological twin buildings. 

\begin{defin}
A Hausdorff topological space $X$ is called a 
\textbf{$k_\omega$ space} if there exists a 
countable ascending sequence $K_1 \subseteq K_2 \subseteq \cdots \subseteq X$ 
of compact sets such that $X$ is the direct limit of the $K_n$, i.e., $X =
\bigcup_{n \in \N} K_n$ and such that 
$U \subseteq X$ 
is open if and only if $U \cap K_n$ is open in $K_n$ for each $n$ with
respect to the subspace topology. 
\end{defin}

We refer the reader to \cite{k-omega} for an overview over the theory of
$k_\omega$ spaces; the benefits of the theory of $k_\omega$ spaces for studying
twin buildings and Kac--Moody groups are clearly visible in
\cite{final-group-topologies}.
Key properties of $k_\omega$ spaces for the present article are:

\begin{prop}[{\cite{k-omega}, \cite[Proposition~4.2]{final-group-topologies}}]
\label{prop:komega-properties} Each $\sigma$-compact locally compact Hausdorff
space is $k_\omega$. Moreover, the category of $k_\omega$ spaces is closed under
taking closed subspaces, finite products, Hausdorff quotients and countable
disjoint unions. Every $k_\omega$ space is 
paracompact, Lindel\"of and normal.
\end{prop}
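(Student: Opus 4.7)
Most of these assertions are standard and the plan is to reduce each to a direct-limit argument using a chosen compact exhaustion. For the first statement, if $X$ is $\sigma$-compact and locally compact Hausdorff, I would choose a countable cover $(C_n)$ by compact sets and inductively enlarge each $C_n$ to a compact $K_n$ whose interior contains $K_{n-1}$; then $X=\bigcup K_n$, and if $U\subset X$ meets every $K_n$ in a relatively open set, local compactness together with $K_{n-1}\subset\mathrm{int}(K_n)$ lets one verify that $U$ is open in $X$, giving the $k_\omega$ structure.

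For the closure properties, fix $k_\omega$-exhaustions $X=\bigcup K_n$ and $Y=\bigcup L_n$. The plan is: \emph{closed subspaces} $A\subset X$ inherit the exhaustion $(A\cap K_n)$ and the subspace topology agrees with the direct-limit topology because a set closed in every $A\cap K_n$ is closed in every $K_n$. \emph{Finite products} use the exhaustion $(K_n\times L_n)$; here the only nontrivial point (and in my view the main obstacle, since direct limits do not in general commute with products) is to check that a set $U\subset X\times Y$ whose intersection with each $K_n\times L_n$ is open must be open in the product topology. I would do this pointwise: given $(x,y)\in U$, choose $n$ with $(x,y)\in K_n\times L_n$ and inductively build open product neighbourhoods $V_m\times W_m\subset U\cap (K_m\times L_m)$ with $\overline{V_m\times W_m}\subset V_{m+1}\times W_{m+1}$; the union is a product neighbourhood in $U$, using normality of each $K_m$ and $L_m$ to separate compact sets. \emph{Hausdorff quotients} $q\colon X\to Y$ are easy: the images $q(K_n)$ are compact and exhaust $Y$, and openness in $Y$ transfers from openness of preimages. \emph{Countable disjoint unions} of $k_\omega$ spaces, $X=\bigsqcup_{i\in\N}X_i$ with $X_i=\bigcup_n K^{(i)}_n$, are exhausted by $K_n:=\bigsqcup_{i\le n}K^{(i)}_n$.

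For the final assertions, normality and paracompactness follow by transfinite/inductive extension of open covers from each $K_n$ to $K_{n+1}$ using normality of the compact Hausdorff spaces $K_n$ (this is the Morita--Michael style argument), and Lindel\"ofness is immediate: any open cover restricts to a finite subcover of each $K_n$, so countably many sets suffice.

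In practice, since the statement is quoted from \cite{k-omega} and \cite[Proposition~4.2]{final-group-topologies}, the cleanest ``proof'' is to simply verify that our $k_\omega$ definition matches the one used there and invoke the cited results; the genuinely delicate point that one must be aware of is the commutation of direct limits with finite products, for which countability of the exhaustion is essential.
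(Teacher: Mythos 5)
The paper itself gives no proof of this proposition: it is stated purely as a citation to Franklin--Thomas \cite{k-omega} and to \cite[Proposition~4.2]{final-group-topologies}, which is the route your final paragraph identifies as the cleanest, and is indeed what the paper does. Your sketches of the individual assertions are the standard ones from the literature and are essentially correct. Two small remarks. For finite products, after choosing $V:=\bigcup V_m$ and $W:=\bigcup W_m$ one still has to check that $V$ is open in $X$; this works because $V\cap K_n=\bigcup_m(V_m\cap K_n)$ and each $K_n$ carries the subspace topology of $K_m$ for $m\ge n$ (being a compact subset of a Hausdorff space), so each $V_m\cap K_n$ is open in $K_n$ --- worth making explicit since it is precisely where the argument would break for uncountable or non-nested families. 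For paracompactness, rather than an inductive cover extension it is cleaner to first establish normality (hence regularity) and Lindel\"ofness as you do, and then invoke the standard fact that a regular Lindel\"of space is paracompact; this avoids a second transfinite/inductive construction.
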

\begin{cor}\label{TTBKOmega}
Each half of a topological twin building is a $k_\omega$ space, in particular paracompact, Lindel\"of
and normal. It is compact if and only if it is spherical. 
\end{cor}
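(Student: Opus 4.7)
The plan is to produce an explicit $k_\omega$ exhaustion of each $\Delta_\pm$ by finite unions of Schubert varieties, then read off paracompactness, Lindel\"ofness and normality from Proposition~\ref{prop:komega-properties}; the sphericity dichotomy will be handled by a direct compactness argument at the end.

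Since $S$ is finite, the Weyl group $W$ is countable; enumerate $W = \{v_1, v_2, \ldots\}$. With $c_\pm$ a chamber as in \refttb{item:ttb-3}, set $K_n := \bigcup_{i=1}^{n} E_{\leq v_i}(c_\pm)$. Each $K_n$ is a finite union of compact sets (Corollary~\ref{SchubertCompact}), hence compact, and the ascending chain $(K_n)$ exhausts $\Delta_\pm$ because for every chamber $d$ the Weyl distance $\delta_\pm(c_\pm, d)$ equals some $v_n$. To see that the direct-limit topology associated with $(K_n)$ coincides with the original topology, it suffices by \refttb{item:ttb-3} to verify that whenever $U \cap K_n$ is open in $K_n$ for every $n$, then $U \cap E_{\leq w}(c_\pm)$ is open in $E_{\leq w}(c_\pm)$ for every $w \in W$. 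Choosing $n$ with $v_n = w$ gives $E_{\leq w}(c_\pm) \subseteq K_n$, and by transitivity of the subspace topology $U \cap E_{\leq w}(c_\pm) = (U \cap K_n) \cap E_{\leq w}(c_\pm)$ is relatively open. Together with \refttb{item:ttb-1} this shows $\Delta_\pm$ is a $k_\omega$-space, and Proposition~\ref{prop:komega-properties} then supplies paracompactness, Lindel\"ofness and normality.

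For the sphericity dichotomy: if $\Delta$ is spherical then the longest element $w_0 \in W$ dominates every other element, so $\Delta_\pm = E_{\leq w_0}(c_\pm)$ is compact by Corollary~\ref{SchubertCompact}. Conversely, suppose $W$ is infinite. An inductive application of axiom (Bu3) produces, for every $w \in W$, a chamber at Weyl distance $w$ from $c_\pm$; picking $w$ of length strictly larger than $\max_{i \leq N} l(v_i)$ therefore shows $K_N \subsetneq \Delta_\pm$ for every $N$. After passing to a subsequence so that $K_n \subsetneq K_{n+1}$, choose $x_n \in K_{n+1} \setminus K_n$. For every $m$, the intersection $\{x_n\}_n \cap K_m$ is contained in $\{x_1, \ldots, x_{m-1}\}$, hence finite and closed in the Hausdorff space $K_m$; applying the same observation to arbitrary subsets of $\{x_n\}_n$, the $k_\omega$-property shows that $\{x_n\}_n$ is a closed discrete infinite subset of $\Delta_\pm$, which is incompatible with compactness.

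The only mildly non-trivial point is reconciling the direct limit in \refttb{item:ttb-3}, which is indexed by the family of Schubert varieties (not linearly ordered in general), with the sequential $k_\omega$-definition; this is resolved by the cofinality observation that each Schubert variety sits inside some $K_n$.
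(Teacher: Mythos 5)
Your argument is correct and follows essentially the same route as the paper, which simply cites Proposition~\ref{prop:komega-properties}, Corollary~\ref{SchubertCompact} and \refttb{item:ttb-3} for the $k_\omega$ claim; your explicit choice of the cofinal sequence $K_n = \bigcup_{i\le n}E_{\le v_i}(c_\pm)$ and the reconciliation of the Schubert-indexed direct limit with a sequential $k_\omega$-sequence is precisely what is implicit there. For the ``compact iff spherical'' statement the paper appeals to the general fact that a closed subset of a direct limit of compact spaces is compact if and only if it already lies in one of them; you instead give a self-contained proof of the needed instance by exhibiting an infinite closed discrete subset $\{x_n\}$ when $W$ is infinite. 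The two are equivalent in substance (the cited fact is proved by exactly this kind of construction), so yours is a slightly more elementary but not essentially different argument. One small point worth making explicit in your write-up: the equality of the $(K_n)$-limit topology with the original one also needs the (trivial) converse direction --- that $\tau$-open sets have $K_n$-open traces, which is automatic since the inclusions $K_n\hookrightarrow\Delta_\pm$ are continuous --- but that does not affect correctness.
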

\begin{proof} The first statement follows from
Proposition~\ref{prop:komega-properties}, Corollary~\ref{SchubertCompact} and
\refttb{item:ttb-3}, whereas the second statement then follows from the fact
that a closed subset of a direct limit of compact spaces is compact if and only
if it is already contained in one of the compact spaces.
\end{proof}
In fact, if $c_{\pm}$ are as in (TTB3) then the sequences $(E_{\leq w}(c_\pm))_{w \in W}$ are explicit $k_\omega$-sequences for $\Delta_{\pm}$. It then follows from the general theory of $k_\omega$ spaces that a sequence $(K_n) \subset \Delta_{\pm}$ is a $k_\omega$-sequence for $\Delta_{\pm}$ if and only if each $K_n$ is compact and for every $w \in W$ there exists $n \in \mathbb N$ such that $E_{\leq w}(c_\pm) \subset K_n$. In particular, we deduce that in the presence of axioms
\refttb{item:ttb-1}, (TTB2),
\refttb{item:ttb-4}, axiom (TTB3) is equivalent to the following strengthened version:
\begin{itemize}
\item[{\rm (TTB3$+$)}] For every chamber $c_\pm \in \Delta_\pm$,
\[\Delta_{\pm} = \lim_\to E_{\leq w}(c_\pm).\]
\end{itemize}
In other words, in the definition of a topological twin building the choices made in axioms (TTB3) and (TTB4) are inessential. Note that Corollary \ref{TTBKOmega}
 is based on the interplay of axioms \refttb{item:ttb-3}
and \refttb{item:ttb-4}. 

Another example for this interplay is provided by the
following
proposition:
\begin{prop}\label{Prop:discrete}
Let $(\Delta, \tau)$ be a topological twin building whose panels are discrete. Then $\Delta$ itself is
discrete.
\end{prop}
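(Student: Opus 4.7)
The plan is to show that the topology must be discrete by propagating discreteness from panels to galleries, from galleries to Schubert varieties, and finally to the whole building via the direct limit description of $\Delta_\pm$.

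First, by axiom \refttb{item:ttb-4} together with Corollary~\ref{PanelsHomeo} (or equivalently the strengthened form (TTB4+) discussed after Proposition~\ref{PanelsHomeo1}), every panel of $\Delta$ is compact. Since by hypothesis every panel is also discrete, every panel is \emph{finite}. This is the only place where the compactness axiom enters the argument.

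Next, I would prove by induction on $k$ that each gallery space $\Gall(s_1, \dots, s_k; c_0)$ is finite. For $k=1$ this is just a panel, which we just observed is finite. For the inductive step, Proposition~\ref{KBS} shows that $\Gall(s_1, \dots, s_k; c_0)$ is a locally trivial fibre bundle over $\Gall(s_1, \dots, s_{k-1}; c_0)$ with fibre homeomorphic to a panel; both base and fibre are finite, so the total space is finite. Now Remark~\ref{ttb4needed} provides a continuous surjection from the finite space $\Gall(s_1, \dots, s_k; c_0)$ onto the Schubert variety $E_{\leq s_1 \cdots s_k}(c_0)$, so the latter is also finite. A finite Hausdorff space is discrete, so every Schubert variety in $\Delta_\pm$ is discrete.

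Finally, I would invoke axiom \refttb{item:ttb-3} (or its strengthening (TTB3+)) to write $\Delta_\pm = \lim_\to E_{\leq w}(c_\pm)$. For an arbitrary subset $U \subseteq \Delta_\pm$, the intersection $U \cap E_{\leq w}(c_\pm)$ is automatically open in $E_{\leq w}(c_\pm)$, because that Schubert variety is discrete. By the defining property of the direct limit topology, $U$ is therefore open in $\Delta_\pm$. Hence every subset of $\Delta_\pm$ is open, i.e., $\Delta_\pm$ is discrete, as required.

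There is no real obstacle here; the statement is essentially a formal consequence of the combination of axioms (TTB3) and (TTB4). The only point that requires a moment's thought is that ``discrete'' in the hypothesis must be combined with the compactness built into (TTB4) in order to force panels to be \emph{finite}—without (TTB4) a discrete panel of arbitrary cardinality would give discrete galleries only in a trivial set-theoretic sense, but the inductive argument over galleries and the passage to the direct limit both rely on finiteness rather than mere discreteness.
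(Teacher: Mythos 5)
Your proof is correct and follows the same route as the paper: panels are compact (by (TTB4)) and discrete, hence finite; Schubert varieties are then finite Hausdorff, hence discrete; and (TTB3) gives discreteness of $\Delta_\pm$ as a direct limit. You simply spell out, via Proposition~\ref{KBS} and Remark~\ref{ttb4needed}, the step the paper leaves implicit (finiteness of Schubert varieties).
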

\begin{proof} By assumption all panels are discrete and compact, hence finite.
Consequently, Schubert varieties are finite and Hausdorff, hence discrete.
Therefore the proposition follows from \refttb{item:ttb-3}.
\end{proof}

This proposition allows one to locally characterize discrete topological twin buildings. Such local-to-global results are the topic of the next section.

\subsection{A local-to-global principle} \label{3.4}

Using the tools developed in the preceding section we derive a 
local-to-global principle for topological twin buildings. Recall that the ${\rm type}$
functor associates with every morphism of topological twin buildings a
continuous map between the topological types, a concept which is meaningful by Corollary~\ref{PanelsHomeo}. 

We intend to show that this
property characterizes morphisms of topological twin buildings among all twin
building morphisms. 

More precisely, let $\Delta^{(1)}, \Delta^{(2)}$ be
topological twin buildings and let $\phi: \Delta^{(1)} \to \Delta^{(2)}$ be a
morphism of the underlying abstract twin buildings. Choose $c \in
\Delta_\pm^{(1)}$; then the map
\[{\rm type}_{c}(\phi):  \bigcup_{s \in S} P_s(c) \to  
\bigcup_{s \in S} P_s(\phi(c))\]
between the topological types of the $\Delta^{(j)}$ can still be defined, but
need not be continuous. 

Now we have:

\begin{thm}[Local-to-global principle for twin building topologies]\label{LTG} 
Let $\Delta^{(1)}, \Delta^{(2)}$ be topological twin buildings, let $\phi: \Delta^{(1)} \to 
\Delta^{(2)}$ be a morphism of the underlying twin buildings, and let $c \in \Delta_\pm^{(1)}$. Then $\phi$ is
continuous if and only if ${\rm type}_c(\phi)$ is continuous.
\end{thm}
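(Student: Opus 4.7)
The forward direction is immediate: the topological type $\bigcup_{s\in S}P_s(c)\subset\Delta_\pm^{(1)}$ is closed by Lemma~\ref{residueclosed}, so continuity of $\phi$ restricts to continuity of ${\rm type}_c(\phi)$. For the converse, assume ${\rm type}_c(\phi)$ is continuous. The strategy is to proceed in three stages: first upgrade the hypothesis from panels through $c$ to \emph{all} panels, then deduce continuity on gallery spaces via the Bott--Samelson bundle structure of Proposition~\ref{KBS}, and finally assemble these into continuity on the whole twin building using the quotient maps from gallery spaces to Schubert varieties (Remark~\ref{ttb4needed}) and the direct-limit presentation (TTB3$+$).

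\emph{Panel continuity.} Let $\mathcal{C}\subseteq\Delta_+^{(1)}\cup\Delta_-^{(1)}$ denote the set of chambers $c'$ for which $\phi|_{P_s(c')}$ is continuous for every $s\in S$. By hypothesis $c\in\mathcal{C}$. Since $\phi$ is an isometry it commutes with every co-projection; combined with Proposition~\ref{PanelsHomeo1}, this shows that if $c'\in\mathcal{C}$ and $e\in\Delta_\mp^{(1)}$ is opposite to $c'$, then each $\phi|_{P_s(e)}$ is the conjugate of $\phi|_{P_s(c')}$ by two panel-opposition homeomorphisms, hence continuous, and so $e\in\mathcal{C}$. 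Given $c'\in\mathcal{C}$ and any chamber $d$ in the same half as $c'$ adjacent to it, Lemma~\ref{Tits1} produces a chamber $e$ opposite to both $c'$ and $d$; two applications of opposition, first from $c'$ to $e$ and then from $e$ to $d$, place $d$ in $\mathcal{C}$. Connectedness of each chamber graph under adjacency together with a single opposition step bridging the two halves forces $\mathcal{C}=\Delta_+^{(1)}\cup\Delta_-^{(1)}$.

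\emph{Gallery spaces and conclusion.} By induction on $k$ we show that for every reduced word $s_1\cdots s_k$ and every initial chamber $c_0$, the induced gallery map
\[\tilde\phi_k\colon\Gall(s_1,\dots,s_k;c_0)\to\Gall(s_1,\dots,s_k;\phi(c_0)),\quad(c_0,\dots,c_k)\mapsto(\phi(c_0),\dots,\phi(c_k))\]
is continuous; the cases $k\leq 1$ follow directly from panel continuity. For the inductive step, over each open chart $U_e\subset\Gall(s_1,\dots,s_{k-1};c_0)$ given by Proposition~\ref{KBS}, the fact that $\phi$ preserves co-projections identifies $\tilde\phi_k|_{\pi^{-1}(U_e)}$, up to the two trivialization homeomorphisms $h_e$ and $h_{\phi(e)}'$, with the product $\tilde\phi_{k-1}|_{U_e}\times\phi|_{P_{s_k}(e)}$: the first factor is continuous by induction and the second by panel continuity, so $\tilde\phi_k$ is continuous on every chart, hence globally. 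Since the endpoint map $p_k$ is a quotient map (Remark~\ref{ttb4needed}) and $\phi\circ p_k=p_k'\circ\tilde\phi_k$ is continuous, $\phi|_{E_{\leq s_1\cdots s_k}(c_0)}$ is continuous for every reduced word; axiom (TTB3$+$) then yields continuity of $\phi$ on each half, applied with $c_0=c$ for one half and with any chamber $c_0$ in the other half (whose membership in $\mathcal{C}$ was already established).

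The main obstacle is the panel-continuity step, since the hypothesis controls $\phi$ only on the finitely many panels through a single chamber, while the conclusion requires continuity on every panel of the twin building. This is resolved by the interplay between Proposition~\ref{PanelsHomeo1} (opposite panels are canonically homeomorphic via co-projection, with isometries intertwining these homeomorphisms) and Lemma~\ref{Tits1} (any two chambers in the same half admit a common opposite). Once panel continuity is in hand, the remaining induction is a routine unpacking of Kramer's Bott--Samelson framework.
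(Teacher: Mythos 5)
Your proof is correct and follows essentially the same route as the paper's: upgrade panel-continuity from the base chamber to all panels via the intertwining of opposition maps (Proposition~\ref{PanelsHomeo1}) and Lemma~\ref{Tits1}, then run the induction over Bott--Samelson gallery spaces (Proposition~\ref{KBS}) and descend through the endpoint quotient maps to (TTB3). The only inessential detour is in the panel-continuity step: since Lemma~\ref{Tits1} supplies a common opposite for \emph{any} pair of chambers in a half (not merely adjacent ones), the double-opposition argument reaches an arbitrary panel directly, and the appeal to adjacency-connectedness of the chamber graph is superfluous.
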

In the spherical case the above local-to-global result was first proved in
\cite[Proposition~3.5]{Boedi/Kramer:1995} using a coordinatization procedure
which, however, is not available in the general case. (Observe that the
coordinatization given in \cite{kramer2002} provides coordinates on Schubert
cells rather than co-Schubert cells.)
\begin{proof} Assume that ${\rm type}_c(\phi)$ is continuous. Since $\phi$ is
a morphism, for arbitrary
opposite panels $P$ and $Q$ of $\Delta^{(1)}$ one has
\begin{eqnarray*}
\phi \circ p^*_{PQ} = p^*_{\phi(P) \phi(Q)} \circ \phi,
\end{eqnarray*}
where $p^*_{PQ} : P \to Q$ and $p^*_{\phi(P)\phi(Q)} : \phi(P) \to \phi(Q)$
are the projection homeomorphisms from Proposition \ref{PanelsHomeo1}.
By Lemma \ref{Tits1} and the continuity of ${\rm type}_c(\phi)$ this implies
that for each panel $P$ of $\Delta^{(1)}$ the restriction $\phi_{|P}$ is
continuous. This shows that the statement that $\mathrm{type}_c(\phi)$ be continuous is in fact independent of $c \in \Delta_\pm$.

For an arbitrary reduced word 
$s_1\cdots s_k \in W$ the morphism
$\phi$ 
induces a map 
\begin{eqnarray*}\label{MapCtsToShowLTG}
\phi_{s_1, \dots, s_k; c}: \Gall(s_1, \dots, s_k; c) \to \Gall(s_1, \dots, s_k;
\phi(c)).
\end{eqnarray*}
We will prove by induction on $k$ that $\phi_{s_1, \dots, s_k; c}$ is 
continuous. For $k=0$ there is nothing to show, so we immediately turn to
the case $k > 0$.
Then by Proposition~\ref{BigCoCellOpen} the sets
\[U_e := \{(c,c_1 \dots, c_{k-1}) \in  \Gall(s_1, \dots, s_{k-1}; c)\,|\, 
c_{k-1} \in E^*_1(e)\}\]
provide an open covering of 
$\Gall(s_1, \dots, s_{k-1}; \phi(c))$ (cf.\ the proof of 
Proposition~\ref{KBS}). The homeomorphisms $h_e$ and $h_{\phi(e)}$ from
\eqref{HomeosGall} on page \pageref{HomeosGall} yield a commuting diagram
\[\begin{xy}\xymatrix{
U_e \times P_{s_k}(e) \ar[rr]^{h_e}
\ar[dd]_{\phi_{|U_e} \times \phi_{|P_{s_k}(e)}} && 
\pi^{-1}_{s_1,...,s_k;c}(U_e) 
\ar[dd]^{\phi_{|\pi^{-1}_{s_1,...,s_k;c}(U_e)}}\\ \\
U_{\phi(e)} \times P_{s_k}(\phi(e)) \ar[rr]^{h_{\phi(e)}} && 
\pi^{-1}_{s_1,...,s_k;\phi(c)}(U_{\phi(e)}).
}\end{xy}\]
The left vertical arrow is continuous, because $\phi_{|U_e}$ is continuous 
by the induction hypothesis and $\phi_{|P_{s_k}(e)}$ is 
continuous by continuity of $\mathrm{type}_e(\phi)$ as shown above. Therefore also
the right vertical arrow is continuous. As the sets
$\pi^{-1}_{s_1,...,s_k;c}(U_e)$
form an open covering of $ \Gall(s_1, \dots, s_k; c)$, this implies 
continuity of the map $\phi_{s_1, \dots, s_k; c}$. 

By Remark
\ref{ttb4needed} the endpoint map $p_{s_1, \dots, s_k; c}: \Gall(s_1,
\dots, s_k; c) \to E_{\leq s_1\cdots s_k}(c)$ is a quotient map, so that 
$\phi_{|E_{\leq s_1\cdots s_k}(c)}$ is continuous.
 
Therefore $\phi$ is continuous by \refttb{item:ttb-3}.
\end{proof}

\begin{cor}\label{AutoContinuity}
Let $\Delta$ be a topological twin building and let $\phi$ be an automorphism of the underlying twin 
building. If, for each type $s \in S$, there exists a panel $P_s$ of type
$s$ such that $\phi_{|P_s}$ is continuous, then $\phi$ is a homeomorphism.  
\end{cor}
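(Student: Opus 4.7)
The plan is to reduce to the local-to-global principle, Theorem~\ref{LTG}. Fix a chamber $c \in \Delta_\pm$; then the topological type $\bigcup_{s \in S} P_s(c)$ is a finite union of panels, each of which is compact (by the equivalent form (TTB4+) of (TTB4) noted after Corollary~\ref{PanelsHomeo}) and hence closed in the Hausdorff ambient space by \refttb{item:ttb-1}. A map from a finite union of closed subspaces is continuous as soon as it is continuous on each piece, so the continuity of $\mathrm{type}_c(\phi)$ reduces to continuity of $\phi|_{P_s(c)}$ for every $s \in S$.

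The key step, and the only non-trivial one, is to upgrade the hypothesis ``$\phi$ is continuous on some panel of each type'' to ``$\phi$ is continuous on every panel of each type.'' I would mimic the argument at the start of the proof of Theorem~\ref{LTG}: let $P_s$ be the distinguished panel of type $s$ on which $\phi$ is known to be continuous, let $P' = P_s(c')$ be an arbitrary panel of the same type, and pick $c_0 \in P_s$. Lemma~\ref{Tits1} produces a chamber $d \in \Delta_\mp$ opposite to both $c_0$ and $c'$, so that $P_s(d)$ is opposite both $P_s$ and $P'$. By Proposition~\ref{PanelsHomeo1} the co-projection maps $p^*_{P_s,P_s(d)}$ and $p^*_{P_s(d),P'}$ (and their images under $\phi$) are homeomorphisms. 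Since $\phi$ is a twin building morphism and hence satisfies $\phi \circ p^*_{PQ} = p^*_{\phi(P),\phi(Q)} \circ \phi$, I obtain
\[\phi|_{P_s(d)} \;=\; p^*_{\phi(P_s),\,\phi(P_s(d))} \circ \phi|_{P_s} \circ \bigl(p^*_{P_s,\,P_s(d)}\bigr)^{-1},\]
so continuity propagates from $P_s$ to $P_s(d)$; a second application of the same identity then transfers it from $P_s(d)$ to $P'$. Applied with $P' = P_s(c)$, this delivers continuity of $\phi|_{P_s(c)}$ for every $s$, and Theorem~\ref{LTG} yields continuity of $\phi$ on all of $\Delta$.

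For the inverse, I would argue that each $\phi|_{P_s}$ is a continuous bijection from the compact space $P_s$ to the Hausdorff space $\phi(P_s)$ (both compact Hausdorff, cf.\ Corollary~\ref{PanelsHomeo}), hence a homeomorphism; consequently $\phi^{-1}$ satisfies the hypothesis of the corollary on the panels $\phi(P_s)$ and the same argument applies to give continuity of $\phi^{-1}$. The main obstacle I anticipate is a bookkeeping one: verifying that the reduction ``continuous on each panel through $c$ $\Rightarrow$ continuous on the topological type'' is legitimate. This is exactly where the compactness of panels interacts with the Hausdorff axiom, since compact panels in a Hausdorff space are closed, so that the finite-closed-cover criterion for continuity applies; without either of (TTB1) or (TTB4) the strategy would break down at this point.
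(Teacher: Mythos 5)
Your proof is correct and takes essentially the same route as the paper's: the paper's proof simply cites ``(the proof of) Theorem~\ref{LTG}'' for the forward direction, and what it is pointing at is exactly the propagation argument you reconstruct — using Lemma~\ref{Tits1}, Proposition~\ref{PanelsHomeo1}, and the intertwining $\phi \circ p^*_{PQ} = p^*_{\phi(P)\phi(Q)} \circ \phi$ to transfer continuity from the given panel $P_s$ to an arbitrary panel of the same type via a common opposite. Your treatment of $\phi^{-1}$ (compact domain plus Hausdorff codomain makes each $\phi|_{P_s}$ a homeomorphism, so $\phi^{-1}$ satisfies the same hypothesis) is also the paper's argument, phrased there as ``bijective quotient map, i.e.\ open.''
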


\begin{proof}
Continuity of $\phi$ is immediate by (the proof of) Theorem \ref{LTG}. By axioms
\refttb{item:ttb-1} and \refttb{item:ttb-4} and Corollary \ref{PanelsHomeo},
each $\phi_{|P_s} : P_s \to
\phi(P_s)$ is a
bijective quotient map, i.e., open. Hence also continuity of $\phi^{-1}$ 
follows from Theorem \ref{LTG}, and $\phi$ is a homeomorphism. 
\end{proof}

\subsection{The axioms revisited}\label{SecAxiomsDiscussion}
We have seen that a reasonably deep theory of twin building topologies can be developed assuming only the axioms (TTB1), (TTB2), (TTB3), (TTB4) given above. On the other hand, these axioms seem to be of fundamental importance in order to be able to develop a meaningful theory. 

(TTB2) is the core axiom underlying --- in an explicit or implicit\footnote{As pointed out in \cite{kramer2002}, in the case of spherical topological building one can actually deduce these continuity properties by making sufficiently strong compactness assumptions and using an open mapping theorem. However, this approach is clearly limited to compact situations.} way --- all approaches to topological geometry: The geometric operations should be continuous on reasonable domains. We have chosen the weakest possible formulation which requires projections between opposite panels to be only separately continuous in each variable. In certain situations it is advantageous to assume joint continuity of such projections, or even joint continuity for projection between chambers of fixed, but possibly non-trivial, codistance. One would then replace (TTB2) by one of the following stronger axioms: 
\begin{itemize}
\item[(TTB2$+$)] For each $s \in S$ the 
map
\begin{align*}
 	p_s: \Delta_1 & \to \Delta_+ \cup \Delta_{-}\\
	(c,d) & \mapsto \pro^*_{P_s(c)}(d)
\end{align*}
is continuous. 
\item[(TTB2$++$)] For each $s \in S$ and $w \in W$ the 
map
\begin{align*}
 	p_s: \Delta_w & \to \Delta_+ \cup \Delta_{-}\\
	(c,d) & \mapsto \pro^*_{P_s(c)}(d)
\end{align*}
is continuous. 
\end{itemize}
We prefer to use the weakest possible axiom as part of our definition and to assume these additional properties only when needed. 

The Hausdorff axiom (TTB1) is standard in topological geometry.

The purpose of the compactness axiom (TTB4) is less obvious, and it seems tempting to try to develop at least the basic theory without it. However, all attempts to develop a sufficiently rich theory in order to allow meaningful applications have failed so far, even in the case of spherical buildings of rank two. We thus cannot avoid (TTB4) at this point.

Axiom (TTB3) has no counterpart in classical topological geometry and so requires some justification. Let us call a pair $(\Delta, \tau)$ a \emph{weak topological twin building} if it satisfies (TTB1), (TTB2) and (TTB4). 

As we have seen above in Section~\ref{BSdes}, after choosing base points $c_{\pm} \in \Delta_{\pm}$ the corresponding combinatorial Schubert cells still yield a canonical topological cell structure on $(\Delta, \tau)$. It is thus natural to refine the topology $\tau$ by passing to the weak topology
\[\tau' := \{U \subset \Delta_+ \cup \Delta_-\,|\, U \cap (E_{\leq w}(c_+) \cup
E_{\leq w}(c_-)) \in \tau|_{E_{\leq w}(c_+) \cup E_{\leq w}(c_-)}\}\]
with respect to this cell decomposition. 

We refer to $(\Delta, \tau')$ as the \emph{Schubert completion} of $(\Delta, \tau)$ and call two weak topological twin buildings \emph{Schubert equivalent} if their Schubert completions coincide. 

Schubert equivalent weak topological twin buildings have homeomorphic Schubert varieties, while their topology at infinity may be different; for the purposes of classification it seems unnatural to distinguish between them. 

Moreover we observe the following:
\begin{prop}\label{prop:TTB3}
If $(\Delta, \tau)$ is a weak topological twin building, then its Schubert completion $(\Delta, \tau')$ 
is a topological twin building. 
\end{prop}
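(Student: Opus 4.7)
The plan is to verify each of the four axioms of a topological twin building for $(\Delta, \tau')$. The strategy relies on a single elementary comparison between $\tau$ and $\tau'$: setting $K_w := E_{\leq w}(c_+) \cup E_{\leq w}(c_-)$, directly from the definition one has $\tau \subseteq \tau'$, since for any $U \in \tau$ each intersection $U \cap K_w$ lies in $\tau|_{K_w}$ by the definition of the subspace topology. Combined with the inclusion $\tau'|_{K_w} \subseteq \tau|_{K_w}$ built into the definition of $\tau'$, this yields the equality $\tau|_{K_w} = \tau'|_{K_w}$ for every $w \in W$. Any subset of $\Delta$ contained in some $K_w$---in particular any panel $P_s(c)$, since the Weyl-distance of elements of $P_s(c)$ from the base chambers $c_\pm$ is bounded in Bruhat order---therefore carries the same subspace topology with respect to $\tau$ and $\tau'$.

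Axioms (TTB1) and (TTB3) follow immediately: Hausdorffness is preserved under refinement of a topology, and the direct-limit property is tautological from the definition of $\tau'$. For (TTB4), observe that all panels of $(\Delta, \tau)$ are already $\tau$-compact: the hypothesis (TTB4) for $\tau$ supplies one compact panel of each type, and Proposition~\ref{PanelsHomeo1} together with Lemma~\ref{Tits1}---whose proofs invoke only (TTB1) and (TTB2), and do not rely on (TTB3)---propagates compactness to every panel of the same type. Since each panel lies inside some $K_w$ and $\tau|_{K_w} = \tau'|_{K_w}$, every panel is also $\tau'$-compact.

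Axiom (TTB2) reduces to a soft observation. Fix $c \in \Delta_{\pm}$, $s \in S$, and let $f \colon E_1^*(c) \to P_s(c)$ be the projection, which is $\tau$-continuous by (TTB2) for $\tau$. On the target $P_s(c)$ the topologies $\tau$ and $\tau'$ coincide by the first paragraph; on the source $E_1^*(c)$, since $\tau \subseteq \tau'$, the $\tau'$-subspace topology is finer than the $\tau$-subspace topology. Refining the source topology of a continuous map while keeping the target topology unchanged preserves continuity, so $f$ is $\tau'$-continuous.

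The main subtlety lies in resisting the temptation to re-derive, in the weak setting, global results such as Corollary~\ref{SchubertCompact} or the full form of Proposition~\ref{prop:co-schubert-cell-is-open} which in Section~\ref{section3.1} crucially invoke (TTB3). The plan above sidesteps these difficulties entirely, working only with the purely local identification $\tau|_{K_w} = \tau'|_{K_w}$ and the monotonicity of continuity under refinement of the source topology; no detour through a globalisation step is required.
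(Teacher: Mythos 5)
Your proof is correct and follows essentially the same approach as the paper's: both rest on the observation that $\tau$ and $\tau'$ induce the same subspace topology on each Schubert variety $K_w$, which you state explicitly as a preliminary lemma while the paper invokes it implicitly in each verification. The detour in (TTB4) through Corollary~\ref{PanelsHomeo} to show that all panels are compact is a harmless redundancy---the paper simply shows that the single compact panel of each type supplied by (TTB4) for $\tau$ remains $\tau'$-compact, which is all the axiom requires.
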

\begin{proof} 
Firstly, since $\tau'$ refines $\tau$, the Schubert completion $(\Delta, \tau')$ inherits
(TTB1). Also, (TTB3) holds by definition. As for (TTB4), assume $P \in {\rm Pan}_s(\Delta_{\pm})$ is
compact with respect to $\tau$. Let $\{U_\alpha\} \subset \tau'|_{P}$ be a
covering of $P$ and choose $w$ sufficiently large so that $P \subset E_{\leq w}(c_{\pm})$.
Then $\{U_\alpha\} = \{U_\alpha \cap E_{\leq w}(c_{\pm})\} \subset \tau|_P$,
whence there exists a finite subcovering, showing that $(P, \tau'|_P)$ is
also compact. 

It remains to establish (TTB2).  For this, fix $s \in S$ and $c \in 
\Delta_+ \cup \Delta_-$. Then the map $\phi_{s,c}: E_1^*(c) \to \Delta_+ \cup \Delta_-$ given by $d \mapsto {\rm proj}^*_{P_s(c)}(d)$ takes values in $P_s(c)$. Given base chambers $c_{\pm} 
\in 
\Delta_{\pm}$ we can choose $w \in W$ so that $P_s(c)\subset E := E_{
\leq w}(c_+) \cup E_{\leq w}(c_-)$. We then obtain for all $A \subset  \Delta_+ \cup \Delta_-$,
\begin{equation}\label{SchubertCompletionTrick}
 \phi_{s,c}^{-1}(A)=\phi_{s,c}^{-1}(A \cap E).
\end{equation}
Now, if $U \in \tau'$, then by definition there exists $V \in \tau$ such that $U \cap E = V \cap E$. Since $\phi_{s,c}$ is $\tau$-continuous we then deduce from \eqref{SchubertCompletionTrick} that
\[
 \phi_{s,c}^{-1}(U)=\phi_{s,c}^{-1}(U \cap E)=\phi_{s,c}^{-1}(V \cap E)=\phi_{s,c}^{-1}(V) \in \tau \subset \tau',
\]
which shows $\tau'$-continuity of $\phi_{s,c}$ and thereby establishes (TTB2).
\end{proof}
\begin{cor} Every Schubert equivalence class of weak topological twin buildings has a unique representative which is a topological twin building.
\end{cor}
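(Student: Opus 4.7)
The strategy is to reduce both existence and uniqueness to a single key observation: whenever $(\Delta,\tau_0)$ is a topological twin building (not merely a weak one), one has $\tau_0 = \tau_0'$.

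Granting this, existence is immediate. Starting from an arbitrary weak topological twin building $(\Delta,\tau)$, Proposition~\ref{prop:TTB3} tells us that the Schubert completion $(\Delta,\tau')$ is already a topological twin building, so the observation applied to $\tau_0 := \tau'$ yields $(\tau')' = \tau'$. Hence $(\Delta,\tau)$ and $(\Delta,\tau')$ share the same Schubert completion, i.e.\ they lie in the same Schubert equivalence class, and the latter is a topological twin building. For uniqueness, suppose $(\Delta,\tau_1)$ and $(\Delta,\tau_2)$ are two topological twin buildings in the same Schubert equivalence class, so that $\tau_1' = \tau_2'$. The observation then gives $\tau_1 = \tau_1' = \tau_2' = \tau_2$.

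To establish the key observation, the inclusion $\tau_0 \subseteq \tau_0'$ is automatic from the very definition of $\tau_0'$: if $U \in \tau_0$ then $U \cap (E_{\leq w}(c_+) \cup E_{\leq w}(c_-))$ is open in the subspace topology inherited from $\tau_0$, so $U \in \tau_0'$. For the reverse inclusion, I would invoke (TTB3) strengthened to (TTB3$+$) as in the discussion following Corollary~\ref{TTBKOmega}, which asserts that each half $\Delta_\pm$ carries the direct-limit topology with respect to the filtration $(E_{\leq w}(c_\pm))_{w \in W}$ for \emph{every} choice of base chamber. Consequently, $U \subseteq \Delta_\pm$ is $\tau_0$-open whenever $U \cap E_{\leq w}(c_\pm)$ is open in each Schubert variety for the subspace topology; passing to the direct sum topology on $\Delta_+ \cup \Delta_-$ gives $\tau_0' \subseteq \tau_0$.

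The only delicate point is the implicit assertion that $\tau_0'$ does not depend on the auxiliary choice of base chambers $c_\pm$; this is precisely what (TTB3$+$) provides, and it is the reason why the argument works cleanly for topological twin buildings while being unavailable for general weak ones. No further technical obstacle is anticipated.
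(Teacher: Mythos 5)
Your argument is correct and is exactly what the paper leaves implicit when it presents the corollary as an immediate consequence of Proposition~\ref{prop:TTB3}: the key observation that $\tau_0 = \tau_0'$ for any topological twin building, which rests on (TTB3) (or its equivalent strengthening (TTB3$+$) from the discussion after Corollary~\ref{TTBKOmega}), together with the idempotence of Schubert completion, gives both existence and uniqueness at once. Nothing is missing, and the handling of base-chamber independence via (TTB3$+$) is precisely the point to be careful about.
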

This corollary is the reason why we allow ourselves to include (TTB3) into our definition.
\begin{rem}
It is common practice in homotopy theory to replace a topology by its compactly
generated counterpart. Schubert completion provides a re-topologization
procedure which is similar in flavour. 

However, we should warn the reader that we do not know
whether this procedure preserves the homotopy type. For
example, our proof of the topological Solomon--Tits theorem in Section~\ref{topsoltits} relies crucially on
(TTB3), and it is not clear to us whether this assumption can be dropped.
\end{rem}
Having justified the necessity of our axioms, we should also mention some axioms which we do not assume, but other authors might find desirable. The following is a non-exhaustive list of such properties; throughout we equip the spaces ${\rm Res}_J^{\pm}$ --- and, in particular, the vertex sets $\mathcal V_s^{\pm}$ --- with the quotient topology with respect to the canonical map $\Delta_{\pm} \to {\rm Res}_J^{\pm}$. We then equip the product $ \prod \mathcal V_s^\pm$ with the product topology.
\begin{itemize}
\item[(TTB1$+$)] The vertex sets
$\mathcal V_s^{\pm}$, $s \in S$, are Hausdorff.
\item[(TTB5)] The set $\Delta_1 = \{ (c,d) \in 
(\Delta_+ \times \Delta_{-}) \cup (\Delta_- \times \Delta_+) \mid 
\delta^*(c,d) = 1 \}$ of opposite chambers is open.
\item[(TTB6)] For every $s \in S$ the canonical map 
$\Delta_{\pm} \to \mathcal V_s^{\pm}$  is open.
\item[(TTB6$+$)] For every $J \subset S$ the canonical map 
$\Delta_{\pm} \to {\rm Res}_J^{\pm}$  is open.
\item[(TTB7)] The diagonal embedding (see page \pageref{VertexEmbedding})
\begin{eqnarray*}
\iota : \Delta_\pm & \hookrightarrow & \prod_{s \in S} \mathcal V_s^\pm  \\ c &
\mapsto & \left(R_{S \backslash \{ s \}}(c)\right)_{s \in S}
\end{eqnarray*} 
is open and, hence, a homeomorphism onto its image.
\end{itemize}
For example, Linus Kramer's original definition of a topological twin building\footnote{We note Kramer
formulates these axioms in terms of the simplicial complex approach to
buildings; in view of the examples we have in mind, it appeared convenient to us
to reformulate the theory in the language of chamber systems. A detailled discussion of the two approaches---the simplicial complex one and the chamber system one---can be found in \cite{abramenko-brown}.} involves the axioms (TTB1$+$), (TTB2$+$),  (TTB4$+$) and (TTB7). The relevance of Axiom (TTB7) is that it allows one to pass freely between the simplicial complex picture and the chamber system picture of twin buildings in a topological context. From the simplicial point of view, it seems indeed more natural to start from a topology on the vertices and to induce a topology on chambers, rather than the other way round. On the other hand, for the theory presented here, (TTB7) is not strictly needed. 

The above axioms are actually not quite independent:
\begin{prop}\label{AxiomDependencies} Let $\tau$ be a topology on a twin building $\Delta$.
\begin{itemize}
\item[(i)] (TTB1$+$) implies (TTB1).
\item[(ii)] (TTB2$++$) implies (TTB2$+$) implies (TTB2).
\item[(iii)] (TTB6) and (TTB6$+$) are equivalent.
\item[(iv)]  If $(\Delta, \tau)$ is a topological twin building, then (TTB6) implies that the space ${\rm Res}_J^{\pm}$ (and in particular the vertex spaces $\mathcal V_s$) are T1.
\item[(v)] If $(\Delta, \tau)$ is a topological twin building, then (TTB1$+$) and (TTB6) imply (TTB7).
\end{itemize}
\end{prop}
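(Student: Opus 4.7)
The plan is to address the five parts in order, with (iii) and (v) requiring the most care.

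Parts (i) and (ii) are formalities. For (i), each canonical map $q_s\colon\Delta_\pm\to\mathcal V_s^\pm$ is continuous by construction of the quotient topology, so the diagonal embedding $\iota$ is continuous; moreover $\iota$ is injective since a chamber is uniquely determined by its tuple of corank-one residues. Under (TTB1+) and Tychonoff's theorem, $\prod_{s\in S}\mathcal V_s^\pm$ is Hausdorff, and the continuous injection $\iota$ then forces $\Delta_\pm$ to be Hausdorff as well. For (ii), restricting (TTB2++) to the stratum $\Delta_1\subset\bigsqcup_{w\in W}\Delta_w$ gives (TTB2+), and fixing the first variable in (TTB2+) gives (TTB2), since $E_1^*(c)=\{d:(c,d)\in\Delta_1\}$.

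For (iii), the implication (TTB6+)$\Rightarrow$(TTB6) is the case $J=S\setminus\{s\}$. For the converse, given $J\subset S$ and open $U\subset\Delta_\pm$, I would write the $J$-saturation iteratively as $\bigcup_k U_k$ with $U_0=U$ and $U_{k+1}=\bigcup_{s\in J}\bigcup_{c\in U_k}P_s(c)$, reducing the task to openness of single $s$-panel saturations, i.e., to (TTB6+) for $J=\{s\}$. For the latter, I would use the combinatorial identity $P_s(c)=\bigcap_{s'\neq s}R_{S\setminus\{s'\}}(c)$, which rests on the Coxeter-group identity $\bigcap_{s'\neq s}W_{S\setminus\{s'\}}=W_{\{s\}}$. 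The naive intersection $\bigcap_{s'\neq s}q_{S\setminus\{s'\}}^{-1}(q_{S\setminus\{s'\}}(U))$ is only an open over-approximation of $\bigcup_{c\in U}P_s(c)$ (since a priori different chambers of $U$ may witness the different factors), so the main content of the argument is a witness-selection/pigeonhole step showing that this over-approximation coincides with the $s$-panel saturation after iterated application of (TTB6).

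Part (iv) is essentially a direct consequence of prior work: the fibers of $q_J\colon\Delta_\pm\to{\rm Res}_J^\pm$ are precisely the $J$-residues, which are closed in $\Delta_\pm$ by Lemma~\ref{residueclosed}. Hence every singleton of ${\rm Res}_J^\pm$ has closed $q_J$-preimage and is therefore closed in the quotient topology, which is the T1 property. In fact, (TTB6) plays no essential role here; closedness of residues alone suffices.

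Part (v) is the main obstacle. The plan exploits the $k_\omega$-structure of $\Delta_\pm$ (Corollary~\ref{TTBKOmega}): on each compact Schubert variety $K=E_{\leq w}(c_\pm)$ the restriction $\iota|_K$ is a continuous bijection from a compact space into the Hausdorff product $\prod_s\mathcal V_s^\pm$ (Hausdorff by (TTB1+) and Tychonoff), hence a homeomorphism onto its compact image $\iota(K)$. Given open $U\subset\Delta_\pm$ and $c\in U$, the goal is to produce a basic product-open neighbourhood $W=\prod_s V_s$ of $\iota(c)$ with $\iota^{-1}(W)\subset U$. Locally on $K$, one separates $\iota(c)$ from each $\iota(d)$ with $d\in K\setminus U$ using Hausdorffness of the appropriate vertex space (for $d$, pick $s(d)$ with $q_{s(d)}(d)\neq q_{s(d)}(c)$ and separate); compactness of $K\setminus U$ then reduces this to finitely many separations and yields suitable local $V_s^K$'s. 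The hardest step---and where (TTB6) is essential---is promoting these local constructions to globally valid $V_s$'s: the openness of each $q_s$ provides global open saturations $q_s^{-1}(q_s(U))$ as containment envelopes, and combining these with the local $V_s^K$'s via the Lindel\"of property of $\Delta_\pm$ (again Corollary~\ref{TTBKOmega}) packages the countable collection of local constructions into a single product-open set satisfying the required containment.
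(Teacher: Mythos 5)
Parts (i) and (ii) are correct and run exactly as in the paper. Part (iv) is also correct, and is in fact a genuine simplification of the paper's argument: you only need that each $q_J$-fibre, being a $J$-residue, is closed (Lemma~\ref{residueclosed}), from which it follows immediately that singletons of ${\rm Res}_J^{\pm}$ are closed. The paper instead invokes normality of $\Delta_\pm$ and (TTB6$+$) to produce separating saturations, which is more than is needed; your observation that (TTB6) plays no essential role here is right.

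For (iii) you have a genuine gap: you correctly reduce to openness of single $s$-panel saturations, and you correctly observe that $\bigcap_{s'\neq s} q_{s'}^{-1}\bigl(q_{s'}(U)\bigr)$ is only an over-approximation of $\bigcup_{c\in U}P_s(c)$ because different chambers of $U$ may witness the different factors. But the claimed ``witness-selection/pigeonhole step'' is not carried out, and iterating (TTB6) does not dispel the problem --- each pass runs into exactly the same issue. The over-approximation really can be strict: already in the $A_3$ flag variety over $\mathbb{R}$, take $U=U_1\cup U_2$ to be two small disjoint balls around chambers $(p_1,\ell_1,\pi_1)$ and $(p_2,\ell_2,\pi_2)$ with $\ell_1\neq\ell_2$, $\pi_1\neq\pi_2$, $\ell_2\subset\pi_1$; then any chamber carrying the pair $(\ell_2,\pi_1)$ lies in the intersection but its point-panel misses $U$. (The paper's own one-line argument for this direction asserts precisely the set equality $\pi(U)=\bigcap_j\pi_j(U)$ that you correctly flag as only one inclusion, so the hard part of (iii) is not addressed on either side.) A real proof here would need a mechanism for choosing the witness continuously, e.g.\ via gated residues and the existing continuity of (co)projections.

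For (v) the spirit matches the paper's (Schubert compactness plus Hausdorffness of the $\mathcal V_s$), but the final step is not an argument. Separating $\iota(c)$ from $\iota(K\setminus U)$ inside each Schubert variety $K$ is fine by compactness, but the resulting opens $V_s^K$ may shrink as $K$ grows, and neither Lindel\"of nor (TTB6) shows that they can be packaged into a single product-open neighbourhood; you have not controlled $\bigcap_K V_s^K$. The paper's route is different: it uses that direct limits commute with finite products in the $k_\omega$-category to identify $\iota(\Delta_\pm)$ with $\lim_\to\iota(E_{\leq w}(c))$. The substantive point there is the identity $\iota(\Delta_\pm)\cap\prod_s q_s(E_{\leq w}(c))=\iota(E_{\leq w}(c))$, a combinatorial claim closely analogous to your witness-selection issue; I suggest you prove that identity before anything else, since it is where the content of (v) actually lies.
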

\begin{proof} (i) If (TTB1$+$) holds and $\iota$ denotes the continuous injection from (TTB7), then $\iota(\Delta_\pm) \subset  \prod \mathcal V_s^\pm$ are Hausdorff. Since $\iota$ is continuous, it follows that  $\Delta_\pm$ are Hausdorff as well. 

(ii) and the implication (TTB6+)$\Longrightarrow$(TTB6) in (iii) are obvious. For the converse, let $J = \{s_1, \dots, s_n\}$ and $U \subset \Delta_{\pm}$ open. Denote by $\pi(U)$ the image of $U$ under the projection onto ${\rm Res}_J^{\pm}$ and by $\pi_j(U) \subset {\rm Res}_J^{\pm}$ the preimage of the image of $U$ in $\mathcal V_{s_j}$. Then (TTB6) implies that the $\pi_j(U)$ are open, whence $\pi(U) = \bigcap \pi_j(U)$ is open. 

In order to show (iv), we fix $J \subset S$ and let $R_1$, $R_2$ 
be distinct $J$-residues in $\Delta_{\pm}$. Since $\Delta_{\pm}$ is normal by Proposition~\ref{prop:komega-properties} and
$R_1$, $R_2$ are closed by Lemma~\ref{residueclosed}, there exist open
neighbourhoods $U_1$ and $U_2$ separating $R_1$ and $R_2$ in $\Delta_{\pm}$. By (iii) we know that (TTB6$+$) holds, hence the images $\pi(U_1)$ and $\pi(U_2)$ in  ${\rm Res}_J^{\pm}$ are open. Now $R_1 \in \pi(U_1)$ and $R_2 \not \in \pi(U_1)$ (and vice versa for $U_2$), hence (iv) follows. 

Concerning (v) we first observe that the map $\iota$ is continuous by definition of the quotient topology. By Corollary~\ref{SchubertCompact}, the Schubert varieties $E_{\leq w}(c^{\pm}) \subset \Delta_{\pm}$ are compact, whence mapped homeomorphically by $\iota$ by (TTB1$+$). Now, using (TTB1$+$) and \ref{prop:komega-properties}, each of the spaces $\mathcal V_s$ itself is $k_\omega$. Since in the category of $k_\omega$ spaces direct limits and finite products commute (cf.~\cite[Proposition
3.3]{Gloeckner:2003}, \cite[Proposition~4.7]{final-group-topologies}), the image
of 
$\iota$ is the direct limit of the images of the Schubert varieties, and so the
claim follows.
\end{proof}
There might be further relations between the axioms. For example, we do not know whether the argument in (iv) can be refined to show that (TTB6) implies (TTB1$+$). Also, we suspect that it might be possible to derive (TTB5) from the other axioms (cf. the argument in Proposition \ref{prop:ttb2} and Theorem \ref{thm:top-twin-building}). 
\begin{defin}\label{strongtop} A topological twin building is called a \emph{strong topological twin building} if it satisfies the additional axioms (TTB1$+$), (TTB2$+$),
 (TTB5), (TTB6) --- and hence also (TTB6$+$) and (TTB7).
\end{defin}
Note that in a strong topological twin buildings the residue spaces ${\rm Res}_J^{\pm}$ (and in particular the vertex spaces $\mathcal V_s$) are $k_\omega$, since they are Hausdorff quotients of $\Delta_{\pm}$. Moreover, by (TTB7) we can identify chambers of a strong topological twin building with the corresponding collections of vertices.

\subsection{Algebraic operations} \label{sec:algop}
By
\cite[Proposition~1.1]{Grundhoefer/Knarr/Kramer:1995} punctured panels in a
generalized 
polygon carry a multiplication operation, which can be defined in 
elementary geometric terms, i.e. by intersecting lines and connecting 
points. The following proposition provides an extension to the twin building
case:
\begin{prop}\label{algop}
Let $(\Delta, \tau)$ be a strong topological twin building, let $c_{\pm}$ be opposite 
chambers, let $r,s \in S$ with $3 \leq m_{rs} \leq \infty$, let $0_+ := 
\pro^*_{P_r(c_+)}(c_-)$ and $0_- := \pro^*_{P_s(c_-)}(c_+)$, and let 
$P_r(c_+)^\times := P_r(c_+) \backslash \{ c_+ \}$ and 
$P_s(c_-)^\times := P_s(c_-) \backslash \{ c_- \}$.  

For each choice $1_{-} \in P_s(c_-)^\times \backslash \{ 0_- \}$, there exists
a 
continuous map
\[\bullet: P_r(c_+)^\times \times P_s(c_-)^\times \to P_r(c_+)^\times\]
with the following properties:
\begin{itemize}
\item[(i)] For all $c \in \Delta^+$ we have $c \bullet 0_- = 0_+$ and $c
\bullet 1_- = c$.
\item[(ii)] For every $c' \in P_s(c_-)^\times \setminus\{0_-\}$ the map $c
\mapsto c 
\bullet c'$ is a homeomorphism of $P_r(c_+)^\times$.
\end{itemize}
\end{prop}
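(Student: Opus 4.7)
The plan is to reduce to a rank-$2$ situation and then invoke, or directly adapt, the classical construction for compact generalized polygons from \cite{Grundhoefer/Knarr/Kramer:1995}.

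Denote by $R_{\pm}$ the $\{r,s\}$-residues containing $c_{\pm}$. By Proposition~\ref{ResiduesTTB} the pair $(R_+, R_-)$ with the restricted distance and codistance is itself a topological twin building of type $(\langle r, s\rangle, \{r, s\})$, and all chambers, panels and co-projections appearing in the statement lie entirely within $R_+ \cup R_-$. Hence I may replace $\Delta$ by this rank-$2$ sub-building and assume $S = \{r, s\}$ throughout.

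Consider first the spherical case $m_{rs} < \infty$. By Corollary~\ref{SchubertCompact} both halves $\Delta_{\pm}$ are compact, and by \cite[Proposition~1]{tits-twin-buildings} the twin structure is equivalent to a spherical structure on a compact generalized $m_{rs}$-gon. Hence \cite[Proposition~1.1]{Grundhoefer/Knarr/Kramer:1995} directly provides a continuous multiplication $\bullet$ with properties (i) and (ii). Since $(\Delta,\tau)$ is assumed \emph{strong}, axiom (TTB2$+$) holds, so the finitely many co-projection steps out of which $\bullet$ is built are jointly, not merely separately, continuous.

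In the infinite case $m_{rs} = \infty$ I would construct $\bullet$ by a direct adaptation. For $c' \in P_s(c_-)^\times \setminus \{0_-\}$ the chamber $c'$ is opposite $c_+$, so by \cite[Proposition~5.179(1)]{abramenko-brown} the pair $(c_+, c')$ spans a unique twin apartment $\Sigma_{c'}$. Within $\Sigma_{c'}$ there is a canonical chain of opposite panel pairs connecting $P_r(c_+)$ to $P_s(c')$, and iterating the co-projection homeomorphisms of Proposition~\ref{PanelsHomeo1} along this chain yields a homeomorphism $\phi_{c'}\colon P_r(c_+) \to P_r(c_+)$. The calibration chamber $1_-$ is used to normalize the construction so that $\phi_{1_-} = \id$. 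Setting $c \bullet c' := \phi_{c'}(c)$ for $c' \neq 0_-$ and $c \bullet 0_- := 0_+$ then gives the desired operation; joint continuity on the open locus $\{c' \neq 0_-\}$ follows from (TTB2$+$).

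The main obstacle is to verify joint continuity of $\bullet$ across the singular value $c' = 0_-$, where the apartment $\Sigma_{c'}$ and the defining chain of co-projections degenerate. Here I would exploit that $0_-$ is by definition the unique chamber of $P_s(c_-)$ at maximal codistance from $c_+$, together with the compactness of panels (\refttb{item:ttb-4}), the openness of the opposition locus from Proposition~\ref{prop:co-schubert-cell-is-open}, and Hausdorffness, to conclude that $\phi_{c'}(c) \to 0_+$ uniformly in $c \in P_r(c_+)^\times$ as $c' \to 0_-$ in the compact panel $P_s(c_-)$. Property (i) then follows by direct evaluation within the calibrated apartment, and (ii) is immediate since each $\phi_{c'}$ with $c' \neq 0_-$ is by construction a composition of panel homeomorphisms.
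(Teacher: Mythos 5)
Your reduction to the $\{r,s\}$-residue via Proposition~\ref{ResiduesTTB} and your treatment of the spherical case $m_{rs}<\infty$ by appealing to \cite[Proposition~1.1]{Grundhoefer/Knarr/Kramer:1995} are both in order --- indeed the paper explicitly notes that this reduction to compact generalized polygons works in the two-spherical setting. However, the paper deliberately chooses \emph{not} to split cases, precisely because the case $m_{rs}=\infty$ (tree residues) requires an independent argument, and that is where your proposal has a genuine gap.

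Concretely, two things are missing in your infinite case. First, the construction is underspecified: you speak of ``a canonical chain of opposite panel pairs connecting $P_r(c_+)$ to $P_s(c')$'' inside the apartment $\Sigma_{c'}$, but $P_r(c_+)$ and $P_s(c')$ have different types, and in a thin twin building of infinite dihedral type there is no longest element and hence no canonical finite chain. You also say the choice of $1_-$ ``is used to normalize the construction so that $\phi_{1_-}=\id$'' without saying how. Without an explicit formula --- the paper uses the four-step composition $\pro^*_{P_r(c_+)}\circ\pro^*_{P_r(y)}\circ\pro^*_{P_r(d_+)}\circ\pro^*_{P_r(1_-)}$ with $d_+=\pro^*_{P_s(c_+)}(c_-)$ --- one cannot verify (i), (ii), nor even that the image lands in $P_r(c_+)^\times$. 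Second, and more importantly, continuity at the singular value $c'=0_-$ is the crux of the proposition, and you assert the needed uniform convergence $\phi_{c'}(c)\to 0_+$ from ``compactness of panels, openness of the opposition locus, and Hausdorffness'' without an argument; these facts do not by themselves deliver joint continuity of a composition of co-projections across a locus where the domain of (TTB2$+$) degenerates. The paper resolves this by computing the codistance spectrum $\delta^*(c_+,G)=\{1,r,s\}$ of the intermediate image $G=g(P_r(c_+)^\times\times P_s(c_-)^\times)$, covering $G$ by the two open sets $U_1=G\cap(E_1^*(c_+)\cup E_1^*(0_+))$ and $U_2=G\cap E_1^*(d_+)$, and proving the identity $\pro^*_{P_r(c_+)}(e)=\pro^*_{P_r(c_+)}(\pro^*_{P_s(e)}(d_+))$ on $U_2$; this alternative description of the final projection is what makes the map continuous near $c'=0_-$. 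Your sketch contains no substitute for this step, so the proof is incomplete where it matters most.
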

In case $(\Delta, \tau)$ is two-spherical, one reduces the proof of Proposition~\ref{algop} to the situation of generalized polygons as follows: Suppose $(\Delta, \tau)$ is a two-spherical topological twin building satisfying (TTB7). Then, by Corollary~\ref{SchubertCompact}, the residues of rank two are compact polygons in the sense of \cite{kramer-polygons}, and one can use the argument presented there. Thus, in this case, in fact axioms (TTB1--4) and (TTB7) suffice in order to deduce the conclusion of Proposition~\ref{algop}. 

Below we present a proof that does not assume two-sphericity, but uses axioms (TTB2$+$), (TTB5) and (TTB6$+$) in addition to axioms (TTB1--4). 

The following lemma extracts the place where these axioms enter the picture:

\begin{lem}\label{TTB2'} For every $s \in S$ the map 
\begin{eqnarray*}
\Delta_{\langle s \rangle} & \to & \Delta_+ \cup \Delta_- \\
\{ P,c \} & \mapsto & \pro^*_P(c)
\end{eqnarray*}
is continuous, where $\Delta_{\langle s \rangle} := \{(P, c) \in {\rm
Pan}_s(\Delta_+) \times
\Delta_{-} \mid \delta^*(P,c) \in \langle s \rangle\} \cup \{(c,P) \in 
\Delta_+ \times {\rm Pan}_s(\Delta_-) \mid \delta^*(c,P) \in 
\langle s \rangle\}$.
\end{lem}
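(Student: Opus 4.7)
The plan is to exhibit the map $(P,c)\mapsto\pro^*_P(c)$ as a factorization of a continuous map through an open quotient map, using axioms (TTB2$+$), (TTB5), and (TTB6$+$). By the built-in symmetry of $\Delta_{\langle s\rangle}$, I would only handle the ``positive'' component
\[
U_+ := \{(P,c) \in {\rm Pan}_s(\Delta_+) \times \Delta_- \mid \delta^*(P,c)\in\langle s\rangle\};
\]
the negative component is treated by the same argument with roles of $\Delta_+$ and $\Delta_-$ swapped.

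The first step is a combinatorial observation. For $(P,c)\in U_+$ the set $\delta^*(P,c)$ is exactly $\{1,s\}$: it is always a two-element set of the form $\{w,ws\}$, and containment in $\langle s\rangle = \{1,s\}$ forces equality. By (Tw2) there is a unique chamber $p^\star\in P$ with $\delta^*(p^\star,c)=s$ (namely $\pro^*_P(c)$, the Bruhat-maximum), while every other chamber of $P$ is opposite $c$. In particular, any $q\in P\setminus\{p^\star\}$ satisfies $P_s(q)=P$ and $\delta^*(q,c)=1$, so that $\pro^*_P(c) = \pro^*_{P_s(q)}(c)$.

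Next I would introduce
\[
\widetilde U_+ := \{(q,c)\in\Delta_+\times\Delta_-\mid \delta^*(q,c)=1\},
\]
which is open in $\Delta_+\times\Delta_-$ by (TTB5). Writing $\pi\colon\Delta_+\to{\rm Pan}_s(\Delta_+)$ for the canonical quotient (open by (TTB6$+$)), the product map $\Pi := \pi \times \id$ is open. By the combinatorial observation, $\Pi(\widetilde U_+) = U_+$ (surjectivity uses thickness, which guarantees that $P\setminus\{p^\star\}$ is non-empty). Restricting, $\Pi|_{\widetilde U_+}\colon\widetilde U_+\to U_+$ is a continuous open surjection: any relatively open $O\subseteq\widetilde U_+$ is open in the ambient product by (TTB5), so $\Pi(O)$ is open in ${\rm Pan}_s(\Delta_+)\times\Delta_-$ and, being contained in $U_+$, also open in $U_+$. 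Hence $\Pi|_{\widetilde U_+}$ is a quotient map.

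Finally, I would define $F\colon\widetilde U_+\to\Delta_+$ by $F(q,c):=\pro^*_{P_s(q)}(c)$. This is continuous as the restriction of the map $p_s$ from axiom (TTB2$+$) to $\widetilde U_+\subseteq\Delta_1$. Since $F(q,c)$ depends only on $(P_s(q),c)$, the map $F$ descends through $\Pi|_{\widetilde U_+}$ to the map $\pro^*\colon U_+\to\Delta_+$, which is therefore continuous by the universal property of quotient maps. The most delicate point will be verifying openness of the restricted map $\Pi|_{\widetilde U_+}$ onto $U_+$; here the role of (TTB5) is indispensable, since without openness of $\widetilde U_+$ in the ambient product one could not promote relative openness of subsets of $\widetilde U_+$ to openness of their images in $U_+$.
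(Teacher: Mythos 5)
Your proof is correct and follows essentially the same route as the paper: realize the target map as the descent of the continuous map $p_s$ from (TTB2$+$) along the restriction to $\Delta_1$ of the open canonical map to the panel space, which is an open quotient by (TTB5) and (TTB6$+$). The only incidental difference is that you argue surjectivity via thickness (indeed only two chambers per panel are needed), whereas the paper invokes (Tw3); both work, and your more detailed bookkeeping of the codistance set $\{1,s\}$ is a fleshed-out version of what the paper leaves implicit.
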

\begin{proof}
The quotient map $(\Delta_{+} \times \Delta_-) \cup (\Delta_- \times
\Delta_+) \to ({\rm Pan}_s(\Delta_+) \times \Delta_{-}) \cup 
(\Delta_+ \times {\rm Pan}_s(\Delta_-))$ is open by 
(TTB6+), and so is its restriction 
$\Delta_1 \to \Delta_{\langle s \rangle}$ to the open subset $\Delta_1$ 
by (TTB5). By \reftw{item:tw-3} this restricted map is 
surjective, so the claim follows from (TTB2$+$).
\end{proof}
The remainder of the proof only uses (TTB2$+$) and the standard axioms:

\begin{proof}[Proof of Proposition~\ref{algop}]
Note that $P_r(1_-)$ is opposite $P_r(c_+)$. 
For
$d_+ := \pro^*_{P_s(c_+)}(c_-)$, the panel $P_r(d_+)$ is opposite $P_r(1_-)$,
because $1_- \neq c_- = \proj^*_{P_s(c_-)}(d_+)$, i.e., $1_-$ is opposite $d_+$.
Define
\begin{eqnarray*}
g : P_r(c_+)^\times \times P_s(c_-)^\times & \to & \Delta_- \\
(x,y) & \mapsto & \pro^*_{P_r(y)} \circ  \pro^*_{P_r(d_+)} \circ
\pro^*_{P_r(1_-)}(x), \\
\bullet: P_r(c_+)^\times \times P_s(c_-)^\times & \to & P_r(c_+)^\times \\
(x,y) & \mapsto & \pro^*_{P_r(c_+)} \circ  \pro^*_{P_r(y)} \circ  
\pro^*_{P_r(d_+)}
\circ
  \pro^*_{P_r(1_-)}(x) \\
& = & \pro^*_{P_r(c_+)}(g(x,y)).
\end{eqnarray*}
Note that the fact $x \bullet y \in P_r(c_+)^\times$ requires a proof that we
will provide below.

As $P_r(d_+)$ is indeed opposite to $P_r(y)$ for all $y \in
P_s(c_-)^\times = P_s(c_-) \backslash \{\proj^*_{P_s(c_-)}(d_+)\}$, by
Proposition~\ref{PanelsHomeo1} and Lemma~\ref{doubleprojection} the map
$g(\cdot, y): P_r(c_+)^\times \to P_r(y) \backslash \{\pro^*_{P_r(y)}(d_+) \}$
is a homeomorphism for every 
$y \in
P_s(c_-)^\times$.

The codistances between elements of $P_r(0_-)$ and of $P_r(c_+)$
lie in the set $\{ s, rs, sr, rsr \}$. Since $rsr \neq s$, the panels $P_r(0_-)$
and $P_r(c_+)$ are not parallel and, thus, the chamber $x \bullet 0_- \in
P_r(c_+)$ is independent of $x$. In fact, $x \bullet 0_-$ equals the unique
element in 
$P_r(c_+)$ for which there exists a chamber in $P_r(0_-)$ at codistance $rsr$,
i.e., $x \bullet 0_- = 0_+ \in P_r(c_+)^\times$ for all $x \in P_r(c_+)^\times$.

For $y \in P_s(c_-)^\times \backslash \{ 0_- \}$ the panels $P_r(y)$ and
$P_r(c_+)$ are parallel, and so Lemma~\ref{doubleprojection} implies
$\pro^*_{P_r(c_+)} \circ  \pro^*_{P_r(y)} \circ  
\pro^*_{P_r(d_+)}
\circ
  \pro^*_{P_r(1_-)}(c_+) = \pro^*_{P_r(c_+)} \circ  \pro^*_{P_r(y)} (d_+) =
c_+$.
We conclude that for $x \in P_r(c_+)^\times$ indeed $x \bullet y \in
P_r(c_+)^\times$ and that $P_r(c_+)^\times \to P_r(c_+)^\times : x \mapsto x
\bullet y$ is a homeomorphism.
 
We compute $g(x,1_-) =  \pro^*_{P_r(1_-)} \circ  \pro^*_{P_r(d_+)} \circ
\pro^*_{P_r(1_-)}(x) = \pro^*_{P_r(1_-)} (x)$, whence $x \bullet 1_- =
\pro^*_{P_r(c_+)} \circ \pro^*_{P_r(1_-)} (x) = x$.

It remains to prove continuity of $\bullet$. Continuity of $g$ 
follows
immediately from (TTB2$+$) and Lemma \ref{TTB2'}. It thus remains to show that
$\pro^*_{P_r(c_+)}$ is continuous on $G := g(P_r(c_+)^\times \times 
P_s(c_-)^\times)$. To this end
we 
claim that the following hold:
\begin{itemize}
\item[($\dagger$)] $\delta^*(c^+, G) = \{1, r, s\}$;
\item[($\dagger\dagger$)] $e \in G \cap E_1^*(d_+) \Rightarrow
\pro^*_{P_r(c_+)}(e) =
 \pro^*_{P_r(c_+)}(\pro^*_{P_{s}(e)}(d_+))$.
\end{itemize}
Let us first show that these claims imply the proposition: By ($\dagger$)
the sets $U_1, U_2$ given by
\[U_1 := G \cap (E_1^*(c_+) \cup E_1^*(0_+)), \quad U_2 := G \cap
E_1^*(d_+)\]
form an open covering of $G$. It is immediate from (TTB2+) that
$\pro^*_{P_r(c_+)}$ is
 continuous on $U_1$ and the map $e \mapsto
\pro^*_{P_r(c_+)}(\pro^*_{P_{s}(e)}(d_+))$ is continuous on $U_2$. Thus
continuity of 
$\pro^*_{P_r(c_+)}$ on all
of 
$G$ follows from ($\dagger\dagger$), and we are left with verifying our
claims. 
As far as $(\dagger)$ is concerned, let $(x,y) \in P_r(c_+)^\times \times
P_s(c_-)^\times$
 and assume $y \neq 0_-$. Since $\delta^*(c_+, y)  = 1$ and $\delta(y,
g(x,y)) \in \langle r \rangle$ it then follows that $\delta^*(c_+, g(x,y)) 
\in
\langle r 
\rangle$. On the other hand, if $y = 0_-$, then 
\[\pro^*_{P_r(c_+)}(g(x,y))= x \bullet y = 0_+,\]
whence $g(x,y) \neq \pro^*_{P_r(0_-)}(0_+)$ and $\delta^*(c_+,
g(x,y)) = s$. To prove ($\dagger \dagger$) we fix $e \in U_2$ and abbreviate 
$a
:= \pro^*_{P_{s}(e)}(d_+)$. 
Since $\delta^*(e, c_+) \in \langle s \rangle$, we see that $b :=
\pro^*_{P_r(c_+)}(e
)$ is the unique element in $P_r(c_+)$ satisfying $\delta^*(b, e) 
\in r\langle s
\rangle$. On the other hand we had assumed $\delta^*(d_+, e) = 1$, which 
implies
$\delta^*(d_+, a) = s$. This in turn implies $\delta^*(c_+, a) = 1$, hence
$\delta^*(\pro^*_{P_r(c_+)}(a), a) = r$ and finally 
$\delta^*(\pro^*_{P_r(c_+)}(a), e) \in
r \langle s \rangle$, showing $\pro^*_{P_r(c_+)}(a)= b$ and finishing the proof.
\end{proof}

\section{Classification methods}\label{SecClass}

\subsection{The Moufang condition}

In this section we approach the classification of topological twin buildings. While abstract rank one buildings without further structure are
not 
of any interest, various obstacles towards a complete classification become apparent in the context of rank two (twin) buildings of irreducible type. 

Firstly, there is an apparent 
difference between twin trees and spherical rank two (twin) buildings: While the 
latter admit a unique twinning by \cite[Proposition~1]{tits-twin-buildings}, the same pair of trees may instead admit many 
different twinnings by \cite[(1.1)]{Ronan/Tits:1999}.

 In order to avoid problems
arising from such
non-uniqueness of twinnings, we restrict our attention to 
buildings of two-spherical type $(W,S)$. This means that all rank two 
residues are assumed to be spherical or, equivalently, that there are no tree
residues. 

A second obstacle is that the automorphism group of a given compact projective plane (e.g., parametrized by a locally compact connected topological ternary field) may be 
small, so that a classification seems impossible. Thus, already in the 
classification of compact projective planes one has to assume some 
homogeneity condition; cf.\ \cite{salzmann}.

In order to obtain a class of topological twin buildings amenable 
to classification we therefore further restrict ourselves to those that admit spherical rank two 
residues which satisfy the \emph{Moufang condition}. We refer the reader 
to \cite{TitsWeiss} for both an introduction to and a complete classification of
such {\em Moufang polygons}. 
\begin{defin}
A topological twin building is called a \emph{Moufang topological twin building} if all its rank two residues are spherical and Moufang.
\end{defin}
The hypotheses that all residues of rank two be spherical and Moufang allow one to apply the local-to-global machinery by M\"uhlherr and Ronan \cite{Muehlherr/Ronan:1995}.
Note that the Moufang condition depends only on the underlying {abstract} twin building, hence guarantees the existence of many {\em abstract} automorphisms, but a priori not necessarily of any {\em continuous} automorphisms. 

It turns out, however, that the topological automorphism group of a Moufang topological twin building is automatically highly transitive; this follows from the following argument which was pointed out to us by
Linus Kramer.
\begin{prop}\label{toptrans}
Let $\Delta$ be a Moufang topological twin building. Then 
the topological automorphism group ${\rm Aut}(\Delta)$ admits a subgroup with an RGD system that acts strongly transitively on $\Delta$, i.e., transitively on the set of pairs of opposite chambers of $\Delta$.
\end{prop}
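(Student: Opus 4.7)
The plan is to combine the abstract Moufang-extension theorem of M\"uhlherr--Ronan \cite{Muehlherr/Ronan:1995} with the topological local-to-global principle of Theorem~\ref{LTG}. By the former, a two-spherical twin building whose rank-two residues are all Moufang is itself Moufang in the strong sense: there is a family $\{U_\alpha\}_{\alpha \in \Phi}$ of root subgroups inside the abstract automorphism group of $\Delta$, indexed by the twin roots of $\Delta$, which, together with a suitable torus $T$, forms an RGD system for the subgroup $G_0 := T\cdot \langle U_\alpha \mid \alpha \in \Phi\rangle$. Strong transitivity of $G_0$ on pairs of opposite chambers of the abstract twin building is then a standard consequence of the RGD axioms, realized through the twin $BN$-pair of Proposition~\ref{prop:rgd-twin-tits} (cf.\ \cite[Chapter~8]{abramenko-brown}).

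The crux is to upgrade each abstract $u \in U_\alpha$ to a homeomorphism of $\Delta$, so that $G_0 \subseteq {\rm Aut}(\Delta)$. By Corollary~\ref{AutoContinuity}, it suffices, for each $s \in S$, to exhibit a single panel of type $s$ on which $u$ acts continuously. If some type-$s$ panel lies entirely inside the half-apartment $\alpha$, then $u$ fixes it pointwise and continuity is automatic. Otherwise, any type-$s$ panel is contained in some rank-two spherical residue $R$ of type $\{s,t\}$ which meets the wall of $\alpha$; on such a residue $u|_R$ is an element of a root group of the abstract Moufang polygon $R$. By Proposition~\ref{ResiduesTTB} and Corollary~\ref{SchubertCompact}, $R$ inherits from $\Delta$ the structure of a compact topological Moufang polygon, and the classical topology of such polygons (cf.\ \cite{kramer-polygons, Grundhoefer/Knarr/Kramer:1995}) ensures that its abstract root group elements act by homeomorphisms. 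A self-contained alternative argument proceeds via Proposition~\ref{algop}: any such $u$ acts on a punctured panel of $R$ as a translation with respect to the continuous multiplication $\bullet$, hence continuously.

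Once each $U_\alpha$ consists of homeomorphisms of $\Delta$, the entire group $G_0$ lies in ${\rm Aut}(\Delta)$, and its RGD system provides the desired structure. The main obstacle is the panel-level continuity in compact Moufang polygons; all else is formal. The local-to-global principle of Theorem~\ref{LTG} promotes panel-level continuity to global continuity, Corollary~\ref{AutoContinuity} then produces topological automorphisms, and the classical RGD machinery packages the resulting continuous root group elements into the subgroup claimed in the proposition.
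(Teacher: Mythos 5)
Your proposal follows essentially the same route as the paper: use the M\"uhlherr--Ronan extension theorem (the paper cites \cite[Proposition~8.19 and Theorem~8.81]{abramenko-brown}, which packages the same content) to obtain a centred RGD system acting strongly transitively on the abstract twin building, then invoke Corollary~\ref{AutoContinuity} to upgrade each root group element to a homeomorphism by exhibiting, for every type $s$, a single $s$-panel on which it acts continuously. The key observation you identify --- that a root group element fixes pointwise any $s$-panel lying in the interior of the corresponding root --- is exactly what the paper uses.

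The main difference is that the paper asserts (correctly) that for every root $\alpha$ and every type $s$ there \emph{always} exists an $s$-panel fixed pointwise by $U_\alpha$, so your second case never arises and the case split is unnecessary. This is a short combinatorial fact about roots in Coxeter complexes without isolated vertices: one can find a chamber in the interior of $\alpha$ all of whose panels lie inside $\alpha$. Your fallback argument is therefore moot, and it is also the weakest link of your write-up: appealing to ``the classical topology of compact Moufang polygons'' to conclude that an abstract root group element of a rank-two residue acts continuously is precisely a rank-two instance of the statement being proved, so without a specific citation that handles both the connected and the totally disconnected case it risks circularity. The alternative via Proposition~\ref{algop} faces a similar issue, since you would still need to identify $u$ with a $\bullet$-translation in a continuous way. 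Dropping the fallback and noting instead that a fixed panel of every type always exists gives the paper's cleaner one-case argument.
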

\begin{proof} By \cite[Proposition~8.19 and Theorem~8.81]{abramenko-brown} 
there exists a group with centred RGD system which acts strongly transitively on the abstract twin building underlying $\Delta$.
We claim that this action is actually by homeomorphisms. In order to prove this it suffices 
to show that each of the root groups acts by homeomorphism. However, for each root group there exists a panel of each type on which it acts trivially. This yields the desired conclusion by Corollary~\ref{AutoContinuity}.
\end{proof}

\subsection{Topological Moufang foundations}
Under various conditions, Moufang twin buildings can be classified by local 
data, so-called \emph{Moufang foundations}, see 
\cite{muehlherr-locally-split}, \cite{MvM}, \cite{Ronan/Tits:1987}. A topological analogue of 
foundations is provided by the following definition:
\begin{defin}\label{DefTMF}
Let $(W, S)$ be a two-spherical Coxeter system and denote by $E \subset
\binom{S}{2}$ the set of edges in the Coxeter graph of $(W,S)$. A
\emph{topological Mou\-fang foundation} of type $(W,S)$ is a triple \[\mathcal F
= (\{\Delta_J\,|\, J \in E\}, \{c_J\,|\, J \in E\}, \{\theta_{jik}\,|\, \{i,j\},
\{j, k\} \in E \})\]with the following properties:
\begin{itemize}
\item[(TMF1)] Each $\Delta_J$ is a topological Moufang polygon and $c_J \in
\Delta_J$.
\item[(TMF2)] $\theta_{ijk}: {\rm Pan}_{j}(c_{i, j}) \to {\rm Pan}_{j}(c_{j,k})$
is a base-point preserving homeomorphism and an isomorphism of Moufang sets.
\item[(TMF3)] The $\theta_{ijk}$ satisfy the cocycle condition $\theta_{kjl}
\circ \theta_{ijk} = \theta_{ijl}$. 
\end{itemize}
\end{defin}
Key examples of topological Moufang foundations arise from Moufang 
topological twin buildings: 
\begin{ex}\label{ExCollapse} Let $\Delta$ be a Moufang topological 
twin building and $c \in \Delta_+ \cup \Delta_-$ a base chamber. We then 
obtain a topological Moufang 
foundation by setting $\Delta_J :={\rm Res}_J(c)$, $c_J := c$ and 
$\theta_{jik}$ the appropriate (co-)restriction of the identity map of $\Delta$.  
\end{ex}
The topological Moufang foundation from Example \ref{ExCollapse} will be denoted $\mathcal F(\Delta, c)$ and  called the \emph{collapse} of $\Delta$ along $c$. By Proposition~\ref{toptrans}, up to isomorphism the foundation $\mathcal F(\Delta, c)$ associated with a
Moufang topological twin building only depends on $\Delta$.

We thus denote its
isomorphism class by $[\mathcal F(\Delta)]$.
\begin{defin} A topological twin building is said to \emph{globalize} a 
topological Moufang foundation $\mathcal F$ if there exists $c \in \Delta_+ 
\cup \Delta_-$ with $\mathcal F(\Delta, c) \cong \mathcal F$; the foundation is
then called \emph{integrable}.
\end{defin}

If one forgets about topologies in Definition \ref{DefTMF} then one obtains 
the definition of an (abstract) Moufang foundation. Under our standing 
two-sphericity assumption an integrable Moufang foundation determines the 
ambient twin building uniquely, provided the Coxeter diagram of the underlying 
type $(W,S)$ has no loops, see \cite[p.~394]{MvM} using \cite[Theorem~1.3]{Muehlherr/Ronan:1995} and \cite[Lemmas~5.1~and~5.2]{Ronan:2000}.\label{discussion} 

Theorem \ref{LTG} allows one to promote this statement to the 
following topological version.  

\begin{thm}\label{ClassificationMain}
Let $(W,S)$ be a (two-spherical) Coxeter system whose associated Coxeter diagram is a tree. Then
every Moufang topological twin building of type $(W,S)$ is uniquely determined by
its topological foundation.
\end{thm}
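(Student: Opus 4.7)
Let $\Delta^{(1)}$ and $\Delta^{(2)}$ be Moufang topological twin buildings of type $(W,S)$ with isomorphic topological foundations. After choosing suitable base chambers $c^{(j)}\in\Delta^{(j)}$ (possible by Proposition~\ref{toptrans}, which renders $\mathcal F(\Delta^{(j)},c^{(j)})$ independent of $c^{(j)}$ up to isomorphism), fix a topological foundation isomorphism
\[\Theta\colon \mathcal F(\Delta^{(1)},c^{(1)})\to\mathcal F(\Delta^{(2)},c^{(2)}),\]
consisting of basepoint-preserving homeomorphisms $\Theta_J\colon R_J(c^{(1)})\to R_J(c^{(2)})$ for every edge $J$ of the Coxeter diagram, compatible with the gluing identifications $\theta_{ijk}$. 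The goal is to promote $\Theta$ to an isomorphism of topological twin buildings.

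The first step is to invoke the abstract classification of two-spherical Moufang twin buildings by their foundations: since the Coxeter diagram is a tree, and hence in particular contains no loop, the integrable abstract Moufang foundation underlying $\mathcal F(\Delta^{(j)},c^{(j)})$ determines $\Delta^{(j)}$ uniquely as an abstract twin building (cf.\ the discussion on page~\pageref{discussion} and \cite[Theorem~1.3]{Muehlherr/Ronan:1995}, \cite[Lemmas~5.1~and~5.2]{Ronan:2000}). The uniqueness part of that result, applied to the abstract isomorphism underlying $\Theta$, yields an isomorphism $\phi\colon\Delta^{(1)}\to\Delta^{(2)}$ of the underlying abstract twin buildings whose restriction to each rank two residue $R_J(c^{(1)})$ coincides with $\Theta_J$.

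It remains to verify that this abstract isomorphism is automatically a homeomorphism, and this is where the local-to-global principle Theorem~\ref{LTG} enters: it suffices to check that the induced map
\[\mathrm{type}_{c^{(1)}}(\phi)\colon \bigcup_{s\in S} P_s(c^{(1)})\;\to\;\bigcup_{s\in S} P_s(c^{(2)})\]
is continuous. For any $s\in S$, the standing hypothesis that the Coxeter diagram has no isolated vertex (Section~\ref{section3.1}) produces some $t\in S$ with $\{s,t\}\in E$, and then $P_s(c^{(1)})\subset R_{\{s,t\}}(c^{(1)})$. By the previous step, $\phi$ restricts on this rank two residue to the homeomorphism $\Theta_{\{s,t\}}$, hence in particular restricts to a homeomorphism of $P_s(c^{(1)})$ onto $P_s(c^{(2)})$. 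Thus $\mathrm{type}_{c^{(1)}}(\phi)$ is continuous, and Theorem~\ref{LTG} delivers continuity of $\phi$. Running the same argument for $\phi^{-1}$, whose restriction to each such residue is the homeomorphism $\Theta_{\{s,t\}}^{-1}$, yields continuity of $\phi^{-1}$, so that $\phi$ is a homeomorphism.

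The main obstacle is the first step. One does not only need the \emph{existence} of some abstract isomorphism between $\Delta^{(1)}$ and $\Delta^{(2)}$, but an isomorphism that actually \emph{realises} the prescribed foundation isomorphism $\Theta$ on each rank two residue through the base chamber; this compatibility is the essential content of the abstract uniqueness statement in the tree case and is precisely where the hypothesis on the Coxeter diagram is used. Once this has been extracted from the M\"uhlherr--Ronan machinery, the topological upgrade via Theorem~\ref{LTG} is routine, since the very axioms of a topological Moufang foundation encode exactly the local homeomorphism data required by the local-to-global principle.
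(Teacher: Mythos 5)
Your proof is correct and takes essentially the same route as the paper: extend the topological foundation isomorphism to an isomorphism of abstract twin buildings via the M\"uhlherr--Ronan local-to-global machinery (using the tree hypothesis), then apply Theorem~\ref{LTG} to both the isomorphism and its inverse to conclude it is a homeomorphism. You merely spell out in more detail why $\mathrm{type}_{c}(\phi)$ is continuous (no isolated vertices plus the rank two foundation homeomorphisms), which the paper leaves implicit.
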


We stress that we do not claim to be contributing to the solution of the problem that certain abstract Moufang foundations be integrable. We simply state that an {\em abstractly integrable} Moufang foundation endowed with a topology as in Definition~\ref{DefTMF} uniquely determines the topology on its (uniquely determined) twin building.

\begin{proof} By the aforementioned local-to-global machinery any continuous isomorphism
of foundations extends to an isomorphism of abstract twin buildings. This
isomorphism is continuous, since its type is continuous (see Theorem \ref{LTG}).
Applying the same argument to its inverse we see that it is a homeomorphism.
\end{proof}
In the sequel we will say that a Moufang topological building is \emph{of tree
type} if it is of type $(W,S)$ and the Coxeter diagram of $(W,S)$ is a tree. We
use a similar terminology concerning foundations. By means of Theorem
\ref{ClassificationMain} the classification of Moufang topological twin
buildings of tree type is reduced to the following two problems:
\begin{itemize}
\item[(1)] Classify topological Moufang foundations of tree type.
\item[(2)] Decide which of these foundations are integrable.
\end{itemize}
For illustration we will carry out this programme for the class of split Moufang topological
twin buildings in the next section.

\subsection{Split foundations and Dynkin diagrams}\label{SecDynkin}
Throughout this section let $k$ be a local field, i.e., a non-discrete locally
compact
$\sigma$-compact Hausdorff topological field.  Recall that an archimedean local field is isomorphic to either $\mathbb{R}$ or
$\mathbb{C}$, whereas the non-archimedean local fields of characteristic $p$,
respectively $0$, are fields of the form
$\mathbb{F}_q((t))$ where $\F_q$ is a finite field, respectively finite extensions of $\mathbb{Q}_p$ for some prime $p$ (\cite[I, \S3]{weil:1995}).
\begin{defin}\label{DefSplitFoundation}
A topological Moufang foundation  $\mathcal F = (\{\Delta_J\,|\, J \in E\}, 
\{c_J\,|\, J \in E\}, \{\theta_{jik}\,|\, \{i,j\}, \{j, k\} \in E \})$ is 
called \emph{$k$-split} if each rank one residue of each $\Delta_J$ (equipped 
with the induced topology) is isomorphic as a topological Mou\-fang set to the
projective line over $k$ in its natural topology. A Moufang topological twin
building is called \emph{$k$-split} if some (hence any) of its foundations is
$k$-split.
\end{defin}
We remark that a foundation $\mathcal F$ as above is $k$-split if and only if 
all the $\Delta_J$ are isomorphic as compact polygons to either
\begin{itemize}
\item[(S-3)] the compact projective plane over $k$;
\item[(S-4)] the compact generalized quadrangle associated with ${\rm Sp}_4(k)$,
or its dual;
\item[(S-6)] the compact generalized hexagon associated with the split algebraic
group of exceptional type $G_2$ over $k$, or its dual. 
\end{itemize}
If one replaces in Definition \ref{DefSplitFoundation} the topological Mounfang foundation by an abstract one, then one obtains the notion of an abstract $k$-split foundation. Concerning such foundation we have the following result of
M\"uhlherr--Van Maldeghem:
\begin{lem}[{\cite[Proposition 2]{MvM}}]\label{LemmaMvM} Let \[\mathcal F =
(\{\Delta_J \mid 
J \in E\}, \{c_J \mid J \in E\}, \{\theta_{jik} \mid \{i,j\}, \{j, k\} \in E \})
\] be an abstract $k$-split Moufang foundation of tree type. Then 
$\mathcal F$ is uniquely determined by the list $\{\Delta_J \mid J \in E\}$.
\end{lem}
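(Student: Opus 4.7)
Let $\mathcal F = (\{\Delta_J\}, \{c_J\}, \{\theta_{ijk}\})$ and $\mathcal F' = (\{\Delta_J\}, \{c'_J\}, \{\theta'_{ijk}\})$ be two abstract $k$-split Moufang foundations of tree type sharing the same list of rank-two residues. My plan is to construct an isomorphism of foundations $\mathcal F \to \mathcal F'$, namely a family of polygon automorphisms $\phi_J \colon \Delta_J \to \Delta_J$ with $\phi_J(c_J) = c'_J$ satisfying $\phi_{\{j,k\}} \circ \theta_{ijk} = \theta'_{ijk} \circ \phi_{\{i,j\}}$ on $P_j(c_{\{i,j\}})$ for every valid triple $(i,j,k)$. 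The construction will propagate the data along the tree structure of the Coxeter diagram $T = (S, E)$.

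First, I would root $T$ at a vertex $s_0 \in S$, so that each non-root vertex $s$ corresponds bijectively to an edge $J_s = \{s, p(s)\} \in E$ through its parent $p(s)$. I then define $\phi_{J_s}$ by breadth-first induction on the depth of $s$. For the first processed child $s_1$ of $s_0$, let $\phi_{J_{s_1}}$ be any polygon isomorphism sending $c_{J_{s_1}}$ to $c'_{J_{s_1}}$; this exists by chamber-transitivity of $\mathrm{Aut}(\Delta_{J_{s_1}})$ for Moufang polygons. For a later vertex $s$, the edge $J_s$ is adjacent in $T$ to the previously processed edge $J_{p(s)}$ at $p(s)$ (or, when $p(s) = s_0$ and $s \neq s_1$, to $J_{s_1}$ at $s_0$), and the intertwining relation uniquely prescribes $\phi_{J_s}$ on the corresponding panel of $c_{J_s}$. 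If $J_s$ also shares its vertex with further previously-processed sibling edges, the resulting additional constraints on the same panel automatically agree thanks to the cocycle condition (TMF3) applied in both $\mathcal F$ and $\mathcal F'$; because $T$ is a tree, no two processed edges are connected by a second path that could produce a genuinely new constraint on $\phi_{J_s}$.

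The central technical step --- and the main obstacle --- is the following extension lemma: for any $k$-split Moufang polygon $\Delta$, any chambers $c, c' \in \Delta$, and any base-point-preserving Moufang-set isomorphism $\psi \colon P_j(c) \to P_j(c')$, there exists a polygon automorphism $\phi$ of $\Delta$ with $\phi(c) = c'$ and $\phi|_{P_j(c)} = \psi$. This is where the $k$-split hypothesis enters crucially: $\mathrm{Aut}(\Delta)$ contains the corresponding split Chevalley group of type $A_2$, $B_2/C_2$ or $G_2$ over $k$, which acts strongly transitively on chambers and whose Borel $B_c$ surjects in its action on each panel $P_j(c)$ onto the full base-point stabilizer $k \rtimes k^* \cong \mathrm{Aut}(\mathbb{P}^1(k), \infty)$ in $\mathrm{PGL}_2(k)$. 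First apply strong transitivity to send $c$ to $c'$, then realize $\psi$ on $P_j(c')$ by an element of $B_{c'}$. A rigorous justification of this lifting requires either a case-by-case analysis of the three split rank-two Moufang polygon types or an invocation of the structure theory of their root groups; everything else in the argument is combinatorial bookkeeping on the tree $T$.
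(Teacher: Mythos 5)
The paper does not give its own proof of this lemma; it is stated as a citation to~\cite[Proposition 2]{MvM}, so there is no internal argument to compare yours against. That said, your overall strategy --- root the tree, propagate the isomorphisms $\phi_J$ outward by induction, use the cocycle condition (TMF3) to reconcile multiple constraints at a shared vertex, and reduce the inductive step to an extension lemma for a single polygon --- is the expected route and is structurally sound. The cocycle bookkeeping you describe does work out: if $s', s''$ are processed siblings of $s$ around the vertex $j$, the two candidate prescriptions for $\phi_{J_s}|_{P_j(c_{J_s})}$ coincide because $\theta_{s''js}\circ\theta_{s'js''}=\theta_{s'js}$ (and the same for the primed system) together with the inductive intertwining for $\phi_{J_{s''}}$.

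The genuine gap is in the extension lemma, and it is not merely a matter of ``case-by-case bookkeeping.'' You assert that the stabilizer of the base point in $\mathrm{Aut}(\mathbb P^1(k))$ (as a Moufang set) is $k\rtimes k^\times$, i.e.\ the image of a Borel of $\mathrm{PGL}_2(k)$, and you propose to realize $\psi$ by an element of the Borel $B_{c'}$ of the split Chevalley group inside $\mathrm{Aut}(\Delta_J)$. This is wrong whenever $k$ admits non-trivial field automorphisms (e.g.\ $k=\mathbb C$, or any local field with $\mathrm{Aut}(k)\neq 1$ such as $\mathbb F_q((t))$ or ramified extensions of $\mathbb Q_p$): the map $t\mapsto\sigma(t)$, $\infty\mapsto\infty$ for $\sigma\in\mathrm{Aut}(k)$ normalizes every root group of $\mathbb P^1(k)$ and is hence a base-point--preserving Moufang-set automorphism, but it does not lie in $\mathrm{PGL}_2(k)$. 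The correct base-point stabilizer is the affine semilinear group $k\rtimes(k^\times\cdot\mathrm{Aut}(k))$, and the Chevalley Borel alone does \emph{not} surject onto it. Since the $\theta_{ijk}$ in an abstract foundation are arbitrary Moufang-set isomorphisms, the panel map you must extend at each step may well involve a non-trivial field automorphism. The fix is standard but must be made explicit: one has to work in the full automorphism group of the split Moufang polygon --- which is of $\mathrm{P\Gamma L}$-type, containing the Chevalley group extended by $\mathrm{Aut}(k)$ (and dualities, which are irrelevant here as they move panel types) --- so that the chamber stabilizer acts on each panel as $\mathrm{A\Gamma L}_1(k)$. With that correction, strong transitivity plus the chamber-stabilizer action do yield the extension lemma, and the rest of your induction goes through.
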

As an immediate consequence of Theorem \ref{ClassificationMain} the lemma carries over to the topological setting:
\begin{cor} A $k$-split Moufang topological twin building $\Delta$ is uniquely determined by the collection $\{\Delta_J \mid J \in E\}$.
\end{cor}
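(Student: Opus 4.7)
\emph{Plan.} The strategy is to combine the abstract classification statement of Lemma~\ref{LemmaMvM} with the topological classification of Theorem~\ref{ClassificationMain}. Given a $k$-split Moufang topological twin building $\Delta$ of tree type, Proposition~\ref{toptrans} shows that the topological automorphism group acts strongly transitively, so the isomorphism class of the collapse $\mathcal{F}(\Delta,c)$ does not depend on the chosen base chamber $c$. Hence it suffices to show that the isomorphism class of the topological foundation $\mathcal{F}(\Delta,c)$ is determined by the collection $\{\Delta_J \mid J \in E\}$ of topological Moufang polygons, after which Theorem~\ref{ClassificationMain} immediately upgrades this to uniqueness of $\Delta$ itself as a topological twin building.

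To carry this out, suppose $\Delta$ and $\Delta'$ are two $k$-split Moufang topological twin buildings of tree type with the property that for every $J \in E$ there is an isomorphism of topological Moufang polygons $\phi_J \colon \Delta_J \to \Delta'_J$. First I would choose base chambers $c, c'$ and, after applying the automorphism groups (using Proposition~\ref{toptrans}) to align the base chambers of the polygons in each foundation, form the abstract foundations underlying $\mathcal{F}(\Delta,c)$ and $\mathcal{F}(\Delta',c')$. By Lemma~\ref{LemmaMvM} these two abstract $k$-split Moufang foundations of tree type are isomorphic as \emph{abstract} foundations, via an isomorphism whose component on each rank-two residue is (after adjusting by an appropriate base-point preserving automorphism of $\Delta'_J$) the given $\phi_J$.

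The heart of the matter is to verify that this abstract isomorphism of foundations is automatically an isomorphism of \emph{topological} foundations, i.e., that the induced cocycle maps $\theta_{ijk}$ are homeomorphisms and not merely bijections. This is where the $k$-split hypothesis is essential: each rank-one residue appearing in the foundations is, as a Moufang set, isomorphic to $\mathbb{P}^1(k)$ with its natural topology, and every abstract isomorphism between such Moufang sets is implemented by an element of $\mathrm{P}\Gamma\mathrm{L}_2(k)$. Since every field automorphism of a local field $k$ is continuous, such a Moufang-set isomorphism is automatically a homeomorphism of $\mathbb{P}^1(k)$. Thus the compatibility maps $\theta_{ijk}$ that arise in gluing the given topological polygons to form a topological foundation are continuous, and the isomorphism of abstract foundations becomes an isomorphism of topological foundations.

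Having arranged that $\mathcal{F}(\Delta,c) \cong \mathcal{F}(\Delta',c')$ as topological Moufang foundations of tree type, I would then invoke Theorem~\ref{ClassificationMain} to conclude that $\Delta$ and $\Delta'$ are isomorphic as topological twin buildings. The main obstacle I anticipate is the automatic-continuity step: one has to be sure that the abstract isomorphism produced by the M\"uhlherr--Van Maldeghem lemma can be chosen base-point preserving on panels so that the continuity of field automorphisms of $k$ forces the cocycle maps to be homeomorphisms; all other steps are routine applications of the results already established in the excerpt.
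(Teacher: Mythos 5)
Your argument has a genuine gap at the automatic-continuity step. You assert that ``every field automorphism of a local field $k$ is continuous''; this is false for $k = \mathbb{C}$, which is a local field in the sense of the paper (non-discrete, locally compact, $\sigma$-compact, Hausdorff). Via the axiom of choice, $\mathbb{C}$ has $2^{2^{\aleph_0}}$ field automorphisms, only two of which (the identity and complex conjugation) are continuous. Consequently there are abstract Moufang-set automorphisms of $\mathbb{P}^1(\mathbb{C})$ inside $\mathrm{P}\Gamma\mathrm{L}_2(\mathbb{C})$ which are not homeomorphisms, so you cannot conclude that the abstract isomorphism of foundations coming from Lemma~\ref{LemmaMvM} is automatically an isomorphism of \emph{topological} foundations. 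Since the connected (archimedean) case is one of the two main cases the classification is supposed to cover, this is not a corner case that can be waved away.

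The repair goes in the direction the paper hints at when calling the corollary ``immediate'' from Theorem~\ref{ClassificationMain}: one should build the topological isomorphism of foundations directly along the tree rather than inherit it from the abstract classification and then try to prove continuity a posteriori. Concretely, enumerate the edges of the tree so that each new edge meets an earlier one in a vertex; pick a topological isomorphism $\psi_{J_1}$ of polygons at a leaf edge, and recursively adjust the subsequent $\psi_{J_i}$ by base-point preserving \emph{continuous} automorphisms of $\Delta'_{J_i}$ so that on the shared panel they match the map $\theta'_{ijk}\circ \psi_{J_j}\vert_{P_j}\circ\theta_{ijk}^{-1}$ forced by the cocycle condition. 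This prescribed panel map is continuous because the $\theta$'s are required to be homeomorphisms by (TMF2), and the only extension fact needed is that a continuous base-point preserving Moufang-set automorphism of a panel of a compact $k$-split Moufang polygon extends to a continuous automorphism of that polygon --- a statement about the continuous automorphism group only. This route never has to control arbitrary abstract automorphisms of $\mathbb{P}^1(k)$, so the discontinuous field automorphisms of $\mathbb{C}$ simply never enter. The remaining steps of your proposal (strong transitivity via Proposition~\ref{toptrans} to make the collapse base-point independent, then Theorem~\ref{ClassificationMain} to pass from a topological foundation isomorphism to a building isomorphism) are fine.
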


In view of the lemma, we can visualize $k$-split foundations of tree type by \emph{Dynkin trees}. Here, by a Dynkin tree we understand an edge-labelled tree, where edges are labelled $3$, $4$ or $6$ and edges of label $>3$ are given an orientation. Given a $k$-split Moufang foundation with associated list $\{\Delta_J \mid J \in E\}$ we define the associated Dynkin diagram as follows: The set of edges is the underlying index set $S$ of the foundation, and two vertices $i, j$ are joint by an edge if $J := (i,j) \in E$; the labeling of the edge $J$ is $3$, $4$ or $6$ according whether $\Delta_J$ is of type (S-3), (S-4) or (S-6) respectively; in the latter two cases the edge is oriented towards the short root. Conversely, every Dynkin tree determines a unique topological $k$-split Moufang foundation. We call the Dynkin tree \emph{topologically $k$-integrable} if this foundation is integrable. 

We can summarize our discussion as follows:
\begin{cor}\label{CorTreeSuff} Let $k$ be a local field. Then there is a bijection between $k$-split Moufang topological twin buildings and topologically $k$-integrable Dynkin trees.
\end{cor}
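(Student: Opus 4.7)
The plan is to assemble the bijection from the preceding machinery, so the work is mostly bookkeeping. I would construct the two maps explicitly and then verify each direction using the already-established uniqueness and integrability results.

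First I would set up the forward map $\Phi \colon [\Delta] \mapsto T(\Delta)$ as follows. Given a $k$-split Moufang topological twin building $\Delta$ of tree type, pick any base chamber $c$ and form the collapse $\mathcal F(\Delta, c)$ from Example \ref{ExCollapse}. By Proposition \ref{toptrans} the topological automorphism group of $\Delta$ acts strongly transitively, hence the isomorphism class $[\mathcal F(\Delta)]$ is independent of the choice of $c$. From $\mathcal F(\Delta)$, extract the list $\{\Delta_J \mid J \in E\}$ of rank two residues; since $\Delta$ is $k$-split, each $\Delta_J$ belongs to one of the three families (S-3), (S-4), (S-6), and we record this information together with the appropriate orientation (towards the short root) in the edge labels of the Dynkin tree $T(\Delta)$. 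This gives a well-defined map from isomorphism classes of $k$-split Moufang topological twin buildings of tree type to the class of Dynkin trees, and by construction $T(\Delta)$ is topologically $k$-integrable, since $\Delta$ itself globalizes the foundation associated with $T(\Delta)$.

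Next I would define the inverse map $\Psi$ on topologically $k$-integrable Dynkin trees $T$. Any such $T$ determines, up to (topological) isomorphism, a $k$-split Moufang foundation of tree type: the underlying rank two Moufang polygons $\Delta_J$ are fixed by the edge labels together with $k$, and by Lemma \ref{LemmaMvM} the gluing data $\{c_J\}$ and $\{\theta_{jik}\}$ are then determined up to isomorphism by the list $\{\Delta_J\}$ in the abstract category; the topology on each $\Delta_J$ is the canonical one coming from $k$, so the topological foundation is also determined up to isomorphism. By the hypothesis of topological $k$-integrability, there exists a $k$-split Moufang topological twin building globalizing this foundation, and we set $\Psi(T)$ equal to its isomorphism class.

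It remains to verify that $\Phi$ and $\Psi$ are mutually inverse. The identity $\Phi \circ \Psi = \mathrm{id}$ is immediate from the definition of the collapse. For $\Psi \circ \Phi = \mathrm{id}$, suppose $\Delta$ and $\Delta'$ have the same Dynkin tree $T$. Then their foundations $\mathcal F(\Delta)$ and $\mathcal F(\Delta')$ are topologically isomorphic: the lists $\{\Delta_J\}$ agree up to topological isomorphism by the labelled-tree data, and the full topological foundations then agree up to topological isomorphism by the corollary to Lemma \ref{LemmaMvM} stated immediately above the theorem (which itself is the topological promotion of \cite[Proposition 2]{MvM} via Theorem \ref{ClassificationMain}). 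Theorem \ref{ClassificationMain} applied once more promotes this isomorphism of topological foundations to an isomorphism $\Delta \cong \Delta'$ of topological twin buildings.

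All of the substantive content has been done in the preceding theorems, so there is no genuine obstacle in this corollary; the only subtlety to be careful about is the distinction between abstract and topological foundations, which is why one cannot invoke Lemma \ref{LemmaMvM} directly but must pass through its topological counterpart obtained via Theorem \ref{ClassificationMain}. The deeper question lurking behind the corollary --- namely which Dynkin trees are in fact topologically $k$-integrable --- is answered later by Theorem \ref{HartnickConjecture}, but is not part of the present statement.
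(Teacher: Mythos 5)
Your proposal is correct and follows the same path the paper intends (the paper itself presents the corollary merely as a ``summary'' of the preceding discussion, so you are supplying the bookkeeping that the authors left implicit). Both directions of the bijection and the two verifications $\Phi\circ\Psi=\mathrm{id}$ and $\Psi\circ\Phi=\mathrm{id}$ are handled with exactly the right ingredients: Proposition~\ref{toptrans} for well-definedness of the collapse, Lemma~\ref{LemmaMvM} and its topological upgrade for uniqueness of the foundation given the list $\{\Delta_J\}$, and Theorem~\ref{ClassificationMain} to pass from foundations to twin buildings.

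One small misattribution is worth flagging. In the injectivity step you write that ``the full topological foundations then agree up to topological isomorphism by the corollary to Lemma~\ref{LemmaMvM}.'' That unnamed corollary actually asserts that the \emph{topological twin building} $\Delta$ is determined by $\{\Delta_J\}$, not that the topological \emph{foundation} is. So if you invoke it, you can conclude $\Delta\cong\Delta'$ directly and the subsequent appeal to Theorem~\ref{ClassificationMain} becomes redundant; alternatively, if you want to pass through an isomorphism of topological foundations first, the correct justification is the abstract Lemma~\ref{LemmaMvM} combined with the (implicit) fact that the gluing isomorphisms of $k$-split Moufang sets over a local field are automatically continuous, and then Theorem~\ref{ClassificationMain} finishes. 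Either route closes the argument; you just should not cite the twin-building corollary as a statement about foundations. This is a wording slip rather than a genuine gap, and the remainder of your argument is sound.
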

We will see in Corollary~\ref{DynkinTreeIntegrable} below that, in fact, every Dynkin tree is topologically $k$-integrable. For this we need to explicitly construct the corresponding Moufang topological twin building, which we will achieve in Section~\ref{toptwinbuilding} using topological split Kac--Moody groups.

\section{Connected topological twin buildings}\label{SecConnected}

\subsection{A dichotomy: Connected vs. totally-disconnected}

In this section we will study (strong) topological twin buildings $(\Delta, \tau)$ for
which the halves $(\Delta_{\pm}, \tau|_{\Delta_{\pm}})$ are connected. By slight
abuse of notation we call such a building $(\Delta, \tau)$ a \emph{connected (strong)
topological twin building}. In a similar way we also define the notion of a 
\emph{totally-disconnected topological twin building}. We recall our standing
assumption that there are no isolated vertices in the Coxeter diagram of 
the underlying type $(W,S)$. We say that a topological twin building is \emph{of irreducible type} if the underlying Coxeter diagram is connected. 

Our goal is to establish the following dichotomy:
\begin{prop}\label{ConnDichotomy}
 Every two-spherical topological twin building of irreducible type is either connected or totally disconnected.
\end{prop}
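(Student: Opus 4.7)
The plan is to establish a dichotomy at the panel level and then propagate it globally. Fix a chamber $c_+\in\Delta_+$ and denote by $C\subseteq\Delta_+$ its connected component.

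The first step is to establish a local dichotomy: for each $s\in S$ the connected component of $c_+$ in $P_s(c_+)$ is either $\{c_+\}$ or all of $P_s(c_+)$. Picking $d\in\Delta_-$ opposite to $c_+$, the continuous co-projection $\pro^*_{P_s(c_+)}\colon E_1^*(d)\to P_s(c_+)$ provided by (TTB2), together with the openness of $E_1^*(d)$ from Proposition~\ref{prop:co-schubert-cell-is-open}, pulls back any clopen decomposition of the panel to a clopen decomposition of an open neighbourhood of $c_+$ in $\Delta_+$. Two-sphericity, via the compactness of adjacent rank two residues from Corollary~\ref{SchubertCompact} and the topological twin building structure on opposite-residue pairs from Proposition~\ref{ResiduesTTB}, allows one to rule out proper intermediate clopen decompositions of a panel through this pull-back.

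Next the dichotomy is propagated across the Coxeter diagram. For any edge $\{s,t\}$ of the Coxeter diagram, a rank two residue $R$ of type $\{s,t\}$ is compact (Corollary~\ref{SchubertCompact}) and, together with its opposite residue, forms a spherical topological twin building of type $I_2(m_{st})$ by Proposition~\ref{ResiduesTTB}. Using Proposition~\ref{PanelsHomeo1} for the projection homeomorphisms between opposite panels inside this sub-twin-building together with the continuity of co-projections, one transfers the local dichotomy between $s$-panels and $t$-panels within $R$. By irreducibility of the Coxeter diagram, either every panel in $\Delta_+$ is connected at its base chamber or every such connected component is a singleton.

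In the first case every panel is connected by Corollary~\ref{PanelsHomeo}, and any clopen $U\subseteq\Delta_+$ with $c_+\in U$ satisfies $U\cap P_s(c_+)=P_s(c_+)$ for every $s$; iterating along galleries and using (TTB3) gives $U=\Delta_+$, so $\Delta_+$ is connected (and by symmetry so is $\Delta_-$). In the second case every panel is totally disconnected; Proposition~\ref{KBS} then inductively realises each gallery space $\Gall(s_1,\ldots,s_k;c_+)$ as an iterated fibre bundle with totally disconnected base and fibre, hence itself totally disconnected. Each Schubert variety $E_{\leq w}(c_+)$ arises as the Hausdorff image of such a gallery space under the endpoint quotient map from Remark~\ref{ttb4needed} with compact fibres; separating these compact fibres by disjoint clopens in the zero-dimensional compact Hausdorff gallery space yields saturated clopens descending to $E_{\leq w}(c_+)$, proving it is totally disconnected. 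Finally, the $k_\omega$ structure from Corollary~\ref{TTBKOmega} together with (TTB3) extends zero-dimensionality to all of $\Delta_+$ through the standard layerwise clopen extension argument for totally disconnected compact Hausdorff pieces in an ascending filtration.

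The main obstacle is the panel-level dichotomy: without a transitivity hypothesis on the automorphism group such as the Moufang condition (which would invoke Proposition~\ref{toptrans}), one cannot appeal to homogeneity to rule out intermediate clopen decompositions of a panel, and the argument must proceed through (TTB2), two-sphericity, and the compactness of rank two residues. A subsidiary obstacle is the descent of total disconnectedness from the gallery space to the Schubert varieties, since Hausdorff quotients of zero-dimensional compact Hausdorff spaces are not in general zero-dimensional; one must exploit the specific structure of the endpoint equivalence relation to produce saturated clopens.
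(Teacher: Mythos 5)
Your high-level architecture matches the paper's: first establish a dichotomy at the panel level, then propagate globally using irreducibility of the Coxeter diagram and the gallery-space machinery. The propagation step is essentially the paper's Lemma~\ref{Conn-TotDc}. However, there is a genuine gap in your first step.

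The panel-level dichotomy --- that in an irreducible rank-two spherical topological twin building either all panels are connected or all panels are totally disconnected --- is a structural theorem about compact generalized polygons, and the paper simply cites it (\cite[2.2.3]{kramer-polygons}, \cite[Proposition~6.13]{GKMW}). Your sketch claims to derive it by pulling back a clopen decomposition of a panel along the co-projection $\pro^*_{P_s(c_+)}\colon E_1^*(d)\to P_s(c_+)$ and then appealing to two-sphericity and compactness of rank-two residues to \emph{rule out proper intermediate clopen decompositions}. But this is exactly the hard content of the cited results, and you never explain \emph{how} the pull-back rules anything out. Pulling back a clopen partition of $P_s(c_+)$ merely gives a clopen partition of the open set $E_1^*(d)$, which says nothing about whether $P_s(c_+)$ could be, e.g., a disjoint union of a Cantor set and an interval. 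The paper-level argument really does depend on the fine structure of compact polygons (or on the Moufang multiplication and a homogeneity argument), and sidestepping that is not possible with the tools you have invoked. So as written, the crucial step is asserted rather than proved.

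On the descent of total disconnectedness to Schubert varieties, you correctly flag that Hausdorff quotients of totally disconnected compact spaces need not be totally disconnected and propose constructing saturated clopens in the gallery space. The paper sidesteps this more cleanly: the set of non-stammering galleries of a given reduced type is mapped \emph{homeomorphically} onto the corresponding Schubert cell $E_w(c_0)$, hence each Schubert cell is totally disconnected; the Schubert variety $E_{\leq w}(c_0)$ is then covered by the open totally disconnected subsets $E_v(c)\cap E_{\leq w}(c_0)$ for $v\le w$, and total disconnectedness follows. This avoids the quotient difficulty altogether, and I would recommend adopting it rather than the saturated-clopen argument, which you do not actually carry out.

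In summary: the global propagation step and the use of two-sphericity and irreducibility are sound and mirror the paper, but the panel-level dichotomy, which you yourself identify as the main obstacle, is left as an unsubstantiated claim. That step needs either a genuine proof (essentially re-deriving the compact polygon result) or an explicit citation to the existing literature, as the paper does.
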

Note that the irreducibility assumption is necessary, since the product of a connected and a totally disconnected topological twin building is neither connected nor totally disconnected. 

Towards the proof of the proposition we first observe the following:
\begin{lem}\label{Conn-TotDc}
 Let $(\Delta, \tau)$ be a topological twin building.
\begin{enumerate}
 \item If all panels are connected, then $\Delta$ is connected.
 \item If all panels are totally disconnected, then $\Delta$ is
totally disconnected.
\end{enumerate}
\end{lem}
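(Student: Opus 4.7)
The plan is to prove both statements by induction on the gallery length, propagating the respective property from panels through the Bott--Samelson fibre bundles of Proposition \ref{KBS} to gallery spaces, and then via the endpoint quotient map (Remark \ref{ttb4needed}) to Schubert varieties. A global conclusion then follows because (TTB3) exhibits each half $\Delta_\pm$ as a union of the Schubert varieties $E_{\leq w}(c_\pm)$, all sharing the common base chamber $c_\pm$.

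For part (i), the induction on $k$ shows that each gallery space $\Gall(s_1,\dots,s_k;c_0)$ is connected. The base case $k=0$ is a singleton. For the inductive step, Proposition \ref{KBS} presents $\Gall(s_1,\dots,s_k;c_0)$ as a locally trivial fibre bundle over $\Gall(s_1,\dots,s_{k-1};c_0)$ with fibre $P_{s_k}(c_0)$; a locally trivial fibre bundle with connected base and connected fibre has connected total space (if $X = A \sqcup C$ with $A,C$ open, the set of basepoints whose fibre lies in $A$ is open by local triviality, and similarly for $C$, so one of them is empty). Applying the continuous endpoint map then yields connectedness of every Schubert variety $E_{\leq w}(c_\pm)$, and $\Delta_\pm$ is a union of connected sets sharing $c_\pm$, hence connected.

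For part (ii), combine (TTB4) and \refttb{item:ttb-1}: a compact Hausdorff totally disconnected space is zero-dimensional (admits a basis of clopen sets). The analogous induction, using that a locally trivial fibre bundle with zero-dimensional base and zero-dimensional compact Hausdorff fibre is zero-dimensional (the local trivializations exhibit it as a product of zero-dimensional spaces, and such products are zero-dimensional), shows that every gallery space is zero-dimensional. It remains to push this through the endpoint quotient map to each Schubert variety and then to $\Delta_\pm$. For the first step I would separate two distinct chambers $d_1,d_2 \in E_{\leq w}(c_0)$ by choosing a clopen subset $U$ of $\Gall(s_1,\dots,s_k;c_0)$ containing the compact fibre $p^{-1}(d_1)$ and disjoint from $p^{-1}(d_2)$, then saturating $U$ along the equivalence relation defined by $p$ to obtain a saturated clopen set whose image is a clopen separator of $d_1$ and $d_2$ in $E_{\leq w}(c_0)$. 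Once each Schubert variety is shown to be zero-dimensional (equivalently, totally disconnected in the compact Hausdorff setting), the direct limit description in (TTB3) propagates the property to $\Delta_\pm$: in a compact Hausdorff zero-dimensional space, any clopen subset of a closed subspace extends to a clopen subset of the ambient space, so one can compatibly extend clopen neighbourhoods along the filtration $E_{\leq w_1}(c_\pm) \subset E_{\leq w_2}(c_\pm) \subset \cdots$ to produce a basis of clopen neighbourhoods in $\Delta_\pm$.

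The main obstacle is the saturation step in the descent from gallery space to Schubert variety. Using closedness of $p$ (a continuous map from a compact space to a Hausdorff space), the saturation $U^{*} := \Gall(s_1,\dots,s_k;c_0) \setminus p^{-1}(p(\Gall(s_1,\dots,s_k;c_0) \setminus U))$ is automatically open; what needs additional justification is that $U^{*}$ is simultaneously closed — equivalently, that $p(\Gall(s_1,\dots,s_k;c_0) \setminus U)$ is open in $E_{\leq w}(c_0)$. This requires exploiting the specific combinatorial structure of the fibres of the endpoint map, which are cut out by the stammering relations on galleries of the given type and inherit zero-dimensionality from the ambient gallery space.
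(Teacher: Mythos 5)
Your part (i) argument matches the paper's, and the final "union of connected sets sharing a common point" conclusion is fine. The issue is in part (ii), where you correctly identify the obstacle but do not resolve it.

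The descent from the gallery space to the Schubert variety $E_{\leq w}(c_0)$ via the endpoint map does not follow from zero-dimensionality of the gallery space plus a saturation argument: quotient maps, even closed ones with compact Hausdorff zero-dimensional domain, do not in general preserve total disconnectedness (the interval $[0,1]$ is a quotient of the Cantor set). Your saturation $U^{*}$ is open, as you note, but its closedness — equivalently, openness of the image of the complement — is precisely the point where the argument would need to collapse distinct chambers consistently, and there is no general principle that does this; you acknowledge as much. Moreover, your fallback appeal to "the specific combinatorial structure of the fibres ... cut out by the stammering relations" is a hope, not an argument.

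The paper avoids the problem entirely. Rather than pushing total disconnectedness through the quotient to the whole Schubert variety, it restricts the endpoint map to the subset of \emph{non-stammering} galleries of reduced type $(s_1,\dots,s_k)$ with $s_1\cdots s_k = w$. This subset is the preimage of the open Schubert cell $E_w(c_0)$, hence it is a saturated open subset on which the (bijective) restriction of the quotient map is a homeomorphism onto $E_w(c_0)$. Total disconnectedness of the gallery space therefore passes directly to each Schubert cell. The paper then covers $E_{\leq w}(c_0)$ by the relatively open, totally disconnected pieces $E_w(c)\cap E_{\leq w}(c_0)$ for $c \in E_{\leq w}(c_0)$; since a space covered by open totally disconnected subsets is totally disconnected, the Schubert variety inherits the property, and the $k_\omega$ direct-limit step (which you do sketch correctly via clopen extension in compact Hausdorff zero-dimensional spaces) finishes the proof. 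The key idea you are missing is to work with Schubert \emph{cells} via the homeomorphic restriction, not with the full quotient onto the Schubert \emph{variety}.
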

\begin{proof}
 If the panels are connected/totally disconnected, then the gallery spaces are
connected/totally disconnected by Proposition \ref{KBS}. In the former case it
follows immediately that the Schubert varieties are connected. In the latter
case it follows that the set of non-stammering galleries of a given type is
totally disconnected. Since this set is mapped homeomorphically onto the corresponding Schubert cell, it follows that each Schubert
cell is totally disconnected. 

For each $w \in W$ and $c \in
\Delta_{\pm}$ one has
\[E_{\leq w}(c_0) = \bigcup_{c \in E_{\leq w}(c_0)}(E_{w}(c) \cap E_{\leq
w}(c_0)).\]
The subsets $E_{w}(c) \cap E_{\leq w}(c_0)$ are open in $E_{\leq w}(c_0)$ and
totally disconnected,
hence the Schubert variety $E_{\leq w}(c_0)$ is totally disconnected itself.

The claim follows from the fact that direct limits of compact
connected/totally disconnected spaces are connected/totally disconnected.
\end{proof}
In view of Lemma \ref{Conn-TotDc}, the proof of Proposition \ref{ConnDichotomy}
follows from the corresponding statement concerning topological generalized polygons:
\begin{lem}
  Let $(\Delta, \tau)$ be an irreducible spherical topological twin building of rank two. Then either all panels
are connected or all panels are totally disconnected.
\end{lem}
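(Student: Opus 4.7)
Since $(\Delta,\tau)$ is irreducible spherical of rank two, it is a topological generalized $n$-gon (for some $n\geq 3$) with two panel types $s,t$. By Corollary~\ref{PanelsHomeo} it suffices to compare a single $s$-panel with a single $t$-panel. The strategy is to use axiom (TTB2) to construct a ``perspectivity'' between pointed panels of different types and to transport connectedness across it.

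\textbf{Step 1 (perspectivity map).} Fix $c_0\in\Delta_+$ and, using \reftw{item:tw-3}, choose $d\in\Delta_-$ with $\delta^*(c_0,d)=s$. A direct argument from \reftw{item:tw-2} (analogous to the one used in Lemma~\ref{lemma:nice-chamber-exists2}) shows that every chamber in $P_s(c_0)\setminus\{c_0\}$ lies in $E_1^*(d)$, i.e.\ is opposite $d$. Consequently, by (TTB2), the map
\[
\phi\colon P_s(c_0)\setminus\{c_0\}\to P_t(d),\qquad v\mapsto \proj^*_{P_t(d)}(v),
\]
is well-defined and continuous. Symmetrically, choosing $c\in\Delta_+$ opposite $d$ (which exists by thickness and is continuous in parameters via Proposition~\ref{PanelsHomeo1}), one obtains a continuous ``reverse'' map $\psi$ from a pointed $t$-panel to an $s$-panel, so that by Corollary~\ref{PanelsHomeo} we may regard $\phi$ and $\psi$ as continuous maps between punctured versions of the homeomorphism types $P_s$ and $P_t$.

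\textbf{Step 2 (image is large).} Using the combinatorics of generalized polygons together with Lemma~\ref{lemma:building-combinatorics} and \reftw{item:tw-3}, one shows that the image of $\phi$ contains every chamber in $P_t(d)$ that is opposite $c_0$; in particular, it is the complement of a single chamber (the unique chamber of $P_t(d)$ not opposite $c_0$) and is therefore dense in the compact Hausdorff space $P_t(d)$.

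\textbf{Step 3 (connectedness transfers).} Suppose every $s$-panel is connected. Then $P_s(c_0)$ is a connected compact Hausdorff space, and since panels of compact generalized polygons contain no cut points --- a fact established via the Bott--Samelson fibre-bundle structure of Proposition~\ref{KBS} applied to a gallery of maximum length based at a chamber of $P_s(c_0)$, cf.~\cite{Knarr:1990}, \cite{kramer-polygons} --- the punctured panel $P_s(c_0)\setminus\{c_0\}$ is also connected. Its continuous image $\phi(P_s(c_0)\setminus\{c_0\})$ is then a connected dense subset of $P_t(d)$, so the closure $P_t(d)$ itself is connected. Corollary~\ref{PanelsHomeo} promotes this to all $t$-panels.

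\textbf{Step 4 (totally disconnected transfers).} For the second alternative, argue by contrapositive using $\psi$: if some $t$-panel contained a nondegenerate connected subset, then by Step~3 applied to $\psi$ some $s$-panel would as well, violating total disconnectedness of all $s$-panels. Combining Step~3 and Step~4 yields the dichotomy.

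\textbf{Expected main obstacle.} The subtle point is Step~2, namely controlling the image of $\phi$ precisely enough to conclude density: the codistance combinatorics in a generalized polygon have to be verified carefully to make sure that no extra chamber is missed in the image beyond the predicted one. A secondary technical point is the ``no cut-point'' fact invoked in Step~3, which, while classical for compact polygons, has to be compatible with our minimal axiom set (TTB1)--(TTB4); if necessary, it can be extracted from the bundle description of gallery spaces (Proposition~\ref{KBS}) by observing that the endpoint map $p_{s_1,\dots,s_k;c_0}$ realises $P_s(c_0)$ as a retract of a non-trivial fibre bundle whose total space has no isolated points on any fibre.
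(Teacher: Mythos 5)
Your Step 1 is sound: for $c_0 \in \Delta_+$ and $d \in \Delta_-$ with $\delta^*(c_0,d)=s$, every $v \in P_s(c_0)\setminus\{c_0\}$ is indeed opposite $d$ by (Tw2), so the map $\phi\colon v \mapsto \proj^*_{P_t(d)}(v)$ is well-defined and continuous. But Step 2 is fatally wrong: this map is \emph{constant}, not dense-image. Indeed, the paper's own Lemma~\ref{doubleprojection} says precisely that if $v_1,v_2$ are $s$-adjacent and both opposite $d$, then $\proj^*_{P_t(d)}(v_1)=\proj^*_{P_t(d)}(v_2)$; any two chambers of $P_s(c_0)\setminus\{c_0\}$ satisfy these hypotheses, so $\phi$ has a singleton image. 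One can also see it directly: for $d_{\max}:=\proj^*_{P_t(d)}(c_0)$ one has $\delta^*(d_{\max},c_0)=ts$, and then (Tw2) (using $l(ts\cdot s)<l(ts)$) forces $\delta^*(d_{\max},v)=t$ for every $v\in P_s(c_0)\setminus\{c_0\}$, i.e.\ $\phi\equiv d_{\max}$; whereas for any other $d'\in P_t(d)$ one has $\delta^*(d',c_0)=s$ and hence $\delta^*(d',v)=1$, so such $d'$ is never hit.

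Because $\phi$ has one-point image, Steps 3 and 4 collapse and your ``perspectivity'' transports no topological information at all. This is not a minor oversight but a structural obstruction: co-projection from chambers at \emph{codistance $1$} onto a panel of the \emph{other} type degenerates by exactly the rigidity expressed in Lemma~\ref{doubleprojection}. The references the paper cites for this lemma (Kramer's thesis \cite[2.2.3]{kramer-polygons} and \cite[Proposition~6.13]{GKMW}) do not use such a map; they rely on a genuinely rank-two argument inside the compact polygon (for instance, comparing the two Schubert/Bott--Samelson decompositions of $E_1^*(c_0)$ arising from the two reduced expressions of $w_0$, or using geometric perspectivities between a point row and a line pencil at distance $n-1$, not $n$), and these are the tools one would need to import in order to salvage your general strategy. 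As written, the argument does not prove the dichotomy.
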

\begin{proof} See \cite[2.2.3]{kramer-polygons}, \cite[Proposition~6.13]{GKMW}.
\end{proof}
We will provide many examples of connected, respectively totally-disconnected Moufang topological twin buildings using split Kac--Moody groups over archimedean, respectively non-archimedean local fields in Theorem~\ref{HartnickConjecture} on page \pageref{HartnickConjecture}. 

For the rest of this section we will focus on the connected case.

\subsection{Smooth topological twin buildings}
We will be interested in the following subclass of connected topological twin buildings:
\begin{defin} A connected topological twin building is called \emph{smooth} if its panels are finite-dimensional real manifolds.
\end{defin}
Note that, by definition, a topological twin
building is smooth if and only if its rank two residues are smooth. In many situations, smoothness is automatic:
\begin{prop}\label{SmoothAutomatic}
 Every connected Moufang topological twin building is smooth.
\end{prop}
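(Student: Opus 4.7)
The plan is to reduce the claim to a statement about rank two residues and then invoke the known classification (or structure) of connected compact Moufang polygons.

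First I would note that by the very definition of smoothness --- a connected topological twin building is smooth iff its panels are finite-dimensional manifolds --- and by Corollary \ref{PanelsHomeo}, every panel is homeomorphic to some rank one residue of an arbitrary rank two residue. Hence it suffices to prove: every rank two residue of a connected Moufang topological twin building has the property that its panels are finite-dimensional real manifolds. By Proposition \ref{ResiduesTTB}, each such rank two residue, equipped with the subspace topology, is itself a topological twin building; being of rank two it is spherical, and it inherits the Moufang property by assumption. Moreover, by Proposition \ref{ConnDichotomy} (applied to the rank two residue, whose Coxeter diagram is a connected edge of the ambient Coxeter diagram) each such residue is either connected or totally disconnected. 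Since the ambient building is connected, so are its (topologically embedded) rank two residues by Lemma \ref{Conn-TotDc} or by a direct connectivity argument using gallery spaces.

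Thus the proposition is reduced to the statement that in a connected compact Moufang polygon the panels are finite-dimensional real manifolds. This is precisely the content of the classification of connected compact Moufang polygons carried out by Knarr, Grundh\"ofer--Knarr--Kramer and Kramer (\cite{Knarr:1990}, \cite{Grundhoefer/Knarr/Kramer:1995}, \cite{Grundhoefer/Knarr/Kramer:2000}, \cite{kramer-polygons}): in each case the panels are spheres of dimension $1$, $2$, $4$ or $8$, and in particular are finite-dimensional smooth manifolds. Assembling these inputs gives smoothness of the ambient building.

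The main obstacle is really only to locate and cite the right piece of the compact polygon classification; conceptually there is little to do beyond the reduction. One slight subtlety is that Proposition \ref{ResiduesTTB} gives us a topological twin building structure on the residue, but the notion of Moufang polygon used in the compact polygon literature is formulated for spherical buildings / generalized polygons rather than for spherical twin buildings. This is, however, harmless because of \cite[Proposition 1]{tits-twin-buildings}, which identifies spherical twin buildings with spherical buildings, so that the rank two residue of a Moufang topological twin building is a compact Moufang polygon in the sense of \cite{kramer-polygons}, to which the classification directly applies.
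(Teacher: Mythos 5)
Your proposal is correct and takes essentially the same route as the paper: reduce to rank two residues, observe they are compact (via Corollary~\ref{SchubertCompact}) and connected (via Lemma~\ref{Conn-TotDc}), and then invoke the classification of connected compact Moufang (equivalently, flag-homogeneous) polygons from \cite{Grundhoefer/Knarr/Kramer:1995}, \cite{Grundhoefer/Knarr/Kramer:2000}. The paper states the reduction more tersely and cites the flag-homogeneous classification directly, while you additionally spell out the harmless translation between spherical twin buildings and spherical buildings via \cite[Proposition~1]{tits-twin-buildings}; this is a useful clarification but not a genuinely different argument.
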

\begin{proof}
By Lemma \ref{Conn-TotDc} a topological twin building $(\Delta, \tau)$ is connected if and only if its rank two residues are connected. A topological twin building $(\Delta, \tau)$ is smooth, if its rank two residues are. 
Since a Moufang topological twin building is two-spherical by definition, each of its rank two residues is compact by Corollary~\ref{SchubertCompact}. The claim therefore follows from the classification of flag-homogeneous connected compact polygons in
\cite{Grundhoefer/Knarr/Kramer:1995}, \cite{Grundhoefer/Knarr/Kramer:2000}.
\end{proof}
Although we will not take advantage of this fact, let us briefly mention that
the assumptions of the proposition can be substantially weakened. By
\cite[Theorem A]{Grundhoefer/Knarr/Kramer:1995} every flag-transitive compact
connected polygon is Moufang. Therefore by the extension theorem from \cite{Muehlherr/Ronan:1995} one has:
\begin{cor}
 Let $(\Delta, \tau)$ be a connected two-spherical topological twin building. If
the rank two residues have flag-transitive automorphism groups, then
$(\Delta, \tau)$ is smooth.
\end{cor}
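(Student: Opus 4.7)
The plan is to reduce the statement to Proposition~\ref{SmoothAutomatic} by verifying that, under the given hypotheses, $(\Delta,\tau)$ is actually a Moufang topological twin building. Since Moufang-ness in this paper is a purely local condition at the rank two residues, the task boils down to showing that every rank two residue is a Moufang polygon with respect to its induced topology.

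First I would use two-sphericity to see that every rank two residue $R$ is a spherical residue; such a residue coincides with a Schubert variety $E_{\leq w_0}(c)$ for the longest element $w_0$ of the associated rank two parabolic subgroup of $W$, and is therefore compact by Corollary~\ref{SchubertCompact}. In particular, $R$ equipped with the induced topology is a compact generalized polygon. Next I would propagate the connectedness assumption on $\Delta$ down to the rank two residues: combining Lemma~\ref{Conn-TotDc} with the polygon-level dichotomy underlying Proposition~\ref{ConnDichotomy} (namely the rank two dichotomy of \cite{kramer-polygons}, \cite{GKMW}), connectedness of $\Delta$ forces every panel, and hence, via the iterated fibre-bundle structure of Proposition~\ref{KBS}, every rank two residue, to be connected. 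Thus each rank two residue of $\Delta$ is a \emph{compact connected} generalized polygon.

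At this point the flag-transitivity hypothesis enters: by \cite[Theorem~A]{Grundhoefer/Knarr/Kramer:1995}, every flag-transitive compact connected generalized polygon is Moufang, so each rank two residue of $\Delta$ is a Moufang polygon. By the definition of a Moufang topological twin building (immediately before Example~\ref{ExCollapse}) this exactly says that $(\Delta,\tau)$ is a Moufang topological twin building, and Proposition~\ref{SmoothAutomatic} then yields that $(\Delta,\tau)$ is smooth.

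The only real subtlety I expect is the connectedness propagation from $\Delta$ to its panels and rank two residues: Proposition~\ref{ConnDichotomy} is formulated for irreducible Coxeter diagrams, so for reducible types one must either argue component by component (using the product decomposition of twin buildings of reducible type), or apply the rank two dichotomy of \cite{kramer-polygons}, \cite{GKMW} directly inside each rank two residue, which is always of irreducible rank two type under our standing assumption that the Coxeter diagram has no isolated vertices. Beyond this bookkeeping, the argument is a direct chain of citations with no further obstacles.
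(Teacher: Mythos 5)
Your proof is correct and follows the paper's argument essentially verbatim: use two-sphericity together with Corollary~\ref{SchubertCompact} and the connectedness propagation from the proof of Proposition~\ref{SmoothAutomatic} to see that each rank two residue is a compact connected polygon, invoke Theorem~A of \cite{Grundhoefer/Knarr/Kramer:1995} to upgrade flag-transitivity to the Moufang condition, and then apply Proposition~\ref{SmoothAutomatic}. The paper additionally cites the M\"uhlherr--Ronan extension theorem, which as you observe is not actually needed here because the paper's notion of Moufang topological twin building is purely local; your closing remark that rank two residues are always of irreducible type is slightly inaccurate (those with $m_{rs}=2$ are of reducible type $A_1\times A_1$), but this does not affect the argument, since the polygon dichotomy need only be applied inside an irreducible rank two residue through each panel, which exists by the standing no-isolated-vertex hypothesis.
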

We do not know any natural condition on twin trees which guarantee smoothness,
hence we cannot extend Proposition~\ref{SmoothAutomatic} beyond the
two-spherical case.
An important observation concerning smooth connected strong topological twin buildings is that
their panels are spheres:
\begin{prop}[{cf.~\cite[Theorem 1.6]{Grundhoefer/Knarr/Kramer:1995},
\cite[Lemma 2.1]{Knarr:1990},
\cite[Proposition~4.1.2]{kramer-polygons}}]
\label{panelsphere}
Let $(W,S)$ be a Coxeter system without isolated points and let
$\Delta$ be a smooth connected strong topological twin building of type $(W,S)$. Then
each panel of
$\Delta$ is a sphere.
\end{prop}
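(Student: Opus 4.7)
The plan is to reduce the problem to the rank-two case and then appeal to the classical structure theory of compact connected generalized polygons. Fix a type $s \in S$ and an $s$-panel $P \subseteq \Delta_\pm$. Since the Coxeter diagram of $(W,S)$ has no isolated vertices, there exists $t \in S$ with $m_{st} \geq 3$, and we let $R$ denote the rank-two residue of type $\{s,t\}$ containing $P$ (together with its unique opposite residue in $\Delta_\mp$). By Proposition~\ref{ResiduesTTB}, $R$ inherits the structure of a topological twin building, and since $R$ is spherical of rank two, Corollary~\ref{SchubertCompact} shows that $R$ itself is compact. The panel $P$ embeds as an $s$-panel of $R$, and by construction the subspace topology on $P$ induced from $\Delta$ agrees with its topology as a panel of $R$.

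The next step is to verify that $R$ is a connected compact generalized $m_{st}$-gon with the smoothness property inherited from $\Delta$. Since $\Delta$ is smooth, the panels of $R$ are finite-dimensional real manifolds; since $\Delta$ is connected, Lemma~\ref{Conn-TotDc} (applied in reverse to the rank-two residue) together with Proposition~\ref{KBS} shows that the panels of $R$ are connected, whence $R$ itself is connected as an iterated bundle of connected panels via the endpoint map of Remark~\ref{ttb4needed}. Consequently, $R$ is a compact connected smooth generalized $m_{st}$-gon, to which the classical theory of topological polygons applies.

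Finally, I invoke Knarr's theorem: by \cite[Lemma~2.1]{Knarr:1990} (or equivalently \cite[Proposition~4.1.2]{kramer-polygons}, \cite[Theorem~1.6]{Grundhoefer/Knarr/Kramer:1995}), every panel of a compact connected generalized polygon whose point and line spaces carry the structure of finite-dimensional manifolds is homeomorphic to a sphere. Applied to $P \subset R$, this yields the desired conclusion. The main point requiring care is the compatibility of topologies in the reduction step---namely that the topology on $R$ induced from $\Delta$ coincides with its intrinsic topological-polygon structure---but this is furnished by Proposition~\ref{ResiduesTTB}; once this is in place, the result is essentially a citation of Knarr's work.
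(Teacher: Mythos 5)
Your proof has a genuine gap: it silently upgrades the hypothesis ``no isolated vertices'' to ``two-spherical.'' The statement only guarantees, for each $s$, some $t$ with $m_{st} \geq 3$; it does \emph{not} guarantee $m_{st} < \infty$. If $m_{st} = \infty$, the rank-two residue $R$ of type $\{s,t\}$ is a twin tree, not a generalized $m_{st}$-gon: it is not spherical, Corollary~\ref{SchubertCompact} does not make it compact, and Knarr's theorem (which is about compact connected generalized polygons) has no purchase. Your sentence ``since $R$ is spherical of rank two, Corollary~\ref{SchubertCompact} shows that $R$ itself is compact'' is precisely where the argument breaks down in that case. So your reduction proves the claim only for two-spherical twin buildings, which is strictly weaker than what is asserted.

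The paper's own proof is designed to avoid exactly this: it uses Proposition~\ref{algop}, which is stated (and proved) for $3 \leq m_{rs} \leq \infty$, to obtain a continuous multiplication $\bullet: P_r(c_+)^\times \times P_s(c_-)^\times \to P_r(c_+)^\times$; connectedness of the punctured panel $P_s(c_-)^\times$ then yields a path from $1_-$ to $0_-$, and $H_t(x) := x \bullet \gamma(t)$ is a pseudo-isotopic contraction, whence Harrold's theorem gives $P_r(c_+)^\times \cong \mathbb{R}^n$ and $P_r(c_+) \cong \mathbb{S}^n$. This is essentially the Knarr/Kramer argument, but recast so that the rank-two residue itself is never required to be a compact polygon --- only the two punctured panels and the projection formalism are used. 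If you do want to invoke the rank-two classification literature directly, you must additionally assume two-sphericity (as the paper notes in the discussion preceding the proof of Proposition~\ref{algop}); for the statement as given you need the Harrold-based argument or an equivalent route that tolerates $m_{st} = \infty$.
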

\begin{proof}
Since compact connected manifolds of positive dimension do not admit cutpoints,
each punctured panel is connected.
Let $P \subset \Delta_+$ be a panel of type $r$ and let $c_+ \in P$, so that
$P=P_r(c_+)$. By 
hypothesis there exists a type $s \in S$ such that $m_{rs} \geq 3$. For 
$c_- \in E^*_1(c_+)$ define $0_+ := 
\pro^*_{P_r(c_+)}(c_-)$ and $0_- := \pro^*_{P_s(c_-)}(c_+)$ and choose $1_-
\in P_s(c_-)^\times \cap E_1^*(c_+)$. Proposition \ref{algop} provides a
continuous
map
\[\bullet: P_r(c_+)^\times  \times P_s(c_-)^\times \to P_r(c_+)^\times.\]
 Since 
$P_s(c_-)^\times$ is a connected manifold, there exists a continuous path 
$\gamma(t)$ from $1_-$ to $0_-$. Then the map $H_t(x) := x 
\bullet \gamma(t)$ defines a pseudo-isotopic contraction of $P_r(c_+)^\times$ 
in the sense of \cite[p.~186]{Harrold}. Therefore by
\cite[Theorem]{Harrold} there exists $n \in \mathbb N$ such that 
$P_r(c_+)^\times \cong \R^n$ and, thus, $P = P_r(c_+) \cong \mathbb{S}^n$.
\end{proof}

Recall from Corollary \ref{PanelsHomeo} that panels of the same type are
pairwise homeomorphic, so that the following definition is meaningful.

\begin{defin}
Let $\Delta$ be a smooth connected strong topological twin building. For each $s \in S$
the number
$d(s) \in \mathbb{N}$ denotes the dimension of an $s$-panel. 
\end{defin}

Proposition~\ref{panelsphere} allows one to identify a CW structure on a smooth
real twin 
building and its geometric realization.

\begin{prop}[{\cite[Proposition~7.9]{kramer2002}, 
cf.~\cite[Theorem 4.1.3]{kramer-polygons}}]\label{SchCW} 
Let $(\Delta, \tau)$ be a smooth connected strong topological twin building.
Then the combinatorial Schubert decomposition of the halves of $\Delta_\pm$ with
respect to base chambers $c_{\pm}$ are CW decompositions. More precisely, there
exists a CW structure on
$\Delta_{\pm}$ with the following properties:
\begin{enumerate}
\item if $w \in W$ and $w = s_1 \cdots s_n$ is a reduced expression, then
there exists a cell of dimension $d(w) := d(s_1)+\dots +d(s_n)$ with attaching
map
\[\phi_w: (D^{d(w)}, S^{d(w)-1}) \to (E_{\leq w}(c_{\pm}), E_{<w}(c_{\pm}));\]
the corresponding open cell is the Schubert cell $E_w(c_{\pm})$;
\item every cell is of the form $\phi_w$ for some $w \in W$.
\end{enumerate}
\end{prop}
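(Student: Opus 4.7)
The plan is to follow Kramer's approach by exploiting the iterated sphere-bundle structure on the Bott--Samelson gallery spaces (Proposition~\ref{KBS}), combined with the sphere shape of panels (Proposition~\ref{panelsphere}). For each $w \in W$ I would fix a reduced expression $w = s_1\cdots s_k$ and aim to construct a characteristic map $\tilde{\phi}_w : D^{d(w)} \to \Gamma_w := \Gall(s_1,\ldots,s_k;c_{\pm})$ whose boundary $S^{d(w)-1}$ is carried into the stammering subspace $\Gamma_w^{\mathrm{stam}}$. Composing with the endpoint quotient map $p_w : \Gamma_w \to E_{\leq w}(c_{\pm})$ (Remark~\ref{ttb4needed}) would then yield the desired attaching map $\phi_w := p_w \circ \tilde{\phi}_w$: the boundary is sent into $E_{<w}(c_{\pm})$ because deleting any letter from a reduced expression of $w$ produces an element strictly smaller in Bruhat order, and the open disk maps bijectively onto $E_w(c_{\pm})$ by uniqueness of minimal galleries of a given reduced type ending at a prescribed chamber.

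I would construct $\tilde{\phi}_w$ inductively on $k$. Given $\tilde{\phi}_{w'}$ for $w' = s_1\cdots s_{k-1}$ with the required boundary behaviour, I would pull the locally trivial $S^{d(s_k)}$-bundle $\Gamma_w \to \Gamma_{w'}$ back along $\tilde{\phi}_{w'}$ to obtain a sphere bundle over the contractible space $D^{d(w')}$, which therefore trivializes as $D^{d(w')} \times S^{d(s_k)}$, with the pulled-back stammering section corresponding to $D^{d(w')} \times \{*\}$ for a chosen basepoint $* \in S^{d(s_k)}$. Composing with the standard quotient $q : D^{d(s_k)} \twoheadrightarrow S^{d(s_k)}$ collapsing $\partial D^{d(s_k)}$ to $*$, and using the identification $D^{d(w)} \cong D^{d(w')} \times D^{d(s_k)}$ (with $\partial \cong S^{d(w)-1}$), I would obtain $\tilde{\phi}_w$. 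A direct check shows both pieces $\partial D^{d(w')} \times D^{d(s_k)}$ and $D^{d(w')} \times \partial D^{d(s_k)}$ of the boundary land inside $\Gamma_w^{\mathrm{stam}}$: the first by the inductive hypothesis (its image in $\Gamma_{w'}$ is already stammering), the second because $q$ sends it onto the section.

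To complete the CW verification I would check the following. The Schubert cells $\{E_w(c_{\pm})\}_{w \in W}$ partition $\Delta_{\pm}$ by the Bruhat decomposition for buildings, and each closed cell $E_{\leq w}(c_{\pm})$ is compact by Corollary~\ref{SchubertCompact} and meets only finitely many cells by finiteness of Bruhat intervals below $w$. Since $p_w$ is a quotient map and $\tilde{\phi}_w$ is surjective, the open cell $E_w(c_{\pm})$ is the continuous image of $D^{d(w)} \setminus S^{d(w)-1}$; the resulting continuous bijection is a homeomorphism on compact Hausdorff pairs $(E_{\leq w}(c_\pm), E_{<w}(c_\pm))$. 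Finally, axiom (TTB3) states $\Delta_{\pm} = \varinjlim E_{\leq w}(c_{\pm})$, which translates precisely to the statement that $\Delta_{\pm}$ carries the weak topology of the CW complex assembled from the cells $\phi_w$.

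The main obstacle is securing the trivialization of the pullback $S^{d(s_k)}$-bundle over $D^{d(w')}$ in a way that respects the pulled-back stammering section globally, so that the boundary of the constructed disk indeed falls in $\Gamma_w^{\mathrm{stam}}$ rather than in some arbitrary cross-section. This amounts to trivializing a based sphere bundle over a contractible paracompact base equivariantly with respect to the basepoint-preserving structure group, which is classical but is the only non-routine step; the smoothness hypothesis enters here (via Proposition~\ref{panelsphere}) to guarantee the fibres are genuine spheres of a well-defined dimension. A secondary concern is that the construction, though dependent on the choice of reduced expression of $w$, produces a cell whose open part is the intrinsic Schubert cell $E_w(c_{\pm})$ defined via $\delta$; this is automatic, since $E_w(c_\pm)$ does not depend on the reduced expression.
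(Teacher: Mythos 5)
Your proposal is correct and follows essentially the same route as the paper: both rest on the Bott--Samelson fibration (Proposition~\ref{KBS}), the sphere shape of panels (Proposition~\ref{panelsphere}), the stammering section, the endpoint quotient map (Remark~\ref{ttb4needed}), and axiom (TTB3) for passing to the limit. The only difference is that the paper invokes Lemma~\ref{cwbundle} as a black box to produce the CW structure on the gallery spaces, whereas you re-derive its content explicitly by pulling the sphere bundle back over a contractible disk and trivializing it as a based bundle --- the ``main obstacle'' you single out is precisely what that lemma packages up.
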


The proof of the proposition is based on the findings of Section~\ref{BSdes} and
the following observation.

\begin{lem}[{\cite[Lemma~6.2.12]{kramer-polygons}}]\label{cwbundle} Let $p: E
\to B$ be an 
$\mathbb{S}^d$-fibre bundle over a CW complex $B$ which admits a global 
section $s : B \to E$. Then there exists a unique CW structure on $E$ with 
the following properties:
\begin{enumerate}
\item $s(B)$ is a subcomplex of $E$ and $s: B \to s(B)$ is an 
isomorphism of CW complexes. 
\item Let $B^{k-1}$ be the $(k-1)$-skeleton of $B$ and 
$\mu: (D^k, S^{k-1}) \to (B, B^{k-1})$ be a $k$-cell. Then there exists a 
unique $(k+d)$-cell $\hat{\mu}: (D^{k+d}, \mathbb{S}^{k+d-1}) \to (E,
E^{k+d-1})$ with
\[\hat{\mu}(D^{k+d}) = p^{-1}(\mu (D^k)).\]
\item Every cell is either of type (i) or type (ii).
\end{enumerate}
\end{lem}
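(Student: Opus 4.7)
The plan is to construct the desired CW structure on $E$ inductively over the skeleta of $B$, and then deduce uniqueness from the rigidity of conditions (i)--(iii). The key tool is the classical fact that every $\mathbb{S}^d$-bundle over a contractible base is trivializable, together with the presence of the section $s$, which lets us anchor the trivializations to a preferred point in each fibre.

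First I would declare the candidate cells of $E$. For every $k$-cell $\mu\colon(D^k,S^{k-1})\to(B,B^{k-1})$ of $B$, I define a horizontal $k$-cell by the composition $s\circ\mu$. In addition, for every such $\mu$ I define a vertical $(k+d)$-cell as follows. The pullback $\mu^*E\to D^k$ is an $\mathbb{S}^d$-bundle over the contractible space $D^k$, hence admits a bundle trivialisation $\Phi_\mu\colon D^k\times\mathbb{S}^d\to\mu^*E$. Since the pulled-back section $\mu^*s$ is a continuous section of this product bundle and $\mathrm{SO}(d+1)$ acts transitively on $\mathbb{S}^d$ through a fibration, I can adjust $\Phi_\mu$ by a continuously varying rotation so that $\Phi_\mu(x,N)=(\mu^*s)(x)$ for all $x\in D^k$, where $N\in\mathbb{S}^d$ is a fixed north pole. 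Fixing any quotient map $q\colon D^d\to\mathbb{S}^d$ that collapses $S^{d-1}$ to $N$ and is a homeomorphism on the open disk, I obtain the desired characteristic map
\[\hat\mu\colon D^k\times D^d\xrightarrow{\mathrm{id}\times q} D^k\times\mathbb{S}^d\xrightarrow{\Phi_\mu}\mu^*E\to E,\]
using $D^k\times D^d\cong D^{k+d}$ topologically, with boundary sphere $(S^{k-1}\times D^d)\cup(D^k\times S^{d-1})$. By construction $\hat\mu(D^{k+d})=p^{-1}(\mu(D^k))$.

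Next, I would verify by induction on $k$ that these data assemble into a CW structure. Injectivity on the interiors of open cells is immediate. On the boundary sphere, the meridional part $D^k\times S^{d-1}$ is mapped by $\hat\mu$ into $s(\mu(D^k))$, which is covered by horizontal cells of dimension at most $k$. The equatorial part $S^{k-1}\times D^d$ is mapped into $p^{-1}(\mu(S^{k-1}))\subseteq p^{-1}(B^{k-1})$, which by induction is covered by horizontal cells of dimension at most $k-1$ and vertical cells of dimension at most $k+d-1$. Hence the attaching map of $\hat\mu$ lands in the $(k+d-1)$-skeleton built so far. Closure-finiteness follows from compactness of $D^{k+d}$ together with closure-finiteness of $B$, and the given topology on $E$ coincides with the weak topology with respect to the new cells because $p$ is a quotient map on every compact preimage and $\mathbb{S}^d$ is compact Hausdorff.

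Finally, uniqueness is quick. Condition (i) forces $s(B)$ to carry the CW structure pushed forward from $B$, fixing every horizontal cell. Condition (ii) asserts that for every cell $\mu$ of $B$ there is a unique cell of $E$ whose image equals $p^{-1}(\mu(D^k))$, so up to reparametrisation the vertical cells are determined. Condition (iii) then rules out any further cells. The main obstacle in this plan is the inductive bookkeeping in the second step: one must check that the successive, non-canonical choices of $\Phi_\mu$ glue continuously with the attaching data of previously constructed cells, and that the quotient topology on $E$ agrees with the weak topology of the resulting cell structure. Both points reduce to the standard observation that a locally trivial bundle over a CW base with a compact Hausdorff fibre naturally inherits a CW structure from any chosen CW decomposition of the fibre, and here the section $s$ selects the preferred decomposition of $\mathbb{S}^d$ as a $0$-cell together with a $d$-cell.
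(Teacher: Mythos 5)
The paper does not prove this lemma: it is imported verbatim from Kramer's thesis \cite[Lemma~6.2.12]{kramer-polygons}, so there is no in-paper argument to compare against. On its own merits your construction is the standard one and is sound: pull the bundle back over each closed cell via $\mu$, trivialize the resulting bundle over the contractible $D^k$, straighten the pulled-back section to the constant $N$ by lifting $D^k\to S^d$ along the fibration $\mathrm{SO}(d+1)\to S^d$ (tacitly assuming $d\geq 1$, which is harmless for the paper's use of the lemma), and then form the vertical $(k+d)$-cell from $D^k\times D^d$ via the collapse $q\colon D^d\to S^d$. The attaching map sends $\partial(D^k\times D^d)=(S^{k-1}\times D^d)\cup(D^k\times S^{d-1})$ into $p^{-1}(\mu(S^{k-1}))\cup s(\mu(D^k))$, which lies in the $(k+d-1)$-skeleton, and the restriction to the open disc is a homeomorphism onto $p^{-1}(\mu(\mathrm{int}\,D^k))\setminus s(\mu(\mathrm{int}\,D^k))$.

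Two remarks. First, the ``main obstacle'' you flag at the end---that the noncanonical trivializations $\Phi_\mu$ must glue compatibly with previously built attaching data---is actually a non-issue: the CW axioms impose no compatibility between characteristic maps of different cells, only that each attaching map be continuous with image in the lower skeleton, which your construction delivers automatically. Second, the point that does deserve more care than you give it is the weak-topology axiom when $B$ is an infinite complex. One clean route is to observe that the closed vertical cell $\hat\mu(D^{k+d})=p^{-1}(\mu(D^k))$ is compact and contains the corresponding closed horizontal cell, so $E$ having the weak topology with respect to its cells is equivalent to the statement that $A\subseteq E$ is closed iff $A\cap p^{-1}(\mu(D^k))$ is closed for every cell $\mu$ of $B$; this in turn follows from the weak topology on $B$ together with the openness of $p$ and the compactness of each $p^{-1}(\mu(D^k))$. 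Your uniqueness argument is fine once made precise by induction on the skeleton: (i) fixes the open horizontal cells, and then (ii) together with disjointness of open cells forces $e_{\hat\mu}=p^{-1}(\mu(\mathrm{int}\,D^k))\setminus s(\mu(\mathrm{int}\,D^k))$, with (iii) ruling out anything further.
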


\begin{proof}[Proof of Proposition \ref{SchCW}]
Applying Lemma \ref{cwbundle} to the Bott--Samelson desingularization 
(Proposition~\ref{KBS}) yields a CW structure on each gallery space
$\Gall(s_1,...,s_k;c_{\pm})$ by induction on $k$, starting from 
the trivial CW structure of the point. Composing the attaching maps with the 
respective endpoint maps (Remark \ref{ttb4needed}) one obtains a
CW structure on Schubert varieties with centre $c_{\pm}$;
by 
(TTB3) these patch together to a CW structure on 
$\Delta_{\pm}$.
\end{proof}

\begin{rem}[{cf.~\cite[Theorem]{Knarr:1990}}]
Proposition \ref{SchCW} yields severe restrictions on the possible values of
the dimensions $d(s)$, because $d(w)$ has to be independent of the reduced 
expression for $w$. For instance, $d(\cdot)$ is constant on subsets of $S$
contained in a single $W$-conjugacy class, i.e., subsets of the connected 
components of the subgraph of the Dynkin diagram containing the simple edges
only, as the dihedral group $\langle s, t \mid s^2 = t^2 = (st)^3 = 1 \rangle$
admits the relation $sts =tst$.
\end{rem}

Another application of Proposition \ref{SchCW} concerns the geometric 
realizations $|\Delta_{\pm}|$ of the two halves of the twin building defined as
follows: 
Let $\Delta = ((\Delta_+,\delta_+), (\Delta_-,\delta_-), \delta^*)$ be a 
topological twin building of type $(W,S)$ and $n:= |S|-1$. Define the 
standard 
simplex and its faces as 
\[\blacktriangle^n := \{v \in \R^{n+1} \mid \sum_{i=1}^{n+1} v_i = 1\},\]
respectively, 
\[\blacktriangle^n[j] := \left\{v \in \R^{n+1} \mid \sum_{i=1}^{n+1} v_i = 1, \quad
v_j = 0\right\}, \quad (j= 1, \dots, n+1).\]
Then the geometric realizations $|\Delta_{\pm}|$ of $\Delta_{\pm}$ are given by
the following construction: Enumerate $S = \{s_1, \dots, s_{n+1}\}$, equip
$\Delta_{\pm} \times
\blacktriangle^n$ with the product 
topology and identify the $j$th faces of chambers which are contained in the 
same $s_j$-panel. In the sequel we will denote by $q: \Delta_{\pm} \times
\blacktriangle^n \to
|\Delta_{\pm}|$ the canonical quotient maps.

\medskip

If we  equip $\blacktriangle^n$ 
with the CW structure given by its faces, then the product CW structure on 
$\Delta_{\pm} \times \blacktriangle$ descend to the geometric realizations 
$|\Delta_{\pm}|$. Observe that the subsets
\[|E|_{\leq w}(c) := q(E_{\leq w}(c) \times \blacktriangle^n), 
|E|_{< w}(c) := q(E_{< w}(c) \times \blacktriangle^n)\]
are subcomplexes of $|\Delta_{\pm}|$ for every $w \in W$ and $c \in
\Delta_{\pm}$.
 Since the inclusion of a CW subcomplex is a cofibration (see e.g. \cite[Theorem
7.12]{spanier}), we obtain:
\begin{cor}[{cf.~\cite[Theorem 2.22(c)]{Mitchell:1988}}]\label{CofibrationLemma}
For every $w \in W$ and $c \in \Delta_{\pm}$ the inclusion $|E|_{< w}(c) 
\hookrightarrow |E|_{\leq w}(c)$ is a cofibration.
\end{cor}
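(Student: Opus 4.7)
The plan is to reduce the statement to the well-known fact that the inclusion of a subcomplex into a CW complex is a cofibration, by identifying both sides of the inclusion as CW subcomplexes of $|\Delta_\pm|$ in a compatible way.

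First I would invoke Proposition~\ref{SchCW} to equip each half $\Delta_\pm$ with its Schubert CW structure, in which each Schubert variety $E_{\leq w}(c)$ is a finite subcomplex and $E_{<w}(c) = \bigcup_{v < w} E_{\leq v}(c)$ is a subcomplex of $E_{\leq w}(c)$. Next I would combine this CW structure on $\Delta_\pm$ with the standard face-simplicial CW structure on the geometric simplex $\blacktriangle^n$ to obtain a product CW structure on $\Delta_\pm \times \blacktriangle^n$, in which $E_{\leq w}(c) \times \blacktriangle^n$ and $E_{<w}(c) \times \blacktriangle^n$ are subcomplexes.

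I would then verify that the quotient map $q\colon \Delta_\pm \times \blacktriangle^n \to |\Delta_\pm|$ descends this product CW structure to $|\Delta_\pm|$. The point is that the identification used in forming $|\Delta_\pm|$ only glues $j$th faces of chambers lying in a common $s_j$-panel, and each such gluing identifies closed cells of the product structure along a cellular map of subcomplexes (determined by the bundle trivializations from Proposition~\ref{KBS} combined with the standard face inclusions of $\blacktriangle^n$). Consequently, $|\Delta_\pm|$ inherits a CW structure in which $|E|_{\leq w}(c) = q\bigl(E_{\leq w}(c) \times \blacktriangle^n\bigr)$ and $|E|_{<w}(c) = q\bigl(E_{<w}(c) \times \blacktriangle^n\bigr)$ are CW subcomplexes, and the second is a subcomplex of the first.

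Finally, I would conclude by citing the standard fact (\cite[Theorem~7.12]{spanier}) that the inclusion of a CW subcomplex into a CW complex is a closed cofibration; applied to the pair $(|E|_{\leq w}(c), |E|_{<w}(c))$ this yields the corollary. The main potential obstacle is the middle step: carefully checking that the simplex-face identifications making $|\Delta_\pm|$ are cellular, so that the quotient genuinely carries a CW structure with the claimed subcomplexes. This is essentially bookkeeping, using that two chambers lying in a common $s_j$-panel produce identifications which, in the gallery-space/Schubert decomposition of Proposition~\ref{SchCW}, identify characteristic maps along the appropriate face $\blacktriangle^n[j]$, which is itself a CW subcomplex of $\blacktriangle^n$.
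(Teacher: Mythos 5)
Your proof is correct and follows essentially the same route as the paper: equip $\Delta_\pm$ with the Schubert CW structure from Proposition~\ref{SchCW}, take the product CW structure with the face structure on $\blacktriangle^n$, descend it to $|\Delta_\pm|$, observe that $|E|_{\leq w}(c)$ and $|E|_{<w}(c)$ are subcomplexes, and invoke Spanier's theorem that CW subcomplex inclusions are cofibrations. The only difference is that you spell out the cellularity check for the face identifications, which the paper leaves implicit.
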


\subsection{A topological Solomon--Tits theorem} \label{topsoltits} 
The goal of this subsection is to establish the following topological variant of
the Solomon--Tits theorem:
\begin{thm}[{\cite[Corollary 7.11]{kramer2002}}]\label{SalTits} Let $\Delta$ 
be a smooth connected strong topological twin building of type $(W,S)$, whose Coxeter
graph contains 
no isolated points. Then $|\Delta_{\pm}|$ is a homotopy sphere of dimension 
$d(w_0) + |S|-1$ if $W$ is finite and $w_0 \in W$ is the longest word, and 
contractible if $W$ is infinite.
\end{thm}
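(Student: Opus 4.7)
The plan is to prove both cases simultaneously by an induction over the Bruhat order that leverages the CW filtration provided by Proposition~\ref{SchCW}. For fixed base chambers $c_\pm \in \Delta_\pm$, set $X_w := |E|_{\leq w}(c_\pm)$. By Proposition~\ref{SchCW} each $X_w$ is a CW subcomplex of $|\Delta_\pm|$, by \refttb{item:ttb-3} one has $|\Delta_\pm| = \lim_\to X_w$, and by Corollary~\ref{CofibrationLemma} every inclusion $X_v \hookrightarrow X_w$ for $v \leq w$ is a cofibration. The base case is trivial: $X_1 = \{c_\pm\} \times \blacktriangle^{|S|-1}$ is a single simplex and is therefore contractible.

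The core of the argument is an inductive lemma: if $w \in W$ admits some $s \in S$ with $l(ws) > l(w)$, then $X_{<w} \hookrightarrow X_w$ is a homotopy equivalence. I would prove this by examining the attaching maps of the new cells in the relative pair $(X_w, X_{<w})$ via the Bott--Samelson machinery (Proposition~\ref{KBS}): picking a reduced expression $w = s_1 \cdots s_k$, the endpoint map from the gallery space presents $X_w$ as a quotient of an iterated sphere bundle over the previous stage, with a canonical section supplied by stammering. The hypothesis $l(ws) > l(w)$ then furnishes an extra free panel direction---a sphere of positive dimension by Proposition~\ref{panelsphere}---along which the attaching maps of the new cells (both in the Schubert-cell direction and in the surviving simplex faces of $\blacktriangle^{|S|-1}$ after panel identifications) can be deformed into $X_{<w}$, giving the desired deformation retraction.

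Granting the lemma, the infinite case is immediate: every $w \in W$ meets its hypothesis, so $|\Delta_\pm|$ is the countable telescope of homotopy equivalences starting at the contractible $X_1$, and hence contractible---the passage to the direct limit is legitimate because, by Corollary~\ref{TTBKOmega}, $|\Delta_\pm|$ is $k_\omega$ (and in particular normal) so mapping cylinder arguments apply. In the finite case the same induction yields contractibility of $X_{<w_0}$, and it remains to analyze the last pair $(X_{w_0}, X_{<w_0})$, where the lemma's hypothesis fails. Here one computes directly that $X_{w_0}/X_{<w_0}$ is a single sphere of dimension $d(w_0) + |S| - 1$: the top Schubert cell $E_{w_0}(c_\pm)$ of dimension $d(w_0)$ combines with the simplex factor $\blacktriangle^{|S|-1}$, while all lower-dimensional faces coming from proper subsets of $S$ are already absorbed into $X_{<w_0}$. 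Since $X_{<w_0}$ is contractible, collapsing it gives $|\Delta_\pm| = X_{w_0} \simeq S^{d(w_0)+|S|-1}$, as claimed.

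The main obstacle will be the inductive lemma. Its delicacy is that the relative complex $(X_w, X_{<w})$ mixes two kinds of cells: those stemming from the Schubert stratum $E_w(c_\pm)$ at chamber level (where the Bott--Samelson sphere bundle offers direct geometric control) and those arising from lower-dimensional simplex faces that are newly created once the panel identifications are taken into account. Showing that \emph{all} of these attaching maps become null-homotopic in $X_{<w}$ once an increasing direction $s$ is available, and doing so in a way that produces a genuine homotopy equivalence and not just a weak equivalence, is the technical heart of the proof, and is where the positive-dimensionality of panels granted by smoothness (Proposition~\ref{panelsphere}) plays its decisive role.
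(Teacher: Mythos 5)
Your proposal follows the same overall architecture as the paper's proof: the CW filtration of $|\Delta_\pm|$ by the subcomplexes $X_w = |E|_{\leq w}(c)$ from Proposition~\ref{SchCW}, the cofibration property from Corollary~\ref{CofibrationLemma}, and an induction whose engine is exactly your key lemma, which (via the cofibration) is equivalent to the paper's statement that $B_w(c) = X_w/X_{<w}$ is contractible when $w$ is non-maximal and a sphere when $w$ is maximal. Your deduction from the lemma to the theorem is a bit loose where you speak of a ``countable telescope of homotopy equivalences'' over $W$---the $X_w$ do not form a chain since the Bruhat order is not linear---but this can be repaired by passing to the length filtration $Z_n := \bigcup_{l(w)\leq n} X_w$ and observing that, for each $w$ of length $n+1$, the overlap $Z_n \cap X_w$ is precisely $X_{<w}$, so each passage $Z_n \hookrightarrow Z_{n+1}$ is a homotopy pushout along acyclic cofibrations. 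The paper instead runs a slightly different auxiliary induction showing $|E|_{<w}(c)$ is contractible for every $w$ via a finite wedge/nerve argument, but both routes are legitimate once one has the key lemma.

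The genuine gap is in the proof of the key lemma itself, which you flag as the ``technical heart'' but do not supply. Your sketch---that the hypothesis $l(ws) > l(w)$ furnishes ``an extra free panel direction'' along which attaching maps can be deformed into $X_{<w}$---is not the mechanism that actually works, and it would be hard to carry out because the cells of $(X_w, X_{<w})$ are not attached along maps that visibly null-homotope in a panel direction. What the paper does instead is describe $B_w(c)$ explicitly as a quotient
$(S^{d(w)} \times \blacktriangle^{N-1}) / (\{p\}\times\blacktriangle^{N-1} \cup S^{d(w)} \times \blacktriangle^{N-1}[I^-])$,
where $I^- := \{i : l(ws_i) < l(w)\}$, using two combinatorial claims about which points of $E_w(c)\times\blacktriangle^{N-1}$ map to the basepoint under the panel identifications. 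The role of the increasing direction $s = s_i$ (with $i \notin I^-$) is then entirely \emph{simplicial}: the $i$-th face of $\blacktriangle^{N-1}$ is not collapsed to the basepoint, so one contracts by sliding the barycentric coordinate toward the $i$-th vertex. When $w$ is maximal, $I^- = \{1,\dots,N\}$, the collapsed set is the full boundary sphere $\partial\blacktriangle^{N-1}$, and the quotient is $S^{d(w)}\wedge S^{N-1} \cong S^{d(w)+N-1}$. In particular your final-case analysis of $(X_{w_0}, X_{<w_0})$ needs the same explicit quotient identification as the non-maximal case, so that computation cannot be sidestepped; it is the crux of both halves of the theorem.
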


We start with some preliminary observations and reductions. By  \cite{k-omega}, \cite{final-group-topologies} direct 
limits in the category of $k_\omega$ spaces commute with finite products and 
with quotient maps. We deduce 
that for any fixed chamber $c \in \Delta_{\pm}$,
\[|\Delta_{\pm}| = \lim_{\to} |E|_{\leq w}(c).\]
If $W$ is infinite, we even obtain
\begin{eqnarray}\label{InfiniteContractionLimit}
|\Delta_{\pm}| &=& \lim_{\to} |E|_{< w}(c).\end{eqnarray}
The key step in the proof of Theorem \ref{SalTits} is to show that for every 
non-maximal $w \in W$ the quotient 
\[B_{w}(c) :=  |E|_{\leq w}(c)/ |E|_{< w}(c)\]
is contractible. 

Let us assume this for the moment and explain how to deduce 
Theorem~\ref{SalTits}: We claim that our assumption implies that 
$|E|_{< w}(c)$ itself is always contractible, even if $w$ is maximal. Indeed, 
for $l(w) \leq 1$ this is clear. Now let $l := l(w) > 1$ and let 
$w_1, \dots, w_N$ be the maximal elements with respect to the Bruhat order 
subject to the condition $w_j < w$. By induction hypothesis, 
$|E|_{< w_j}(c)$ is contractible for every $j=1, \dots, N$. Since $w_j < w$, 
the $w_j$ are non-maximal, whence by our assumption also 
$|E|_{\leq w_j}(c)/ |E|_{< w_j}(c)$ is contractible for every $j$. In view 
of Corollary \ref{CofibrationLemma} this implies that each of the sets 
$|E|_{\leq w_j}(c)$ is contractible. Since
\[|E|_{\leq w_j}(c) \cap |E|_{\leq w_l}(c) = |E|_{\leq \inf\{w_j, w_l\}}(c),\]
the same argument shows that finite intersections of the $|E|_{\leq w_j}(c)$ 
are contractible. Using Corollary \ref{CofibrationLemma} this implies
\[|E|_{< w}(c) = \bigcup_{j=1}^n |E|_{\leq w_j}(c) \simeq \bigvee_{j=1}^n 
|E|_{\leq w_j}(c) \simeq \{*\}\]
and, thus, establishes our claim. 

In the infinite case we can combine our claim and 
\eqref{InfiniteContractionLimit} to deduce
\[|\Delta_{\pm}| \simeq \{*\};\]
if $W$ is finite with longest word $w_0$ then another application of Corollary
\ref{CofibrationLemma} yields
\[|\Delta_{\pm}| \cong |E|_{\leq w_0}(c)/|E|_{< w_0}(c) = B_{w_0}(c).\]
We have thus reduced the proof of Theorem \ref{SalTits} to the following 
lemma:
\begin{lem}[{\cite[Proposition 7.10]{kramer2002}, 
cf.~\cite[2.10--2.15]{Knarr:1990}, \cite[Theorem 2.16]{Mitchell:1988}}]
Let $w \in W$. Then $B_{w}(c) \simeq  S^{d(w)+|S|-1}$, if $w$ is maximal; otherwise, $B_w(c)$ is contractible.
\end{lem}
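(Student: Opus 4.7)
The plan is to identify $B_w(c)$ as a smash product by analyzing which points of $E_{\leq w}(c)\times\blacktriangle^n$ map to $|E|_{<w}(c)$ and which identifications remain on the complement. Enumerate $S=\{s_1,\ldots,s_{n+1}\}$ with $n=|S|-1$, and set
\[J_w \;:=\; \{j \mid l(ws_j)>l(w)\},\qquad \partial_w\blacktriangle^n \;:=\; \bigcup_{j\notin J_w}\blacktriangle^n[j] \;\subseteq\; \partial\blacktriangle^n.\]
Then $w$ is maximal iff $J_w = \emptyset$ iff $\partial_w\blacktriangle^n = \partial\blacktriangle^n$, and $w$ is non-maximal iff $\partial_w\blacktriangle^n$ is a proper subcomplex of $\partial\blacktriangle^n$ missing some top-dimensional face.

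The first step is the combinatorial claim that for every chamber $d \in E_w(c)$ and every $J \subseteq \{s_j \mid j \in J_w\}$ one has $R_J(d) \cap E_{\leq w}(c) = \{d\}$: indeed, the codistances from $c$ to chambers of $R_J(d)$ form the coset $wW_J$, and by choice of $J$ the element $w$ is the minimal-length representative of this coset, so all other chambers in $R_J(d)$ have codistance from $c$ of length strictly larger than $l(w)$ and hence lie outside $E_{\leq w}(c)$. Combined with the equivalence relation $(d,x)\sim(d',x)$ whenever $d'\in R_{J(x)}(d)$ with $J(x):=\{s_j \mid x_j=0\}$, this shows that a point $(d,x)$ with $d \in E_w(c)$ maps into $|E|_{<w}(c)$ precisely when $x \in \partial_w\blacktriangle^n$, and that on the complement $E_w(c)\times(\blacktriangle^n\setminus\partial_w\blacktriangle^n)$ the relation $\sim$ is trivial. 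Therefore the quotient map restricts to a homeomorphism
\[E_w(c)\times\bigl(\blacktriangle^n\setminus\partial_w\blacktriangle^n\bigr) \;\xrightarrow{\,\cong\,}\; |E|_{\leq w}(c)\setminus|E|_{<w}(c).\]

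Now $|E|_{\leq w}(c)$ is compact by Corollary~\ref{SchubertCompact} and $|E|_{<w}(c)$ is closed, so $B_w(c)$ is the one-point compactification of its complement. The product formula $(X\times Y)^+\cong X^+\wedge Y^+$ for locally compact Hausdorff $X,Y$ then gives
\[B_w(c) \;\cong\; \bigl(E_w(c)\bigr)^+ \wedge \bigl(\blacktriangle^n/\partial_w\blacktriangle^n\bigr) \;\cong\; \bigl(E_{\leq w}(c)/E_{<w}(c)\bigr) \wedge \bigl(\blacktriangle^n/\partial_w\blacktriangle^n\bigr),\]
where the first factor equals $S^{d(w)}$ by the attaching map $\phi_w$ of Proposition~\ref{SchCW}. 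If $w$ is maximal, then $\partial_w\blacktriangle^n=\partial\blacktriangle^n$, so $\blacktriangle^n/\partial\blacktriangle^n\cong S^n$ and
\[B_w(c) \;\cong\; S^{d(w)}\wedge S^n \;\cong\; S^{d(w)+|S|-1}.\]
If $w$ is non-maximal with $w\neq 1$, then $\partial_w\blacktriangle^n\subsetneq\partial\blacktriangle^n$ is a contractible proper subcomplex (it deformation retracts onto any vertex $e_{j_0}$ with $j_0\in J_w$, since every face it contains meets that vertex); since $\blacktriangle^n$ is contractible and the inclusion is a CW-cofibration, $\blacktriangle^n/\partial_w\blacktriangle^n$ is contractible, whence $B_w(c)$ is contractible as a smash with a well-pointed contractible space. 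The edge case $w=1$ is immediate since $|E|_{\leq 1}(c) = \blacktriangle^n$ is itself contractible.

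The main obstacle is the combinatorial description of the preimage of $|E|_{<w}(c)$ and the accompanying verification that no identifications survive on the complement, both of which hinge on the coset structure of $wW_J$ in $W$ and the projection properties of $J$-residues; once these are set up, the remainder is a standard computation with one-point compactifications, smash products, and cofibration sequences.
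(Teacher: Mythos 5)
Your proof is correct and reaches the same conclusion, but the route is genuinely different from the paper's. The paper factors the quotient through $p_0\colon \bigl(E_{\leq w}(c)/E_{<w}(c)\bigr)\times\blacktriangle^{N-1}\to B_w(c)$, proves two panel-level claims about which simplicial identifications survive, writes $B_w(c)$ as an explicit quotient of $S^{d(w)}\times\blacktriangle^{N-1}$ by a subspace, and in the non-maximal case finishes with an explicit contracting homotopy that pushes the simplex coordinate toward a vertex $e_i$ with $i\notin I^-$. You instead identify $B_w(c)$ as the one-point compactification of the open product $E_w(c)\times\bigl(\blacktriangle^n\setminus\partial_w\blacktriangle^n\bigr)$, invoke $(X\times Y)^+\cong X^+\wedge Y^+$ for locally compact Hausdorff spaces, and reduce the lemma to a smash-product computation: $S^{d(w)}\wedge S^n$ when $w$ is maximal, and a smash with the well-pointed contractible CW quotient $\blacktriangle^n/\partial_w\blacktriangle^n$ otherwise (well-pointed because the pair is a CW-pair, contractible because $\partial_w\blacktriangle^n$ is star-shaped with respect to any $e_{j_0}$ with $j_0\in J_w$ and the inclusion is a cofibration). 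Your combinatorial input --- $R_J(d)\cap E_{\leq w}(c)=\{d\}$ for all $J\subseteq\{s_j\mid j\in J_w\}$, via the coset $wW_J$ and minimal-length representatives --- is the residue-level form of the paper's panel-level Claims 1 and 2, and it makes the injectivity-on-the-complement step immediate rather than requiring one to chase chains of panel adjacencies. The paper's explicit homotopy is more elementary and self-contained; yours is arguably cleaner in that it isolates the combinatorics (residues of $E_{\leq w}(c)$ at a chamber of maximal length) from the topology (one-point compactifications and smash products) and handles both cases uniformly. One small caveat: the aside on $w=1$ is shaky, since $B_1(c)=|E|_{\leq 1}(c)/\emptyset$ is not a pointed space in the convention used elsewhere and whether it is "contractible" depends on whether one reads $X/\emptyset$ as $X$ or $X_+$; but the same degeneracy is present in the paper's own homotopy formula, and this value of $w$ is never invoked in the proof of Theorem~\ref{SalTits}, so this is harmless.
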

\begin{proof} We fix an enumeration $S = \{s_1, \dots, s_N\}$ and for 
$1 \leq i \leq N$ denote by 
\[\blacktriangle^{N-1}[i] := \{(t_1, \dots, t_N) \in \R^n\,|\, \sum_{j=0}^N t_j
= 1, t_i = 0\}\]
the $i$th face of the standard $(N-1)$-simplex. We then denote by 
$q: \Delta_{\pm} \times \blacktriangle^{N-1} \to |\Delta_{\pm}|$ the quotient 
map given by identifying the $i$-th faces of $s_i$-adjacent chambers.
Furthermore we denote by $\pi: |E|_{\leq w}(c) \to B_{w}(c)$ the canonical
projection and set
\[p := \pi \circ q|_{E_{\leq w}(c) \times \blacktriangle^{N-1}}: E_{\leq
w}(c)\times \blacktriangle^{N-1} \to B_w(c).\]
We observe that $p$ maps all points in $E_{< w}(c) \times \blacktriangle^{N-1}$ to the
basepoint $* := p(c)$ of $B_w(c)$. In particular $p$ factors through a map
\[p_0: E_{\leq w}(c)/E_{< w}(c) \times \blacktriangle^{N-1} \to B_w(c).\]
Since $E_{\leq w}(c)\times \blacktriangle^{N-1}$ is compact, the maps $p$ and,
consequently, $p_0$ are quotient maps.

\medskip
\noindent \textsc{Claim 1}: If $l(ws_i) < l(w)$ then $p(d, (t_j)) = *$ for all $d \in
E_w(c)$ and $(t_j) = (t_1, \dots, t_N) \in \blacktriangle^N[i]$ with $t_i =
0$. \\[1ex]
Indeed, if $l(ws_i) < l(w)$ then there exists a reduced expression $w = r_1
\cdots r_M$ with $r_j \in S$, $r_M = s_i$. Let $(c = x_0, x_1, \dots, x_{M-1},
x_m = d)$ be a gallery of type $(r_1, \dots, r_M)$. Then $x_{M-1} \in
E_{ws_i}(c) \subset E_{< w}(c)$. Since $x_{m-1}$ and $d$ share their $i$th face
in $|E|_{\leq w}(c)$, the claim follows.

\medskip
\noindent \textsc{Claim 2}: If $d \sim_i e$ for some $d \in E_{w}(c)$ and $e \in E_{\leq
w}(c)$, then $l(ws_i) < l(w)$. \\[1ex]
For $e \in E_{< w}(c)$ this is clear. Now assume $d,e \in E_w(c)$ and $l(ws_i) =
l(w)+1$; take a reduced expression $(r_1, \dots, r_M)$ for $w$ and let $(c =
c_0, \dots, c_M = d)$ be a gallery of this type. Then $(c, \dots, c_{M-1}, d,
e)$ is of reduced type, whence $l(\delta(c, e)) < l(w)$, contradicting the choice
of $e$.

\medskip

Now let $I^- := \{i \in \{1, \dots, N\}\,|\, l(ws_i) < l(w) \}$ and 
\[\blacktriangle^{N-1}[I^-] := \bigcup_{i \in I^-} \blacktriangle^{N-1}[i].\]
By Claim 2 the map $p_0$ maps the set
\[[c] \times \blacktriangle^{n-1} \cup E_{\leq w}/E_{< w}(c)  \times
\blacktriangle^{N-1}[I^-]\]
to $*$ and is one-to-one on the complement of this set. By Proposition
\ref{SchCW} we also have 
\[E_{\leq w}(c)/E_{< w}(c) \cong S^{d(w)}.\]
Since $p_0$ is a covering map, we obtain
\begin{eqnarray*}
B_w(c) &\cong& p_0(E_{\leq w}(c)/E_{< w}(c) \times \blacktriangle^{N-1})\\
&\cong& \frac{E_{\leq w}(c)/E_{< w}(c) \times \blacktriangle^{N-1}}{[c] \times
\blacktriangle^{n-1} \cup E_{\leq w}/E_{< w}(c)  \times
\blacktriangle^{N-1}[I^-]}\\
&\cong& (S^{d(w)} \times \blacktriangle^{N-1})/(\{p\} \times 
\blacktriangle^{N-1} \cup S^{d(w)} \times  \blacktriangle^{N-1}[I^-]).
\end{eqnarray*}

If $w$ is maximal, then $I^- = \{1, \dots, N\}$, whence
\[B_w(c) \cong  (S^{d(w)} \times \blacktriangle^{N-1})/(\{p\} \times 
\blacktriangle^{N-1} \cup S^{d(w)} \times  \partial\blacktriangle^{N-1}) \cong
S^{d(w)+N-1}.\]
Otherwise we can find $i \in \{1, \dots, N\} \setminus I^-$; then we obtain a
contracting homotopy
\[H_t:  (S^{d(w)} \times \blacktriangle^{N-1})/(\{p\} \times 
\blacktriangle^{N-1} \cup S^{d(w)} \times 
\blacktriangle^{N-1}[I^-])\circlearrowleft\]
by the formula
\[H_t([x, (t_1, \dots, t_N)]) := [x, ((1-t)t_1, \dots t_i+t(1-t_i), \dots,
(1-t)t_N)]. \qedhere \]
\end{proof}

\section{Topological RGD systems and topological twin buildings}
\label{section:3.7}

We now turn to the problem of actually constructing (strong) topological twin buildings. We start by studying topological RGD systems. 

Throughout this section let $G$ be a topological group with associated RGD system 
$(\{U_\alpha\}_{\alpha \in \Phi}, T)$ and denote by 
$\Delta = \Delta(G, \{U_\alpha\}_{\alpha \in \Phi}, T)$ the associated 
twin building. We equip both halves $\Delta_{\pm}$ of $\Delta$ with the
quotient topology induced by $G$. The goal of this section is to give conditions on the
topology of $G$ which guarantee that $\Delta$ is a (strong) topological twin building. 

\subsection{Orbit closure relations}
We will first be concerned with the question concerning the  openness of the big
cells $B_+B_-$ and $B_-B_+$. The quotient map
$G \to G/B_\pm$ allows one to relate the big cell $B_\mp B_\pm$ in the building
to the big cell
$B_\mp B_\pm$ in the group: $B_\mp B_\pm$ is open considered as a subset of $G$
if and
only if $B_\mp B_\pm$ is open considered as a subset of $\Delta_\pm$. We
conclude that 
for questions concerning
the openness (and closedness) of unions of $B_\mp$-$B_\pm$-double cosets it is
in fact
irrelevant whether one uses the group or the building topology. As our first
important reduction step we will establish
the following result:
\begin{lem}\label{OpennessMain}\label{cor:unique-open-dense-orbit} Let $G$,
$\Delta$, $B_{\pm}$ as above. If $\Delta$ satisfies axioms \refttb{item:ttb-1}, 
(TTB2) and \refttb{item:ttb-3} and if the panels of $\Delta$ are non-discrete, then $B_+B_-$ and
$B_-B_+$ are open in $G$. 
\end{lem}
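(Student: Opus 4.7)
The plan is to reduce the statement to Proposition~\ref{BigCoCellOpen} via the standard identification of big cells in $G$ with $B_\mp$-orbits in $\Delta_\pm$. The key point is that, for the quotient topologies, openness of a subset of $\Delta_\pm = G/B_\pm$ is equivalent to openness of its preimage in $G$; thus it suffices to show that the fundamental co-Schubert cells $E_1^*(c_\mp)$ are open in the two halves of the twin building.

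First I would let $c_+ \in \Delta_+$ and $c_- \in \Delta_-$ be the chambers corresponding to the cosets $B_+$ and $B_-$ themselves. By the Birkhoff decomposition these chambers satisfy $\delta^*(c_-,c_+) = 1$, i.e.\ they are opposite. Applying Lemma~\ref{lemma:b-orbits} with $w = 1$, the $B_-$-orbit of $c_+$ in $\Delta_+$ coincides with the co-Schubert cell $E_1^*(c_-)$; under the continuous quotient map $\pi_+ \colon G \to G/B_+ = \Delta_+$, the preimage of this orbit is precisely the double coset $B_-B_+$.

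Next, I would invoke Proposition~\ref{BigCoCellOpen}: the axioms (TTB1), (TTB2), (TTB3) are part of the hypotheses, and thickness of $\Delta$ is automatic for twin buildings arising from an RGD system, since (RGD0) forces each root group to be non-trivial and hence each panel to contain at least three chambers. The proposition therefore applies and yields that $E_1^*(c_-)$ is open in $\Delta_+$. Continuity of $\pi_+$ with respect to the quotient topology then gives openness of $B_-B_+ = \pi_+^{-1}(E_1^*(c_-))$ in $G$. A symmetric argument, interchanging the roles of $+$ and $-$, yields openness of $B_+B_-$.

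I do not foresee a genuine obstacle, since the substantive topological work has already been carried out in Proposition~\ref{BigCoCellOpen}, and the present lemma simply transports that result back to $G$ through the continuous quotient map. The hypothesis that panels be non-discrete does not appear to enter the argument directly; it seems to be retained in order to exclude the degenerate situation in which $\Delta_\pm$ would already be discrete, in which case the conclusion either is vacuous or follows at once from the openness of $B_\pm$ in $G$.
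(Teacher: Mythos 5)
Your argument is correct, but it follows a route genuinely different from the paper's. The paper does not prove Lemma~\ref{OpennessMain} directly: it is obtained as the case $w = 1$ of Theorem~\ref{thm:closure-relation-wrt-borel-subgroup}(ii), which is in turn deduced from the full closure relations for $B_-$-$B_+$ double cosets, proved via Proposition~\ref{prop:open-orbits} and Lemma~\ref{lemma:b-closure-first-inclusion}. Your proof bypasses the closure relations entirely: you identify $B_-B_+$ as the $\pi_+$-preimage of the co-Schubert cell $E_1^*(c_-)$, invoke Proposition~\ref{BigCoCellOpen} (which only requires (TTB1), (TTB2), (TTB3) and thickness), and then use the definition of the quotient topology --- which is exactly the remark the paper makes in the sentences preceding the lemma, that openness in $G$ and in $\Delta_\pm$ agree for unions of double cosets. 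Your identification of $E_1^*(c_-)$ with the $B_-$-orbit $B_-B_+/B_+$ via Lemma~\ref{lemma:b-orbits}, and your observation that (RGD0) forces thickness (each $s$-panel has $|U_{\alpha_s}|+1 \geq 3$ chambers), are both sound. You are also right that the non-discreteness hypothesis plays no role in this direct argument: the paper carries it because Lemma~\ref{lemma:b-closure-first-inclusion} --- which supplies the inclusion $\overline{B_-wB_+} \supseteq B_-wsB_+$ needed for the ``smallest open union'' assertion of the theorem --- genuinely requires it, but the openness of $B_\pm B_\mp$ itself does not. What the paper's longer route buys is the complete orbit-closure structure of Theorem~\ref{thm:closure-relation-wrt-borel-subgroup}, which is of independent interest and is reused later (e.g.\ in Theorem~\ref{thm:top-rel-wrt-theta-codistance}); for the lemma alone, your argument is the more economical one.
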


%
We will actually provide a more precise result: We will compute the closures of arbitrary
$B_-$-$B_+$ double cosets, culiminating in Theorem~\ref{thm:closure-relation-wrt-borel-subgroup} below, that contains Lemma~\ref{OpennessMain} as a special case. 

We begin with the following observation:
\begin{prop} \label{prop:open-orbits}
Let $c_\mp \in \Delta_\mp$, $d \in \Delta_\pm$, let $\delta^*(c_\mp,d) = w$, 
let 
$B_\mp$ be the Borel subgroup associated to $c_\mp$, let $B_\pm$ be a 
Borel subgroup opposite $B_\mp$, and let $v \in W$ such that $w\ngeq v$ in 
the
Bruhat order. 
Then there exists an open neighbourhood of $d$ in $\Delta_\pm$ disjoint from 
$B_\mp vB_\pm$.
\end{prop}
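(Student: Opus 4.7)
The plan is to produce an open neighborhood of $d$ in $\Delta_\pm$ of the form $E^*_1(d^*)$ for a carefully chosen chamber $d^* \in \Delta_\mp$ opposite $d$, and then to verify via Lemma~\ref{lemma:building-combinatorics} that this neighborhood avoids the entire co-Schubert cell $E^*_v(c_\mp) = B_\mp vB_\pm$ by purely combinatorial means.

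Concretely, I would first invoke \cite[Proposition~5.179(3)]{abramenko-brown} to obtain a twin apartment $\Sigma=(\Sigma_+,\Sigma_-)$ containing both $c_\mp$ and $d$, and let $d^* \in \Sigma_\mp$ be the unique chamber of $\Sigma_\mp$ opposite $d$ in the thin twin building $\Sigma$. The thin twin apartment identity recalled in Example~\ref{thintwinbuilding}, applied to $(d,c_\mp,d^*)$, gives
\[
1 \;=\; \delta^*(d,d^*) \;=\; \delta^*(d,c_\mp)\,\delta_\mp(c_\mp,d^*) \;=\; w^{-1}\,\delta_\mp(c_\mp,d^*),
\]
so $\delta_\mp(c_\mp,d^*)=w$ exactly. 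Since $d^*$ is opposite $d$, we have $d \in E^*_1(d^*)$, which is open in $\Delta_\pm$ by Proposition~\ref{prop:co-schubert-cell-is-open}; this is the candidate open neighborhood.

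To check disjointness $E^*_1(d^*)\cap E^*_v(c_\mp)=\emptyset$, I would argue by contradiction: suppose $e$ lies in the intersection, so $\delta^*(e,c_\mp)=v^{-1}$ by axiom \reftw{item:tw-1} and $\delta^*(e,d^*)=1$. Applying Lemma~\ref{lemma:building-combinatorics} to the triple $(e,c_\mp,d^*)$, with the computed value $\delta_\mp(c_\mp,d^*)=w$, yields $\delta^*(e,d^*)=v^{-1} w'$ for some $w' \leq w$. Setting this equal to $1$ forces $w'=v$, whence $v \leq w$, contradicting the hypothesis $w\ngeq v$.

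The main subtlety, which I anticipate to be the main obstacle, is pinning down the exact codistance of $d^*$ from $c_\mp$: for an arbitrary chamber opposite $d$ one only obtains the inequality $w \leq \delta_\mp(c_\mp,d^*)$ via Lemma~\ref{lemma:building-combinatorics}, and the resulting Bruhat bound $v \leq \delta_\mp(c_\mp,d^*)$ would not contradict $v \not\leq w$. The twin apartment construction is what forces the equality $\delta_\mp(c_\mp,d^*)=w$, so that the hypothesis on $v$ and $w$ transfers directly into the subword bound we need.
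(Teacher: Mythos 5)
Your proof is correct, and it takes a genuinely different route from the paper's. The paper's own argument translates $d$ by a group element $\tilde{w}^{-1} \in \Stab_G(\Sigma)$ so that it becomes opposite $c_\mp$, then identifies the finitely many non-identity codistances that the translated set $\tilde{w}^{-1}B_\mp vB_\pm$ assumes from $c_\mp$ (here is where $w \ngeq v$ enters), applies Lemma~\ref{lemma:nice-chamber-exists} once for each such codistance, intersects the resulting open neighbourhoods, and finally translates back by $\tilde{w}$. You instead select the chamber $d^*$ opposite $d$ inside a twin apartment through $c_\mp$ and $d$, so that $\delta_\mp(c_\mp, d^*)=w$ exactly, take the single open set $E^*_1(d^*)$ furnished by Proposition~\ref{prop:co-schubert-cell-is-open}, and verify disjointness from $E^*_v(c_\mp)$ by a single application of Lemma~\ref{lemma:building-combinatorics}. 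Your argument is shorter and avoids invoking the $G$-action on $\Delta_\pm$ by homeomorphisms, but it leans on Proposition~\ref{prop:co-schubert-cell-is-open} (hence on (TTB3)) rather than only on Lemma~\ref{lemma:nice-chamber-exists}; both lemmas are available under the standing hypotheses of Section~\ref{section:3.7}, so there is no circularity. You also correctly flag the key subtlety: choosing $d^*$ inside the twin apartment is what pins $\delta_\mp(c_\mp, d^*)$ to exactly $w$, whereas an arbitrary chamber opposite $d$ would only give $\delta_\mp(c_\mp, d^*) \geq w$, and then the conclusion $v \leq \delta_\mp(c_\mp, d^*)$ would not contradict $w \ngeq v$.
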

\begin{proof}
Let $\Sigma$ be a twin apartment containing $d$ and $c_\mp$ and let
$\tilde{w}^{-1} \in \Stab_{G}(\Sigma)$ be a representative 
of $w^{-1}$ that maps $d$ to the chamber in $\Sigma$ opposite $c_\mp$.
Any chamber $x \in B_\mp vB_\pm$ of $\Delta_\pm$ satisfies 
$\delta^*(c_\mp,x) = v$,
so $\delta^*(\tilde{w}^{-1}.c_\mp,\tilde{w}^{-1}.x) = v$.
As $\delta_\mp(c_\mp,\tilde{w}^{-1}.c_\mp) = w^{-1}$, 
Lemma~\ref{lemma:building-combinatorics}
allows us to conclude
\[
	\delta^*(c_\mp,\tilde{w}^{-1}.x) \in \{ w_1v \mid w_1 \text{ is a 
subexpression of } w^{-1} \}.
\]
Hence, for $X := \tilde{w}^{-1}B_\mp vB_\pm$, the hypothesis $w \ngeq v$ yields 
$1_W \notin \delta^*(c_\mp,X)$. 
Therefore, for each $a \in \delta^*(c_\mp,X)$, 
Lemma~\ref{lemma:nice-chamber-exists} provides an 
open neighbourhood $U_a$ of $\tilde w^{-1}.d$ which intersects 
$E_a^*(c_\mp)$ trivially. As $\delta^*(c_\mp,X)$ is finite, the set 
\[
	U := \bigcap_{a \in \delta^*(c_\mp,X)} U_a
\]
is an open neighbourhood of $\tilde w^{-1}.d$. Since $X \subseteq
\bigcup_{a\in \delta^*(c_\mp,X)}
E_a^*(c_\mp)$, moreover $X \cap U = \emptyset$. 
Hence $\tilde w .U$ is an open neighbourhood of $d$ satisfying 
$B_\mp vB_\pm \cap \tilde w.U = \emptyset$, as claimed.
\end{proof}
\begin{lem} \label{lemma:b-closure-first-inclusion}
Let $w \in W$, let $s \in S$, and assume that the panels of $\Delta$ are
non-discrete.
\begin{enumerate}
\item \label{item:b-closure-1}If $l(ws) > l(w)$, then the following inclusions
hold:
	\begin{enumerate}[(a)]
	\item $\overline{B_-wB_+} \supseteq B_-wsB_+$,
	\item $\overline{B_+wB_-} \supseteq B_+wsB_-$.
	\end{enumerate}
\item \label{item:b-closure-2}If $l(sw) > l(w)$, 
then the following inclusions
hold:
	\begin{enumerate}[(a)]
	\item $\overline{B_-wB_+} \supseteq B_-swB_+$,
	\item $\overline{B_+wB_-} \supseteq B_+swB_-$.
	\end{enumerate}
\end{enumerate}
\end{lem}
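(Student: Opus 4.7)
The strategy is to translate the four inclusions into statements about closures of $B_\mp$-orbits in $\Delta_\pm = G/B_\pm$, to prove part (i) directly via a panel argument based on axiom \reftw{item:tw-2}, and then to deduce part (ii) formally by group inversion. For the reduction, the quotient map $\pi_\pm \colon G \to G/B_\pm = \Delta_\pm$ is open---as every group quotient map is---so $\overline{XB_\pm} = \pi_\pm^{-1}(\overline{\pi_\pm(X)})$ for every $X \subseteq G$. Writing $c_\mp := B_\mp \in \Delta_\mp$ for the base chamber, Lemma~\ref{lemma:b-orbits} identifies the image of $B_\mp v B_\pm$ in $\Delta_\pm$ with the co-Schubert cell $E^*_v(c_\mp)$, so each of the four inclusions becomes equivalent to one of the form $E^*_v(c_\mp) \subseteq \overline{E^*_w(c_\mp)}$ inside $\Delta_\pm$.

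To establish (i)(a), fix $d \in E^*_{ws}(c_-)$ and consider the panel $P := P_s(d) \subseteq \Delta_+$. For any $d' \in P \setminus \{d\}$ one has $\delta_+(d,d') = s$, and since $l((ws)s) = l(w) < l(ws)$, axiom \reftw{item:tw-2} forces $\delta^*(c_-, d') = (ws)s = w$. Hence $P \setminus \{d\} \subseteq E^*_w(c_-)$, and it remains to show that $d$ is a limit point of $P \setminus \{d\}$ in $\Delta_+$. This is the only delicate step of the argument: the hypothesis gives only that $P$ is non-discrete, whereas we need non-isolation of the particular chamber $d$. The passage is enabled by homogeneity---the setwise stabiliser of $P$ in $G$, which in an RGD system acts transitively on $P$ through (a conjugate of) the rank-one subgroup $G_{\alpha_s}$, acts on $P$ by homeomorphisms, since left translation in $G$ descends continuously to $\Delta_+$ through the open map $\pi_+$. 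Consequently all chambers of $P$ are topologically equivalent inside $P$, so non-discreteness of $P$ yields non-isolation of $d$, giving $d \in \overline{P \setminus \{d\}} \subseteq \overline{E^*_w(c_-)}$. Part (i)(b) is proved by the identical argument carried out inside $\Delta_-$.

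Part (ii) follows formally from (i) by applying group inversion. The map $g \mapsto g^{-1}$ is a self-homeomorphism of $G$, commutes with closure, and sends $B_\mp v B_\pm$ to $B_\pm v^{-1} B_\mp$. Applying (i)(b) with $w$ replaced by $w^{-1}$---and using $l(w^{-1}s) = l(sw)$ and $l(w^{-1}) = l(w)$---yields $B_+ w^{-1} s B_- \subseteq \overline{B_+ w^{-1} B_-}$ whenever $l(sw) > l(w)$; taking inverses on both sides produces $B_- s w B_+ \subseteq \overline{B_- w B_+}$, which is (ii)(a). Assertion (ii)(b) follows from (i)(a) by the same device. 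The main obstacle, as already indicated, lies in leveraging the bare non-discreteness hypothesis to conclude non-isolation at a specific point; everything else is a transfer through \reftw{item:tw-2}, the $B_\mp$-orbit description of Birkhoff cells, and the homeomorphism properties of inversion and the quotient map.
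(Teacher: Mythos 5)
Your proof is correct and follows essentially the same line as the paper's: reduce to a statement about co-Schubert cells in the half $\Delta_\pm$, use \reftw{item:tw-2} to see that the punctured panel around $d$ lies in the open cell, invoke non-discreteness and a homogeneity argument to get $d$ in the closure of the punctured panel, and deduce part (ii) by inversion. The only noteworthy difference is in the homogeneity step: you appeal to transitivity of the full panel stabiliser on the entire panel (so that all chambers of $P$ are topologically equivalent and non-discreteness immediately forces $d$ non-isolated), whereas the paper invokes transitivity of $\mathrm{Stab}_{B_{-\epsilon}}(P_s(c))$ only on $P_s(c)\setminus\{d\}$; your version makes the inference from non-discreteness more transparent.
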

\begin{proof}
\begin{enumerate}
\item Let $\epsilon \in \{ +, -\}$. By Lemma \ref{lemma:b-orbits} the orbits of
$B_{-\epsilon}$ on $\Delta_\epsilon$ are given by the
$B_{-\epsilon}$-$B_\epsilon$-double cosets $B_{-\epsilon}wB_\epsilon$, $w \in
W$.
Let $c_{-\epsilon}$ be the fundamental chamber in $\Delta_{-\epsilon}$ and 
let $c \in \Delta_\epsilon$ such that that $\delta^*(c_{-\epsilon},c) = w$, 
i.e., $c$ is a representative of the $B_{-\epsilon}$-orbit
$B_{-\epsilon}wB_\epsilon$
in $\Delta_\epsilon$. Let $s \in S$ such that $l(ws) > l(w)$ and consider 
the $s$-panel $P_s(c)$ around $c$.
The projection
$d := \proj_{P_s(c)}(c_{-\epsilon})$ is the unique chamber of $P_s(c)$ 
satisfying $\delta^*(c_{-\epsilon},d) = ws$ and the group 
$\mathrm{Stab}_{B_{-\epsilon}}(P_s(c))$ acts transitively on 
the set $P_s(c) \backslash \{d\}$ of chambers distinct from $d$.
Since $P_s(c)$ is non-discrete, it follows that $d$ is contained 
in $\overline{P_s(c) \backslash \{d\}}$ and, thus, in 
$\overline{B_{-\epsilon}wB_\epsilon}$. 

We conclude that, for each $s \in S$ such that $l(ws) > l(w)$, the closure 
of $B_-wB_+$ intersects the orbit
$B_{-\epsilon}wsB_\epsilon$. Since this closure is a union of orbits, one
has for all $s \in S$ with $l(ws) > l(w)$
\[
	\overline{B_{-\epsilon}wB_\epsilon} \supseteq
B_{-\epsilon}wsB_\epsilon.
\]

\item As inversion in $G$ is a homeomorphism, one has 
$\overline{B_{-\epsilon}wB_\epsilon} \supseteq B_{-\epsilon}swB_\epsilon$ if
and only if 
$\overline{B_{\epsilon}w^{-1}B_{-\epsilon}} \supseteq
B_{\epsilon}w^{-1}sB_{-\epsilon}$. Hence the
inequality $l(w^{-1}s)=l(w^{-1}s^{-1})=l(sw) > l(w) = l(w^{-1})$ allows one 
to immediately conclude \ref{item:b-closure-2} from \ref{item:b-closure-1}.
\qedhere
\end{enumerate}
\end{proof}
Now we can establish the following theorem, which contains Lemma
\ref{OpennessMain} as a special case:
\begin{thm} \label{thm:closure-relation-wrt-borel-subgroup} \index{topological
twin building!orbit structure!of Borel subgroups}
Let $G$ be a topological group with RGD system 
$(\{U_\alpha\}_{\alpha \in \Phi}, T)$, let 
$\Delta = \Delta(G, \{U_\alpha\}_{\alpha \in \Phi}, T)$ be the associated 
twin building, equip both halves $\Delta_{\pm}$ of $\Delta$ with the
quotient topology, and assume that this twin building
topology satisfies axioms \refttb{item:ttb-1},
(TTB2) and \refttb{item:ttb-3} and that panels of
$\Delta$ are non-discrete.
Let $W$ be its Weyl group, let $\leq$ the Bruhat order of $W$, and let 
$w \in W$.
Then the following hold:
\begin{enumerate}
\item \label{item:borel-orbits-1}
\[
	\overline{B_- w B_+} = \bigsqcup_{w' \geq w} B_- w' B_+.
\]
\item \label{item:borel-orbits-2} The smallest 
open union of $B_-$-$B_+$-double cosets containing $B_-wB_+$ is
\[
	\bigsqcup_{w' \leq w} B_- w' B_+,
\]
which consists of finitely many $B_-$-$B_+$-double cosets.
\end{enumerate}
\end{thm}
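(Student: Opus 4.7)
I would split the proof into (i) and (ii), with (ii) deduced from (i) by soft arguments using axiom~\refttb{item:ttb-3} and Lemma~\ref{lemma:building-combinatorics}.

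For the inclusion ``$\subseteq$'' in (i), Proposition~\ref{prop:open-orbits} gives it directly: any chamber $d\in B_{-}uB_{+}$ with $u\ngeq w$ admits an open neighbourhood disjoint from $B_{-}wB_{+}$, so $d\notin\overline{B_{-}wB_{+}}$. For the reverse inclusion, I would induct on $l(w')-l(w)$, showing $B_{-}w'B_{+}\subseteq\overline{B_{-}wB_{+}}$ whenever $w'\geq w$; the base $w'=w$ is trivial. For the inductive step one wants to invoke the lifting property of the Bruhat order to produce an intermediate $w''$ satisfying $w\leq w''<w'$, $l(w'')=l(w')-1$, and either $w'=w''s$ or $w'=sw''$ for some $s\in S$ with the length-increase needed to apply Lemma~\ref{lemma:b-closure-first-inclusion}. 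That lemma then yields $B_{-}w'B_{+}\subseteq\overline{B_{-}w''B_{+}}$, and transitivity of closure together with the inductive hypothesis closes the step. The principal obstacle lies precisely in this combinatorial reduction: the one-sided lifting property alone may fail to produce such a $w''$, and one must argue carefully using both left and right descents of $w'$ together with the symmetric variant of Lemma~\ref{lemma:b-closure-first-inclusion}.

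For (ii), finiteness of $\{w':w'\leq w\}$ is the standard Coxeter-theoretic observation that subwords of any reduced expression for $w$ already exhaust the Bruhat interval below $w$. To show $O:=\bigsqcup_{w'\leq w}B_{-}w'B_{+}$ is open, I would apply \refttb{item:ttb-3} and reduce to showing $O\cap E_{\leq v}(c_{+})$ is open in $E_{\leq v}(c_{+})$ for every $v\in W$, where $c_{+}$ is the base chamber. By Lemma~\ref{lemma:building-combinatorics} applied with $c=c_{-}$ and $d=c_{+}$, the codistance $\delta^{*}(c_{-},\cdot)$ takes only values that are subexpressions of $v$ on $E_{\leq v}(c_{+})$, so $E_{\leq v}(c_{+})$ meets only the finitely many Birkhoff cells indexed by elements below $v$. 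Given $d\in O\cap E_{\leq v}(c_{+})$, say $d\in B_{-}uB_{+}$ with $u\leq w$, one has $u\ngeq u'$ for every $u'\leq v$ with $u'\not\leq w$ (otherwise $u'\leq u\leq w$ contradicts $u'\not\leq w$). Proposition~\ref{prop:open-orbits} then produces an open neighbourhood of $d$ in $\Delta_{+}$ disjoint from $B_{-}u'B_{+}$; intersecting these finitely many neighbourhoods and restricting to $E_{\leq v}(c_{+})$ furnishes an open neighbourhood of $d$ entirely inside $O\cap E_{\leq v}(c_{+})$.

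For the minimality half of (ii), suppose $O'$ is an open union of $B_{-}$-$B_{+}$-double cosets containing $B_{-}wB_{+}$. If some $w'\leq w$ satisfied $B_{-}w'B_{+}\nsubseteq O'$, then $B_{-}w'B_{+}\subseteq(O')^{c}$, and since $(O')^{c}$ is a closed union of double cosets this would force $\overline{B_{-}w'B_{+}}\subseteq(O')^{c}$. Applying (i) to $w'$ yields $B_{-}wB_{+}\subseteq\overline{B_{-}w'B_{+}}\subseteq(O')^{c}$, contradicting $B_{-}wB_{+}\subseteq O'$. Hence $O\subseteq O'$, completing the minimality argument.
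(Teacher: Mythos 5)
The ``$\subseteq$'' direction of (i), the finiteness claim in (ii), and the openness argument in (ii) are sound. For ``$\subseteq$'' your route is slightly lighter than the paper's: you conclude $\overline{B_-wB_+}\cap B_-uB_+ = \emptyset$ for $u\ngeq w$ directly from Proposition~\ref{prop:open-orbits}, whereas the paper instead proves that the complement of $\bigsqcup_{w'\geq w}B_-w'B_+$ is open via the direct-limit axiom (TTB3); your openness proof in (ii) then re-uses a direct-limit pattern essentially equivalent to the paper's, so the work is merely redistributed. The minimality half of (ii) is fine modulo (i).

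The genuine gap is in the ``$\supseteq$'' direction of (i). Your induction needs, for each $w<w'$, an intermediate $w''$ with $w\le w''<w'$, $l(w'')=l(w')-1$, and $w'=w''s$ or $w'=sw''$ for some $s\in S$; you flag this as the ``principal obstacle'' and propose to resolve it using both left and right descents. That combinatorial statement is false in general. In $\widetilde{A}_2$ (Coxeter graph a triangle, all $m_{ij}=3$) take $w=s_1s_2s_1$ and $w'=s_1s_2s_3s_1$. Then $l(w')=4$ and $w<w'$ is a covering relation (via the non-simple reflection $s_1s_3s_1$), but $D_R(w')=D_L(w')=\{s_1\}$, so the only one-step truncations are $w's_1=s_1s_2s_3$ and $s_1w'=s_2s_3s_1$; both have length $3=l(w)$ and differ from $w$, hence neither dominates $w$. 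Consequently no chain of left or right simple-reflection ascents reaches $w'$ from $w$, and Lemma~\ref{lemma:b-closure-first-inclusion} alone cannot close the inductive step. (The paper's own text also only says ``an induction using Lemma~\ref{lemma:b-closure-first-inclusion}''; a complete argument needs an extra ingredient --- e.g. that saturating a closed $B_-$-invariant set with a minimal parabolic $B_+\cup B_+sB_+$ preserves closedness, exploiting compactness of panels, as in the Zariski-topology argument for Schubert varieties.) The minimality argument in (ii), which invokes (i) to move from $B_-w'B_+\subseteq(O')^c$ to $B_-wB_+\subseteq(O')^c$, inherits this gap.
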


This theorem is essentially \cite[Lemma
3.4]{kac-peterson-regular-functions}. A special case is 
\cite[Theorem~23 (p.~127)]{steinberg}.

\begin{proof}
\begin{enumerate}
\item 
An induction using Lemma \ref{lemma:b-closure-first-inclusion} shows that 
\[
	\overline{B_- w B_+} \supseteq \bigsqcup_{w' \geq w} B_- w' B_+.
\]

Conversely, let $x$ be an element of the 
complement $$X := \bigsqcup_{w' \ngeq w} B_- w' B_+$$ of 
$\bigsqcup_{w' \geq w} B_- w' B_+$.
We will show that $x$ lies in the interior of 
$X$.
Let $n \in \N$ be sufficiently large such that 
$x \in \bigcup_{l(v) \leq n} B_+vB_+$. 
The intersection $\left( \bigsqcup_{w' \geq w} B_- w' B_+ \right) \cap 
\left( \bigcup_{l(v) \leq n} B_+vB_+ \right)$ meets finitely many 
$B_-$-$B_+$-double cosets. Let $A \subset W$ be a finite set such that these
double
cosets are given by the family $\{B_-aB_+\}_{a \in A}$. For every $a \in A$,
Proposition \ref{prop:open-orbits} provides an open neighbourhood 
$U_a$ of $x$ in $G$ disjoint from $B_- a B_+$.

Then $\bigcap_{a \in A} U_a \cap \left( \bigcup_{l(v) \leq n} B_+vB_+ \right)$
is open in $\bigcup_{l(v) \leq n} B_+vB_+$, contains $x$ and intersects 
$\bigsqcup_{w' \geq w} B_- w' B_+$ trivially. 
Thus, this intersection is an open neighbourhood of $x$ in 
$X\cap \left( \bigcup_{l(v) \leq n} B_+vB_+ \right)$, and hence $x$ is an 
interior
point of $X \cap \left( \bigcup_{l(v) \leq n} B_+vB_+ \right)$.
As $x$ was arbitrary, we conclude that $X \cap \left(\bigcup_{l(v) \leq n}
B_+vB_+\right)$ is 
open in $\bigcup_{l(v) \leq n} B_+vB_+$ for each $n \in \N$.
By axiom \refttb{item:ttb-3} 
$\Delta_+ = \lim_\to \bigcup_{l(v) \leq n} B_+vB_+$, and so $X$ is open in 
$\Delta_+$ and, thus, in $G$.

\item Define the finite set $X_w := \{ v \in W \mid v \nleq w, \exists s \in S
\mbox{
such that $sv \leq w$ or $vs \leq w$} \}$. Then, for any $w' \in W$, one has
$w' \nleq w$ if and only if there exists $v \in X_w$ such that $v \leq w'$. 
Hence, by \ref{item:borel-orbits-1},
\[
\bigsqcup_{w' \leq w} B_- w' B_+ = G \setminus \bigcup_{x \in X_w} \overline{B_- x
B_+}.
\]
Since $\bigcup_{x \in X_w} \overline{B_- x B_+}$ is a finite union of closed sets,
 it is closed, and so its complement, $\bigsqcup_{w' \leq w} B_- w' B_+$, is 
open. 

Moreover, if $U$ is an arbitrary open union of $B_-$-$B_+$-double cosets
containing $B_-wB_+$, then by \ref{item:borel-orbits-1} for each $w' \leq w$
one necessarily has $B_-w'B_+ \subseteq U$. \qedhere
\end{enumerate}
\end{proof}

\begin{rem}
Theorem~\ref{thm:closure-relation-wrt-borel-subgroup} states that there
exists a closed $B_-$-$B_+$-double coset if and only if there exists a
maximal element of $W$ with respect to the Bruhat order. This is the case if
and only if $W$ is spherical. 
\end{rem}

\subsection{A group-theoretic criterion for twin building topologies}
\label{section:5.2}

As before, let $G$ be a topological group with RGD system 
$(\{U_\alpha\}_{\alpha \in \Phi}, T)$ and denote by $\Delta$ the associated twin
building. Our explicit projection formulae
(Theorem~\ref{thm:projection-formula}) immediately yield the following
proposition:
\begin{prop} \label{prop:ttb2}
If the bijective product map 
$m : U_+ \times T \times U_- \to B_+B_-$ is open, then $\Delta$
satisfies (TTB2).
If, moreover, $B_+B_-$ is open in $G$, then 
$\Delta$ also satisfies (TTB2$+$) and (TTB5).  
\end{prop}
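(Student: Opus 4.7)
The plan is to deduce all three assertions from the explicit projection formula (the corollary to Theorem~\ref{thm:projection-formula}, applied to chambers at codistance $1$) together with Lemma~\ref{ContinuityFoldingMain}, which supplies continuity of $\pi$ on $B_+B_-$ precisely under the openness hypothesis on $m$. Once continuity of $\pi$ is in hand, each axiom reduces to a diagram chase through the group operations and the openness of the canonical quotient maps $G \to \Delta_\pm$.

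For (TTB2) I would fix $c_- = hB_- \in \Delta_-$ and observe that $E_1^*(c_-) = q_+(hB_-B_+)$, where $q_+: G \to \Delta_+$ is the canonical projection. Since $q_+$ is open (being a quotient by a right subgroup action) and $hB_-B_+$ is right-$B_+$-saturated, the restriction $q_+|_{hB_-B_+} : hB_-B_+ \to E_1^*(c_-)$ remains an open surjection with respect to the subspace topologies, hence is itself a quotient map. By the projection formula, the map $\phi: E_1^*(c_-) \to \Delta_-$ whose continuity we seek lifts along this quotient map to the explicit map $g \mapsto h\pi(g^{-1}h)^{-1}sB_-$; the latter is continuous as a composition of inversion and multiplication in the topological group $G$, of the map $\pi$ on $B_+B_-$ (continuous by Lemma~\ref{ContinuityFoldingMain}), and of the quotient $G \to \Delta_-$. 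Continuity of $\phi$ then follows from the quotient property, and the case $c \in \Delta_+$ is handled symmetrically via inversion in $G$, which exchanges $B_+B_-$ with $B_-B_+$ and preserves the openness hypothesis on $m$.

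Assuming in addition that $B_+B_-$ is open in $G$, the set $\Omega := \{(h,g) \in G \times G : g^{-1}h \in B_+B_-\}$ is the preimage of $B_+B_-$ under the continuous map $(h,g) \mapsto g^{-1}h$, hence open; moreover $\Omega$ is $B_- \times B_+$-saturated under the natural right action $(h,g) \cdot (b_-, b_+) = (hb_-, gb_+)$. Since the product quotient map $G \times G \to \Delta_- \times \Delta_+$ is open, the image of $\Omega$ is the open subset $\Delta_1 \cap (\Delta_- \times \Delta_+)$; applying inversion gives the analogous statement for $\Delta_1 \cap (\Delta_+ \times \Delta_-)$, and this establishes (TTB5). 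For (TTB2$+$), the restricted product quotient map $\Omega \to \Delta_1 \cap (\Delta_- \times \Delta_+)$ is an open surjection, hence a quotient map, and the lift $(h,g) \mapsto h\pi(g^{-1}h)^{-1}sB_-$ of $p_s$ is continuous by the same composition argument, so it descends to a continuous map on $\Delta_1 \cap (\Delta_- \times \Delta_+)$; symmetry disposes of the other component of $\Delta_1$. No step is genuinely difficult once Lemma~\ref{ContinuityFoldingMain} is available; the main technical care is in verifying that the restrictions of the canonical quotient maps to the relevant saturated subsets remain quotient maps, which follows uniformly from openness of $G \to \Delta_\pm$.
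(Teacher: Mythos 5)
Your proof is correct. For (TTB2) and (TTB5) you take essentially the same route as the paper, just spelling out more explicitly the quotient-map bookkeeping (that restricting the open map $G\to\Delta_\pm$, resp.\ $G\times G\to\Delta_-\times\Delta_+$, to a saturated subset yields an open surjection, hence a quotient map). The real divergence is in (TTB2$+$). The paper's argument does not lift $p_s$ directly; instead it works on the open set $O := (B_-B_+/B_+ \times B_+B_-/B_-) \cap \Delta_1$, constructs a continuous section $\nu_O: O \to U_-$ assigning to $(c,d)$ the unique $u\in U_-$ moving $c$ to the base chamber $c_+$, rewrites $\pro^*_{P_s(c)}(d) = \nu_O(c,d)^{-1}\bigl(\pro^*_{P_s(c_+)}(\nu_O(c,d)(d))\bigr)$, and thereby reduces (TTB2$+$) to (TTB2) plus a $G$-translation covering of $\Delta_1$. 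Continuity of $\nu_O$ uses openness of the map $U_-\times T\times U_+\to B_-B_+$, which is not literally the hypothesis but is obtained from it by applying inversion and permuting factors --- a small extra step. Your argument instead lifts $p_s$ directly to the explicit formula $(h,g)\mapsto h\pi(g^{-1}h)^{-1}sB_-$ on the saturated open set $\Omega\subset G\times G$ and descends along the quotient, reusing only the continuity of $\pi$ on $B_+B_-$ that Lemma~\ref{ContinuityFoldingMain} already gives. This is cleaner in that it treats all three axioms uniformly by one mechanism and never needs the opposite-sign decomposition map; the paper's version is more geometric (it exhibits a literal reduction of joint continuity to separate continuity via a continuous gauge $\nu_O$), which makes the dependence on (TTB2) visible.

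Two very small points of phrasing worth tightening. First, when you pass from $\Omega$ mapping onto $\Delta_1 \cap (\Delta_-\times\Delta_+)$ to the other component $\Delta_1\cap(\Delta_+\times\Delta_-)$, the clean symmetry is the coordinate swap $(h,g)\mapsto(g,h)$ combined with the sign-swap $\pm\leftrightarrow\mp$, not inversion in $G$ alone; either works, but it is worth being precise. Second, for the lift to descend you are tacitly invoking that the expression $h\pi(g^{-1}h)^{-1}sB_-$ is constant on $(B_-\times B_+)$-orbits; this is guaranteed a priori because Theorem~\ref{thm:projection-formula} identifies it with the intrinsically defined co-projection $\pro^*_{P_s(hB_-)}(gB_+)$, so one does not need to verify the equivariance of $\pi$ by hand --- but it is worth stating this explicitly so the descent is airtight.
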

\begin{proof} The set of pairs of chambers in $G/B_+ \times G/B_-$ at 
codistance $1 \in W$ is equal to
$\{ (gB_+, hB_-) \mid g^{-1}h \in B_+ B_-\}$.
Given $s \in S$,
Theorem~\ref{thm:projection-formula} 
implies $\proj^*_{P_s(hB_-)}(gB_+) = h \rho_w(g^{-1}h)^{-1}sB_-$. 
By Lemma~\ref{ContinuityFoldingMain} therefore $\Delta$ satisfies
(TTB2).

If $B_+B_-$ is
open in $G$, then by continuity of multiplication and inversion the set $\{ (g,h) \in G \times G \mid g^{-1}h \in B_+B_- \}$ is open in $G \times G$. As the canonical quotient map $G \times G \to (G \times G)/(B_+ \times B_-)$ is open, the set $\{ (gB_+, hB_-) \mid g^{-1}h \in B_+ B_-\}$ of opposite pairs of chambers is open in $\Delta_+ \times \Delta_- = G/B_+ \times G/B_-$, i.e. (TTB5) holds.

In this case the set $B_-B_+/B_+ \times B_+B_-/B_- \subset \Delta_+ \times \Delta_-$ is open, whence also $O := (B_-B_+/B_+ \times B_+B_-/B_-) \cap \Delta_1$ is open. Note that for $(c,d) \in O$ the chamber $c$ is opposite the chamber $c_- := B_- \in G/B_-$, as $B_-B_+$ is exactly the set of chambers of $\Delta_+$ opposite $c_-$. Therefore, by sharp transitivity of $U_-$ on the chambers opposite $c_-$, there exists a unique $g_{(c,d)} \in U_-$ with $g_{(c,d)}(c) = c_+ := B_+ \in G/B_+$. The resulting map $\nu_O : O \mapsto U_- : (c,d) \mapsto  g_{(c,d)}$ is continuous, since the map $U_- \times T \times U_+ \to B_-B_+ : (u_-,t,u_+) \mapsto u_-tu_+$ is open by hypothesis. 

Considering the restriction  ${p_s}_{|O} : O \to \Delta_+ \cup \Delta_- : (c,d) \mapsto \pro^*_{P_s(c)}(d)$ of the projection map from axiom (TTB2$+$) we compute
\begin{eqnarray*}
\pro^*_{P_s(c)}(d) & = & g^{-1}_{(c,d)}\left(\pro^*_{P_s(g_{(c,d)}(c))}(g_{(c,d)}(d))\right) \\  & = & \nu_O(c,d)^{-1}\left(\pro^*_{P_s(c_+)}(\nu_O(c,d)(d))\right).
\end{eqnarray*}
Therefore (TTB2) and continuity of $\nu_O$ imply continuity of ${p_s}_{|O}$. Since, by the strong transitivity of the action of $G$ on $\Delta$, the set $\Delta_1$ is covered by $G$-translates of the open set $O$, the validity of (TTB2$+$) follows.
%
%
%
\end{proof}
Combining the proposition and Lemma \ref{OpennessMain} we obtain:
\begin{thm} \label{thm:top-twin-building}
 Let $G$ be a topological group with root group datum $(\{U_\alpha\}_{\alpha
\in 
\Phi}, T)$, let $\Delta = \Delta(G, \{U_\alpha\}_{\alpha \in \Phi}, T)$
be the associated twin building, and equip both halves with the 
quotient topology. Assume the following:
\begin{itemize}
 \item[(i)] $B_{\pm}$ are closed.
 \item[(ii)] $G = \lim_\to \left( \bigcup_{l(w) \leq n} B_+wB_+ \right) = 
\lim_\to \left( 
\bigcup_{l(w) \leq n} B_-wB_- \right)$.
 \item[(iii)] The multiplication map $m\colon U_+ \times T \times U_- \to
B_+B_-$ is open.
\item[(iv)] Panels in $\Delta$ are compact.
\end{itemize}
Then $\Delta$ is a strong topological twin building, and the parabolic subgroups
\[P^J_{\pm} := B_{\pm}W_JB_{\rm}\quad (J \subset S)\]
are closed in $G$.
\end{thm}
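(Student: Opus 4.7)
The plan is to verify the four basic axioms (TTB1)--(TTB4) first, use these to deduce closedness of the parabolic subgroups $P^J_\pm$, and finally to upgrade to the full list of strong axioms.

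Axioms (TTB1) and (TTB4) are immediate: by (i), $B_\pm$ is closed in $G$, so the quotient $\Delta_\pm = G/B_\pm$ is Hausdorff, and (TTB4) is precisely (iv). For (TTB2) I invoke the first assertion of Proposition~\ref{prop:ttb2}, whose hypothesis is exactly (iii). The most delicate of the basic axioms is (TTB3): take $c_\pm := B_\pm \in G/B_\pm$ and set $K^\pm_n := \bigcup_{l(w) \leq n} B_\pm w B_\pm$. Since each $K^\pm_n$ is a union of $B_\pm$-cosets and the quotient map $q_\pm \colon G \to G/B_\pm$ is open as a quotient by a subgroup action, the restriction $q_\pm|_{K^\pm_n}$ is itself a quotient map. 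Combining this with hypothesis (ii) yields $\Delta_\pm = \lim_\to q_\pm(K^\pm_n)$, and by the Bruhat decomposition $q_\pm(K^\pm_n) = \bigcup_{l(w) \leq n} E_{\leq w}(c_\pm)$. Passing to a cofinal sequence in the directed Bruhat poset of $(W,S)$ then produces the required identification $\Delta_\pm = \lim_\to E_{\leq w}(c_\pm)$.

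Once $(\Delta, \tau)$ is established as a topological twin building, the final assertion of the theorem follows from Lemma~\ref{residueclosed}: every $J$-residue $R_J(c_\pm)$ is closed in $\Delta_\pm$, so that $P^J_\pm = q_\pm^{-1}(R_J(c_\pm))$ is closed in $G$. In particular the parabolic subgroups $P^{S \setminus \{s\}}_\pm$ are closed, so the vertex spaces $\mathcal V_s^\pm = G/P^{S \setminus \{s\}}_\pm$ are Hausdorff quotients, yielding (TTB1+). Axiom (TTB6) is now automatic, since the canonical map $\Delta_\pm \to \mathcal V_s^\pm$ is the further quotient $G/B_\pm \to G/P^{S \setminus \{s\}}_\pm$, which is open as a quotient by a subgroup action; (TTB6+) then follows from Proposition~\ref{AxiomDependencies}(iii).

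For (TTB5) and (TTB2+) I apply Lemma~\ref{OpennessMain}, whose hypotheses (TTB1), (TTB2), (TTB3) and non-discreteness of panels are now all in hand (the degenerate case of discrete, hence finite, panels being trivial by Proposition~\ref{Prop:discrete}), to conclude that $B_+B_-$ is open in $G$. Feeding this back into the second assertion of Proposition~\ref{prop:ttb2} delivers (TTB5) and (TTB2+) simultaneously, and (TTB7) then follows from (TTB1+) and (TTB6) via Proposition~\ref{AxiomDependencies}(v). The main obstacle I anticipate is axiom (TTB3): while the sequential exhaustion $\Delta_\pm = \lim_\to q_\pm(K^\pm_n)$ is a formal consequence of openness of $q_\pm$ and hypothesis (ii), the reindexing of this direct limit over the directed family of Schubert varieties requires both the directedness of the Bruhat poset of $W$ and a closure statement for $B_\pm$-$B_\pm$-double cosets, ensuring that each Schubert variety is closed in $q_\pm(K^\pm_n)$ so that the topology on the latter is indeed determined by its restriction to Schubert sub-varieties.
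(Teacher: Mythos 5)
Your proposal matches the paper's proof essentially step for step: basic axioms first (with (TTB2) via Proposition~\ref{prop:ttb2}, (TTB3) from (ii)), then closedness of parabolics from Lemma~\ref{residueclosed}, then $B_+B_-$ open from Lemma~\ref{OpennessMain}, and finally the strong axioms from Propositions~\ref{prop:ttb2} and \ref{AxiomDependencies}. Your explicit case-split for discrete panels (before invoking Lemma~\ref{OpennessMain}) is a small improvement in rigour that the paper silently omits. Your closing worry about (TTB3) is unfounded, though: the closure of $B_\pm$-$B_\pm$-double cosets is not needed. Once one knows that the Bruhat poset of $W$ is directed, for every $n$ the finite set $\{w : l(w) \leq n\}$ has an upper bound $w^*$, so $q_\pm(K^\pm_n) \subset E_{\leq w^*}(c_\pm)$; the two families $\{q_\pm(K^\pm_n)\}_n$ and $\{E_{\leq w}(c_\pm)\}_{w\in W}$ then mutually refine each other (each member of one lies inside a member of the other), and this interleaving alone forces the two final topologies to coincide --- no closedness of Schubert subvarieties inside $q_\pm(K^\pm_n)$ is required.
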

\begin{proof} We first consider the axioms which follow immediately from our assumptions: (i) implies (TTB1), (ii) implies (TTB3) and (iv) implies (TTB4). Using (iii) and the first part of
Proposition~\ref{prop:ttb2} we deduce that also (TTB2) holds. By Lemma \ref{OpennessMain} this implies that
$B_+B_-$ and $B_-B_+$ are open. Thus the second part of Proposition~\ref{prop:ttb2} applies and shows that also (TTB2$+$) and
(TTB5) hold. (TTB6) follows from the general fact that given subgroups $H_1<H_2$ of a topological group the canonical map $\pi: G/H_1 \to G/H_2$ is open, since for an open set $UH_1 \subset G/H_1$ the saturation $\pi^{-1}(\pi(UH_1)) = UH_2 = \bigcup UH_1h_2$ is open as the union of (open) translates. Finally, residues in $\Delta$ are closed by Lemma~\ref{residueclosed}, whence their stabilizers, i.e., parabolic subgroups are closed, which in turn implies (TTB1$+$).
\end{proof}
\begin{rem}\label{ttb4holds}
For an $\mathbb{F}$-locally split root group datum there is an easy
condition which guarantees property (iv) above. Indeed, the panels
of $\Delta$ are homeomorphic to $\mathbb{P}_1(\mathbb{F})$, the building of
$\mathrm{SL}_2(\F)$.  
If $\mathbb{F}$ is Hausdorff, non-discrete, $\sigma$-compact and locally 
compact,
these are compact (cf.\  \cite[Proposition 14.5 and Corollary
14.7]{salzmann}).
\end{rem}

\section{Topological split Kac--Moody groups} \label{section6}

In this section we return to the classification problem for $k$-split topological twin buildings over a local field $k$, which we studied in Section \ref{SecDynkin}. Our goal is to show that every Dynkin tree is  topologically $k$-integrable. 

Our starting point is the observation that every Dynkin tree is abstractly $k$-integrable; in fact the corresponding twin building is the twin building of the associated split Kac--Moody group. In view of this observation, our task is to topologize the twin buildings of split Kac--Moody groups over local fields. We will in fact topologize the Kac--Moody groups themselves and then apply the criterion developed in Theorem~\ref{thm:top-twin-building} to obtain topological twin buildings. 

Along the way we develop a topological theory of Kac--Moody groups which is of independent interest. In particular, we will revisit various results and techniques developed and announced in  \cite{kac-peterson-regular-functions}.



\subsection{Abstract split Kac--Moody groups} \label{abstractKM}

For the convenience of the reader let us start with the definition of a Kac--Moody root datum.

\begin{defin} \label{cartanmatrix}
A \textbf{generalized Cartan matrix} is a
matrix $A = (a_{ij})_{1 \leq i, j \leq n} \in \Z^{n \times n}$ satisfying
$a_{ii} = 2$, $a_{ij} \leq 0$ for $i \neq j$, and $a_{ij} = 0$ if and only
if $a_{ji} = 0$.

Let $I = \setn$ and let $A = (a_{ij})_{1\leq i,j\leq n}$ be a generalized 
Cartan matrix. A quintuple $\calD = (I, A, \Lambda, \{c_i\}_{i\in I}, 
\{h_i\}_{i\in I})$ is called a \textbf{Kac--Moody root datum} if $\Lambda$ 
is a free $\Z$-module, each $c_i$ is an element of $\Lambda$ and 
each $h_i$ is in the $\Z$-dual $\Lambda^\vee$ of $\Lambda$ such that for 
all $i,j \in I$ one has $h_i(c_j) = a_{ij}$.

\end{defin}

Following \cite[3.6]{tits-algebra} to a Kac--Moody root datum $\calD$
one associates
 a triple $\calF = (\calG, \{\phi_i\}_{i\in I}, \eta)$, where $\calG$ is a
group functor on the category of commutative unital rings, the $\phi_i$ are 
maps $\mrm{SL}_2(R) \to \calG(R)$, and $\eta$ is a natural transformation 
$\Hom_{\Z-{\rm alg}}(\Z[\Lambda], -) \to \calG$ such that the following assertions hold:
	\begin{enumerate}[(KMG1)]
		\item \label{item:kmg-1}If $\F$ is a field, then the group
$\calG(\F)$ is generated by the images of the $\phi_i$ and $\eta(\F)$.
		\item \label{item:kmg-2}For all rings $R$ the homomorphism
$\eta(R): \Hom_{\Z-{\rm alg}}(\Z[\Lambda],R) \rightarrow \calG(R)$ is injective.
		\item \label{item:kmg-3}Given a ring $R$, $i\in I$ and 
$u\in R^\times$, one has $$\phi_i\left(
				\begin{array}{cc}
					u & 0     \\
					0 & u^{-1}
				\end{array}\right) = \eta\left(\lambda \mapsto
u^{h_i(\lambda)}\right).$$
		\item \label{item:kmg-4}If $R$ is a ring, $\F$ is a field and
$\iota: R \rightarrow \F$ is a monomorphism, then $\calG(\iota): \calG(R)
\rightarrow \calG(\F)$ is a monomorphism as well.
		\item \label{item:kmg-5}If $\gotg$ is the complex Kac--Moody
algebra 
of type $A$, then there exists a 
homomorphism $\mathrm{Ad}: \calG(\C) \rightarrow \Aut(\gotg)$ such that 
$\ker(\mathrm{Ad})\subseteq \eta(\mathbb{C})(\Hom_{\Z-{\rm alg}}(\Z[\Lambda],\mathbb{C}))$ and for a given 
$z\in \C$ one has
			\begin{align*}
				\mathrm{Ad}\left(\phi_i\left(
					\begin{array}{cc}
						1 & z \\
						0 & 1
					\end{array}\right)\right) & =
\exp(\mathrm{ad}_{ze_i}),\\
				\mathrm{Ad}\left(\phi_i\left(
					\begin{array}{cc}
						1 & 0 \\
						z & 1
					\end{array}\right)\right) & =
\exp(\mathrm{ad}_{-zf_i});
			\end{align*}
where $\{e_i, f_i\}$ are part of a standard $\mathfrak{sl}_2$-triple
for the fundamental Kac--Moody sub-Lie algebra corresponding to the simple
root $\alpha_i$; furthermore, for every homomorphism 
$\gamma \in \Hom_{\Z-{\rm alg}}(\Z[\Lambda],\mathbb{C})$ one has
			\[
				\mathrm{Ad}(\eta(\mathbb{C})(\gamma))(e_i) = \gamma(c_i)
\cdot e_i, \quad \mathrm{Ad}(\eta(\mathbb{C})(\gamma))(f_i) = \gamma(-c_i) \cdot
f_i.
\]
\end{enumerate}
The Kac--Moody root datum $\calD$ is called \index{Kac--Moody root
datum!centred}\textbf{centred} if the following stronger version of
\refkmg{item:kmg-1} is
satisfied: If $\F$ is a field, then the group $\calG(\F)$ is generated by the
images of the $\phi_i$.

For a given Kac--Moody root datum $\mathcal{D}$ the group 
$G_\calD(R) := \calG(R)$ is called a \textbf{split Kac--Moody group} of type 
$\calD$ over $R$.

The main result of \cite{tits-algebra} states that under some non-degeneracy
assumptions any functor defined on the category of fields satisfying the above
axioms must coincide with $\calG$.

A split Kac--Moody group over a field is an example of a group with 
an RGD system by the following result.  

\begin{prop}[{\cite[Proposition 8.4.1]{remy}, \cite[Lemma 1.4]{main-caprace}}]
\label{prop:kmg-has-rgd} \index{RGD system!of a Kac--Moody group}
Let $\F$ be a field, let $\calD = (I, A, \Lambda, \{c_i\}_{i\in I}, 
\{h_i\}_{i\in I})$ be a Kac--Moody root datum, and let 
$G_\calD(\F) := \calG(\F)$ be the corresponding split Kac--Moody group of 
type $\calD$ over $\F$.  Then $G_\calD(\F)$ admits an RGD system as follows. Let
$M(A)$ be the 
associated Coxeter matrix of type $(W, S)$ and choose a set of simple 
roots $\Pi = \{ \alpha_i \mid i \in I\}$ such that the reflection 
associated to $\alpha_i$ is $s_i \in S$. Define the {\bf set of real roots} as
$\Phi^{re} := W.\Pi$. Given $i \in I$, let $U_{\alpha_i}$ 
and $U_{-\alpha_i}$ be the respective images of the subgroups of strictly upper, resp.\ strictly lower triangular 
matrices of the matrix group $\mrm{SL}_2(\F)$ under the map $\phi_i$, and denote by $T$ the image of $\eta(\mathbb F)$ in $G_\calD(\F)$.

Then $T = \bigcap_{\alpha \in \Phi^{re}} N_{G_\calD(\F)}(U_\alpha)$, $W
\cong N_{G_\calD(\F)}(T)/T$ and
$(G_\calD(\F), \{U_\alpha\}_{\alpha \in \Phi^{re}}, T)$ is an RGD system. \qed
\end{prop}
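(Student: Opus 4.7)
The plan is to verify the RGD axioms (RGD0)--(RGD5) relying on the Tits functor construction, so that the cited references of Tits, R\'emy and Caprace can be invoked for the most technical verifications. As a preliminary step I would extend the definition of root groups from simple to general real roots. Given $\alpha = w.\alpha_i \in \Phi^{re}$, I would fix the canonical lifts $\widetilde{s}_j := \phi_j\left(\begin{array}{cc} 0 & 1 \\ -1 & 0 \end{array}\right)$ of the simple reflections, use a reduced expression for $w$ to produce a representative $\widetilde{w} \in G_\calD(\F)$, and then set $U_\alpha := \widetilde{w}\, U_{\alpha_i} \widetilde{w}^{-1}$. Independence of this definition from the choice of reduced expression follows from the braid relations among the $\widetilde{s}_j$, which hold up to an element of $T$ that is absorbed by (KMG3) when conjugating $U_{\alpha_i}$.

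Next I would check the axioms one at a time. Axioms (RGD0), (RGD4) and (RGD5) are essentially immediate: (RGD0) follows from injectivity of each $\phi_i$ on the unipotent subgroups of $\mrm{SL}_2(\F)$, which is a consequence of (KMG5) applied over $\C$ and descended to $\F$ via (KMG4); (RGD4) is axiom (KMG1) combined with the fact that $T$ is the image of $\eta$; and (RGD5) is a direct reformulation of (KMG3). For (RGD2) one uses the standard $\mrm{SL}_2$-identity expressing a strictly lower triangular matrix as a product of an upper triangular matrix, a Weyl-group representative, and another upper triangular matrix; pushing this identity through $\phi_i$ defines $\mu_{s_i}$, and the required conjugation law $\mu_{s_i}(u) U_\alpha \mu_{s_i}(u)^{-1} = U_{s_i(\alpha)}$ then follows from our definition of non-simple $U_\alpha$ combined with the fact that $\mu_{s_i}$ is a $T$-twisted lift of $s_i$.

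The two axioms requiring substantive work are (RGD1) and (RGD3). For the commutator relations (RGD1) on prenilpotent pairs I would first establish the corresponding identity at the level of the Kac--Moody Lie algebra $\gotg$, where it reduces to the standard inclusion $[\gotg_\alpha, \gotg_\beta] \subseteq \bigoplus_{\gamma \in ]\alpha,\beta[} \gotg_\gamma$ valid for any prenilpotent pair $\{\alpha, \beta\}$. This identity is then transported to $G_\calD(\C)$ via the faithful adjoint action of (KMG5), upgraded from Lie-algebraic to group-theoretic form using the exponential formulae in that axiom. To descend to an arbitrary field $\F$ one exploits that both sides depend polynomially on the root group parameters with $\Z$-integral coefficients, together with the monomorphism property (KMG4) applied to a suitable subring of $\F$. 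For (RGD3), the non-inclusion $U_{-\alpha_s} \not\subseteq U_+$ is extracted from the Birkhoff-type decomposition constructed by Tits: any non-trivial element of $U_{-\alpha_s}$, combined with its positive-root partner via $\mu_{s}$, produces a representative of the non-trivial Weyl element $s$, hence lies in the Bruhat cell $B_+ s B_+$, whereas $U_+ \subseteq B_+$ lies entirely in the identity cell.

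Finally, the identifications $T = \bigcap_{\alpha \in \Phi^{re}} N_G(U_\alpha)$ and $W \cong N_G(T)/T$ are standard. The inclusion $\supseteq$ in the former is (RGD5); the reverse direction uses that any element normalizing every $U_\alpha$ must preserve every real root space of $\gotg$ under $\mathrm{Ad}$, and this forces it into $T$ by the characterization of the torus in (KMG5) together with the injectivity (KMG2). The isomorphism $W \cong N_G(T)/T$ is then the classical Weyl group identification, with each $s_i$ corresponding to the class of $\widetilde{s}_i$. The main obstacle is the careful verification of (RGD1) in positive characteristic, where the descent from $\C$ to $\F$ requires a delicate integrality analysis that constitutes the core of the published work of Tits \cite{tits-algebra} and R\'emy \cite{remy}; for this reason the statement is simply quoted from their work rather than reproved here.
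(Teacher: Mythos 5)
The paper gives no proof of this proposition: the statement is attributed to R\'emy \cite[Proposition 8.4.1]{remy} and Caprace \cite[Lemma 1.4]{main-caprace} and closed immediately with a box, so the authors simply quote the result. Your sketch is a reasonable outline of what appears in those references, you correctly identify the commutator relations (RGD1) as the technically demanding step, and you ultimately defer to the cited works for that integrality analysis, which is consistent with the paper's own treatment.

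One substantive caveat: your argument for (RGD0) via ``(KMG5) applied over $\C$ and descended to $\F$ via (KMG4)'' does not go through in general. Axiom (KMG4) gives a monomorphism $\calG(R) \to \calG(\F)$ for a subring $R$ of a field $\F$; it transfers information \emph{into} $\F$ from a subring, not \emph{from} $\C$ \emph{into} $\F$. In particular it cannot relate $\calG(\F)$ for $\F$ of positive characteristic to $\calG(\C)$, so (KMG5) is not directly available. The correct mechanism — which R\'emy uses, and which the paper itself sets up later in Proposition~\ref{prop:representation-remy} — is to pass to the $\Z$-form $\calU_\Z$ of the universal enveloping algebra and to the representation of $G_\calD(\F)$ on $\calU_\F = \calU_\Z \otimes_\Z \F$. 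Since $e_i \otimes 1 \neq 0$ in $\calU_\F$, the operator $\mathrm{Ad}(x_{\alpha_i}(r)) = \sum_n (\mathrm{ad}_{e_i})^n/n! \otimes r^n$ is non-trivial for $r \neq 0$, so the root group parametrizations are injective over every field. This same $\Z$-form is what makes the integrality descent in your sketch of (RGD1) precise, so it is the unifying device across characteristics; invoking (KMG4)/(KMG5) obscures this. (A minor further note: the braid relations among the canonical lifts $\widetilde{s}_j$ hold \emph{exactly}, not merely up to $T$ — it is the quadratic relations that fail — though your conclusion is unaffected.)
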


In the sequel we refer to $T$ as the \emph{standard maximal torus} of $G_\calD(\F)$.
Note that, since the matrix group $\mathrm{SL}_2$ over a field is generated by its subgroups of strictly upper and strictly lower triangular matrices, the concept of being centred coincides with the one introduced in Section~\ref{RGD system}.

\begin{rem}\label{adjointform}
In general the action of a split Kac--Moody group $G_\calD(\F)$ on the
associated twin building will not be effective; however the kernel $\mathcal
Z_{\calD}(\F)$ of this action always equals the centre of $G_\calD(\F)$, which in turn is contained in $T$ (cf.\
\cite[Proposition~9.6.2]{remy}). 
\end{rem}



From now on we will reserve the letter $G$
to denote a
split Kac--Moody group $G_{\calD}(\F)$ over a field $\F$. Note that Propositions~\ref{prop:rgd-twin-tits} and \ref{prop:kmg-has-rgd} in particular imply that $G$ admits a twin $BN$-pair. Using Proposition~\ref{prop:rgd-twin-tits} we introduce the following additional notation. 

For every $k \geq 0$ we set
\[
	G_k^\pm := \bigcup_{l(w) \leq k} B_\pm w B_\pm
\]
and $G_k := G_k^+ \cap G_k^-$. For every $k$-tuple  $\overline{\alpha} = (\alpha_1, \ldots, \alpha_k) \in
\Pi^k$ of simple roots we will denote by
\[G_{\overline{\alpha}} := 
G_{\alpha_1}\cdots G_{\alpha_k} \subseteq G\]
the subset of $G$ consisting of products of the form
\[g = g_1 \cdots g_k, \quad g_j \in G_{\alpha_j}.\]
Note that as a special case we have $G_{\alpha} = G_{(\alpha)}$. 

Similarly, we denote by $TG_{\overline{\alpha}}$ the image of $T \times G_{\overline\alpha}$ under the product map.

Note that the set of indices carries a natural partial order: We write
$\overline{\alpha} \leq
\overline{\beta}$ provided $\overline{\alpha}$ appears as an ordered subtuple of
$\beta$; in this case there is an obvious embedding $G_{\overline \alpha}
\hookrightarrow G_{\overline \beta}$. We record the following inclusion
relations for later use:
\begin{prop} \label{prop:G-I-in-G-n}
Let $B_+$, $B_-$ be the standard Borel subgroups of the adjoint form of
$G_\calD(\F)$, let 
$k \in \N$, and let $\overline{\alpha} = (\alpha_1,...,\alpha_k) \in
\Pi^k$. Then $TG_{\overline{\alpha}} \subseteq G_k^+ \cap G_k^-$.
\end{prop}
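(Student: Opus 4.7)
The plan is to argue by induction on $k$. The base case $k=0$ is immediate: the empty product gives $G_{\overline\alpha} = \{1\}$, so $TG_{\overline\alpha} = T \subseteq B_+ \cap B_- = G_0^+ \cap G_0^-$.

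The key rank one ingredient that I would establish first is the inclusion
\[G_{\alpha_i} \subseteq B_+ \cup B_+ s_i B_+ \quad \text{and} \quad G_{\alpha_i} \subseteq B_- \cup B_- s_i B_-\]
for every simple root $\alpha_i \in \Pi$. This rests on the Bruhat decomposition of the rank one subgroup $G_{\alpha_i}$, which is a quotient of $\mathrm{SL}_2(\F)$ under $\phi_i$. Writing $T_{\alpha_i} := \phi_i(\text{diagonal of }\mathrm{SL}_2(\F))$, axiom \refkmg{item:kmg-3} identifies $T_{\alpha_i}$ as a subgroup of $T$, so its Borel $B_{\alpha_i} := T_{\alpha_i}U_{\alpha_i}$ is contained in $TU_+ = B_+$. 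The Bruhat decomposition of $\mathrm{SL}_2(\F)$ then yields $G_{\alpha_i} = B_{\alpha_i} \cup B_{\alpha_i}\tilde s_i B_{\alpha_i}$ for any lift $\tilde s_i \in N_{G}(T)$ of $s_i$, giving the desired inclusion. The symmetric statement for $B_-$ follows by using the opposite Borel of $G_{\alpha_i}$ in place of $B_{\alpha_i}$.

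For the inductive step, suppose $TG_{(\alpha_1,\dots,\alpha_{k-1})} \subseteq G_{k-1}^+ \cap G_{k-1}^-$. Multiplying on the right by $G_{\alpha_k} \subseteq B_+ \cup B_+ s_k B_+$ gives
\[TG_{(\alpha_1,\dots,\alpha_k)} \subseteq \bigcup_{l(w) \leq k-1} B_+ w B_+ \bigl(B_+ \cup B_+ s_k B_+\bigr).\]
Applying the $BN$-pair axiom \refbn{item:bn-1} in the form $B_+ w B_+ s_k B_+ \subseteq B_+ w s_k B_+ \cup B_+ w B_+$, the right hand side is contained in
\[\bigcup_{l(w) \leq k-1} \bigl(B_+ w B_+ \cup B_+ w s_k B_+\bigr),\]
and since $l(ws_k) \leq l(w)+1 \leq k$, this lies in $G_k^+$. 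The same argument with $B_-$ in place of $B_+$ gives the inclusion $TG_{\overline\alpha} \subseteq G_k^-$, completing the induction.

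The only nontrivial step is the rank one inclusion $G_{\alpha_i} \subseteq B_\pm \cup B_\pm s_i B_\pm$; once that is in place, the rest of the argument is a straightforward double induction of length using the $BN$-pair axiom. The subtle point there is that we need $T_{\alpha_i}$ to sit inside the ambient torus $T$, which is exactly what axiom \refkmg{item:kmg-3} guarantees.
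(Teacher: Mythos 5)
Your proof is correct and follows the same strategy as the paper's: induction on $k$, with the base case $T \subseteq B_+ \cap B_-$, and the inductive step multiplying by $G_{\alpha_k}$, invoking the rank-one Bruhat decomposition inside $B_\epsilon$ and then the $BN$-pair multiplication axiom. In fact your explicit appeal to (BN1) for the inclusion $B_\epsilon w B_\epsilon s_k B_\epsilon \subseteq B_\epsilon w s_k B_\epsilon \cup B_\epsilon w B_\epsilon$ is the accurate reference; the paper's proof labels this step with (TBN1), which governs mixed $B_+$-$B_-$ cosets and is not the axiom actually being used there.
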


\begin{proof}
We prove the result by induction on $|\overline{\alpha}| = k$. For $k = 0$ 
the claim follows from the inclusion $T \subseteq B_+ \cap B_-$, cf.\ Proposition~\ref{prop:rgd-twin-tits}. Let $k>0$ and assume that for all 
$\overline{\beta}$ 
with $|\overline{\beta}| < |\overline{\alpha}|$ the set $TG_{\overline{\beta}}$
is contained in $G_{|\overline{\beta}|}$. Hence, for 
$\overline{\alpha_0} = (\alpha_1, \ldots, \alpha_{k-1})$, the induction 
hypothesis yields $TG_{\overline{\alpha_0}} \subseteq
G_{|\overline{\alpha_0}|}$. For $\epsilon \in \{\pm\}$ we have
Bruhat decompositions $\mathrm{SL}_2(\mathbb{F}) \cong G_{\alpha_k} = B_{\alpha_k}^\epsilon \cup 
B_{\alpha_k}^\epsilon s_{\alpha_k} B_{\alpha_k}^\epsilon$, where 
$B_{\alpha_k}^\epsilon := B_\epsilon \cap G_{\alpha_k}$. Hence
\begin{eqnarray*}
	TG_{\overline{\alpha}} = TG_{\overline{\alpha_0}} \cdot G_{\alpha_k} &
\subseteq & \left(\bigcup_{l(w) \leq k-1} B_\epsilon w B_\epsilon\right) \cdot
\left(B^\epsilon_{\alpha_k} \cup B^\epsilon_{\alpha_k} s_{\alpha_k}
B^\epsilon_{\alpha_k}\right)\\
		& \subseteq & \left(\bigcup_{l(w) \leq k-1} B_\epsilon w
B_\epsilon\right) \cdot \left(B_\epsilon \cup B_\epsilon s_{\alpha_k}
B_\epsilon\right)\\
		& \stackrel{\text{\reftbn{item:tbn-1}}}{\subseteq} & \bigcup_{l(w) \leq
k-1} B_\epsilon w B_\epsilon \cup B_\epsilon ws_{\alpha_k} B_\epsilon\\
		& \subseteq & \bigcup_{l(w) \leq k} B_\epsilon w B_\epsilon.
\end{eqnarray*}
The claim follows.
\end{proof}

\subsection{The centre and the adjoint representation} \label{adjointrep}

While there is a unique twin building associated with a given generalized Cartan matrix $A$ over a given field $\mathbb F$, there exist several corresponding root group data and hence several corresponding Kac--Moody groups, like in the spherical case;
%
an example to keep in mind are the groups ${\rm SL}_n(\mathbb{F})$ and ${\rm GL}_n(\mathbb{F})$. Their central quotients  ${\rm PSL}_n(\mathbb{F})$ and ${\rm PGL}_n(\mathbb{F})$ are isomorphic for $\mathbb{F} = \mathbb{C}$ but distinct for $\mathbb{F} = \mathbb{R}$. For the study of topological buildings arising from Kac--Moody groups these differences do not matter much, but there are some subtleties.

\medskip

Using the notation of  Definition \ref{cartanmatrix} and generalizing classical terminology in the spherical case, a Kac--Moody root datum $\calD$ will be called 
\textbf{simply connected} if the set $\{h_i \mid i\in I\}$ is a $\Z$-basis 
of $\Lambda^\vee$ and
\textbf{adjoint} if the set $\{c_i \mid i\in I\}$ is a $\Z$-basis of 
$\Lambda$. We will also refer to the corresponding Kac--Moody groups as simply connected or adjoint. For example, the algebraic group ${\rm SL}_n$ is simply connected and the algebraic group ${\rm PGL}_n$ is adjoint in this sense.

\medskip

In general, the centre of an adjoint Kac--Moody group will be trivial, whereas the centre of a simply connected group is typically non-trivial. However, we observe:
\begin{lem}\label{ToriIsogeneous} The centre of a simply connected Kac--Moody group over an arbitrary field $\mathbb F$ is finite.
\end{lem}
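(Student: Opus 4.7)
The plan is to identify the centre $Z := Z(G_{\calD}(\F))$ explicitly inside the standard maximal torus $T$ of Proposition~\ref{prop:kmg-has-rgd} and to rewrite it as a character group of a concrete quotient of $\Lambda$, whose finiteness can then be read off from the generalized Cartan matrix.

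First, Remark~\ref{adjointform} gives $Z \subseteq T$, and axiom (KMG2) identifies $T$ with $\Hom(\Lambda,\F^\times)$ via $\eta(\F)$; write $t_\chi := \eta(\chi)$ for the element associated with a character $\chi: \Lambda \to \F^\times$. Axiom (KMG3) applied to the ${\rm SL}_2$-identity
\[
\mtr{u}{0}{0}{u^{-1}}\mtr{1}{z}{0}{1}\mtr{u^{-1}}{0}{0}{u} = \mtr{1}{u^2 z}{0}{1}
\]
shows that conjugation by $t_\chi$ scales $U_{\alpha_i}$ by $\chi(c_i)$; indeed the torus element appearing in (KMG3) corresponds to $\chi: \lambda \mapsto u^{h_i(\lambda)}$, whose value on $c_i$ is $u^{h_i(c_i)} = u^{a_{ii}} = u^2$. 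Axiom (KMG1) together with Proposition~\ref{prop:kmg-has-rgd} implies that $G_\calD(\F)$ is generated by $T$ and the root subgroups $U_{\pm\alpha_i}$, so $t_\chi$ lies in $Z$ if and only if $\chi(c_i) = 1$ for all $i \in I$. Hence, setting $\Lambda_0 := \sum_{i\in I} \Z c_i$, I obtain the identification
\[
Z \cong \Hom(\Lambda/\Lambda_0,\F^\times).
\]

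Next I exploit the simply-connected hypothesis to exhibit the structure of $\Lambda/\Lambda_0$. Since $\{h_i\}_{i\in I}$ is a $\Z$-basis of $\Lambda^\vee$, the $\Z$-module $\Lambda$ is free of rank $|I|$ with the dual basis $\{\omega_i\}_{i\in I}$ characterized by $h_i(\omega_j) = \delta_{ij}$. Expanding via $h_i(c_j) = a_{ij}$ yields $c_j = \sum_{i \in I} a_{ij}\omega_i$, which identifies the inclusion $\Lambda_0 \hookrightarrow \Lambda$ with the endomorphism of $\Z^{|I|}$ given by the Cartan matrix~$A$. Consequently $\Lambda/\Lambda_0 \cong \operatorname{coker}(A)$, and since $\Hom(F,\F^\times)$ is finite for every finite abelian group $F$, it suffices to establish finiteness of this cokernel.

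The main obstacle is precisely this last step. When $A$ is invertible over $\Q$, which covers all finite-type (i.e.\ spherical) Cartan matrices, $\operatorname{coker}(A)$ has order $|\det A|$ and the conclusion is immediate. For a degenerate matrix $A$, however, the abstract cokernel carries a free part and an honest argument is needed to exclude free character components from $Z$; I would expect this to follow by adapting the standard line of reasoning for the kernel of the simply-connected-to-adjoint isogeny of Kac--Moody groups to the Tits functor formulation of Section~\ref{abstractKM}, combining the identification above with the finer axioms (KMG4) and (KMG5).
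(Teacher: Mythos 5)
Your reduction of the centre to a character group is correct and is, in essence, what the paper does too. Starting from $Z \subseteq T$ (Rémy~9.6.2) and the (KMG3) conjugation formula, you obtain
$Z \cong \Hom(\Lambda/\Lambda_0, \F^\times)$ with $\Lambda_0 = \sum_i \Z c_i$, and in the simply connected case you correctly identify $\Lambda/\Lambda_0 \cong \operatorname{coker}(A)$. This is exactly the character-lattice picture the paper works with: the paper phrases it as ``the adjoint representation induces an isogeny $T^{\mathrm{sc}} \to T^{\mathrm{ad}}$'' and translates it into the statement that the injection $\Lambda^{\mathrm{ad}} \hookrightarrow \Lambda^{\mathrm{sc}}$ has finite cokernel because both lattices have rank $|I|$. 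Your $\operatorname{coker}(A)$ is precisely the quotient $\Lambda^{\mathrm{sc}}/\Lambda^{\mathrm{ad}}$ when $\Lambda^{\mathrm{ad}}$ is identified with the sublattice generated by the $c_i$.

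The gap you flag for $\det A = 0$ is real and your proposed repair does not work. When $A$ is singular, the map $\mathbb{Z}^{|I|} \to \Lambda^{\mathrm{sc}}$ sending $e_i \mapsto c_i$ is \emph{not} injective, so $\operatorname{coker}(A)$ acquires a nontrivial free summand; for an affine type (e.g.\ $\tilde D_4$, which is even tree type and two-spherical) your formula would give a centre containing a copy of $\F^\times$. Axioms (KMG4) and (KMG5) cannot save the argument: (KMG4) is a statement about functoriality under field embeddings and (KMG5) only asserts $\ker(\mathrm{Ad}) \subseteq T$; neither shrinks $\Hom(\Lambda/\Lambda_0,\F^\times)$. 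The paper closes this gap by appealing to Rémy [7.1.2] for the existence of a \emph{natural embedding} $\Lambda^{\mathrm{ad}} \hookrightarrow \Lambda^{\mathrm{sc}}$ (together with the rank count), i.e.\ it outsources exactly the point you get stuck on. If you want a self-contained argument, you need to unwind how the adjoint Kac--Moody root datum and the torus functor $\Hom_{\Z\text{-alg}}(\Z[\Lambda],-)$ are set up in Rémy [7.1.2, 8.2.1, 9.6.2] so that injectivity with finite cokernel persists in the degenerate case; no elementary manipulation of the (KMG) axioms will substitute for this.
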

\begin{proof}
Let $G := G_{\calD}(\F)$ be a simply connected Kac--Moody group. Its centre is contained in any of its maximal tori by \cite[Proposition~9.6.2]{remy}. Moreover, also by \cite[Proposition~9.6.2]{remy}, the centre of $G$ equals the kernel of the adjoint representation. To prove the claim it is therefore enough to show that the adjoint representation induces an isogeny from a maximal torus $T^{\mathrm{sc}}$ onto its image.

By \cite[Definition~7.1.1]{remy} the group of characters $\Lambda$ of a maximal torus of a Kac--Moody group is a free abelian group of finite rank and by \cite[8.2.1]{remy} the torus functor is defined as the group functor $\mathrm{Hom}_{\text{$\mathbb{Z}$-alg}}(\mathbb{Z}[\Lambda],-)$. By \cite[7.1.2]{remy} there exists a natural embedding $\Lambda^\mathrm{ad} \to \Lambda^\mathrm{sc}$ of the group of characters of an adjoint torus into the group of characters of a simply connected torus of the same type. This yields an injective $\mathbb{Z}$-algebra homomorphism $\mathbb{Z}[\Lambda^{\mathrm{ad}}] \to \mathbb{Z}[\Lambda^{\mathrm{sc}}]$, which in turn provides a surjective morphism $\mathrm{Hom}_{\text{$\mathbb{Z}$-alg}}(\mathbb{Z}[\Lambda^\mathrm{sc}],-) \to \mathrm{Hom}_{\text{$\mathbb{Z}$-alg}}(\mathbb{Z}[\Lambda^\mathrm{ad}],-)$.. As the ranks of $\Lambda^\mathrm{ad}$ and $\Lambda^\mathrm{sc}$ coincide, this implies that the kernel of this morphism is $0$-dimensional, i.e., finite.
\end{proof}
As before, let $\calD$ be a root group datum associated with a generalized Cartan matrix $A$, let $\mathbb F$ be a field of arbitrary characteristic, let $G := G_{\calD}(\F)$ and let $Z := Z(G)$ the centre of $G$. We recall from \cite[Section 7.3.1]{remy} how to construct a faithful representation of $G/Z(G)$ starting from the 
complex Kac--Moody algebra $\gotg$ associated 
to the generalized Cartan matrix $A$, which generalizes the classical adjoint representation in the spherical case.


For this we denote by $\calU := \calU(\gotg)$
the
\index{Kac--Moody algebra!universal enveloping algebra}universal 
enveloping algebra of $\gotg$. For each $u \in \calU$, let 
$u^{[n]} := (n!)^{-1}u^n$ and
$\binom u n := (n!)^{-1} \cdot u \cdot (u-1) \cdot \cdots \cdot (u-n+1)$.

Let $Q := \sum_{\alpha \in \Pi} \Z \alpha$ be the free abelian group 
generated by the simple roots. Then, as in \cite[Section 7.3.1]{remy}, the
algebras $\calU$ and $\gotg$ admit an abstract 
\index{Kac--Moody algebra!universal enveloping algebra!$Q$-grading}$Q$-grading
by declaring $e_i$ and $f_i$ to be of degree $\alpha_i$ and $-\alpha_i$, 
respectively, and extending linearly.

With this notation, set $\calU_0$ to be the subring of $\calU$ generated 
by the elements of degree $0$ of the form $\binom h n$, where 
$h \in \goth, n \in \N$. Moreover, define $\calU_{\alpha_i}$ and 
$\calU_{-\alpha_i}$ to be the subrings $\sum_{n \in \N} \Z e_i^{[n]}$ and 
$\sum_{n \in \N} \Z f_i^{[n]}$, respectively.
Let $\calU_\Z$ be the subring of $\calU$ generated by $\calU_0$ and 
$\{\calU_{\pm \alpha} \mid \alpha \in \Pi\}$. Then 
\index{Kac--Moody algebra!$\Z$-form}$\calU_\Z$ is a $\Z$-form of $\calU$,
i.e., the canonical map $\calU_\Z \otimes_\Z \C \to \calU$ is a bijection, 
cf.\ \cite[Section~4]{tits-algebra}, \cite[Proposition~7.4.3]{remy}. 

This construction allows one to replace the field $\C$ with an arbitrary
field $\F$: 
define $\calU_\F := \calU_\Z \otimes_\Z \F$. Let $\Aut_\filt(\calU_\F)$ be the
group of $\F$-linear automorphisms of $\calU_\mathbb{F}$ which preserve
the above $Q$-grading.

The resulting adjoint action of a Kac--Moody group has the following nice properies.

\begin{prop}[{\cite[Proposition 9.5.2]{remy}}] \label{prop:representation-remy}
\index{Kac--Moody group!adjoint representation}
Let $G_\calD(\F)$ be a split Kac--Moody group over a field $\F$ and let $T$
denote
its standard maximal torus. Then there exists a morphism of groups
\[
	\mrm{Ad}: G_\calD(\F) \to \Aut_\filt(\calU_\F)
\]
which is characterized by the following axioms, where $\alpha_i$ is a real root,
$r \in \F$ and $h \in T$:
\begin{enumerate}
\item \label{item:root-group-action}$\Ad(x_{\alpha_i}(r)) = \exp (\ad_{e_i}
\otimes r) = \sum_{n = 0}^\infty \frac{(\ad_{e_i})^n}{n!} \otimes r^n$,
\item \label{item:torus-fixes-U0}$\Ad(T)$ fixes $\calU_0$,
\item \label{item:action-of-torus}$\Ad(h)(e_i \otimes r) = h^*(\alpha_i^\vee)
(e_i \otimes r)$. 
\end{enumerate}
The kernel of this representation coincides with the centre of the group  $G_\calD(\F)$.\qed
\end{prop}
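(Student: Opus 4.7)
The plan is to construct $\mathrm{Ad}$ in three stages---an analytic definition over $\C$, descent to the $\Z$-form, and base change to an arbitrary field---and then to identify the kernel using the RGD structure. Formulas (i) and (iii) serve both as the recipe for the construction and as a uniqueness criterion.

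First, I would invoke axiom \refkmg{item:kmg-5} of the Tits functor applied to $\F = \C$ to obtain a complex adjoint representation $\mathrm{Ad}_\C: G_\calD(\C) \to \Aut(\gotg)$ with the prescribed formulas on root-group elements and torus elements. Via the universal property of the enveloping algebra, this extends canonically to an action on $\calU$ by $Q$-graded algebra automorphisms, with each root-group element acting by $\exp(\ad_{e_i} \otimes z)$ and each torus element scaling weight vectors according to (iii). The core technical step is then to show that this complex action preserves the $\Z$-form $\calU_\Z$: although $\ad_{e_i}$ is not globally nilpotent on $\gotg$ in the Kac--Moody setting, it is locally nilpotent on each weight space, and a careful Chevalley--Kostant-style integrality computation shows that the divided-power exponentials preserve the $\Z$-span of the canonical generating set $\{ e_j^{[n]}, f_j^{[n]}, \binom{h}{n} \}$. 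This integrality statement has been worked out in detail in \cite{tits-algebra} and \cite[Chapter 7]{remy}, so for our purposes it suffices to invoke those references. Having established integrality, descent to an arbitrary field is straightforward: one defines $\calU_\F := \calU_\Z \otimes_\Z \F$ and prescribes $\mathrm{Ad}$ on root-group and torus generators of $G_\calD(\F)$ by the formulas (i) and (iii), and the universal property of the Tits functor \cite{tits-algebra} guarantees that these assignments respect all defining relations. Property (ii) is then immediate from (iii) together with axiom \refkmg{item:kmg-3}, which identifies the induced torus action on weight-zero elements as the identity.

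For the kernel, one direction is easy: by \cite[Proposition 9.6.2]{remy}, the centre $Z(G_\calD(\F))$ is contained in $T$, and for any central $h$ the RGD conjugation relation $h x_{\alpha_i}(r) h^{-1} = x_{\alpha_i}(h^*(\alpha_i^\vee) r)$ forces $h^*(\alpha_i^\vee) = 1$ for every simple root, whence $\mathrm{Ad}(h) = \mathrm{id}$ by (iii). Conversely, formula (i) shows that $\mathrm{Ad}$ is injective on each root group---since $\exp(\ad_{e_i} \otimes r)$ already acts nontrivially on $\gotg$ for $r \neq 0$---and this combined with the Bruhat decomposition forces $\ker(\mathrm{Ad}) \subseteq T$; on $T$ it is then exactly the set of elements with trivial characters on all simple coroots, which by the same conjugation relation is precisely the centre. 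The main obstacle in the entire argument is the integrality step; once that is granted, the rest is bookkeeping with the Tits functor axioms, and the uniqueness of $\mathrm{Ad}$ follows from axiom \refkmg{item:kmg-1} together with the explicit characterization on generators.
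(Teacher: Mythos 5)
The paper does not prove this proposition at all: the \verb|\qed| in the statement signals that the result is taken verbatim from R\'emy's book, cited as Proposition~9.5.2 there (with the kernel statement coming from Proposition~9.6.2, which the paper also cites elsewhere). So there is no proof in the paper against which to measure your reconstruction; you have in effect written a sketch of the argument that lives in the reference.

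As a reconstruction of R\'emy's argument your sketch is broadly correct and identifies the genuine technical content --- the integrality of the divided-power exponentials on $\calU_\Z$, which is precisely what Tits and R\'emy develop in detail and which you appropriately defer to those sources. Two spots are thinner than they should be. First, your derivation of $\ker(\mathrm{Ad}) \subseteq T$ is not an argument: knowing that $\mathrm{Ad}$ is injective on each individual root group $U_\alpha$ does not, together with the Bruhat decomposition, force an element of the kernel into $T$. The actual mechanism is that $\ker(\mathrm{Ad})$ is a normal subgroup of $G_\calD(\F)$ meeting each root group trivially, and then one invokes the structure theory of groups with an RGD system (cf.\ R\'emy~\S 9.6 and Caprace--R\'emy): such a normal subgroup must lie in the intersection $\bigcap_{g} gB_+g^{-1}\cap\bigcap_g gB_-g^{-1}$, which is $T$ acting trivially on the twin building, i.e.\ the centre by \cite[Proposition~9.6.2]{remy}. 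Without that structural input, the deduction does not go through. Second, in deriving (ii) from (iii) you should say explicitly that the torus acts on each graded piece $\calU_\alpha$ by the character corresponding to the degree $\alpha\in Q$, so that the degree-zero piece $\calU_0$ carries the trivial character; property (iii) only pins down the action on the weight spaces of the simple roots, and one needs the grading-compatibility of $\mathrm{Ad}$ to propagate this to all of $\calU_\F$.
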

In the sequel we will often tacitly identify $G/Z(G)$ with its image under the adjoint representation and thereby consider $G/Z(G)$ as a subgroup of $\Aut_\filt(\calU_\F)$.

\subsection{The Kac--Peterson topology I: $k_\omega$-property}


Given a local field $\F$ we are going to construct a group topology on every
split Kac--Moody group over $\F$.
Throughout we will imitate closely the arguments given in \cite[Section
6]{final-group-topologies} for unitary forms of complex Kac--Moody groups. The
main difference is that we
use the adjoint representation on the
associative algebra $\mathcal{U}_\F$
instead of the Kac--Moody Lie algebra. This will allow us to include the case
of positive characteristic as well.

Throughout we will reserve the letter $\F$ to denote a local field, the
letter $\calD$ to denote a Kac--Moody root datum, and the
letter $G$ to denote the
associated Kac--Moody group $G := G_\calD(\F)$. As above, we then denote by $Z(G)$ the centre of $G$ and identify $G/Z(G)$ with its image under the adjoint representation. 

Our goal is to define a group topology on $G$ that is induced by the topology on $\mathbb{F}$. We will actually provide two constructions of such a group
topology on $G$ in Definitions~\ref{kpdef} and \ref{def:kac-peterson-topology} below. In Proposition~\ref{universalKP}, however, we will show that
these two topologies in fact coincide.

We equip
the space 
$\F^{n \times n}$ of $(n \times n)$ matrices over $\F$ with the product 
topology and obtain a Hausdorff group topology $\mathcal O_\F$  on the open
subset $\mathrm{GL}_n(\F)$. We also obtain Hausdorff group topologies on
$\mathrm{SL}_n(\F) < \mathrm{GL}_n(\F)$ and the central quotient $\mathrm{PSL}_n(\F)$,
which we denote by the same letter $\mathcal O_\F$. The rank one subgroups $G_\alpha$ of $G$ are isomorphic to $\mathrm{SL}_2(\F)$ or $\mathrm{PSL}_2(\F)$ and, hence, by this definition can be considered as topological groups.


Starting from this topology on the rank one subgroups and the torus we now give our first definition of a topology, which is inspired by a construction
of Kac and Peterson in the complex case \cite{final-group-topologies}, \cite{kac-peterson-topology}. We start by defining topologies on the pieces
$G_{\overline \alpha}$ introduced above in Section~\ref{abstractKM}.

\begin{defin} \label{topologyonG}
Let $G$ be a split Kac--Moody group over a
local field $\F$ and
let $\overline{\alpha} = (\alpha_1, \ldots, \alpha_k) \in \bigcup_{l \in \mathbb{N}}\Pi^l$. Equip the torus $T$ with its Lie group topology $\tau_{\F}$ and the rank one subgroups $G_{\alpha_1}, \dots, G_{\alpha_k}$ with the topology $\mathcal O_\F$. Then we denote by $\tau_{\overline \alpha}$ the quotient topology on $TG_{\overline \alpha}$ with respect to the surjective map
\[p_{\overline{\alpha}}:(T, \tau_\F) \times (G_{\alpha_1}, \mathcal O_\F) \times \dots \times (G_{\alpha_k}, \mathcal O_\F) \to TG_{\overline \alpha}.\]
\end{defin}
Note that the topological spaces $(G_{\overline{\alpha}},
\tau_{\overline{\alpha}})$ introduced in Definition~\ref{topologyonG} form a directed system with respect to the sub-tuple relation $\overline{\alpha} \leq \overline{\beta}$
defined in Section~\ref{abstractKM}. Moreover, the set underlying $G$ is the ascending union (i.e.\ direct limit) of the underlying directed system of sets.
\begin{defin} \label{kpdef}
The {\bf Kac--Peterson topology}\footnote{see Remark \ref{KacPetersonOriginal} for the relation to their original definition} $\tau_{KP}$ on $G$ is the direct limit
topology with respect to the directed system $\{(TG_{\overline{\alpha}},
\tau_{\overline{\alpha}})\}_{\overline{\alpha}}$.
\end{defin}
It is not obvious at all that this topology defines a group topology on $G$. The main problem one encounters in establishing continuity of the multiplication is that 
for general topological spaces $G_i$,
\[\lim_\to G_i \times \lim_\to G_j \not \cong \lim_\to (G_i \times G_j).\]
However, such an exchange of limits is possible provided the pieces $G_i$ are $k_\omega$ \cite[Propositions~4.2, 4.7]{final-group-topologies}. Our strategy for showing that $\tau_{KP}$ is a group topology will thus be to establish the $k_\omega$-property for certain subsets of $(G, \tau_{KP})$. Once we can exchange direct limits and finite products freely, the proof becomes trivial. In order to establish the desired $k_\omega$-property we will have to assume $\F$ to be locally compact and $\sigma$-compact. Somewhat surprisingly, the hardest part in establishing the $k_\omega$-property of $\tau_{KP}$ is to show that it is Hausdorff. For this we need to use some topological properties of the adjoint representation, which we summarize in the following proposition.
Here we topologize all finite-dimensional $\F$-vector spaces $V \cong \F^n$ with the natural product topology with respect to the locally compact topology of $\F$ and all corresponding general linear groups ${\rm GL}(V)$ with the topology $\mathcal O_\F$ defined above.
\begin{prop}\label{prop:ad-finite-dimensional-submodule}
\label{cor:submodules-and-continuity}\label{
rem:prop:ad-finite-dimensional-submodule}
Let $\F$ be a local field and let $G$ be a split Kac--Moody group over $\mathbb{F}$.
For each $v \in \mathcal{U}_\F$ there exists a family of subspaces
$\{V^v_{\overline{\alpha}}\}_{\overline{\alpha}}$ of $\calU_\F$
with the following properties:
\begin{enumerate}
 \item $\dim V^v_{\overline{\alpha}} < \infty$.
\item The image of the orbit map $TG_{\overline{\alpha}} \to \mathcal{U}_\F : g \mapsto g(v)$ is contained in  $V^v_{\overline{\alpha}}$.
\item The orbit map $(TG_{\overline{\alpha}},\tau_{\overline{\alpha}}) \to (V^v_{\overline{\alpha}},\mathcal{O}_\F) : g \mapsto g(v)$ is continuous.
 \item If $\overline{\alpha}\leq \overline{\beta}$ then
$V^v_{\overline{\alpha}} \leq V^v_{\overline{\beta}}$.
\item $V^v_{\overline{\alpha}}$ is $TG_{\alpha_1}$-invariant, where $\overline{\alpha} = (\alpha_1,\alpha_2,...,\alpha_k)$.
 \item The kernel of the
map $\rho_{\alpha_1} := {\rm
Ad}|_{TG_{\alpha_1}}^{{\rm GL}(V^v_{\overline{\alpha}})}:
TG_{\alpha_1} \to {\rm GL}(V^v_{\overline{\alpha}})$ is contained in $Z(G)$. 
\item The image of $U_{\pm \alpha_1} \subset G_{\alpha_1}$ under $\rho_{\alpha_1}$ is a closed subgroup of $({\rm GL}(V^v_{\overline
\alpha}),
\mathcal O_\F)$.
\end{enumerate}
\end{prop}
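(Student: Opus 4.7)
The plan is to construct $V^v_{\overline{\alpha}}$ for $\overline{\alpha} = (\alpha_1, \ldots, \alpha_k)$ as the smallest linear subspace of $\mathcal{U}_\F$ that contains both $v$ and a fixed finite-dimensional auxiliary subspace $W_{\alpha_1}$ depending only on $\alpha_1$, and that is invariant under the adjoint action of the torus $T$ and under the derivations $\ad_{e_{\alpha_j}}$, $\ad_{f_{\alpha_j}}$ for every $j \in \{1, \ldots, k\}$. The crucial input is that, by the very definition of the adjoint representation in Proposition~\ref{prop:representation-remy}, each real-root vector $\ad_{e_{\alpha}}$ acts locally nilpotently on $\mathcal{U}_\F$. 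Consequently, starting from a finite-dimensional subspace and closing under a fixed finite collection of such derivations yields again a finite-dimensional subspace; $T$-invariance is automatic via the $Q$-grading on $\mathcal{U}_\F$, because the closure described above is a direct sum of finitely many weight spaces. This gives property~(i). Property~(ii) then follows because $V^v_{\overline{\alpha}}$ is $T$-invariant and $G_{\alpha_j}$-invariant for each $j$ (the subgroup $G_{\alpha_j}$ is generated by its unipotent subgroups $U_{\pm\alpha_j}$, which act via $\exp(r\,\ad_{\pm e_{\alpha_j}})$ and thus preserve any subspace invariant under $\ad_{e_{\alpha_j}}$ and $\ad_{f_{\alpha_j}}$). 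Properties~(iv) and~(v) are immediate from the construction.

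For property~(iii), on the finite-dimensional space $V^v_{\overline{\alpha}}$ each $\ad_{\pm e_{\alpha_j}}$ is nilpotent, so the matrix entries of $\Ad(x_{\pm\alpha_j}(r))$ are polynomials in $r\in\F$ and hence continuous with respect to $\mathcal{O}_\F$; similarly $T$ acts through continuous characters. Since matrix multiplication in $\mathrm{GL}(V^v_{\overline{\alpha}})$ is continuous, the composition
\[
(T, \tau_\F) \times (G_{\alpha_1}, \mathcal{O}_\F) \times \cdots \times (G_{\alpha_k}, \mathcal{O}_\F) \xrightarrow{p_{\overline{\alpha}}} (TG_{\overline{\alpha}}, \tau_{\overline{\alpha}}) \to (V^v_{\overline{\alpha}}, \mathcal{O}_\F)
\]
is continuous, and continuity of the orbit map then follows from the universal property of the quotient topology $\tau_{\overline{\alpha}}$.

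To secure property~(vi), the auxiliary subspace $W_{\alpha_1}$ has to be chosen carefully: it should be a finite-dimensional $TG_{\alpha_1}$-invariant subspace of $\mathcal{U}_\F$ on which $TG_{\alpha_1}$ acts with kernel exactly $Z(G) \cap TG_{\alpha_1}$. I would produce such a $W_{\alpha_1}$ by taking the span of the $\mathfrak{sl}_2$-triple $(e_{\alpha_1}, f_{\alpha_1}, h_{\alpha_1})$, on which $G_{\alpha_1}$ acts faithfully modulo $Z(G_{\alpha_1})$, and adjoining finitely many weight vectors for $T$, which exist by finiteness of the rank of $T$ and by faithfulness of $\Ad$ modulo $Z(G)$ from Proposition~\ref{prop:representation-remy}. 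One then closes the resulting finite set under the action of $G_{\alpha_1}$, which again yields a finite-dimensional $TG_{\alpha_1}$-invariant subspace by local nilpotency, and on this subspace the kernel of the action is $Z(G) \cap TG_{\alpha_1}$ by design. Since by construction $W_{\alpha_1} \subseteq V^v_{\overline{\alpha}}$, property~(vi) follows immediately.

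The main obstacle will be property~(vii). Here the image of $U_{\pm\alpha_1}$ under $\rho_{\alpha_1}$ is the one-parameter subgroup $\{\exp(rN) : r \in \F\} \subset \mathrm{GL}(V^v_{\overline{\alpha}})$, where $N$ is the restriction of $\ad_{\pm e_{\alpha_1}}$ to $V^v_{\overline{\alpha}}$. Property~(vi) combined with $U_{\pm\alpha_1} \cap Z(G) = \{1\}$ forces $N \neq 0$, so the parametrization $r \mapsto \exp(rN)$ is an injective polynomial map into the matrix algebra and at least one of its matrix entries is a non-constant polynomial in $r$. Since $\F$ is locally compact, every non-constant polynomial function $\F \to \F$ is proper, and therefore so is this parametrization. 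As an injective continuous proper map from a locally compact space into a Hausdorff space is a closed embedding, the image is closed in $\mathrm{GL}(V^v_{\overline{\alpha}})$, which establishes~(vii).
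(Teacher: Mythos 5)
Your overall strategy---using local nilpotency of $\ad_{e_\alpha}$ to produce finite-dimensional invariant subspaces, then observing that on such a subspace the root-group action is by unipotent matrices with polynomial entries---is the right one. However, there is a genuine gap in the construction of $V^v_{\overline\alpha}$ itself. You propose to take the smallest subspace containing $v$ and $W_{\alpha_1}$ that is invariant under $\ad_{e_{\alpha_j}}$ and $\ad_{f_{\alpha_j}}$ for \emph{all} $j\in\{1,\dots,k\}$ simultaneously, and you assert that local nilpotency guarantees this is finite-dimensional. That assertion is false once the $\alpha_j$ are not all equal: closing under several non-commuting locally nilpotent derivations need not stay finite-dimensional, because iterating the different $\ad_{e_{\alpha_i}}$ shifts the $Q$-degree by different simple roots and thereby reaches infinitely many nonzero weight spaces of $\calU_\F$. (Concretely, in the adjoint module of an affine Kac--Moody algebra, closing $\langle e_{\alpha_1}\rangle$ under $\ad_{e_{\alpha_0}}$ and $\ad_{e_{\alpha_1}}$ already produces vectors of unbounded $\delta$-degree, hence an infinite-dimensional span.) For the same reason the claim that the closure ``is a direct sum of finitely many weight spaces'' does not hold. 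The correct construction, which the paper uses, is \emph{iterative and order-sensitive}: first form the finite-dimensional $G_{\alpha_k}$-invariant hull of $\langle v\rangle$, then its $G_{\alpha_{k-1}}$-invariant hull, and so on, finishing with a $G_{\alpha_1}$-invariant hull. This respects the ordered product structure of $TG_{\overline\alpha}$ (so (ii) holds), but the resulting $V^v_{\overline\alpha}$ is only $TG_{\alpha_1}$-invariant --- exactly what properties (v)--(vii) demand --- and it stays finite-dimensional at every stage because only a single root is closed off at a time.

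Once $V^v_{\overline\alpha}$ is correctly (and finite-dimensionally) constructed, your remaining arguments do essentially go through, and your route to (vii) is in fact genuinely different from the paper's: you argue that $\{\exp(rN):r\in\F\}$ is the image of an injective polynomial parametrization with a nonconstant matrix entry, that nonconstant polynomials over a local field are proper, and that an injective continuous proper map into a Hausdorff space is a closed embedding. This is an elementary and self-contained argument. The paper instead invokes Borel--Tits to conclude that $\rho_{\alpha_1}$ is algebraic, an open mapping theorem to get local compactness of the image, and then Hewitt--Ross to conclude closedness. Your approach avoids these heavier tools and would be worth keeping. Your sketch of (vi) is compatible with the paper's case analysis (which distinguishes $G_{\alpha}\cong\mathrm{PSL}_2$, $G_\alpha\cong\mathrm{SL}_2$ with $-1\notin Z(G)$, and $G_\alpha\cong\mathrm{SL}_2$ with $-1\in Z(G)$), though you should make explicit how the chosen auxiliary space forces the kernel into $Z(G)$ in each case. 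Finally, note that with your ``close under everything'' construction property (iv) would also be problematic, since $\overline\alpha\leq\overline\beta$ does not force the first entries of the two tuples to coincide, so the dependence of $W_{\alpha_1}$ on the first entry would need to be addressed; with the iterative construction this issue does not arise in the same way.
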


\begin{proof}
By Proposition~\ref{prop:representation-remy}(ii)(iii) we may disregard the finite-dimensional torus $T$.

Let $v_1, ....., v_t \in \mathcal{U}_\F$ and $\alpha \in \Pi$. The Gauss algorithm/Bruhat decomposition of the
fundamental rank one subgroup $G_{\alpha}$ of
$G$ implies 
that the product map $U_\alpha \times U_{-\alpha} \times U_\alpha 
\times U_{-\alpha} \to G_{\alpha}$ is surjective
(\cite[Lemma~24]{steinberg}). 
The adjoint action of $u_\alpha(q) \in U_\alpha \subseteq G_{\alpha}$ on
$\calU_\mathbb{F}$ is given by $\Ad(u_\alpha(q)) = \sum_{n = 0}^\infty 
\left(\frac{(\ad_{e_i})^n}{n!} \otimes q^n\right)$, see 
Proposition~\ref{prop:representation-remy}.
Hence, by the Bruhat decomposition, the vector space
\[
	\sum_{i=1}^t \sum_{k,l,m,n \in \N} \left<
\left(\frac{(\ad_{e_\alpha})^k}{k!}
\otimes 1\right) \left(\frac{(\ad_{f_\alpha})^l}{l!} \otimes 1\right)
\left(\frac{(\ad_{e_\alpha})^m}{m!} \otimes 1\right)
\left(\frac{(\ad_{f_\alpha})^n}{n!} \otimes 1\right)
v_{i}\right>_\F
\]
contains $\sum_{i=1}^t \langle G_{\alpha}.v_i \rangle_\F$. 
By construction this vector space is $\mathrm{Ad}|_{G_{\alpha}}$-invariant. From the
local 
nilpotency of $\ad_{e_\alpha}$ and $\ad_{f_\alpha}$, we may conclude that the above 
sum is finite and hence this vector space has finite dimension.

Using bases, this argument shows that each finite-dimensional subspace of $\mathcal{U}_\F$ is contained in a $G_\alpha$-invariant finite-dimensional subspace of $\mathcal{U}_\F$.

It is therefore immediate from Proposition \ref{prop:representation-remy} that for $v \in \mathcal{U}_\F$ there exists a family of subspaces
$\{V^v_{\overline{\alpha}}\}_{\overline{\alpha}}$ of $\calU_\F$ such that (i), (ii), (iv), (v) hold. 

Given a simple root $\alpha$ the corresponding rank one subgroup $G_\alpha$ falls in one of three classes and we will choose $g_\alpha \in G_\alpha$ accordingly: If $G_\alpha \cong {\rm PSL}_2(\F)$, then we let $g_\alpha$ be an arbitrary non-trivial element. If $G_\alpha \cong {\rm SL}_2(\F)$ and $-1$ is not in the centre of $G$, then we choose $g_\alpha := -1$. Finally, if $G_\alpha \cong {\rm SL}_2(\F)$ and $-1$ is contained in the centre of $G$, then we choose $g_{\alpha}$ different from $\pm 1$. In any case there exists $v_\alpha \in \mathcal{U}_\F$ with ${\rm Ad}(g_\alpha)(v_\alpha) \neq v_\alpha$.  In the first two cases, the orbit-stabilizer formula and the fact that $g_\alpha$ is contained in any non-trivial normal subgroup of $G_\alpha$ imply that $G_\alpha \to \mathcal{U}_\F : g \mapsto g(v_\alpha)$ is injective. Therefore, for any $G_\alpha$-invariant subspace $U$ of $\mathcal{U}_\F$ containing $v_\alpha$, the group homomorphism $\rho : G_\alpha \to \mathrm{GL}(U)$ induced by the adjoint action is injective. In the third case, a similar argument shows that the kernel is contained in the centre of $G$.

We have shown that for $v \in \mathcal{U}_\F$ there exists a family of subspaces
$\{V^v_{\overline{\alpha}}\}_{\overline{\alpha}}$ of $\calU_\F$ such that (i), (ii), (iv), (v), (vi) hold. 

It remains to show that such a family also satisfies (iii) and (vii).

Applying Borel--Tits \cite{boreltits} to $G_{\alpha_1}$ and the Zariski closure of its image under $\rho_{\alpha_1}$ in $\mathrm{GL}(V^v_{\overline
\alpha})$ we conclude that $\rho_{\alpha_1} : G_{\alpha_1} \to {\rm GL}(V^v_{\overline{\alpha}})$ is algebraic and, thus, $\rho_{\alpha_1} : (G_{\alpha_1},\tau_{\alpha_1}) \to ({\rm GL}(V^v_{\overline{\alpha}}),\mathcal{O}_\F)$ is continuous. It follows from an open mapping theorem (cf.\ \cite[Lemma~2.1]{final-group-topologies}) that the subspace topology on $\rho_{\alpha_1}(G_{\alpha_1})$ is locally compact. Therefore, by \cite[Theorem~II.5.11]{Hewitt/Ross:1963}, the image $\rho_{\alpha_1}(G_{\alpha_1})$ is a closed subgroup of $({\rm GL}(V^v_{\overline{\alpha}}),\mathcal{O}_F)$ and, hence, so are $\rho_{\alpha_1}(U_{\pm\alpha_1})$. This establishes (vii).

Since $\rho_{\alpha_1} : (G_{\alpha_1},\tau_{\alpha_1}) \to ({\rm GL}(V^v_{\overline{\alpha}}),\mathcal{O}_\F)$ is continuous, so is the orbit map $(G_{{\alpha_1}},\tau_{{\alpha}_1}) \to (V^v_{\overline{\alpha}},\mathcal{O}_\F) : g \mapsto g(v)$, i.e., we have shown (iii) for each $1$-tuple $\overline{\alpha}$. Decompose an arbitrary tuple $\overline{\alpha}$ of length more than $1$ as $(\alpha_1,\overline{\beta})$. By induction the orbit map $(G_{\overline{\beta}},\tau_{\overline{\beta}}) \to (V^v_{\overline{\beta}},\mathcal{O}_\F) : g \mapsto g(v)$ is continuous and, hence, so is $\phi : (G_{\overline{\beta}},\tau_{\overline{\beta}}) \to (V^v_{\overline{\alpha}},\mathcal{O}_\F) : g \mapsto g(v)$ by (iv). Since the action $\epsilon : (\mathrm{GL}(V^v_{\overline{\alpha}}),\mathcal{O}_\F) \times (V^v_{\overline{\alpha}},\mathcal{O}_\F) \to (V^v_{\overline{\alpha}},\mathcal{O}_\F)$ is continuous, it follows that $$\epsilon \circ (\rho_{\alpha_1} \times \phi) : (G_{\alpha_1},\tau_{\alpha_1}) \times (G_{\overline{\beta}},\tau_{\overline{\beta}}) \to (V^v_{\overline{\alpha}},\mathcal{O}_\F) : (x_{\alpha_1},x_{\overline{\beta}}) \to x_{\alpha_1}x_{\overline{\beta}}(v)$$ is continuous. As $G_{\overline{\alpha}}$ carries the quotient topology $\tau_{\overline{\alpha}}$, this means that also $$(G_{\overline{\alpha}},\tau_{\overline{\alpha}}) \to (V^v_{\overline{\alpha}},\mathcal{O}_\F) : x \mapsto x(v)$$ is continuous, proving (iii).
\end{proof}
Now we can deduce:
\begin{prop}\label{KPBasic}
Let $\F$ be a local field and let $G=G_{\mathcal D}(\F)$ be a
split Kac--Moody group over $\F$. Then the Kac--Peterson topology is a $k_\omega$
group topology on
$G$, which is moreover independent of the choice of the system $\Pi$ of simple roots. 
\end{prop}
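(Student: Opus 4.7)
The plan is to follow the template of \cite[Section~6]{final-group-topologies}, replacing the action on the Kac--Moody Lie algebra by the adjoint action on $\calU_\F$ in order to accommodate arbitrary characteristic. The main obstacle will be Hausdorffness of the individual pieces $(TG_{\overline{\alpha}}, \tau_{\overline{\alpha}})$; once that is secured, the remainder of the proposition is a formal application of the $k_\omega$-calculus summarized in Proposition~\ref{prop:komega-properties}.

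First I would establish that each building block $(TG_{\overline{\alpha}}, \tau_{\overline{\alpha}})$ is a $k_\omega$-space. Since $\F$ is a local field, the factors $T$, $G_{\alpha_1}, \dots, G_{\alpha_k}$ endowed with their respective Lie-group, resp.\ $\mathcal O_\F$ topologies are Hausdorff, $\sigma$-compact and locally compact; hence so is their finite product, and by Proposition~\ref{prop:komega-properties} the product is $k_\omega$. Its Hausdorff quotient $\tau_{\overline{\alpha}}$ will then inherit $k_\omega$-ness from the same proposition. To verify Hausdorffness I would exploit Proposition~\ref{prop:ad-finite-dimensional-submodule}: selecting a family of vectors $v \in \calU_\F$ whose associated modules $V^v_{\overline{\alpha}}$ jointly generate $\calU_\F$, the continuous orbit maps of item~(iii) combine into a continuous map from $(TG_{\overline{\alpha}}, \tau_{\overline{\alpha}})$ to a Hausdorff product of finite-dimensional $\F$-vector spaces whose fibres coincide with cosets of $Z(G) \cap TG_{\overline{\alpha}}$ (using that $\ker(\mathrm{Ad}) = Z(G) \subset T$). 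Combining this with the continuous projection to the Hausdorff Lie group $T/Z(G)$---which is Hausdorff because $Z(G)$ is closed in $T$ by Lemma~\ref{ToriIsogeneous} (simply connected case) or by its definition via algebraic characters in general---yields a continuous injection into a Hausdorff target, from which Hausdorffness of the quotient follows.

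Next, since $\Pi$ is finite, the indexing set $\bigcup_k \Pi^k$ is countable, and enumerating it gives a countable cofinal chain of pieces whose union is $G$. By Proposition~\ref{prop:komega-properties} the direct limit of a countable ascending sequence of $k_\omega$-spaces is again $k_\omega$, establishing the $k_\omega$-assertion for $(G, \tau_{KP})$. For the group topology axioms I would invoke the fact recalled before Corollary~\ref{TTBKOmega} that in the $k_\omega$-category direct limits commute with finite products; consequently $(G \times G, \tau_{KP} \times \tau_{KP}) = \lim_\to (TG_{\overline{\alpha}} \times TG_{\overline{\beta}})$, and on each such piece the multiplication factors the continuous multiplication on $T \times G_{\alpha_1} \times \cdots \times G_{\beta_l}$ through the defining quotient map with values in $TG_{\overline{\alpha}\overline{\beta}}$, hence is continuous by the universal property of quotient topologies. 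Inversion is handled identically, using $(TG_{(\alpha_1, \dots, \alpha_k)})^{-1} = TG_{(\alpha_k, \dots, \alpha_1)}$ together with continuity of inversion in $T$ and in each $G_{\alpha_i}$.

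Finally, independence of the choice of $\Pi$ reduces to the fact that any two systems of simple roots in $\Phi^{re}$ are $W$-conjugate: given $\Pi' = w\Pi$ and a lift $\tilde w \in N_G(T)$, conjugation by $\tilde w$ induces a bijection between the two directed systems of pieces that is continuous on each piece, because it acts algebraically on each rank one subgroup (hence continuously in the $\mathcal O_\F$-topology) and by a Lie-group automorphism on $T$. By functoriality of direct limits this yields a homeomorphism between the two a priori different versions of $(G, \tau_{KP})$, showing that the Kac--Peterson topology is indeed intrinsic to $G$.
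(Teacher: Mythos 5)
Your proposal reproduces the paper's overall strategy — build the pieces $(TG_{\overline{\alpha}},\tau_{\overline{\alpha}})$, prove they are Hausdorff via the orbit maps of Proposition~\ref{prop:ad-finite-dimensional-submodule}, deploy the $k_\omega$-calculus from \cite{final-group-topologies}, and obtain independence of $\Pi$ from $W$-conjugacy — but there is a genuine gap in your treatment of the Hausdorff axiom when $Z(G)\neq\{1\}$.

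You correctly note that the combined orbit map $TG_{\overline\alpha}\to\prod_v V^v_{\overline\alpha}$ has fibres which are exactly the cosets of $Z(G)$ (since $\ker(\mathrm{Ad})=Z(G)$). But then you propose to repair the failure of injectivity by \emph{combining with a continuous projection $TG_{\overline\alpha}\to T/Z(G)$}. This does not work for two independent reasons: first, there is no such map — $TG_{\overline\alpha}$ is the \emph{image} of the product $T\times G_{\alpha_1}\times\cdots\times G_{\alpha_k}$ under multiplication, and a general element has no well-defined ``$T$-part''; second, even if such a projection existed, it would be constant on $Z(G)$-cosets and hence could not separate exactly the points that $\mathrm{Ad}$ fails to separate. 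The paper handles this by a different device: prove the proposition first in the centre-free case (where $\mathrm{Ad}$ already separates points), and then reduce the general case to it by embedding a simply connected Kac--Moody group as a subgroup of an adjoint (hence centre-free) Kac--Moody group of strictly larger rank, generalizing $\mathrm{SL}_n(\F)\hookrightarrow\mathrm{PGL}_{n+1}(\F)$. Hausdorffness of the pieces then descends along this continuous injection. Without that embedding trick (or some substitute argument), your proof does not cover, say, simply connected groups with $-1\in Z(G)$.

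There is also a smaller logical issue of ordering. Proposition~\ref{prop:komega-properties} guarantees that \emph{Hausdorff} quotients and countable ascending unions of $k_\omega$-spaces (inside a Hausdorff space) are $k_\omega$; you invoke the $k_\omega$-property for the direct limit $(G,\KP)$ before knowing that $(G,\KP)$ is Hausdorff. The paper's proof first records that $(G,\KP)$ is T1 as a direct limit of Hausdorff spaces, then uses the $k_\omega$-property of the \emph{pieces} to verify continuity of multiplication and inversion, concludes $(G,\KP)$ is a topological group (hence Hausdorff, being T1), and only then that it is $k_\omega$. You should reorder your argument accordingly.
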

\begin{proof} To simplify the argument, we first assume that $G$ has trivial centre. In a second step we will then remove this assumption by an easy embedding argument. 

Thus let $G$ be centre-free and fix a $k$-tuple $\overline{\alpha} = ( \alpha_1, \alpha_2, ..., \alpha_k)$ of simple roots. Our first claim is that $(TG_{\overline{\alpha}}, \tau_{\alpha})$ is Hausdorff. For this, let $g\neq h\in G_{\overline{\alpha}}$. Since $G$ is centre-free there then exists $v \in \mathcal{U}_\F$ such that $g(v) \neq h(v)$. By Proposition~\ref{prop:ad-finite-dimensional-submodule}(iii) there exists a finite-dimensional subspace $V^v_{\overline{\alpha}}$ of $\mathcal{U}_\F$ that yields a continuous orbit map $f : (TG_{\overline{\alpha}},\tau_{\overline{\alpha}})\to (V^v_{\overline{\alpha}},\mathcal{O}_\F) : x \mapsto x(v)$.
Taking preimages under $f$ of suitable open neighbourhoods of $g(v)$, resp.\ $h(v)$ 
provides disjoint open neighbourhoods of $g$ and $h$. Hence $G_{\overline{\alpha}}$ is Hausdorff. It then follows from \cite[Proposition~4.2(d)]{final-group-topologies} that the spaces $(TG_{\overline{\alpha}}, \tau_{\alpha})$ are $k_\omega$. Moreover, the space $(G, \KP)$ is T1 as a direct limit of Hausdorff spaces.

Since the multiplication map
$TG_{(\alpha_1, \dots, \alpha_k)} \times TG_{(\beta_1, \dots, {\beta_m})}
\ra TG_{(\alpha_1, \dots, \beta_m)}$ is continuous and these pieces are $k_\omega$, it follows from 
\cite[Propositions~4.2, 4.7]{final-group-topologies} that multiplication on $G$ is continuous with respect to $\KP$. A similar argument shows that also inversion is continuous, whence $(G, \KP)$ is a topological group. As a T1 topological group it is in fact Hausdorff, hence $k_\omega$ as a Hausdorff direct limit of $k_\omega$-spaces.

The indepence of the topology $\KP$ of the choice of $\Pi$ now follows from the fact that up to conjugation there is a unique choice (\cite[Theorem~10.4.2]{remy}) and the fact that conjugation in a topological group is a homeomorphism. This finishes the proof in the case where $G$ is centre-free.

For the general case, we observe that the above proof goes through provided we are able to show that the pieces $(TG_{\overline{\alpha}}, \tau_{\alpha})$ are Hausdorff. For this, in turn, it suffices to embed them continuously into a Hausdorff topological space. We will use an embedding into a centre-free Kac--Moody group of larger rank, which generalizes the embedding of ${\rm SL}_n(\F)$ into ${\rm PGL}_{n+1}(\F)$.

Given a simply connected Kac--Moody group $G_\mathcal{D}(\mathbb{F})$ with simple roots $\Pi$, then, for any non-empty subset $\Sigma \subseteq \Pi$, the group $\langle U_\alpha, U_{-\alpha} \mid \alpha \in \Sigma \rangle$ again is a simply connected Kac--Moody group. If $\Pi$ is irreducible and $\Sigma$ is a proper subset of $\Pi$, then the centre of $\langle U_\alpha, U_{-\alpha} \mid \alpha \in \Sigma \rangle$ intersects the centre of $G_\mathcal{D}(\mathbb{F})$ trivially. Therefore, by \cite[Proposition~9.6.2]{remy}, the corresponding centre-free adjoint group $G_{\mathcal{D}^\mathrm{ad}}(\mathbb{F})$ contains the simply connected Kac--Moody group $\langle U_\alpha, U_{-\alpha} \mid \alpha \in \Sigma \rangle$ as a subgroup. This argument shows that each simply connected Kac--Moody group can be embedded into an adjoint Kac--Moody group whose type is obtained by adding one further vertex to each connected component of the diagram.
\end{proof}

The following result is now an immediate consequence of Proposition
\ref{prop:G-I-in-G-n}:
\begin{cor} \label{cor:equivalent-description}
Let $\mathbb{F}$ be a local field, and let $G$ be a split Kac--Moody group over $\mathbb{F}$. Then, with notation as in Proposition
\ref{prop:G-I-in-G-n}, 
$(G, \KP)$ is the direct limit of each one of the following three directed systems:
$$\bigcup_{k \in \mathbb{N}} (G_k^+,\KP|_{G_k^+}), \quad 
\bigcup_{k \in \mathbb{N}} (G_k^-,\KP|_{G_k^-}), \quad 
\bigcup_{k \in \mathbb{N}} (G_k^+ \cap G_k^-,\KP|_{G_k^+ \cap G_k^-}).$$
\end{cor}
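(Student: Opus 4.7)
\emph{Plan.} I will prove the three identifications of $\KP$ with direct limit topologies simultaneously: the argument for $\{G_k^+\}_{k \in \mathbb{N}}$, $\{G_k^-\}_{k \in \mathbb{N}}$, and $\{G_k^+ \cap G_k^-\}_{k \in \mathbb{N}}$ is formally identical, so I will phrase it for $\{G_k^+\}$. First I will check that each of the three sequences has union $G$: for $\{G_k^\pm\}$ this is the Bruhat decomposition, and for $\{G_k^+ \cap G_k^-\}$ it follows from the identity $G = \bigcup_{\overline\alpha} TG_{\overline\alpha}$ combined with the inclusion $TG_{\overline\alpha} \subseteq G_{|\overline\alpha|}^+ \cap G_{|\overline\alpha|}^-$ provided by Proposition~\ref{prop:G-I-in-G-n}.

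Let $\tau$ denote the direct limit topology on $G$ associated with one of the three sequences, where each piece $G_k^\pm$ carries the subspace topology from $\KP$. The inclusion $\KP \subseteq \tau$ is tautological: $\tau$ is the finest topology making every inclusion $G_k^+ \hookrightarrow G$ continuous when $G_k^+$ is equipped with $\KP|_{G_k^+}$, and $\KP$ itself has this property. For the reverse inclusion $\tau \subseteq \KP$, the main tool is the $k_\omega$-property of $(G, \KP)$ from Proposition~\ref{KPBasic}. Since $\Pi$ is finite I can enumerate the finite tuples over $\Pi$ and concatenate them to obtain an ascending sequence $\beta_1 \leq \beta_2 \leq \cdots$ with the property that every $\overline\alpha$ appears as a subtuple of some $\beta_k$; the resulting family $\{TG_{\beta_k}\}$ is cofinal in $\{TG_{\overline\alpha}\}$, and interleaving the compact $k_\omega$-sequences of the individual $TG_{\beta_k}$ furnishes a compact $k_\omega$-sequence for $(G, \KP)$. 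The absorption property of $k_\omega$ spaces then ensures that every compact $C \subseteq (G, \KP)$ is contained in some $TG_{\beta_k}$, and hence by Proposition~\ref{prop:G-I-in-G-n} in some $G_n^+$ (respectively some $G_n^-$, or some $G_n^+ \cap G_n^-$). Given a $\tau$-open $U$ and a compact $C \subseteq (G, \KP)$ lying in such a $G_n^+$, I can then write $U \cap C = (U \cap G_n^+) \cap C$; since $U \cap G_n^+$ is open in $G_n^+$ by assumption and the subspace topology on $C$ from $G_n^+$ agrees with that from $(G, \KP)$, this intersection is open in $C$. The $k_\omega$-characterization of openness on $(G, \KP)$ then forces $U$ to be $\KP$-open, yielding $\tau = \KP$.

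I anticipate no serious obstacle: the nontrivial work has already been done in Proposition~\ref{prop:G-I-in-G-n} (cofinality of $\{TG_{\overline\alpha}\}$ inside $\{G_k^\pm\}$) and in Proposition~\ref{KPBasic} ($k_\omega$-property of $\KP$), and the remainder is a routine cofinality argument in the category of $k_\omega$ spaces, drawing only on the standard facts collected in Proposition~\ref{prop:komega-properties}. The one small subtlety deserving care is that the original directed system $\{TG_{\overline\alpha}\}$ is not a priori countable, so one must extract a countable cofinal subsequence before invoking absorption; this however is immediate from the finiteness of $\Pi$ via the concatenation trick described above.
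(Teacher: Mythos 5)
Your argument is correct, but it takes a substantially more elaborate route than the paper needs, and it is worth pointing out why. The Kac--Peterson topology $\KP$ is by definition (Definition~\ref{kpdef}) the final topology with respect to the maps $(TG_{\overline{\alpha}}, \tau_{\overline{\alpha}}) \hookrightarrow G$. Given this, the inclusion $\tau \subseteq \KP$ follows immediately from the universal property of final topologies: for each $\overline{\alpha}$ of length $k$, Proposition~\ref{prop:G-I-in-G-n} gives $TG_{\overline{\alpha}} \subseteq G_k^+$, and the inclusion $(TG_{\overline{\alpha}}, \tau_{\overline{\alpha}}) \to G$ factors as a composite of continuous maps $(TG_{\overline{\alpha}}, \tau_{\overline{\alpha}}) \to (G_k^+, \KP|_{G_k^+}) \to (G, \tau)$, the first arrow being the corestriction of the continuous inclusion into $(G,\KP)$ and the second being continuous by definition of $\tau$. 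Since $\KP$ is the finest topology on $G$ making all the $(TG_{\overline{\alpha}}, \tau_{\overline{\alpha}}) \to G$ continuous, this yields $\tau \subseteq \KP$. No compactness or $k_\omega$-theory is needed for this direction. Your $k_\omega$-approach is also valid --- you correctly use that subspace topologies on a compact set from $G_n^+$ and from $G$ agree, and that a countable cofinal chain exists because $\Pi$ is finite --- but it brings in machinery the paper doesn't use here, and it requires some care that you slightly gloss over: you must justify that a $k_\omega$-sequence for $(G, \KP)$ can be chosen with each term contained in one of the pieces $TG_{\beta_k}$, which is a nontrivial diagonalization (it follows from \cite[Proposition~4.2]{final-group-topologies} but is not automatic). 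Proposition~\ref{KPBasic} in the paper only asserts that $(G,\KP)$ is $k_\omega$, not that a particular well-adapted $k_\omega$-sequence exists, so invoking it as you do leaves a small gap that the simpler universal-property argument avoids altogether.
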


\begin{rem}\label{KacPetersonOriginal}
\label{def:rg-top}\label{cor:kp-top=rg-top}\label{prop:rg-final-topology}\label{
lemma:first-inclusion}\label{prop:kp-rg-coincide}
If $\mathcal D$ is a {\em centred} Kac--Moody root datum and $G = \mathcal G_{\mathcal D}(\F)$, then the torus can be recovered from the root groups. Therefore, the Kac--Peterson topology coincides with the final topology with respect to the directed system $(G_{\overline \alpha}, \tau_{\overline \alpha}|_{G_{\overline \alpha}})_{\overline \alpha}$. In this case, our definition of the Kac--Peterson topology is equivalent to the one given in  
\ref{kpdef} is equivalent to the one given in 
\cite[Section 4G]{kac-peterson-regular-functions} using parametrizations of 
the root groups as follows. 

Let $\mathbb{F}$ be a local field and $G= \mathcal G_{\mathcal D}(\F)$ a centred Kac--Moody group. For each simple root $\alpha$, choose a parametrization 
$x_{\pm\alpha}\colon \F \to U_{\pm\alpha}$ of the root groups. For each finite
sequence of positive or negative simple 
roots $\overline{\beta} = (\beta_1, \ldots, \beta_k)$ denote by 
$$x_{\overline{\beta}} : \F^k \to G : (t_1, \ldots, t_k) \mapsto 
x_{\beta_1}(t_1) \cdots x_{\beta_k}(t_k)$$ the composition of the chosen 
parametrizations with the product map of $G$, and let
$U_{\overline{\beta}}$ denote the image of $x_{\overline{\beta}}$.
As, by the Gauss algorithm/Bruhat decomposition, for each simple root $\alpha$ one has $G_\alpha =
U_\alpha U_{-\alpha} U_\alpha U_{-\alpha}$, the final 
topology on $G$ with respect to the maps 
$x_{\overline{\beta}}$ coincides with the Kac--Peterson topology.

In the {\em non-centred} case, one additionally has to prescribe the topology of the torus. To this end, one classically realizes an $n$-dimensional split $\mathbb{F}$-torus with group of characters $\Lambda \cong \mathbb{Z}^n$ as the affine variety $$\{ (a_1, b_1, ..., a_n, b_n ) \in \mathbb{F}^{2n} \mid \forall 1 \leq i \leq n : a_ib_i = 1 \}.$$ The parametrization in order to obtain $\Hom_{\Z-{\rm alg}}(\Z[\Lambda],\mathbb{F})$ is given by the map that sends the $a_i$ to the free abelian generators of $\Lambda$ and the $b_i$ to their inverses.    
\end{rem}

An important tool in the study of the Kac--Peterson topology, which goes back to the original work of Kac and Peterson (see \cite[\S2]{kac-peterson-regular-functions}), are weakly regular functions in the sense of the following definition:

\begin{defin} A function $f : G \to \mathbb{F}$ is called \emph{weakly regular}, if $f \circ x_{\overline{\beta}} : \mathbb{F}^k \to \mathbb{F}$ is a polynomial function for all $\overline{\beta} \in (\Pi\cup-\Pi)^k$ and all $k \in \mathbb{N}$ and $f|_T$ is a regular function in the usual sense.
\end{defin}
Note that for centred $G$ the last condition is automatic; thus we recover the original definition from \cite[\S2]{kac-peterson-regular-functions} in this case. The link between weakly regular functions and the Kac--Peterson topology is provided by the following lemma:

\begin{lem}\label{weaklyregular} Every weakly regular function is continuous with respect to the Kac--Peterson topology.
\end{lem}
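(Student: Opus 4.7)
The plan is to reduce the claim to the universal characterization of $\KP$ as a final topology provided by Remark~\ref{KacPetersonOriginal}. Recall from that remark that $\KP$ coincides with the final topology on $G$ with respect to the family of parametrizations
\[x_{\overline{\beta}}\colon \mathbb{F}^k \to G, \qquad (s_1,\ldots,s_k) \mapsto x_{\beta_1}(s_1)\cdots x_{\beta_k}(s_k),\]
where $\overline{\beta}=(\beta_1,\ldots,\beta_k)\in (\Pi\cup-\Pi)^k$ runs over all finite tuples of positive or negative simple roots and $k$ runs over $\mathbb{N}$; in the non-centred case one furthermore includes the classical torus parametrization
\[\{(a_1,b_1,\ldots,a_n,b_n)\in\mathbb{F}^{2n} \mid a_ib_i=1\text{ for all }i\}\to T \hookrightarrow G.\]
By the universal property of the final topology, a function $f\colon G \to \mathbb{F}$ is $\KP$-continuous precisely when $f \circ x_{\overline{\beta}}$ is continuous for every $\overline{\beta}$ and, in the non-centred case, when $f|_T$ pulled back along the torus parametrization is continuous as well.

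Both requirements are immediate consequences of the definition of weak regularity: by assumption $f \circ x_{\overline{\beta}}\colon\mathbb{F}^k \to \mathbb{F}$ is a polynomial map for every $\overline{\beta}$, hence continuous over the topological field $\mathbb{F}$; and $f|_T$ is by assumption a regular function on the affine variety $T$, so its pullback along the torus parametrization is a polynomial map $\{(a_i,b_i)\mid a_ib_i=1\}\subset\mathbb{F}^{2n} \to \mathbb{F}$, again continuous. This yields $\KP$-continuity of any weakly regular $f$. Since the entire argument rests on the equivalent description of $\KP$ recalled above, there is no substantive obstacle beyond the reformulation given in Remark~\ref{KacPetersonOriginal}; if one instead worked directly from the direct-limit definition of $\KP$, one would have to verify continuity of $f \circ p_{\overline{\alpha}}\colon T \times \prod_i G_{\alpha_i} \to \mathbb{F}$ for each $\overline{\alpha}$, which can be done by pulling back further along the Gauss decomposition $G_{\alpha_i}=U_{\alpha_i}U_{-\alpha_i}U_{\alpha_i}U_{-\alpha_i}$ and using the commutation relations $tx_\gamma(s)=x_\gamma(\gamma(t)s)t$, but the argument presented here bypasses these technicalities entirely.
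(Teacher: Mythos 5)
Your proof is correct and follows essentially the same route as the paper: both reduce the claim to the reformulation of $\KP$ as a final topology with respect to the parametrizations $x_{\overline{\beta}}$ (and the torus parametrization) given in Remark~\ref{KacPetersonOriginal}, and then invoke continuity of polynomial maps over the topological field $\mathbb{F}$. The only difference is that you spell out the torus contribution in the non-centred case, which the paper's proof leaves implicit, and you sketch the alternative check directly against the direct-limit definition; these are welcome clarifications but not a different method.
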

\begin{proof} Let $A \subset \mathbb F$ be a closed subset and $f : G \to \mathbb{F}$ be a weakly regular function. Then for each $\overline{\beta} = (\beta_1,...,\beta_k) \in (\Pi \cup -\Pi)^k$ the preimage $(f \circ x_{\overline{\beta}})^{-1}(A)$ is closed with respect to the Hausdorff topology on $\mathbb{F}^k$, because polynomial functions are continuous. As this set equals the preimage under $x_{\overline{\beta}}$ of $f^{-1}(A)$, its image in $U_{\beta_1}U_{\beta_2}\cdots U_{\beta_k}$ under $x_{\overline{\beta}}$ is closed. This shows that $f^{-1}(A)$ is closed, as $\KP$ equals the direct limit topology. 
\end{proof}


\subsection{Topology of spherical subgroups}

We retain the notation of the preceding section. In particular, $\F$ denotes a local
field and $G$ a split Kac--Moody group over
$\F$ endowed with the
Kac--Peterson topology $\KP$. We also denote by $\Delta_{\pm}$ the two halves of the associated twin building. Before we can continue our study of this topology we need to
identify various closed subgroups. 

We start with the following observation:
\begin{prop}\label{prop:B-closed} The subgroups $B_{\pm}<G$ are closed with respect to $\KP$.
\end{prop}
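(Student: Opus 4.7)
The plan is to realise $B_{\pm}$ as the $\Ad$-stabilisers of the Borel subalgebras $\mathfrak{b}_{\pm} \subseteq \mathfrak{g} \subseteq \calU_\F$ and to verify closedness piecewise using Proposition~\ref{prop:ad-finite-dimensional-submodule} together with the direct limit description of $\KP$.

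First I would establish that $B_+ = \{g \in G \mid \Ad(g)\mathfrak{b}_+ = \mathfrak{b}_+\}$, and analogously for $B_-$. The inclusion $\subseteq$ follows since $T$ preserves weight spaces and each positive real root subgroup acts on $\mathfrak{b}_+$ by a locally nilpotent map that preserves $\mathfrak{b}_+$; the reverse inclusion uses the self-normalising property of $\mathfrak{b}_+$ in $\mathfrak{g}$, together with $\ker\Ad = Z(G) \subseteq T \subseteq B_+$ (Remark~\ref{adjointform}) and the fact that the identity is the only Weyl group element preserving the set of positive real roots. Next I would reduce to a one-sided stabiliser condition: since $\Ad(g)\mathfrak{b}_+ = \mathfrak{b}_+$ is equivalent to the conjunction of $\Ad(g)\mathfrak{b}_+ \subseteq \mathfrak{b}_+$ and $\Ad(g^{-1})\mathfrak{b}_+ \subseteq \mathfrak{b}_+$, and inversion is a $\KP$-homeomorphism by Proposition~\ref{KPBasic}, it suffices to prove $\KP$-closedness of
\[S \ := \ \{g \in G \mid \Ad(g)(\mathfrak{b}_+) \subseteq \mathfrak{b}_+\} \ = \ \bigcap_{v \in \mathfrak{b}_+} S_v, \qquad S_v \ := \ \{g \in G \mid \Ad(g)(v) \in \mathfrak{b}_+\}.\]

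For each fixed $v \in \mathfrak{b}_+$ I would then show that $S_v$ is $\KP$-closed. By Definition~\ref{kpdef} this amounts to verifying that $S_v \cap TG_{\overline\alpha}$ is $\tau_{\overline\alpha}$-closed for every tuple $\overline\alpha$ of simple roots. The key input is Proposition~\ref{prop:ad-finite-dimensional-submodule}~(iii): the orbit map
\[\phi_v \colon (TG_{\overline\alpha}, \tau_{\overline\alpha}) \longrightarrow (V^v_{\overline\alpha}, \calO_\F), \quad g \longmapsto \Ad(g)(v),\]
is continuous into the finite-dimensional $\F$-subspace $V^v_{\overline\alpha} \subseteq \calU_\F$. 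The intersection $V^v_{\overline\alpha} \cap \mathfrak{b}_+$ is an $\F$-linear subspace of the finite-dimensional space $V^v_{\overline\alpha}$, hence cut out by finitely many linear (and therefore $\calO_\F$-continuous) functionals, and consequently closed. Therefore $S_v \cap TG_{\overline\alpha} = \phi_v^{-1}(V^v_{\overline\alpha} \cap \mathfrak{b}_+)$ is $\tau_{\overline\alpha}$-closed, proving closedness of $S_v$ and thereby of $B_+$. The argument for $B_-$ is entirely symmetric, replacing $\mathfrak{b}_+$ with $\mathfrak{b}_-$ throughout.

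The hard part will be the precise identification $B_+ = \mathrm{Stab}_G(\mathfrak{b}_+)$ over an arbitrary local field $\F$, which requires appeals to structural results on Kac--Moody Lie algebras over fields of arbitrary characteristic (cf.~\cite{remy}, \cite{kumar}); in particular, the self-normalisation of $\mathfrak{b}_+$ must be handled carefully when $\F$ has positive characteristic. A less conceptual alternative that circumvents this identification would be to exhibit $B_+$ directly as the common vanishing locus of a sufficiently rich family of weakly regular matrix coefficients on $G$ and to appeal to Lemma~\ref{weaklyregular}.
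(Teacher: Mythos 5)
Your primary argument is conceptually close to the paper's, but not identical, and the one point you flag as the hard part is precisely where the paper makes a different choice that avoids the difficulty. The paper does not stabilize the Borel subalgebra $\mathfrak b_+$; instead it stabilizes $\mathcal U_{\mathbb F}^{\geq 0}$, the non-negative part of $\mathcal U_{\mathbb F}$ with respect to the $Q$-grading introduced in the section on the adjoint representation. This is the more natural object here: the adjoint action is on $\mathcal U_{\mathbb F}$ (not on a Lie algebra), the $Q$-grading on $\mathcal U_{\mathbb F}$ is defined uniformly over any field including positive characteristic, and axiom (KMG5) together with Proposition~\ref{prop:representation-remy}(i) gives the containment $\mathrm{Ad}(B_+)\,\mathcal U_{\mathbb F}^{\geq 0} \subseteq \mathcal U_{\mathbb F}^{\geq 0}$ directly, with no appeal to self-normalization of $\mathfrak b_+$. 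Your $\mathfrak b_+$ would have to be interpreted as an $\mathbb F$-form inside $\mathcal U_{\mathbb F}$, which is delicate in positive characteristic, as you note. So the switch from $\mathfrak b_+$ to $\mathcal U_{\mathbb F}^{\geq 0}$ is not cosmetic --- it is the step that makes the argument characteristic-free.

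For the topological step, the two routes are genuinely equivalent. You unwind $\KP$-closedness into piecewise closedness of $S_v \cap TG_{\overline\alpha}$ using Proposition~\ref{prop:ad-finite-dimensional-submodule}(iii); the paper packages the same information as: the matrix coefficients $f_{v,v^*}(g) = v^*(\mathrm{Ad}(g)v)$ are weakly regular (which uses local nilpotency of $\mathrm{ad}_{e_\alpha}$, $\mathrm{ad}_{f_\alpha}$, the same input as your Proposition~\ref{prop:ad-finite-dimensional-submodule}), and $B_+$ is the common zero set of the family $\{f_{v,v^*}\}$ for $v \in \mathcal U_{\mathbb F}^{\geq 0}$, $v^* \in \mathrm{Ann}(\mathcal U_{\mathbb F}^{\geq 0})$, hence closed by Lemma~\ref{weaklyregular}. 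Your concluding "less conceptual alternative" is in fact the paper's proof verbatim, provided the functions are chosen relative to $\mathcal U_{\mathbb F}^{\geq 0}$ rather than $\mathfrak b_+$. If you keep your first argument, you should replace $\mathfrak b_+$ by $\mathcal U_{\mathbb F}^{\geq 0}$ and re-examine the reverse inclusion $\mathrm{Stab}_G(\mathcal U_{\mathbb F}^{\geq 0}) \subseteq B_+$, which the paper also uses without proof but which reduces more cleanly to the action of a non-trivial Weyl element on the $Q$-graded pieces of $\mathcal U_{\mathbb F}$.
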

\begin{proof} 
Observe that for $v \in \mathcal{U}_\mathbb{F}$ and $v^* \in (\mathcal{U}_\mathbb{F})^*$, the vector space dual of $\mathcal{U}_\mathbb{F}$, the map $f_{v,v^*} : G \to \mathbb{F} : g \mapsto v^*(g(v))$ is a weakly regular function, because $\mathrm{ad}_{e_\alpha}$ and $\mathrm{ad}_{f_\alpha}$ are locally nilpotent (cf.\ the proof of Proposition~\ref{prop:ad-finite-dimensional-submodule}).  Denote by $\mathcal{U}_\mathbb{F}^{\geq 0}$ the subspace of $\mathcal{U}_\mathbb{F}$ consisting of the non-negative vectors with respect to the $Q$-grading (cf.\ Section~\ref{adjointrep}). Then, for each $g \in G \backslash B_+$ there exists a $v \in \mathcal{U}_\mathbb{F}^{\geq 0}$ such that $g(v) \not\in \mathcal{U}_\mathbb{F}^{\geq 0}$. This means that there exists $v^* \in \mathrm{Ann}(\mathcal{U}_\mathbb{F}^{\geq 0})$, the annihilator of $\mathcal{U}_\mathbb{F}^{\geq 0}$ in $(\mathcal{U}_\mathbb{F})^*$,  such that $f_{v,v^*}(g) \neq 0$. We conclude that $B_+$ is the set of common zeros of the family of weakly regular functions $(f_{v,v^*})_{v \in \mathcal{U}_\mathbb{F}^{\geq 0}, v^* \in \mathrm{Ann}(\mathcal{U}_\mathbb{F}^{\geq 0})}$. It then follows from Lemma \ref{weaklyregular} that $B_+$ is closed with respect to $\KP$.
%
%
\end{proof}

The proposition implies in particular that the halves $\Delta_{\pm} = G/{B_{\pm}}$ are Hausdorff when equipped with the quotient topology with respect to $\KP$. We will see in the next corollary that 
this has massive consequences for the topology of \emph{spherical} subgroups --- i.e., fundamental subgroups of the form $G_{\alpha_1, \dots, \alpha_r} = \langle G_{\alpha_1}, ..., G_{\alpha_r} \rangle$ for spherical subsets $\{ \alpha_1, ..., \alpha_r\}$ of the Coxeter system $(W,S)$, and their conjugates.

Note that in particular all rank one subgroups are spherical. Any spherical subgroup carries a unique semisimple Lie group topology $\mathcal{O}_\F$ over the ground field $\mathbb F$.

\begin{cor}\label{cor-panels-cpt} Let $\mathbb{F}$ be a local field and let $G$
be a split Kac--Moody group over $\F$. Equip the halves $\Delta^{\pm}$ of the
associated twin building with the quotient topology with respect to the
Kac--Peterson topology. Then:
\begin{enumerate}
 \item $\Delta^{\pm}$ are $k_\omega$-spaces.
 \item Panels in $\Delta^{\pm}$ --- and, more generally, spherical residues --- are compact.
 \item For every real root $\alpha$ the restriction of $\KP$ to
$G_{\alpha}$ coincides with $\mathcal O_\F$.
\item If $H$ is a spherical subgroup, then the restriction of $\KP$ to $H$  coincides with its Lie group topology.
 \item Spherical subgroups are closed.
\end{enumerate}
\end{cor}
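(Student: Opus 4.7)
For (i), I would simply combine the existing machinery: by Proposition~\ref{prop:B-closed} the Borel subgroups $B_\pm$ are $\KP$-closed, so $\Delta_\pm = G/B_\pm$ is Hausdorff in the quotient topology; since $(G, \KP)$ is $k_\omega$ by Proposition~\ref{KPBasic} and Hausdorff quotients of $k_\omega$-spaces remain $k_\omega$ (Proposition~\ref{prop:komega-properties}), we deduce that $\Delta_\pm$ is $k_\omega$. For (ii), I would identify the $s$-panel based at $c_+ = B_+$ with the orbit $G_{\alpha_s} \cdot c_+ \cong G_{\alpha_s}/(G_{\alpha_s} \cap B_+) \cong \mathbb{P}^1(\F)$; since $\F$ is locally compact, $\mathbb{P}^1(\F)$ is compact, and the continuous surjection $(G_{\alpha_s}, \calO_\F) \to P_s(c_+)$ factors through this compact space, so $P_s(c_+)$ is a compact subset of the Hausdorff space $\Delta_+$. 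Compactness of all $s$-panels follows by $G$-transitivity. For a spherical $J \subseteq S$, the subgroup $G_J$ is a semisimple algebraic group over $\F$ and $R_J(c_+) \cong G_J/(G_J \cap B_+)$ is its projective flag variety, hence likewise compact.

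The core of the argument is (iii). One direction is immediate from the definition of $\KP$: the chain $(G_\alpha, \calO_\F) \hookrightarrow (TG_{(\alpha)}, \tau_{(\alpha)}) \hookrightarrow (G, \KP)$ is continuous, so $\calO_\F$ refines $\KP|_{G_\alpha}$. For the reverse direction I plan to invoke Proposition~\ref{prop:ad-finite-dimensional-submodule}: choosing a suitable $v \in \calU_\F$ produces a finite-dimensional $G_\alpha$-invariant subspace $V \subseteq \calU_\F$ on which the induced morphism $\rho : G_\alpha \to \mathrm{GL}(V)$ has kernel $K \subseteq Z(G) \cap G_\alpha$, which is finite (being contained in the centre of $\mathrm{SL}_2(\F)$, and finite in any case by Lemma~\ref{ToriIsogeneous}). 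This $\rho$ is continuous out of $(G_\alpha, \KP|_{G_\alpha})$ by part~(iii) of that proposition, and it is algebraic, so also continuous out of $(G_\alpha, \calO_\F)$. An open-mapping argument (as in the proof of part~(vii) of that proposition) applied to the locally compact $\sigma$-compact group $(G_\alpha, \calO_\F)$ shows that $\rho$ induces a homeomorphism $(G_\alpha/K, \calO_\F/K) \cong \rho(G_\alpha)$. Thus the two quotient topologies on $G_\alpha/K$ induced from $\calO_\F$ and from $\KP|_{G_\alpha}$ agree. To lift this equality back to $G_\alpha$ I would use that $K$ is finite and both ambient topologies are Hausdorff, hence the $K$-action is free and properly discontinuous in either topology; the quotient map $G_\alpha \to G_\alpha/K$ is then a covering map for each topology, and the covering topology is determined by the base together with the finite-fibre action, forcing $\KP|_{G_\alpha} = \calO_\F$.

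With (iii) in hand, (iv) follows by exactly the same argument applied to a spherical subgroup $H$: Proposition~\ref{prop:ad-finite-dimensional-submodule} produces a faithful-modulo-finite-centre algebraic representation of $H$, and the open-mapping/covering argument forces $\KP|_H$ to coincide with the standard semisimple Lie group topology on $H$. Part (v) is then immediate: $\KP|_H$ is locally compact Hausdorff, and a locally compact subgroup of a Hausdorff topological group is automatically closed by \cite[Theorem~II.5.11]{Hewitt/Ross:1963}. The main technical obstacle throughout is part~(iii): it is the covering-space step at the end, lifting the equality of topologies on $G_\alpha/K$ to an equality on $G_\alpha$, that is least automatic and requires the most care, since the direct-limit definition of $\KP$ gives little direct handle on the local structure of $(G_\alpha, \KP|_{G_\alpha})$ beyond what the adjoint representation provides.
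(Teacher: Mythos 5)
Parts (i), (ii), and (v) of your argument follow essentially the same line as the paper's proof. For (iii) and (iv) you take a genuinely different and considerably heavier route. The paper argues by a one-line sandwich: denoting by $\tau_{co}$ the compact-open topology on $G_\alpha$ with respect to the action on the compact panel $P_\alpha$ obtained in (ii), the identity maps $(G_\alpha, \mathcal O_\F) \to (G_\alpha, \KP|_{G_\alpha}) \to (G_\alpha, \tau_{co})$ are all continuous, and the coincidence $\mathcal O_\F = \tau_{co}$ then forces all three topologies to agree; part (iv) follows by replacing the panel with the compact spherical residue. Your route through the adjoint representation, the open mapping theorem, and a covering-space argument also proves (iii), but at two costs. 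First, lifting the coincidence of topologies on $G_\alpha/K$ back to $G_\alpha$ is subtler than ``the covering topology is determined by the base together with the finite-fibre action'' suggests: a continuous bijection between two covering spaces over the same base is not automatically a homeomorphism, and one has to argue directly that the two sheet-decompositions agree locally, using that both are open for the finer topology $\mathcal O_\F$ --- this does succeed, but it should be spelled out, especially over a non-archimedean field where one cannot use connectedness to identify sheets. Second, and more seriously, for (iv) you invoke Proposition~\ref{prop:ad-finite-dimensional-submodule} to produce a faithful-modulo-finite-centre algebraic representation of an arbitrary spherical fundamental subgroup $H$, but that proposition only produces a $TG_{\alpha_1}$-invariant subspace and only controls the kernel of the induced representation for the rank-one group $G_{\alpha_1}$; it does not, as stated, deliver the analogous finite-dimensional representation of a higher-rank spherical $H$, so this step would require a further (if routine) extension of that proposition before (iv) is secure. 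The paper's compact-open sandwich sidesteps both issues and obtains (iii) and (iv) from (ii) with essentially no additional structure theory.
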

\begin{proof}
\begin{enumerate}
\item This is immediate by Proposition~\ref{KPBasic}, Proposition~\ref{prop:B-closed} and
\cite[Proposition~4.2(d)]{final-group-topologies}.
\item  The (continuous) action of the group
$(G_{\alpha}, \mathcal O_\F)$ on the twin building preserves the panel
$P_{\alpha}$. Denote by $B_{\alpha}< G_{\alpha}$ the point stabilizer of a
basepoint in $P_{\alpha}$, so that we obtain a continuous bijection between
$G_\alpha/B_\alpha$ and $P_{\alpha}$. The former is compact (see Remark
\ref{ttb4holds}) and the latter is Hausdorff by (i), whence the latter is
compact as it is a Hausdorff quotient of a compact space. The same argument works for spherical residues.
\item Denote by $\tau_{co}$ the compact-open topology on $G_{\alpha}$ with
respect to the action on $P_\alpha$. Then we have continuous maps
\[(G_\alpha, \mathcal O_\F) \to (G_\alpha, \KP) \to (G_\alpha, \tau_{co}).\]
However, we have $\mathcal O_\F = \tau_{co}$, so in fact all three topologies
coincide. 
\item Since the Lie group topology is precisely the compact-open topology with respect to the action on the corresponding spherical residue, the same argument as in (iii) applies.
\item By (iii) and (iv) the subgroups in question are locally compact, hence must be
closed by \cite[Theorem~II.5.11]{Hewitt/Ross:1963}. \qedhere
\end{enumerate}
\end{proof}
From this in turn we deduce:
\begin{cor}\label{RootGroupsLcp}
\begin{enumerate}
\item For every root $\alpha$ the root group $U_\alpha$ is closed with respect to $\KP$.
\item For every root $\alpha$ the canonical map $\mathbb F \to (U_\alpha, \KP|_{U_\alpha})$ is a homeomorphism.
\item $T$ is closed with respect to $\KP$.
\end{enumerate}
\end{cor}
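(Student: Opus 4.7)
The plan is to derive all three assertions directly from the structural results on spherical subgroups established in Corollary~\ref{cor-panels-cpt} together with Proposition~\ref{prop:B-closed}. The key observation is that the machinery needed has essentially already been built, so the proof is short.

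For (i), I would first note that for a real root $\alpha$ the rank one subgroup $G_{\alpha}=\langle U_{\alpha},U_{-\alpha}\rangle$ is a spherical subgroup of $G$. By Corollary~\ref{cor-panels-cpt}(v) it is closed in $(G,\KP)$, and by Corollary~\ref{cor-panels-cpt}(iv) the subspace topology $\KP|_{G_{\alpha}}$ agrees with the Lie group topology of $G_{\alpha}\cong(\mathrm{P})\mathrm{SL}_2(\F)$. Since inside $(\mathrm{P})\mathrm{SL}_2(\F)$ the group $U_{\alpha}$ is the closed unipotent subgroup of strictly upper-triangular matrices, it follows that $U_{\alpha}$ is closed in $G_{\alpha}$ and hence, by transitivity of closedness, closed in $(G,\KP)$.

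For (ii), the canonical parametrization $\F\to U_{\alpha}$ is an isomorphism of algebraic groups, and therefore a homeomorphism onto $U_{\alpha}$ equipped with the Lie group topology inherited from $G_{\alpha}\cong(\mathrm{P})\mathrm{SL}_2(\F)$. By the identification of topologies in the previous paragraph, this Lie group topology coincides with $\KP|_{U_{\alpha}}$, so the parametrization is a homeomorphism with respect to $\KP|_{U_{\alpha}}$ as well.

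For (iii), I would argue $T=B_+\cap B_-$ and invoke Proposition~\ref{prop:B-closed}. The inclusion $T\subseteq B_+\cap B_-$ is immediate from $B_{\pm}=TU_{\pm}$ (Proposition~\ref{prop:rgd-twin-tits}). Conversely, if $g\in B_+\cap B_-$, write $g=tu_-$ with $t\in T$, $u_-\in U_-$ using $B_-=TU_-$; then $t^{-1}g=u_-\in U_-\cap B_+$. Writing an arbitrary element of $U_-\cap B_+$ as $u_-=u_+'t'$ with $u_+'\in U_+$, $t'\in T$ yields $u_+'\cdot t'\cdot u_-^{-1}=1$, and the injectivity of the multiplication map $U_+\times T\times U_-\to B_+B_-$ recalled in Remark~\ref{rem:rho-continuous} forces $u_+'=t'=u_-=1$. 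Hence $g=t\in T$, so $T=B_+\cap B_-$ is closed as an intersection of closed subgroups.

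No step looks like a serious obstacle: the hard work lies entirely in Proposition~\ref{prop:B-closed} (where weakly regular functions and the adjoint representation are exploited) and in Corollary~\ref{cor-panels-cpt} (where the Lie group topology is identified on spherical subgroups via the compact-open topology). The present statement is essentially an unpacking of those two results.
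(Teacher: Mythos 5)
Your arguments for (i) and (ii) are exactly the paper's: $U_\alpha < G_\alpha$, and Corollary~\ref{cor-panels-cpt} already tells you $\KP|_{G_\alpha}$ is the Lie group topology, from which both closedness of $U_\alpha$ and the homeomorphism $\F \to U_\alpha$ fall out immediately. For (iii), however, you take a genuinely different route. The paper deduces closedness of $T$ from (i) together with the identity $T = \bigcap_{\alpha \in \Phi^{re}} N_G(U_\alpha)$ (Proposition~\ref{prop:kmg-has-rgd}): once each $U_\alpha$ is closed, so is each normalizer $N_G(U_\alpha)$ in a Hausdorff topological group, hence so is their intersection. You instead identify $T = B_+ \cap B_-$ via the injectivity of the Birkhoff product map $U_+ \times T \times U_- \to B_+B_-$, and then invoke Proposition~\ref{prop:B-closed}. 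Both are correct; yours leans on the (already-proved) closedness of the Borel subgroups rather than on part (i) of the statement itself, so it trades a slightly longer group-theoretic computation for independence from the root-group argument. The paper's version is a little tighter in that the intersection-of-normalizers identity is quoted directly and no Birkhoff-uniqueness manipulation is needed, but there is no gap in what you wrote.
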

\begin{proof} (i) and (ii) are immediate since $U_\alpha < G_\alpha$ and $\KP|_{G_\alpha}$ is the Lie group topology. (iii) then follows from (i) and $T = \bigcap_{\alpha \in \Phi^{re}} N_G(U_\alpha)$, cf.\ Proposition~\ref{prop:kmg-has-rgd}.
\end{proof}
Getting the analogous statement of (ii) for the torus is slightly more involved:
\begin{prop}\label{toptorus} Let $G$ be a centre-free Kac--Moody group, i.e., a subgroup of an adjoint Kac--Moody group, or a central quotient of a simply connected Kac--Moody group. Then the map $(T, \tau_\F) \to (T, \KP|_T)$ is a homeomorphism. In particular, if $G$ is centred, then there exists a finite group $F$ such that $(T,  \KP|_T) \cong (\F^\times)^n/F$.
\end{prop}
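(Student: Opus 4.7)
The plan is to establish continuity of the identity map between $(T,\tau_\F)$ and $(T,\KP|_T)$ in both directions. One direction is immediate: choosing the empty tuple $\overline{\alpha}=()$ in Definition~\ref{topologyonG} yields $TG_{()}=T$ with $\tau_{()}=\tau_\F$, so by Definition~\ref{kpdef} the inclusion $(T,\tau_\F)\hookrightarrow(G,\KP)$ is continuous, and hence so is $(T,\tau_\F)\to(T,\KP|_T)$.

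For the opposite direction I would exploit matrix coefficients of the adjoint representation. Proposition~\ref{prop:representation-remy} tells us that $T$ acts on each weight space $(\calU_\F)_\lambda$ of the $Q$-grading (where $Q=\sum_{i\in I}\Z\alpha_i$ is the root lattice) through a character $\chi_\lambda\colon T\to\F^\times$. For any weight vector $v\in(\calU_\F)_\lambda$ and any functional $v^*\in(\calU_\F)^*$ with $v^*(v)\ne 0$, the matrix coefficient $f_{v,v^*}\colon g\mapsto v^*(\Ad(g)v)$ is weakly regular on $G$, by the same reasoning used in the proof of Proposition~\ref{prop:B-closed} (the local nilpotency of $\ad_{e_i}$ and $\ad_{f_i}$ forces polynomial dependence along each root-group parametrization). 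By Lemma~\ref{weaklyregular}, $f_{v,v^*}$ is $\KP$-continuous, and its restriction to $T$ equals $v^*(v)\cdot\chi_\lambda$, so every character $\chi_\lambda$ with $\lambda\in Q$ is $\KP|_T$-continuous.

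Under the centre-free hypothesis, $\ker(\Ad|_T)\subseteq\ker\Ad=Z(G)=\{1\}$, so $\Ad|_T$ is faithful. Since $\Ad|_T$ operates on the weight space decomposition through the characters $\{\chi_\lambda:\lambda\in Q\}$, this family separates points of $T$. As $T$ is an algebraic torus of finite rank, Noetherianity of the algebraic structure implies that finitely many $\lambda_1,\dots,\lambda_N\in Q$ suffice to make $\chi:=(\chi_{\lambda_1},\dots,\chi_{\lambda_N})\colon T\to(\F^\times)^N$ injective. Being an injective morphism of algebraic tori over the local field $\F$, $\chi$ is a closed topological embedding with respect to the analytic topology (combining an open-mapping argument as in \cite[Lemma~2.1]{final-group-topologies} with \cite[Theorem~II.5.11]{Hewitt/Ross:1963}). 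Since $\chi$ is also $\KP|_T$-continuous by the preceding step, composing with the continuous inverse $\chi(T)\to(T,\tau_\F)$ yields continuity of the identity $(T,\KP|_T)\to(T,\tau_\F)$, thereby establishing $\KP|_T=\tau_\F$.

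The "in particular" for centred $G$ then follows by noting that $T$ is a continuous surjective image of the simply connected torus $(\F^\times)^n$ of the cover $G^{\mathrm{sc}}\to G$ (induced by the rank-one embeddings $h_i\colon\F^\times\hookrightarrow G_{\alpha_i}$, which are continuous by Corollary~\ref{cor-panels-cpt}(iii)) with kernel $F$ contained in the finite centre $Z(G^{\mathrm{sc}})$ (Lemma~\ref{ToriIsogeneous}); applying the first part of the proposition to the centre-free quotient $G/Z(G)$ identifies $\KP|_T$ with the quotient topology, yielding $(T,\KP|_T)\cong(\F^\times)^n/F$. The main technical obstacle is verifying that characters from the root lattice $Q$ genuinely separate points of $T$ beyond the adjoint case (where $\{c_i\}$ is literally a $\Z$-basis of $\Lambda$ and the argument is almost tautological), and promoting the algebraic injectivity of the character map to a genuine topological embedding for the analytic topology on tori over $\F$.
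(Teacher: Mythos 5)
Your proof is correct, but it takes a genuinely different route from the paper's. Both arguments note that continuity of $(T,\tau_\F)\to(T,\KP|_T)$ is immediate from the direct-limit description of $\KP$, and both conclude via the standard fact that an injective algebraic homomorphism of a split torus into $\mathrm{GL}_N$ over a local field is a topological embedding onto a closed subgroup. Where you differ is the mechanism for continuity in the opposite direction. The paper exploits the identification $(T,\KP|_T)=\lim_\to(T,\tau_{\overline{\alpha}}|_T)$ (which uses that $T$ is $\KP$-closed together with general $k_\omega$ theory) and then, for each $\overline{\alpha}$, composes $(T,\tau_\F)\to(T,\tau_{\overline{\alpha}})\to\mathrm{GL}(V^v_{\overline{\alpha}})$ using Proposition~\ref{prop:ad-finite-dimensional-submodule}; the injectivity of this composite uses the centre-free hypothesis via the kernel statement of that proposition. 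You instead bypass the direct-limit description of $\KP|_T$ entirely and work with weakly regular matrix coefficients $f_{v,v^*}$ of the adjoint representation: via Lemma~\ref{weaklyregular} these are globally $\KP$-continuous, and restricted to $T$ they produce the characters $\chi_\lambda$, $\lambda\in Q$; centre-freeness says these separate points, and a finite-dimensional embedding into $(\F^\times)^N$ follows. The payoff of your approach is that it is somewhat more self-contained — it does not rely on the (mild, but nontrivial) fact that the subspace topology on a closed subset of a $k_\omega$ direct limit is again the direct limit of the restricted topologies. Two small remarks: (1) the Noetherianity argument is overkill — $Q$ is finitely generated by the simple roots $\alpha_1,\dots,\alpha_n$, and since $\chi_\lambda=\prod\chi_{\alpha_i}^{m_i}$ for $\lambda=\sum m_i\alpha_i$, the map $\chi=(\chi_{\alpha_1},\dots,\chi_{\alpha_n})$ already has kernel $\bigcap_{\lambda\in Q}\ker\chi_\lambda=\{1\}$; (2) in the ``in particular'' step you speak of applying the first part to $G/Z(G)$, but $G$ is already centre-free — you simply apply the first part to $G$ itself, and the identification $T\cong(\F^\times)^n/F$ is then a purely algebraic fact about the maximal torus of a centred centre-free Kac--Moody group being an isogenous quotient of $T^{\mathrm{sc}}\cong(\F^\times)^n$, which the paper also leaves implicit.
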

\begin{proof} Consider first the case that $G$ is centre-free. By definition $(T, \KP|_T) = \lim_\to (T, \tau_{\overline \alpha})$. It thus suffices to show that the continuous maps $(T, \tau_\F) \to (T, \tau_{\overline \alpha})$ are open. Now Proposition~\ref{prop:representation-remy} yields a finite-dimensional vector space $V^v_{\overline{\alpha}}$ and a homomorphism $(T, \tau_{\overline \alpha}) \to {\rm GL}(V^v_{\overline{\alpha}})$. Since $G$ is assumed adjoint, this homomorphism is actually injective. It remains to show only that the map $(T, \tau_\F) \to (T, \tau_{\overline \alpha}) \to {\rm GL}(V^v_{\overline{\alpha}})$ is a homeomorphism onto its image. 

By the explicit formulae, the map in question is algebraic, whence continuous and has closed and, consequently, locally compact image. By the open mapping theorem, this map is therefore open. This finishes the proof in the centre-free case. 

Now we consider the second case, where $G$ is assumed to be a central quotient of a simply connected Kac--Moody group. The same argument as in the first case shows that $\KP$ and $\tau_\F$ coincide on the quotient ${\rm Ad}(T)$. By Lemma~\ref{ToriIsogeneous}, this is a finite quotient, hence the proposition follows from standard topological extension theory.
\end{proof}

\subsection{Kac--Peterson topology II: Universality}

At this point we have assembled enough information about the Kac--Peterson topology to characterize it in terms of a universal property. 
\begin{defin} \label{def:kac-peterson-topology} 
\index{Kac--Peterson topology} 
Let $\F$ be a local field and let $G$ be a
split Kac--Moody group 
over $\F$. Then 
the \textbf{universal topology} $\tau$ on $G$ is defined to be 
the \index{final group topology}final group topology with respect to the 
maps $$\phi_\alpha : \mathrm{SL}_2(\F) \to G, \, \, \alpha \in \Phi^{re}, \quad \quad \eta(\mathbb{F}) : \Hom_{\Z-{\rm alg}}(\Z[\Lambda], \F) \to G,$$ where $\mathrm{SL}_2(\F)$ and $\Hom_{\Z-{\rm alg}}(\Z[\Lambda], \F) \cong (\F^\times)^{\mathrm{rk}(\Lambda)}$ are equipped with their Lie group
topologies.
\end{defin}

Note that, as before, one obtains the same universal topology if one considers simple roots $\alpha \in \Pi$ only.

\begin{lem}[{cf.\ \cite[Lemma 6.2]{final-group-topologies}}]
\label{lemma:topology-same-wrt-simple-roots}
Let $\F$ be a local field, let $G$ be a
split Kac--Moody 
group, and let $\Pi = \{\alpha_1, \ldots, \alpha_n\}$ be a basis of simple 
roots of $\Phi^{re}$. Then the universal topology on $G$ is the final
group topology with 
respect to the maps $(\phi_{\alpha_i})_{1 \leq i \leq n}$ and $\eta(\F)$.
\end{lem}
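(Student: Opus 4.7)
The plan is to denote the universal topology by $\tau$ and the topology defined by simple roots only by $\tau'$, and to show $\tau = \tau'$. Since $\tau$ is the final group topology with respect to strictly more maps than $\tau'$, every $\tau$-open set is automatically $\tau'$-open; equivalently $\tau$ is coarser than $\tau'$. To establish the reverse inclusion, by the universal property defining $\tau$ it suffices to check that each defining map $\phi_\alpha$, $\alpha \in \Phi^{re}$, is continuous with respect to $\tau'$. The continuity of $\eta(\F)$ is automatic, being part of the defining family of $\tau'$.

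Fix $\alpha \in \Phi^{re}$. Since $\Phi^{re} = W\cdot\Pi$, write $\alpha = w\alpha_i$ for some $w \in W$ and simple root $\alpha_i$, and fix a reduced decomposition $w = s_{j_1}\cdots s_{j_k}$. For each simple reflection $s_{j_\ell}$, axiom \refrgd{item:mu} provides an element $\mu_{s_{j_\ell}}(u_\ell) \in G_{\alpha_{s_{j_\ell}}} = \phi_{j_\ell}(\mathrm{SL}_2(\F))$, and by the same axiom the product $n_w := \mu_{s_{j_1}}(u_1)\cdots \mu_{s_{j_k}}(u_k) \in G$ satisfies $n_w U_\beta n_w^{-1} = U_{w\beta}$ for every real root $\beta$; in particular, conjugation by $n_w$ carries $G_{\alpha_i}$ onto $G_\alpha$.

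Since $\tau'$ is by construction a group topology, the conjugation map $c_{n_w}\colon (G,\tau') \to (G,\tau')$ is a homeomorphism, and $\phi_{\alpha_i}$ is continuous with respect to $\tau'$ by definition. Hence the composite $c_{n_w} \circ \phi_{\alpha_i}\colon \mathrm{SL}_2(\F) \to (G,\tau')$ is continuous. A direct computation using the explicit conjugation formulas in \refrgd{item:mu} together with Proposition~\ref{prop:kmg-has-rgd} shows that this composite differs from $\phi_\alpha$ only by an algebraic automorphism of $\mathrm{SL}_2(\F)$ that accounts for the particular parametrization of $G_\alpha$; since such algebraic automorphisms are homeomorphisms with respect to $\mathcal O_\F$, the map $\phi_\alpha$ itself is $\tau'$-continuous, as desired.

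The main delicate point will be formalizing the last sentence: for a non-simple real root $\alpha$, the map $\phi_\alpha$ is defined only up to the choice of a word $w$ with $w\alpha_i = \alpha$ and a corresponding lift $n_w \in N$, so one must either fix such a choice once and for all and verify that different choices induce the same topology, or observe directly (using \refrgd{item:mu} and the relation between $\mu_s(u)$ and the standard generators of $G_{\alpha_s}$) that the conjugate of a parametrization of $G_{\alpha_i}$ differs from a parametrization of $G_\alpha$ by an explicit algebraic, hence continuous, automorphism of $\mathrm{SL}_2(\F)$. With this bookkeeping in place the proof is immediate.
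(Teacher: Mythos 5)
Your proof is correct and takes essentially the same approach as the paper: the paper's proof consists precisely of the observation that every real root group $G_\alpha$ with $\alpha=w.\alpha_i$ satisfies $G_\alpha=\tilde w G_{\alpha_i}\tilde w^{-1}$ for any lift $\tilde w$ of $w$, so that $\phi_\alpha$ factors as a conjugation (a homeomorphism in any group topology) composed with $\phi_{\alpha_i}$. Your extra discussion of the dependence on the choice of reduced word and lift $n_w$ is a sound bookkeeping remark but not needed, since the continuity of $\phi_\alpha$ in a group topology is invariant under all such choices.
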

\begin{proof} It suffices to observe that for every real root $\alpha$ there 
exists $w \in W$ and $\alpha_i \in \Pi$ such that $\alpha = w.\alpha_i$, whence
for any 
representative $\tilde w$ of $w$ in $G$, one has 
$G_\alpha = G_{w.\alpha_i} = \tilde w G_{\alpha_i}\tilde w^{-1}$.
\end{proof}

Again the universal topology can be defined for general topological fields, but 
it is unclear to us whether it has any
good properties in general; we do not even know whether it is Hausdorff.
However, over local fields we can show the following:
\begin{prop}\label{universalKP}
Assume $\F$ is a local field and that $G$ is centre-free or a central quotient of a simply connected Kac--Moody group. Then the universal topology and the
Kac--Peterson topology coincide. In particular, $(G, \tau)$ is Hausdorff and
$k_\omega$.
\end{prop}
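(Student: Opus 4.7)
The strategy is to prove two inclusions of topologies separately; the Hausdorff and $k_\omega$ properties will then be inherited from $\KP$ via Proposition~\ref{KPBasic}.

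First, I would show $\KP \subseteq \tau$, i.e.\ that $\tau$ is finer than $\KP$. For this it suffices, by the universal property of $\tau$ as the finest group topology making all $\phi_\alpha$ and $\eta(\F)$ continuous, to verify that $\KP$ is a group topology on $G$ for which these maps are continuous. That $\KP$ is a group topology is Proposition~\ref{KPBasic}. Continuity of the $\phi_\alpha$ into $(G,\KP)$ follows from Corollary~\ref{cor-panels-cpt}(iii): the image $G_\alpha$ is a spherical (rank one) subgroup and $\KP|_{G_\alpha}$ agrees with its Lie group topology $\mathcal O_\F$, with respect to which $\phi_\alpha\colon\mathrm{SL}_2(\F)\to G_\alpha$ is a continuous (quotient) map. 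Continuity of $\eta(\F)$ follows similarly from Proposition~\ref{toptorus}, which identifies the restriction $\KP|_T$ with the Lie group topology on the torus.

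Next, I would establish the reverse inclusion $\tau \subseteq \KP$. Let $U \in \tau$ be arbitrary and fix a tuple $\overline{\alpha}=(\alpha_1,\dots,\alpha_k)$ of simple roots. Since $\tau$ is a group topology under which each $\phi_{\alpha_i}$ and $\eta(\F)$ is continuous, the composition
\[
\Hom_{\Z\text{-alg}}(\Z[\Lambda],\F)\times \mathrm{SL}_2(\F)^k \xra{\;\eta(\F)\times\prod\phi_{\alpha_i}\;} T\times G_{\alpha_1}\times\cdots\times G_{\alpha_k} \xra{\;\cdot\;} G
\]
is $\tau$-continuous, so its preimage of $U$ is open in the product of Lie group topologies. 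Since each $\phi_{\alpha_i}\colon\mathrm{SL}_2(\F)\to G_{\alpha_i}$ is either a homeomorphism or a quotient by a finite central subgroup, it is open; likewise the map from $\Hom_{\Z\text{-alg}}(\Z[\Lambda],\F)$ onto $T$ is open in the relevant case by Proposition~\ref{toptorus}. Consequently the preimage of $U\cap TG_{\overline{\alpha}}$ under the defining map $p_{\overline{\alpha}}\colon (T,\tau_\F)\times(G_{\alpha_1},\mathcal O_\F)\times\cdots\times(G_{\alpha_k},\mathcal O_\F)\to TG_{\overline{\alpha}}$ is open. By definition of $\tau_{\overline{\alpha}}$ as the quotient topology with respect to $p_{\overline{\alpha}}$, this means $U\cap TG_{\overline{\alpha}}\in\tau_{\overline{\alpha}}$. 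As $\overline{\alpha}$ was arbitrary and $\KP=\lim_\to \tau_{\overline{\alpha}}$, we conclude $U\in\KP$.

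Combining the two inclusions yields $\tau=\KP$; the $k_\omega$-property (and in particular the Hausdorff property) of $\tau$ is then immediate from Proposition~\ref{KPBasic}. The delicate point throughout is the handling of the torus under the two hypotheses on $G$: in both the centre-free case and the case of a central quotient of a simply connected Kac--Moody group, the identification of $\KP|_T$ with the Lie group topology in Proposition~\ref{toptorus} is what allows the universal property to be applied symmetrically in $T$ and in the rank one pieces. This is the only place where the hypothesis on $G$ enters, and it is the step where some care is required.
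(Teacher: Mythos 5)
Your proposal is correct and follows essentially the same approach as the paper: one inclusion comes from the universal property of $\tau$ (once one knows $\KP$ is a group topology making $\phi_\alpha$ and $\eta(\F)$ continuous, via Corollary~\ref{cor-panels-cpt}(iii) and Proposition~\ref{toptorus}), and the reverse inclusion comes from checking that each $\tau$-open set restricts to a $\tau_{\overline{\alpha}}$-open set on each piece $TG_{\overline{\alpha}}$, since $p_{\overline{\alpha}}$ is $\tau$-continuous. The paper phrases the second direction as showing the two topologies agree on each $G_{\overline{\alpha}}$ and then extending to $T$ via Proposition~\ref{toptorus}, whereas you absorb the torus into the pieces from the start, but the content is the same.
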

\begin{proof}
Since the inclusion maps $G_\alpha \to (G, \tau_{KP})$ are continuous, we obtain
a continuous map $(G, \tau) \to (G, \tau_{KP})$. It remains to show that this
map
is open. In view of Corollary \ref{cor-panels-cpt}(iii), the topologies coincide
on
each $G_{\alpha}$. Since multiplication is continuous, the map
$(G_{\overline{\alpha}}, \tau_{\overline{\alpha}}) \to
(G_{\overline{\alpha}},\tau|_{G_{\overline{\alpha}}})$ is continuous. On the
other hand the map
\[(G_{\overline{\alpha}},\tau|_{G_{\overline{\alpha}}}) \to
(G_{\overline{\alpha}},\tau_{KP}|_{G_{\overline{\alpha}}})=
(G_{\overline{\alpha}}, \tau_{\overline{\alpha}})\]
is continuous as the restriction of a continuous map. Altogether we have shown
that $\tau$ and $\tau_{KP}$ coincide on each $G_{\overline{\alpha}}$. It then
follows that they coincide globally and the subgroup of $G$ generated by the root subgroups. The claim therefore follows from Proposition~\ref{toptorus}.
\end{proof}
In the two-spherical case we can reformulate the universal property of the Kac--Peterson topology in the form of an amalgamation result that generalizes
\cite[Theorem 6.20]{final-group-topologies}.
\begin{thm}[Topological Curtis--Tits Theorem] \label{theorem:curtis-tits}
\index{Curtis--Tits Theorem}
Let $\mathbb{F}$ be a local field and let $G$ be a
two-spherical simply connected split Kac--Moody group. 
Let $\Phi^{re}$ be the set of real roots and let $\Pi$ be a basis of 
simple roots for $\Phi^{re}$. For $\alpha, \beta \in \Pi$, set 
$G_\alpha := \phi_\alpha(\mrm{SL}_2(\F))$ and 
$G_{\alpha \beta} := \langle G_\alpha \cup G_\beta \rangle$. Moreover, 
let $\iota_{\alpha \beta}\colon G_\alpha \hookrightarrow G_{\alpha \beta}$ be 
the canonical inclusion morphisms.

Then the group $(G, \KP)$ is a universal enveloping group of the 
amalgam $\{G_\alpha, G_{\alpha\beta}; \iota_{\alpha\beta} \}$ in the 
categories of 
\begin{enumerate}
\item \label{item:curtis-tits-1}abstract groups,
\item \label{item:curtis-tits-2}Hausdorff topological groups and
\item \label{item:curtis-tits-3}$k_\omega$ groups.
\end{enumerate}
\end{thm}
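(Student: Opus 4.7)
The plan is to reduce Theorem~\ref{theorem:curtis-tits} to the abstract Curtis--Tits presentation of $G$ combined with the direct limit description of $\KP$ from Definition~\ref{kpdef} and Remark~\ref{KacPetersonOriginal}.

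First I would establish~\ref{item:curtis-tits-1} by invoking the abstract Curtis--Tits theorem for two-spherical simply connected split Kac--Moody groups (due to Abramenko--M\"uhlherr in the Kac--Moody setting), which asserts precisely that the canonical inclusions $G_\alpha \hookrightarrow G$ and $G_{\alpha\beta} \hookrightarrow G$ exhibit $G$ as a universal enveloping group of the amalgam $\{G_\alpha, G_{\alpha\beta}; \iota_{\alpha\beta}\}$ in the category of abstract groups. The two-sphericity hypothesis enters here in an essential way, ensuring that each $G_{\alpha\beta}$ is a finite-dimensional split reductive group whose defining relations, together with those of the $G_\alpha$, suffice to reconstruct the global structure of $G$.

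For~\ref{item:curtis-tits-2} and~\ref{item:curtis-tits-3}, I would consider an arbitrary compatible cone of continuous homomorphisms $\psi_\alpha\colon G_\alpha \to H$ and $\psi_{\alpha\beta}\colon G_{\alpha\beta} \to H$ into a Hausdorff, respectively $k_\omega$, topological group $H$. By~\ref{item:curtis-tits-1} this cone extends uniquely to an abstract homomorphism $\varphi\colon G \to H$, and the remaining task is to verify that $\varphi$ is continuous with respect to $\KP$. Since $G$ is simply connected it is centred, so by Remark~\ref{KacPetersonOriginal} the topology $\KP$ coincides with the final topology with respect to the directed system of subsets $G_{\overline\alpha} = G_{\alpha_1} \cdots G_{\alpha_k}$ (indexed by tuples $\overline\alpha = (\alpha_1,\ldots,\alpha_k)$ of simple roots) endowed with the topologies $\tau_{\overline\alpha}$. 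It therefore suffices to verify continuity of $\varphi|_{G_{\overline\alpha}}$ for each $\overline\alpha$. Since $\tau_{\overline\alpha}$ is by construction the quotient topology induced by the product map
\[
p_{\overline\alpha}\colon (G_{\alpha_1}, \mathcal{O}_\F) \times \cdots \times (G_{\alpha_k}, \mathcal{O}_\F) \to G_{\overline\alpha},
\]
continuity of $\varphi|_{G_{\overline\alpha}}$ reduces to continuity of
\[
\varphi \circ p_{\overline\alpha}\colon (g_1,\ldots,g_k) \longmapsto \psi_{\alpha_1}(g_1) \cdots \psi_{\alpha_k}(g_k),
\]
which is immediate from the continuity of each $\psi_{\alpha_i}$ and of multiplication in $H$. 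Uniqueness of $\varphi$ in each of the three categories is inherited from uniqueness in~\ref{item:curtis-tits-1}.

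The principal obstacle is the appeal to the abstract Curtis--Tits theorem, whose proof relies decisively on two-sphericity; once that input is granted, the transition to the topological categories is essentially formal and uses only the direct limit description of $\KP$ together with the universal property of quotient topologies and continuity of multiplication in $H$.
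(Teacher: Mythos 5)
Your proof is correct and follows essentially the same strategy as the paper: reduce to the abstract Curtis--Tits theorem of Abramenko--M\"uhlherr for part (i), then promote the unique abstract extension to a continuous one by exploiting the final/direct-limit description of the Kac--Peterson topology. Where the paper invokes Lemma~\ref{lemma:topology-same-wrt-simple-roots} and Proposition~\ref{universalKP} (the identification of $\KP$ with the final \emph{group} topology for the maps $\phi_\alpha$ and $\eta(\F)$) and then cites \cite[Corollary 5.10]{final-group-topologies} for the $k_\omega$ case, you instead verify the universal enveloping property by hand from the direct-limit description of Definition~\ref{kpdef}/Remark~\ref{KacPetersonOriginal} together with the quotient-topology characterization of each piece $\tau_{\overline\alpha}$; this treats (ii) and (iii) uniformly and avoids the detour through the final-group-topology formalism, at the cost of repeating an argument which is essentially already inside Proposition~\ref{universalKP}. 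Two dependencies should be made explicit: (a) you silently use that a simply connected Kac--Moody group is centred, so that Remark~\ref{KacPetersonOriginal} applies and the torus can be dropped from the directed system --- this holds because $\{h_i\}$ being a $\Z$-basis of $\Lambda^\vee$ forces $T$ to be generated by the elements $\phi_i(\mathrm{diag}(u,u^{-1}))$ via (KMG3); and (b) for $(G,\KP)$ even to be an object of the categories in (ii) and (iii) one needs to know that $\KP$ is a Hausdorff $k_\omega$ group topology, which is Proposition~\ref{KPBasic}. With those two remarks added, the argument is complete.
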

\begin{proof}
\begin{enumerate}
\item This is the main result of \cite{abr-muh:bn-pairs}.
\item By Lemma
\ref{lemma:topology-same-wrt-simple-roots} and Proposition \ref{universalKP} the group 
$(G,\tau_{KP})$ is the direct limit of the amalgam $\{G_\alpha, G_{\alpha\beta};
\iota_{\alpha\beta} \}$ in the 
category of topological groups. Since $\tau_{KP}$ is Hausdorff by
Proposition~\ref{universalKP} the claim
follows. 
\item By \ref{item:curtis-tits-2}, the claim follows from 
\cite[Corollary 5.10]{final-group-topologies}.\qedhere
\end{enumerate}
\end{proof}
In Theorem \ref{theorem:curtis-tits} the hypothesis of simple connectedness is important, since otherwise the torus will not be the universal enveloping group of the amalgam of the tori of ranks one and two. For their central quotients it is, of course, possible to derive a compact presentation as well by incorporating the finite group of toral relations manually (cf.\ Lemma~\ref{ToriIsogeneous}). 

We obtain:
\begin{cor} Central quotients of simply connected two-spherical split Kac--Moody groups over local fields are compactly presented in the sense of \cite[Definition~3.1]{Cornulier}.
\end{cor}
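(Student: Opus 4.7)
The plan is to first establish compact presentability for a simply connected two-spherical split Kac--Moody group $\tilde G$ over a local field $\mathbb F$ via the Topological Curtis--Tits Theorem~\ref{theorem:curtis-tits}, and then pass to an arbitrary central quotient $G = \tilde G/Z$ by exploiting the finiteness of $Z(\tilde G)$ recorded in Lemma~\ref{ToriIsogeneous}.

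For the simply connected case, by part (iii) of Theorem~\ref{theorem:curtis-tits} the group $(\tilde G, \KP)$ is the universal enveloping $k_\omega$-group of the \emph{finite} amalgam $\{G_\alpha, G_{\alpha\beta}; \iota_{\alpha\beta}\}_{\alpha,\beta \in \Pi}$. Each rank-one piece $G_\alpha \cong \mathrm{SL}_2(\mathbb F)$ and each rank-two piece $G_{\alpha\beta}$ is a finite-dimensional semisimple algebraic group over the local field $\mathbb F$, hence a compactly presented topological group by standard results on semisimple groups over local fields. I would then choose compatible compact generating sets: pick a compact generating set $S_{\alpha\beta}$ for each $G_{\alpha\beta}$ that contains compact generating sets $S_\alpha$ for $G_\alpha$ and $S_\beta$ for $G_\beta$. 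Their finite union $S = \bigcup_{\alpha,\beta} S_{\alpha\beta}$ is then a compact generating set for $\tilde G$ by the universal property.

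The core step is to extract a uniformly bounded presentation. By compact presentability of each $G_{\alpha\beta}$ there exist $N_{\alpha\beta} \in \mathbb N$ and a family $R_{\alpha\beta}$ of words in $S_{\alpha\beta}$ of length at most $N_{\alpha\beta}$ whose normal closure is the kernel of the canonical epimorphism from the free group on $S_{\alpha\beta}$ onto $G_{\alpha\beta}$. The universal property of Theorem~\ref{theorem:curtis-tits} applied in the category of Hausdorff topological groups tells us that the kernel of the natural surjection from the free group on $S$ onto $\tilde G$ is normally generated by $\bigcup_{\alpha,\beta} R_{\alpha\beta}$, i.e.\ by a finite family of words of length at most $N := \max_{\alpha,\beta} N_{\alpha\beta}$. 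This yields compact presentability of $\tilde G$.

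Finally, for a central quotient $G = \tilde G/Z$ with $Z \subseteq Z(\tilde G)$, Lemma~\ref{ToriIsogeneous} shows that $Z$ is finite. Hence $Z$ is finitely normally generated, and a standard principle of the theory of compactly presented topological groups (cf.\ \cite{Cornulier}) guarantees that the quotient of a compactly presented group by a finitely normally generated closed normal subgroup is again compactly presented: one simply adjoins finitely many additional bounded-length relations, namely one word in $S$ for each generator of $Z$. The main obstacle will be verifying carefully that the universal $k_\omega$-group property of Theorem~\ref{theorem:curtis-tits} really does translate into a group presentation with uniformly bounded-length relators, i.e.\ that no further ``topological'' relations are needed beyond those already supplied by the $R_{\alpha\beta}$; this reduces to the standard diagrammatic description of pushouts in the category of groups, so that the edge identifications impose no new obligations beyond those already encoded in the $R_{\alpha\beta}$.
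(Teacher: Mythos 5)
Your approach matches the paper's own (brief) argument: the paper itself only sketches this corollary in the two sentences immediately preceding it, saying that Theorem~\ref{theorem:curtis-tits} handles the simply connected case while ``the finite group of toral relations'' from Lemma~\ref{ToriIsogeneous} must be ``incorporated manually'' for central quotients --- which is exactly what you do. Two small corrections to your write-up, though neither affects the validity of the method. First, the ``main obstacle'' you flag at the end --- whether the topological universal property could impose extra relations beyond $\bigcup R_{\alpha\beta}$ --- is already dealt with by part~(i) of Theorem~\ref{theorem:curtis-tits}, not part~(ii) or (iii): the theorem asserts that $\tilde G$ is the universal enveloping group of the amalgam in the category of \emph{abstract} groups as well (via Abramenko--M\"uhlherr), so the abstract colimit of the finitely many compactly presented pieces literally \emph{is} $\tilde G$ and $\langle S \mid \bigcup R_{\alpha\beta} \rangle$ is a genuine abstract presentation; compactness of $S$ inside $(\tilde G, \KP)$ then upgrades this to a compact presentation. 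Second, the phrase ``a finite family of words of length at most $N$'' is not quite right: each $R_{\alpha\beta}$ is in general an uncountable (compact) set of relators, since $S_{\alpha\beta}$ is a continuum; what matters is the uniform bound $N$ on the relator length, and the finiteness is only in the number of \emph{pairs} $(\alpha,\beta)$, so the total relator set is a finite union of compact sets of bounded-length words. With those adjustments, the proof is correct.
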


\begin{rem} \label{Rem:SphericalSubgroups}
We have seen in Corollary \ref{cor-panels-cpt} that in the spherical case the Kac--Peterson topology coincides with the Lie group topology. Thus Theorem \ref{theorem:curtis-tits} applies in particular to semisimple Lie groups over local fields. We emphasize once more that in the non-spherical case it is not locally compact and, thus, not metrizable, cf.\ Remark~\ref{notlocallycompact}; see also \cite[Example~6.14]{final-group-topologies} for a detailed discussion of group topologies on affine Kac--Moody groups.
\end{rem}

\subsection{Topology of bounded and some non-spherical subgroups}
We now return to the study of the topology of subgroups of $G$.
Corollary~\ref{cor-panels-cpt} implies that $(G, \KP)$ induces the natural topology on the obvious finite-dimensional pieces of $G$, the spherical subgroups. In this section we intend to understand the topologies induced on some infinite-dimensional subgroups and on the less obvious finite-dimensional pieces, the bounded subgroups. 

Our first goal is to understand the groups $U_{\pm}$. The key idea is to consider them as direct limits of finite-dimensional pieces. In this context the key observation is as follows:
\begin{lem}\label{RootGroupProductOpen} Let $\beta_1, \dots, \beta_n$ be distinct positive real roots and  denote by $X$ the image of the multiplication map
\[
m: U_{\beta_1} \times \dots \times U_{\beta_n} \to G : (x_1, \dots, x_n) \mapsto x_1 \cdots x_n.
\]
Assume that the labeling is compatible with the Bruhat order in the sense that $i<j$ if $\beta_i < \beta_j$.
Then:
\begin{itemize}
\item[(i)] The map $m$ is injective.
\item[(ii)] If $U_{\beta_j}$ and $X$ are equipped with the respective restrictions of $\tau_{KP}$ and their product is equipped with the product topology, then $m$ is a homeomorphism onto its image.
\end{itemize}
\end{lem}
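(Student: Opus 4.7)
My plan is to prove (i) and (ii) simultaneously by induction on $n$, extracting $t_1$ of the leftmost factor $u_{\beta_1}(t_1)$ as a continuous function of the product via the adjoint action of $G$ on $\calU_\F$. The base case $n=1$ is trivial since $U_{\beta_1}\cong(\F,+)$ as topological groups by Corollary~\ref{RootGroupsLcp}. For the inductive step, the Bruhat-compatibility of the ordering forces $\beta_1$ to be minimal in $\{\beta_1,\ldots,\beta_n\}$, since otherwise some $\beta_j<\beta_1$ with $j>1$ would violate the hypothesis ``$i<j$ if $\beta_i<\beta_j$''. Consequently, the only non-negative integer tuple $(k_1,\ldots,k_n)\in\Z_{\geq 0}^n$ satisfying $\sum_{j=1}^n k_j\beta_j=\beta_1$ is $(1,0,\ldots,0)$: any alternative solution would contain some $\beta_j$ with $j>1$ and $k_j>0$ satisfying $\beta_j\leq\beta_1$, hence by distinctness $\beta_j<\beta_1$, contradicting minimality.

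I then apply Proposition~\ref{prop:ad-finite-dimensional-submodule} to $\overline{\alpha}:=(\beta_1,\ldots,\beta_n)$ and the weight vector $v:=f_{\beta_1}$ of weight $-\beta_1$, obtaining a finite-dimensional subspace $V\subseteq\calU_\F$ containing the orbit $\mrm{Ad}(TG_{\overline{\alpha}})\cdot v$ on which the orbit map $(TG_{\overline{\alpha}},\tau_{\overline{\alpha}})\to(V,\mathcal{O}_\F)$ is continuous. Since $V$ is generated from the weight vector $v$ by repeated application of weight-shifting operators, it is automatically $Q$-graded, and contains $[e_{\beta_1},v]=h_{\beta_1}$, a nonzero vector of weight $0$. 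Using $\mrm{Ad}(u_{\beta_j}(t_j))=\sum_{k\geq 0}\frac{t_j^k}{k!}(\mrm{ad}_{e_{\beta_j}})^k$ and composing, the weight-$0$ component of $\mrm{Ad}(u_{\beta_1}(t_1)\cdots u_{\beta_n}(t_n))\cdot v$ is a sum over tuples $(k_1,\ldots,k_n)$ with $\sum k_j\beta_j=\beta_1$; by the uniqueness statement above, it equals exactly $t_1\cdot h_{\beta_1}$. Picking out the $h_{\beta_1}$-coefficient (in any basis of the weight-$0$ subspace extending $h_{\beta_1}$) defines a continuous linear functional $\ell_1:V\to\F$ satisfying $\ell_1(\mrm{Ad}(m(t_1,\ldots,t_n))\cdot v)=t_1$.

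Consequently, the map $g\mapsto\ell_1(\mrm{Ad}(g)\cdot v)$ recovers $t_1$ continuously from $g\in X$, establishing injectivity and continuity of $m^{-1}$ in the first coordinate. Then $g\mapsto u_{\beta_1}(t_1(g))^{-1}\cdot g$ is a continuous map $X\to X'$, where $X'$ is the image of the product map for the (still Bruhat-compatible) sub-tuple $(\beta_2,\ldots,\beta_n)$. The inductive hypothesis provides continuous coordinate functions $(t_2,\ldots,t_n)$ on $X'$, hence on $X$. Combined with the automatic continuity of $m$ (multiplication is continuous in $(G,\KP)$), this shows that $m$ is a homeomorphism onto $X$, yielding both (i) and (ii).

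The main technical obstacle is the simultaneous control of the subspace $V$: one must arrange $V$ to contain the full orbit $\mrm{Ad}(TG_{\overline{\alpha}})\cdot v$, to be $Q$-graded, and to support a continuous orbit map. The construction behind Proposition~\ref{prop:ad-finite-dimensional-submodule}---essentially an iterated application of the locally nilpotent operators $\mrm{ad}_{e_{\pm\beta_j}}$ starting from $v$---yields such a $V$, although the verification that all required properties can be arranged in a single graded finite-dimensional subspace, while routine, is somewhat delicate. A secondary subtlety is the validity of the expansion formula in positive characteristic, where the factorials must be interpreted via divided powers; this standard adjustment does not affect the conclusion since $h_{\beta_1}=\mrm{ad}_{e_{\beta_1}}(v)$ already has integral coefficient in the $\Z$-form $\calU_\Z$.
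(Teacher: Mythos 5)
Your proposal takes a genuinely different route from the paper. Where the paper conjugates so that $\beta_1$ becomes simple, keeps the other $\beta_j$ positive, and then uses a contracting torus element to extract $x_1 = u_{\beta_1}(t_1)$ as a limit, you aim to read off $t_1$ statically as a linear coefficient, using the $Q$-grading of $\calU_\F$ and the adjoint orbit of $f_{\beta_1}$. The inductive skeleton is the same, but the extraction mechanism is different. Unfortunately the key combinatorial step of your extraction has a genuine gap.

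You assert that Bruhat-minimality of $\beta_1$ forces the only solution of $\sum_{j} k_j\beta_j=\beta_1$ in $\Z_{\geq 0}^n$ to be $(1,0,\ldots,0)$, on the grounds that any other solution would produce some $j>1$ with $k_j>0$ and $\beta_j\leq\beta_1$, contradicting minimality. But the relation you actually obtain from $\sum k_j\beta_j=\beta_1$ is a \emph{dominance} relation: $\beta_1-k_j\beta_j$ lies in the non-negative cone spanned by $\{\beta_l\}$. This does not, in general, imply a Bruhat relation between the reflections $s_{\beta_j}$ and $s_{\beta_1}$, which is what "minimality" refers to in the hypothesis. The two orders diverge already in type $B_2$: with $\alpha_1$ long and $\alpha_2$ short, $\alpha_1+\alpha_2$ is dominance-less than $\alpha_1+2\alpha_2$, yet $s_{\alpha_1+\alpha_2}=s_1s_2s_1$ and $s_{\alpha_1+2\alpha_2}=s_2s_1s_2$ both have Coxeter length $3$ and are Bruhat-incomparable. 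So "dominance $\Rightarrow$ Bruhat" is invalid, and the uniqueness of the tuple needs a separate justification (which I could not supply, nor could I find a clean counterexample; the matter is genuinely unclear). Without the uniqueness, the weight-$0$ component of $\Ad(m(t_1,\dots,t_n))\cdot f_{\beta_1}$ can receive contributions from other tuples with $k_1=0$, and $t_1$ is then no longer a linear functional of that component; your inductive mechanism breaks at the first step.

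A secondary issue: you invoke Proposition~\ref{prop:ad-finite-dimensional-submodule} for the tuple $\overline\alpha=(\beta_1,\dots,\beta_n)$, but that proposition is stated for tuples of \emph{simple} roots $\overline\alpha\in\Pi^k$ and for the pieces $TG_{\overline\alpha}$; the $\beta_j$ here are arbitrary positive real roots and you are working with the sets $U_{\beta_1}\cdots U_{\beta_n}$. The continuity of the orbit map into a finite-dimensional graded subspace can instead be derived from Lemma~\ref{weaklyregular} (matrix coefficients of $\Ad$ are weakly regular, hence $\KP$-continuous), but as written the citation does not directly apply. This is fixable, unlike the combinatorial gap above.
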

We warn the reader that the above product is not to be confused with the products $U_{\overline \alpha}$ studied before. Unlike the $\alpha_j$, the $\beta_j$ are assumed to be all positive and distinct and not necessarily simple. 
\begin{proof} We are going to establish both (i) and (ii) by induction on $n$, the case $n=1$ being trivial. Now let $n \geq 2$, assume that both (i) and (ii) hold for all products of length at most $n-1$, 
and consider $X = U_{\beta_1} \cdots U_{\beta_n}$. By assumption, $\beta_1$ is Bruhat minimal among $\beta_1, \dots, \beta_n$. We can therefore conjugate by a suitable element $g \in U^{-}$ to ensure that $\beta_1$ is in fact a positive simple root and the other $\beta_j$ are still positive and distinct. Then there exists a torus element $t \in T$ whose conjugation preserves the root group $U_{\beta_1}$ and contracts all other root groups $U_{\beta_j}$. We deduce that if $x_j \in U_{\beta_j}$ and $x = x_1 \cdots x_n$, then
 \[\lim_{n \to \infty} g^{-1}t^ngxg^{-1}t^{-n}g = x_1.\]
This shows that there is a continuous map
\[\phi_1: X \to U_{\beta_1} \times (U_{\beta_2}\cdots U_{\beta_n}) : x \mapsto (x_1, x_1^{-1}x).\]
By induction hypothesis we thus find a continuous map
\[\phi: X \to U_{\beta_1} \times \dots \times U_{\beta_n},\]
which is inverse to $m$. This shows (i) and (ii).
\end{proof}
There is a slight variant of the lemma, which incorporates the torus and will be useful later:
\begin{lem}\label{TorusRootGroupProductOpen} Let $\beta_1, \dots, \beta_n$ be distinct positive real roots and  denote by $Y$ the image of the multiplication map
\[
m: T \times U_{\beta_1} \times \dots \times U_{\beta_n} \to G : (t, x_1, \dots, x_n) \mapsto tx_1 \cdots x_n.
\]
Assume that the labeling is compatible with the Bruhat order in the sense that $i<j$ if $\beta_i < \beta_j$.
Then:
\begin{itemize}
\item[(i)] The map $m$ is injective.
\item[(ii)] If $T$, $U_{\beta_j}$ and $Y$ are equipped with the respective restrictions of $\tau_{KP}$ and their product is equipped with the product topology, then $m$ is a homeomorphism onto its image.
\end{itemize}
\end{lem}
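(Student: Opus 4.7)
The plan is to reduce the lemma to Lemma \ref{RootGroupProductOpen} by constructing a continuous Levi projection $\pi_T\colon Y \to T$ sending $tx_1\cdots x_n$ to $t$. For injectivity in (i), one notes that $X := U_{\beta_1}\cdots U_{\beta_n} \subseteq U_+$, so that $Y \subseteq TU_+ = B_+$. The abstract semidirect structure $B_+ = T \ltimes U_+$ (a standard consequence of Proposition \ref{prop:rgd-twin-tits}) provides a group-theoretic projection $B_+ \to T$; applied to an equality $tx_1\cdots x_n = t'x'_1\cdots x'_n$ this forces $t = t'$, after which Lemma \ref{RootGroupProductOpen}(i) applied to $x_1\cdots x_n = x'_1\cdots x'_n$ yields $x_j = x'_j$ for each $j$.

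For the homeomorphism claim in (ii), continuity of $m$ is immediate from continuity of multiplication in $(G,\KP)$. Granting continuity of $\pi_T$ as a map $(Y, \KP|_Y) \to (T, \KP|_T)$, the assignment $y \mapsto (\pi_T(y), \pi_T(y)^{-1}y) \in T \times X$ is continuous, and composing with the identity on $T$ together with the continuous inverse of $U_{\beta_1}\times\cdots\times U_{\beta_n} \to X$ supplied by Lemma \ref{RootGroupProductOpen}(ii) gives a continuous inverse to $m$. The whole statement is thus reduced to continuity of $\pi_T$.

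To establish the latter, I would use weakly regular functions in the spirit of the proof of Proposition \ref{prop:B-closed}. For each weight $\chi$ occurring in the adjoint representation on $\mathcal U_\F$, pick a weight vector $v \in \mathcal U_\F$ of weight $\chi$ and a linear functional $v^* \in (\mathcal U_\F)^*$ supported on the $\chi$-weight space with $v^*(v) = 1$. The matrix coefficient $f_{v,v^*}\colon g \mapsto v^*(\Ad(g)v)$ is a weakly regular function on $G$, hence continuous by Lemma \ref{weaklyregular}; moreover, by the $Q$-graded structure of the adjoint action, $f_{v,v^*}(tu) = \chi(t)$ for $t \in T$ and $u \in U_+$, so $f_{v,v^*}|_Y = \chi \circ \pi_T$. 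Since the weights occurring on $\mathcal U_\F$ generate the root lattice $Q$ which separates points of $T/(T \cap Z(G))$, combining sufficiently many of these coefficients together with Proposition \ref{toptorus} produces the desired continuity of $\pi_T$; the residual issue of the centre is handled by its finiteness in the simply connected case (Lemma \ref{ToriIsogeneous}) or its triviality in the centre-free case, to which the general situation reduces as in the proof of Proposition \ref{KPBasic}.

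The main obstacle is the continuity of $\pi_T$ in the presence of a possibly non-trivial centre, where the adjoint representation fails to be faithful on $T$ and the identification of the subspace topology on $T$ with $\tau_\F$ (Proposition \ref{toptorus}) must be invoked carefully. An alternative elementary route would be to mimic the contraction argument from Lemma \ref{RootGroupProductOpen}: choose $s \in T$ with $|\beta_j(s)| < 1$ for all $j$ (possible because the $\beta_j$ are positive real roots and $\F$ is a local field), observe $s^k y s^{-k} = t \cdot (s^k x_1 s^{-k})\cdots(s^k x_n s^{-k}) \to t$ in $(G,\KP)$, and use this limit to identify $\pi_T$; turning this sequential statement into genuine continuity then requires verifying separately that $Y$ is $k_\omega$, which would follow from $Y$ being a closed subset of the $k_\omega$ group $B_+$.
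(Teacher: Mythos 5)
Your proposal and the paper's proof diverge in approach. The paper disposes of this lemma in a single sentence---``Since $T$ is self-centralizing the same argument applies''---i.e.\ the contraction argument of Lemma~\ref{RootGroupProductOpen} is repeated verbatim with the extra torus factor simply carried along: the conjugating element $s$ commutes with $t$ because $T$ is abelian, so $s^k(tx_1\cdots x_n)s^{-k}\to t$, isolating $t$ first and then recursing on the unipotent part. Your \emph{main} route via weakly regular matrix coefficients $f_{v,v^*}$ is genuinely different: you construct a continuous Levi projection $\pi_T\colon Y\to T$ character-by-character from the $Q$-grading of the adjoint action and then invoke Lemma~\ref{RootGroupProductOpen} on the residual unipotent factor. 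This is conceptually clean and avoids any limit argument, but, as you correctly flag, the characters coming from $Q$ only separate $T$ modulo $Z(G)\cap T$, so one must either pass to the centre-free case by the embedding trick of Proposition~\ref{KPBasic} or invoke Lemma~\ref{ToriIsogeneous}; this makes your route noticeably heavier than the paper's one-liner. Your \emph{alternative} sketch in the final paragraph is precisely the paper's argument. Your scruple about promoting the pointwise limit to continuity, and the resulting detour through $k_\omega$-ness of $Y$, is more caution than the paper itself exercises: the proof of Lemma~\ref{RootGroupProductOpen} uses the identical sequential limit without separately verifying $k_\omega$-ness, the intended reading being that the components $t$ and $x_j$ are determined by polynomial formulas in the ambient finite-dimensional coordinates, so the inverse of $m$ is continuous directly. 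Your injectivity argument in (i) via the semidirect decomposition $B_+ = T\ltimes U_+$ is correct and is what the paper uses implicitly.
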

\begin{proof} Since $T$ is self-centralizing the same argument applies.
\end{proof}
From now on we fix an enumeration
\[\mathbb N \to (\Phi^{re})^+ : n \mapsto \beta[n]\]
which is compatible with the Bruhat order and the Bruhat length in the following sense: We require that $i<j$ whenever $\beta[i] < \beta[j]$ in the Bruhat order and, moreover, that $\beta[i] < \beta[j]$ whenever $l(\beta[i]) < l(\beta[j])$.
\begin{cor} 
\begin{itemize}
\item[(i)] For all $n \in \mathbb N$ the map
\[\mathbb F^n \to X_{[n]} := \prod_{j=1}^n U_{\beta[j]} : (t_1, \dots, t_n) \mapsto u_{\beta[1]}(t_1)\cdots u_{\beta[n]}(t_n)\]
is a homeomorphism, where $X_{[n]}$ is equipped with the restriction of $\KP$.
\item[(ii)]  $(U_+, \KP|_{U_+}) = \lim_{\to} X_{[n]} \cong \lim_\to \mathbb F^n$.
\end{itemize}
\end{cor}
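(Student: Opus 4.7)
For claim (i), I would combine two earlier results. By Corollary~\ref{RootGroupsLcp}(ii), each parametrization $\mathbb{F} \to U_{\beta[j]}$, $t \mapsto u_{\beta[j]}(t)$, is a homeomorphism onto $U_{\beta[j]}$ equipped with $\KP|_{U_{\beta[j]}}$. By construction, the enumeration of $(\Phi^{re})^+$ is a linear extension of the Bruhat order, so Lemma~\ref{RootGroupProductOpen}(ii) applies to the tuple $(\beta[1], \dots, \beta[n])$ and shows that the product map $U_{\beta[1]} \times \cdots \times U_{\beta[n]} \to X_{[n]}$ is a homeomorphism, where the domain carries the product topology induced from $\KP$ and $X_{[n]}$ carries the subspace topology $\KP|_{X_{[n]}}$. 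Composing with the parametrizations yields the homeomorphism $\mathbb{F}^n \to X_{[n]}$.

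For claim (ii), I first establish the set-theoretic equality $U_+ = \bigcup_n X_{[n]}$. By the standard combinatorics of RGD systems (cf.\ \cite[Proposition~8.17]{abramenko-brown}), $U_+$ is the directed union $\bigcup_{w \in W} U_w$, where $U_w := U_+ \cap wU_-w^{-1}$; moreover each $U_w$ admits the unique nilpotent decomposition $U_w = \prod_{\alpha \in \Phi(w)} U_\alpha$ taken in any Bruhat-compatible ordering of $\Phi(w) := \Phi^+ \cap w\Phi^-$. Since our enumeration extends the Bruhat order, for $n$ large enough that $\Phi(w) \subseteq \{\beta[1], \dots, \beta[n]\}$ and after padding the product with trivial factors $u_{\beta[j]}(0) = e$ for $\beta[j] \notin \Phi(w)$, we obtain $U_w \subseteq X_{[n]}$; hence $U_+ \subseteq \bigcup_n X_{[n]}$, and the reverse inclusion is obvious.

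For the topological identification, by (i) each $X_{[n]}$ is $\KP$-homeomorphic to $\mathbb{F}^n$, hence locally compact, $\sigma$-compact and Hausdorff, and therefore $k_\omega$. Under the homeomorphisms of (i), the inclusion $X_{[n]} \hookrightarrow X_{[n+1]}$ becomes the standard closed embedding $\mathbb{F}^n \hookrightarrow \mathbb{F}^{n+1}$. By Proposition~\ref{prop:komega-properties}, the direct limit $\lim_\to X_{[n]}$ is a Hausdorff $k_\omega$ space, and the universal property of the direct limit produces a continuous bijection
\[
\phi\colon \lim_\to X_{[n]} \longrightarrow (U_+, \KP|_{U_+}).
\]

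The principal obstacle is to upgrade $\phi$ to a homeomorphism. My plan is to verify that $(U_+, \KP|_{U_+})$ is itself $k_\omega$ with $(X_{[n]})$ a cofinal ascending sequence of closed subspaces. For $k_\omega$-ness, I would first show that $U_+$ is $\KP$-closed in $G$: using the finite-dimensional invariant submodules of $\mathcal{U}_\F$ from Proposition~\ref{prop:ad-finite-dimensional-submodule} and detecting unipotent action through matrix coefficients, one expresses $U_+$ as the common zero locus of a family of weakly regular functions, which is closed by Lemma~\ref{weaklyregular}. Then $(U_+, \KP|_{U_+})$ is $k_\omega$ by Proposition~\ref{prop:komega-properties}. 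The desired cofinality---every $\KP$-compact subset of $U_+$ lies in some $X_{[n]}$---would follow by combining Corollary~\ref{cor:equivalent-description} (the $k_\omega$-exhaustion of $G$ by the pieces $TG_{\overline{\alpha}}$) with Proposition~\ref{prop:G-I-in-G-n}: a $\KP$-compact $K \subseteq U_+$ is absorbed by some $TG_{\overline{\alpha}}$, and elements of $TG_{\overline\alpha} \cap U_+$, being products of $T$-translates and bounded numbers of root-group factors whose positive roots form a finite Bruhat-bounded subset of $(\Phi^{re})^+$, can be reindexed via our Bruhat-compatible enumeration into some $X_{[n]}$ (using the $T$-equivariance and the commutator relations to eliminate torus and negative-root contributions). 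Since both sides of $\phi$ are $k_\omega$ with mutually cofinal ascending sequences of closed subspaces realizing the same subspace topologies, the two topologies on $U_+$ coincide, so $\phi$ is a homeomorphism.
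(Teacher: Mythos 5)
For part (i) your argument coincides with the paper's: combine Corollary~\ref{RootGroupsLcp}(ii) with Lemma~\ref{RootGroupProductOpen}(ii). For part (ii) your route is genuinely different from the paper's, which is a two-line argument: observe, via the action on the twin building, the set-theoretic identity $U_+ \cap G_k^- = X_{[n(k)]}$, and then invoke Corollary~\ref{cor:equivalent-description}, i.e.\ the presentation $(G,\KP)=\lim_\to G_k^-$. You instead want to first prove $U_+$ is $\KP$-closed and then run a $k_\omega$-absorption argument. That is a legitimate strategy in principle, but the closedness step you propose has a real gap.

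The weakly-regular-function argument succeeds for $B_+$ in Proposition~\ref{prop:B-closed} because $B_+\supset T\supset Z(G)$. By Proposition~\ref{prop:representation-remy}, $\ker(\mathrm{Ad})=Z(G)$, so every $\mathrm{Ad}$-matrix-coefficient $g\mapsto v^*(\mathrm{Ad}(g)v)$ is constant on $Z(G)$-cosets and its zero locus is a union of $Z(G)$-cosets. Since $U_+\cap Z(G)=\{1\}$, the group $U_+$ is not $Z(G)$-saturated whenever $Z(G)\neq\{1\}$, so for a general split Kac--Moody group $G$ one cannot carve $U_+$ out of $B_+$ by vanishing of $\mathrm{Ad}$-matrix-coefficients: the smallest $Z(G)$-saturated set containing $U_+$ is $U_+Z(G)$, and that is the most the adjoint representation can isolate. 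Separating the finitely many $Z(G)$-translates of $U_+$ inside $U_+Z(G)$ in effect requires the topological decomposition $B_+\cong T\ltimes U_+$, which the paper only establishes afterwards (Proposition~\ref{uclosed}(ii)) and from which closedness of $U_\pm$ is then \emph{deduced} (Proposition~\ref{uclosed}(iv)). In other words, the dependency you are trying to invert is there for a reason. Your set-theoretic decomposition of $U_+$ via the groups $U_w=U_+\cap wU_-w^{-1}$ and their ordered root-group factorizations is correct and an adequate substitute for the paper's twin-building observation, and the cofinality argument via Corollary~\ref{cor:equivalent-description} and Proposition~\ref{prop:G-I-in-G-n} is sound; the only problematic step is the proposed direct proof of $\KP$-closedness of $U_+$.
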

\begin{proof} (i) follows by combining Corollary \ref{RootGroupsLcp} and Lemma \ref{RootGroupProductOpen}. Concerning (ii) we observe that the action of $G$ on the twin building implies that 
$G_k^{-} = \bigcup_{l(w) \leq k} B_{-} w B_{-}$ intersects 
$U_+$ in exactly those root groups $U_\alpha$ with 
$l(\alpha) \leq k$. Hence, if we choose $n(k)$ such that $\beta[n(k)]$ is the largest root of length $k$, then
\[
	U_+ \cap G_k^{-} = X_{[n(k)]}.
\]
Thus $U_+ = \lim_\to X_{[n]}$ by Corollary \ref{cor:equivalent-description}. \end{proof}
A symmetric argument now shows:
\begin{cor} 
\begin{itemize}
\item[(i)] For all $n \in \mathbb N$ the map
\[\mathbb F^n \to X^-_{[n]} := \prod_{j=1}^n U_{-\beta[j]} : (t_1, \dots, t_n) \mapsto u_{-\beta[1]}(t_1)\cdots u_{-\beta[n]}(t_n)\]
is a homeomorphism, where $X^-_{[n]}$ is equipped with the restriction of $\KP$.
\item[(ii)] $(U_-, \KP|_{U_-}) = \lim_{\to} X^-_{[n]} \cong \lim_\to \mathbb F^n$.
\end{itemize}
\end{cor}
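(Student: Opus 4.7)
The plan is to mirror, step by step, the argument just given for $U_+$, with positive roots systematically replaced by their negatives. The main work is to verify the sign-swapped analogue of Lemma~\ref{RootGroupProductOpen}; once this is available, both (i) and (ii) follow by exactly the same bookkeeping.

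First I would establish the following variant: if $\beta_1,\ldots,\beta_n$ are distinct positive real roots whose labeling is compatible with the Bruhat order in the sense specified on page ``$(n)$'' (i.e.\ $i<j$ whenever $\beta_i<\beta_j$), then the map
\[
m^-\colon U_{-\beta_1}\times\cdots\times U_{-\beta_n}\longrightarrow G,\qquad (y_1,\ldots,y_n)\mapsto y_1\cdots y_n,
\]
is injective and a homeomorphism onto its image, where source and target carry the appropriate $\KP$-induced topologies. The proof is the same induction on $n$ as for the positive case: using $B_-$-conjugation one can assume $\beta_1$ is simple, and then one chooses $t\in T$ contracting the remaining positive root groups while preserving $U_{\beta_1}$. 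Conjugation by $t^{-n}$ (rather than $t^n$) then preserves $U_{-\beta_1}$ and contracts each $U_{-\beta_j}$ for $j\ge 2$, so the limit
\[
\lim_{n\to\infty}\bigl(g^{-1}t^{-n}g\bigr)\,y\,\bigl(g^{-1}t^ng\bigr)=y_1
\]
yields continuity of the projection $y\mapsto y_1$, from which continuity of the full inverse of $m^-$ is obtained inductively.

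Given this lemma, part (i) is immediate: Corollary~\ref{RootGroupsLcp} identifies each $(U_{-\beta[j]},\KP|_{U_{-\beta[j]}})$ homeomorphically with $\F$, and the lemma says that the product map to $X^-_{[n]}$ is a homeomorphism with respect to $\KP|_{X^-_{[n]}}$.

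For (ii), I would argue by symmetry of Corollary~\ref{cor:equivalent-description}: since $(G,\KP)=\lim_\to G_k^+$, it suffices to identify $U_-\cap G_k^+$. The same action-on-the-twin-building argument that gave $U_+\cap G_k^-=X_{[n(k)]}$ applies with the two halves swapped, yielding
\[
U_-\cap G_k^+=\prod_{\alpha\in(\Phi^{re})^+,\,l(\alpha)\le k}U_{-\alpha}=X^-_{[n(k)]}
\]
for the same cut-off $n(k)$ as in the positive case. Combining with (i) and the general fact that direct limits commute with taking closed subspaces in the $k_\omega$-category (\cite[Proposition~4.2]{final-group-topologies}), we conclude $(U_-,\KP|_{U_-})=\lim_\to X^-_{[n]}\cong\lim_\to \F^n$. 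The only point requiring genuine verification is the sign-swap in the torus contraction in the negative analogue of Lemma~\ref{RootGroupProductOpen}; everything else is a formal transcription.
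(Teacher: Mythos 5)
Your proposal is correct and is precisely the argument the paper has in mind: the paper merely says ``a symmetric argument now shows'' after proving the positive-root corollary, and you have spelled out the sign-swapped version of Lemma~\ref{RootGroupProductOpen} (including the need for $t^{-n}$ in place of $t^{n}$ in the contraction step) together with the use of $G_k^+$ and Corollary~\ref{cor:equivalent-description} for part (ii). Nothing substantive differs from the intended proof.
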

Applying Lemma \ref{TorusRootGroupProductOpen} instead of Lemma \ref{RootGroupProductOpen} we obtain:
\begin{prop}\label{uclosed}
\begin{enumerate}
\item The maps
\[T \times \mathbb F^n \to TX^{\pm}_{[n]}\]
are homeomorphisms.
\item $B_\pm = \lim_\to TX^{\pm}_{[n]} \cong T \times U_{\pm}$; in case $G$ is a central quotient of a simply connected Kac--Moody group, the latter is isomorphic to  $(\F^\times)^n/F \times \lim_\to \F^n$, where $F$ is some finite group.
\item The multiplication map $T \times U_{\pm} \to B_{\pm}$ is open.
\item $U_{\pm}$ are closed in $B_{\pm}$, hence in $G$.
\end{enumerate}
\end{prop}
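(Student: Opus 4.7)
For (i), I would apply Lemma \ref{TorusRootGroupProductOpen} (and its evident variant for negative roots) to the $n$-tuple $\pm\beta[1], \ldots, \pm\beta[n]$, which is compatible with the Bruhat order by the choice of the enumeration $\beta[\cdot]$: this yields that the multiplication map $T \times U_{\pm\beta[1]} \times \cdots \times U_{\pm\beta[n]} \to TX^\pm_{[n]}$ is an injective homeomorphism. Composing with the homeomorphism $\F^n \to U_{\pm\beta[1]} \times \cdots \times U_{\pm\beta[n]}$ coming from the parametrizations $u_{\pm\beta[j]}$ (cf.\ Corollary \ref{RootGroupsLcp}(ii)) gives (i).

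For (ii) and (iii), I would exploit the semidirect product structure of $B_\pm$. Since $T$ normalizes every $U_\alpha$ and $T \cap U_\pm = \{1\}$ by the Birkhoff decomposition (cf.\ Remark \ref{rem:rho-continuous}), the internal multiplication realizes $B_\pm$ as the semidirect product of $T$ and $U_\pm$. Equipping $T \times U_\pm$ with the product of $\KP|_T$ and $\KP|_{U_\pm}$, both factors are $k_\omega$ by Proposition \ref{toptorus} and the preceding corollary, hence so is $T \times U_\pm$ by Proposition \ref{prop:komega-properties}. Similarly $B_\pm$ is $k_\omega$ as a closed subgroup of the $k_\omega$ group $(G, \KP)$ (Propositions \ref{prop:B-closed}, \ref{KPBasic}). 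Thus multiplication provides a continuous bijective group homomorphism $T \times U_\pm \to B_\pm$ between $k_\omega$ groups.

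To upgrade this continuous bijection to a homeomorphism I would combine Part (i) with the commutation of $k_\omega$ direct limits and finite products (Proposition \ref{prop:komega-properties}; cf.\ \cite[Propositions~4.2,~4.7]{final-group-topologies}): by (i),
\[
T \times U_\pm \;=\; T \times \lim_\to X^\pm_{[n]} \;\cong\; \lim_\to (T \times X^\pm_{[n]}) \;\cong\; \lim_\to TX^\pm_{[n]}.
\]
It then suffices to show that $B_\pm$ equipped with $\KP|_{B_\pm}$ equals $\lim_\to TX^\pm_{[n]}$ as a topological space. Imitating the proof of the preceding corollary (which established $U_+ \cap G_k^- = X_{[n(k)]}$ for suitable $n(k)$ via the Birkhoff decomposition) and using $T \subseteq B_\pm$, I expect that $B_+ \cap G_k^- = TX^+_{[n(k)]}$, and symmetrically for the opposite sign; together with Corollary \ref{cor:equivalent-description} this delivers the desired direct limit presentation of $B_\pm$, so that (ii) and (iii) follow simultaneously. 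The additional identification $T \cong (\F^\times)^n/F$ in the simply connected case is then immediate from Proposition~\ref{toptorus}.

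For (iv), the projection $B_\pm \to T$ coming from the semidirect product decomposition is a continuous group homomorphism with kernel $U_\pm$, so $U_\pm$ is closed in $B_\pm$; closedness in $G$ then follows since $B_\pm$ is itself closed in $G$ by Proposition~\ref{prop:B-closed}. The main obstacle will be the identification $B_+ \cap G_k^- = TX^+_{[n(k)]}$, that is, pinning down the direct limit presentation of $B_\pm$ that matches the product description of $T \times U_\pm$; once this calculation is carried out via the Bruhat--Birkhoff decomposition, everything else amounts to routine $k_\omega$ bookkeeping.
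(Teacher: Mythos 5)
Your proposal is correct and follows essentially the same route as the paper's (rather terse) proof: (i) is Lemma \ref{TorusRootGroupProductOpen} applied to $\pm\beta[1],\dots,\pm\beta[n]$; (ii) and (iii) rest on the intersection formula $B_\pm\cap G_k^\mp = TX^\pm_{[n(k)]}$ (which you rightly flag as the one nontrivial computation — it follows exactly as $U_+\cap G_k^-=X_{[n(k)]}$ in the preceding corollary, using $T\subseteq B_\mp$ so that $TG_k^\mp=G_k^\mp$), combined with the commutation of $k_\omega$ direct limits with finite products; and (iv) is then an immediate consequence of the homeomorphism $B_\pm\cong T\times U_\pm$ together with $T$ Hausdorff, whether phrased via the closed subset $\{1\}\times U_\pm$ or, as you do, via the continuous projection onto $T$.
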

\begin{proof} (i) is immediate from Lemma \ref{TorusRootGroupProductOpen}, for (ii) we need only to observe that $B_{\pm}$ intersects $G_n^{\mp}$ precisely in $TX^{\pm}_{[n]}$. (iii) and (iv) are immediate from (ii).
\end{proof}

\begin{rem}\label{notlocallycompact}
We conclude from Proposition~\ref{uclosed} that in the non-spherical case $(G, \KP)$ cannot be locally compact. Indeed, in this case the closed subgroup $U_+$ would be locally compact, which is absurd as it is homeomorphic to an infinite-dimensional vector space. Proposition~\ref{universalKP} and \cite[\S 21]{k-omega} therefore imply that $(G, \KP)$ is not metrizable and, in particular, not Polish.
\end{rem}

We conclude this section with an analysis of the topology of some further subgroups of $G$. The following is a straighforward generalization of Proposition~\ref{prop:B-closed}.

\begin{prop}\label{paraclosed} Parabolic subgroups are closed.
\end{prop}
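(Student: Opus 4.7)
The strategy is to recognize that Proposition~\ref{paraclosed} is an immediate consequence of Theorem~\ref{thm:top-twin-building} once its four hypotheses have been verified for $(G, \KP)$. The plan is therefore to collect the relevant results already established in the preceding sections and then to read off the conclusion.

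First, I would run through the hypotheses (i)--(iv) of Theorem~\ref{thm:top-twin-building} in order. Hypothesis (i), that $B_\pm$ is closed in $(G,\KP)$, is precisely Proposition~\ref{prop:B-closed}. Hypothesis (ii), that $G$ is the direct limit of the Schubert filtrations $\bigcup_{l(w)\leq n} B_\pm w B_\pm$, is contained in Corollary~\ref{cor:equivalent-description}. Hypothesis (iii), openness of the bijective product map $U_+ \times T \times U_- \to B_+B_-$, is Proposition~\ref{uclosed}(iii), and the symmetric statement for $B_-B_+$ follows by swapping the roles of $U_+$ and $U_-$ in the same proof. Finally, hypothesis~(iv), compactness of panels, is Corollary~\ref{cor-panels-cpt}(ii).

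With all four hypotheses in place, Theorem~\ref{thm:top-twin-building} applies and asserts that, equipped with the quotient topology induced by $\KP$, the twin building $\Delta$ is a strong topological twin building, and that the parabolic subgroups $P^J_\pm = B_\pm W_J B_\pm$ are closed in $G$ for every $J\subseteq S$. The conceptual reason this works, which is worth recording for the reader, is that each parabolic subgroup $P^J_\pm$ is the preimage of the $J$-residue $R_J(c_\pm)$ at the base chamber $c_\pm = B_\pm$ under the continuous quotient map $G \to G/B_\pm = \Delta_\pm$; the residue is closed in $\Delta_\pm$ by Lemma~\ref{residueclosed} as soon as (TTB1)--(TTB3) hold, which is precisely what our verification of hypotheses (i)--(iv) guarantees.

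I do not expect any genuine obstacle here: all of the substantive analytic input (closedness of the Borel subgroups via weakly regular functions, the $k_\omega$ direct-limit presentation of $G$, openness of the triangular multiplication, and compactness of panels) has been prepared in advance, so the proof reduces to a short bookkeeping argument invoking Theorem~\ref{thm:top-twin-building}.
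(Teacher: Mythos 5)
Your proposal contains a genuine gap. You identify hypothesis (iii) of Theorem~\ref{thm:top-twin-building} with Proposition~\ref{uclosed}(iii), but these are different statements: Proposition~\ref{uclosed}(iii) says that the multiplication map $T\times U_\pm \to B_\pm$ is open, whereas Theorem~\ref{thm:top-twin-building}(iii) requires openness of $m\colon U_+\times T\times U_- \to B_+B_-$, i.e.\ an Iwasawa-type decomposition of the \emph{big cell}, not of a single Borel subgroup. The correct source for that hypothesis is Proposition~\ref{prop:multiplication-open} --- but that result is stated and proved only for \emph{two-spherical simply connected} split Kac--Moody groups, because its proof relies on M\"uhlherr's finite prolongation property (Proposition~\ref{lemma:finiteprolongation}), which requires two-sphericity. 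Proposition~\ref{paraclosed} carries no such restriction and is placed \emph{before} Proposition~\ref{prop:multiplication-open} in the paper precisely because it does not depend on it.

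The paper instead proves Proposition~\ref{paraclosed} directly, by the same mechanism as Proposition~\ref{prop:B-closed}: one replaces the space $\mathcal{U}_\mathbb{F}^{\geq 0}$ of non-negative vectors by the subspace $\mathcal{U}_\mathbb{F}^{P}$ spanned by the $P$-orbit of $\mathcal{U}_\mathbb{F}^{\geq 0}$, and exhibits $P$ as the common zero set of the weakly regular functions $f_{v,v^*}$ with $v\in\mathcal{U}_\mathbb{F}^{P}$ and $v^*\in\mathrm{Ann}(\mathcal{U}_\mathbb{F}^{P})$; closedness then follows from Lemma~\ref{weaklyregular}. This argument is local-field general and makes no appeal to the twin-building machinery of Section~\ref{section:3.7}. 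Your route, once corrected to cite Proposition~\ref{prop:multiplication-open}, would establish the result only in the two-spherical simply connected setting, and would also reorder the logical dependencies of Section~\ref{section6}.
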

\begin{proof} The proof is identical to the one given in Proposition~\ref{prop:B-closed}. 
Let $P$ be a parabolic subgroup of positive sign and denote by $\mathcal{U}_\mathbb{F}^{P}$ the subspace of $\mathcal{U}_\F$ spanned by the $P$-orbit of the subspace $\mathcal{U}_\mathbb{F}^{\geq 0}$ of the non-negative vectors with respect to the $Q$-grading. Then, for each $g \in G \backslash P$ there exists a $v \in \mathcal{U}_\mathbb{F}^{P}$ such that $g(v) \not\in \mathcal{U}_\mathbb{F}^{P}$. This means that there exists $v^* \in \mathrm{Ann}(\mathcal{U}_\mathbb{F}^{P})$ such that $f_{v,v^*}(g) \neq 0$. We conclude that $P$ is the set of common zeros of the family of weakly regular functions $(f_{v,v^*})_{v \in \mathcal{U}_\mathbb{F}^{P}, v^* \in \mathrm{Ann}(\mathcal{U}_\mathbb{F}^{P})}$. It then follows from Lemma~\ref{weaklyregular} that $P$ is closed with respect to $\KP$.
\end{proof}

\begin{cor}\label{bounded} Bounded subgroups, i.e., intersections of spherical parabolics of opposite signs, are algebraic Lie groups. Its Levi decomposition is a semi-direct product of closed subgroups.
\end{cor}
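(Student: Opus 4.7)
Let $H = P_+^{J_1} \cap P_-^{J_2}$ be a bounded subgroup, with $J_1, J_2 \subset S$ spherical. Closedness of $H$ in $(G, \KP)$ is immediate from Proposition~\ref{paraclosed} as an intersection of closed subgroups. The plan is to dissect $H$ via the standard Levi decomposition of $P_+^{J_1}$, identify both factors with algebraic Lie subgroups of $G$, and recognize $H$ as a semidirect product of these two closed factors.

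First I would invoke the Levi decomposition $P_+^{J_1} = L_{J_1} \ltimes U_+^{J_1}$, where $L_{J_1} = T \cdot \langle U_\alpha : \alpha \in \Phi_{J_1}\rangle$ is the spherical Levi and $U_+^{J_1}$ is generated by the positive real root groups $U_\alpha$ with $\alpha \in \Phi^{re,+}\setminus \Phi_{J_1}$. Since $T \subset P_-^{J_2}$ and $M := L_{J_1}\cap P_-^{J_2}$ normalizes $V := U_+^{J_1}\cap P_-^{J_2}$ (both being intersections with the same subgroup $P_-^{J_2}$ which contains $M$), one obtains at the level of abstract groups
\begin{equation*}
H = M \ltimes V.
\end{equation*}
Using the uniqueness of the ordered factorizations in $U_+^{J_1}$ provided by Lemma~\ref{RootGroupProductOpen}, the subgroup $V$ is generated by exactly those root groups $U_\alpha$ with $\alpha \in \Phi_{J_2}^+ \setminus \Phi_{J_1}$, a finite set thanks to sphericity of $J_2$; in particular $V$ is finite-dimensional and unipotent. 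The factor $M$ is an algebraic subgroup of the reductive algebraic group $L_{J_1}$, cut out by the condition of stabilizing a $J_2$-residue in $\Delta_-$.

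The topological descriptions of $M$ and $V$ now follow directly from results already established. By Corollary~\ref{cor-panels-cpt}(iv) the restriction $\KP|_{L_{J_1}}$ coincides with the reductive Lie group topology on $L_{J_1}$, so $M$ inherits its natural algebraic Lie group structure and is closed in $L_{J_1}$, hence in $G$. By Corollary~\ref{RootGroupsLcp}(i) each root group $U_\alpha \subset V$ is closed in $G$, and Lemma~\ref{RootGroupProductOpen} realizes $V$ as a topological copy of $\F^{\dim V}$ via the ordered product map, so that $V$ is a closed unipotent algebraic Lie subgroup of $G$. To upgrade the continuous bijection $M \times V \to H$ to a homeomorphism, I would apply the contraction trick from the proof of Lemma~\ref{RootGroupProductOpen}: choose a cocharacter $\lambda\colon \F^\times \to T$ centralizing $L_{J_1}$ and contracting $V$ to $\{1\}$ under conjugation; then the map $h \mapsto \lim_{n\to\infty}\lambda(t)^n h \lambda(t)^{-n}$ provides a continuous splitting $H \to M$, inverting the multiplication map. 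Consequently $H = M \ltimes V$ is a semidirect product of closed algebraic Lie subgroups, and thus itself an algebraic Lie group.

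The principal technical obstacle is the explicit combinatorial identification of $V$ as the ordered product of the root groups $U_\alpha$ for $\alpha \in \Phi_{J_2}^+ \setminus \Phi_{J_1}$; while classical in the theory of Tits systems, this requires careful book-keeping in the twin-BN setting, and is where the sphericity of $J_2$ enters essentially. A secondary subtlety is the existence of a suitable contracting cocharacter $\lambda$, which one obtains from the fact that all roots indexing $V$ lie in $\Phi_{J_2}^+\setminus \Phi_{J_1}$, so that any sufficiently dominant cocharacter perpendicular to $\Phi_{J_1}$ will suffice.
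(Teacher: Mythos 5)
The paper's proof simply cites R\'emy~\cite[Section~6.2]{remy} for the fact that bounded subgroups are algebraic and admit a Levi decomposition, and then observes that both factors are closed (the Levi factor via Corollary~\ref{cor-panels-cpt}, the unipotent radical via Propositions~\ref{uclosed} and~\ref{paraclosed}). You instead attempt to \emph{derive} the Levi decomposition of $H = P_+^{J_1}\cap P_-^{J_2}$ from the decomposition $P_+^{J_1} = L_{J_1}\ltimes U_+^{J_1}$ by intersecting both factors with $P_-^{J_2}$. There is a genuine gap here: your argument shows that $M := L_{J_1}\cap P_-^{J_2}$ normalizes $V := U_+^{J_1}\cap P_-^{J_2}$, hence that $MV$ is a subgroup contained in $H$, but it does \emph{not} show $H = MV$. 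Given $h\in H$ with its unique factorization $h=\ell u$ ($\ell\in L_{J_1}$, $u\in U_+^{J_1}$), nothing in your proposal explains why $\ell$ and $u$ individually lie in $P_-^{J_2}$ --- intersecting a semidirect product with a third subgroup does not distribute over the factors in general. Establishing this refined factorization is exactly the combinatorial content of R\'emy's Section~6.2 that the paper invokes. You flag as the ``principal technical obstacle'' the precise identification of $V$ with a product of root groups, but the prior step $H = MV$ is the more serious missing ingredient and is not even acknowledged as requiring proof.

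Aside from that gap, your secondary points are sound and go somewhat beyond the paper: the use of Corollary~\ref{cor-panels-cpt}(iv) and Corollary~\ref{RootGroupsLcp} for closedness of the factors, and the contraction cocharacter argument to upgrade the abstract splitting to a homeomorphism, are reasonable (the paper is content with the factors being closed and does not explicitly verify that the semidirect product is topological). But none of this repairs the unproven factorization $H = M\ltimes V$.
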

\begin{proof}
The fact that bounded subgroups are algebraic follows from \cite[Section~6.2]{remy}, as does the existence of a  Levi decomposition. A Levi factor is a spherical subgroup and, hence, a (closed) Lie group by Corollary~\ref{cor-panels-cpt}. The unipotent radical is closed by Propositions~\ref{uclosed} and \ref{paraclosed}; moreover, it equals a finite product of real root subgroups, hence it is homeomorphic to a finite-dimensional $\F$-vector space.  
\end{proof}

\subsection{The topological twin building of a split Kac--Moody group}\label{toptwinbuilding}

The goal of this section is to establish the first main result of this paper:
\setcounter{main}{0}
\begin{main}\label{HartnickConjecture2}\label{HartnickConjecture} Let $G$ be a
two-spherical simply connected split
Kac--Moody group over a local field
and let $\KP$ be the Kac--Peterson topology on $G$. Then the associated twin
building endowed with the quotient topology is a strong topological twin building (cf.\ Definition~\ref{strongtop}).

If the local field equals the field of real or of complex numbers, then $G$ is connected, otherwise totally disconnected.
\end{main}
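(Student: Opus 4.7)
The plan is to verify the four hypotheses (i)--(iv) of Theorem~\ref{thm:top-twin-building} when $G$ is endowed with the Kac--Peterson topology $\KP$; that theorem then immediately yields a strong topological twin building (satisfying (TTB1+), (TTB2+), (TTB5), (TTB6), and hence (TTB6+) and (TTB7)). After this verification, we will handle the connectedness/total-disconnectedness dichotomy separately by combining the topological Curtis--Tits theorem with an identity-component argument via the action on the twin building.

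Three of the four hypotheses are essentially already at hand. Hypothesis (i), the closedness of $B_{\pm}$, is Proposition~\ref{prop:B-closed}. Hypothesis (ii), the direct-limit presentation $G = \lim_\to G_n^{\pm}$, is precisely the content of Corollary~\ref{cor:equivalent-description}. Hypothesis (iv), compactness of panels, is Corollary~\ref{cor-panels-cpt}(ii), which is also compatible with Remark~\ref{ttb4holds}, since each panel is homeomorphic to $\mathbb{P}_1(\mathbb{F})$, which is compact over any local field $\mathbb{F}$.

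The hypothesis that requires real work is (iii): openness of the bijective multiplication map $m\colon U_+ \times T \times U_- \to B_+B_-$. This is exactly where the two-spherical assumption enters. First we invoke Proposition~\ref{lemma:finiteprolongation}: since any local field $\mathbb{F}$ is infinite, and since the generalized Cartan matrix of a two-spherical Kac--Moody group forces $m_{ij} \in \{2,3,4,6\}$ (so no generalized octagons arise as rank-two residues), the root group datum has the finite prolongation property. Given a tuple $\overline\alpha$, prolongation produces $\overline\alpha'$ of length $N$ such that $B_+B_- \cap U_{\overline\alpha}$ is contained in $(U_+ \cap U_{\overline\alpha'})(T \cap U_{\overline\alpha'})(U_- \cap U_{\overline\alpha'})$. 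On each finite piece the three factors are closed finite-dimensional algebraic subvarieties and the multiplication map is an algebraic bijection, hence a homeomorphism by Lemmas~\ref{RootGroupProductOpen}, \ref{TorusRootGroupProductOpen} together with the open-mapping theorem for locally compact $\mathbb{F}$-analytic varieties. Since $B_+B_-$ carries the direct-limit topology coming from these finite pieces via Corollary~\ref{cor:equivalent-description} applied to $G_n^+ \cap G_n^-$, and since direct limits in the $k_\omega$ category commute with finite products (\cite{final-group-topologies}), these piecewise homeomorphisms assemble into an openness statement for $m$. I expect this step to be the main technical obstacle, as it relies on M\"uhlherr's combinatorial prolongation argument from the appendix; without two-sphericity the whole approach breaks down.

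It remains to establish the connectedness dichotomy. Panels are homeomorphic to $\mathbb{P}_1(\mathbb{F})$, which for $\mathbb{F} \in \{\R, \C\}$ is $S^1$ or $S^2$, hence connected, and for non-archimedean $\mathbb{F}$ is totally disconnected. In the archimedean case, each rank-one subgroup $G_\alpha \cong \mathrm{SL}_2(\mathbb{F})$ is connected because $\mathbb{F}$ is simply connected as a split simply connected Lie group, and each rank-two subgroup $G_{\alpha\beta}$ is connected as it is generated by two connected subgroups sharing the identity; by the topological Curtis--Tits theorem (Theorem~\ref{theorem:curtis-tits}) $G$ is the $k_\omega$-amalgam of these, hence connected. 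In the non-archimedean case we argue via the action on the (totally disconnected, by Lemma~\ref{Conn-TotDc}) halves of the building: the identity component $G^\circ \trianglelefteq G$ maps into the connected component of $eB_+$ in $\Delta_+$, which is $\{eB_+\}$, forcing $G^\circ \subseteq B_+$. The same argument applied to the action on $\Delta_-$ and the opposite fundamental chamber, together with $B_+ \cap B_- = T$ and the fact that $T \cong (\mathbb{F}^\times)^n/F$ is totally disconnected (Proposition~\ref{toptorus}), yields $G^\circ \subseteq T$ and hence $G^\circ = \{e\}$, so $G$ is totally disconnected.
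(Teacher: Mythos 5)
Your verification of hypotheses (i)--(iv) of Theorem~\ref{thm:top-twin-building} is exactly the paper's strategy: conditions (i), (ii), (iv) are read off from Proposition~\ref{prop:B-closed}, Corollary~\ref{cor:equivalent-description} and Corollary~\ref{cor-panels-cpt}, and condition (iii) is Proposition~\ref{prop:multiplication-open}, which you correctly identify as the place where two-sphericity enters (via the finite prolongation property, Proposition~\ref{lemma:finiteprolongation}, feeding into a direct-limit argument in the $k_\omega$ category using Lemmas~\ref{RootGroupProductOpen} and~\ref{TorusRootGroupProductOpen}). This part is essentially identical to what the paper does.

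For the connected/totally-disconnected dichotomy your reasoning is correct but takes a different route. In the archimedean case the paper simply observes that each piece $TG_{\overline\alpha}$ is connected (continuous image of a product of connected groups) and that a direct limit of connected spaces sharing a common point is connected; invoking the topological Curtis--Tits Theorem~\ref{theorem:curtis-tits} is valid but heavier machinery than necessary. In the non-archimedean case the paper works directly on the group: by Propositions~\ref{uclosed} and~\ref{prop:multiplication-open} the subset $B_+B_-\cong U_+\times T\times U_-$ is open and totally disconnected, which immediately forces $G^{\circ}=\{e\}$. Your alternative goes through the building: $\Delta_\pm$ is totally disconnected by Lemma~\ref{Conn-TotDc}, so the orbit of $B_\pm$ under the connected subgroup $G^\circ$ collapses to a point, giving $G^\circ\subseteq B_+\cap B_-=T$, which is totally disconnected by Proposition~\ref{toptorus}. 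Both arguments rely on the same underlying facts about direct limits of totally disconnected $k_\omega$-spaces; yours adds a small detour through the building action (and note that it is not circular, since you only invoke Lemma~\ref{Conn-TotDc} after establishing that $\Delta$ is a topological twin building). Either way, the conclusion stands.
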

The key observation is the following:
\begin{prop} \label{prop:multiplication-open}
Let $\mathbb{F}$ be a local field and let $G$ be a
two-spherical simply connected split Kac--Moody group over $\F$ endowed with the
Kac--Peterson topology. Then the map $m\colon U_+ \times T \times
U_- \to B_+B_- : (u_+,t,u_-) \mapsto
u_+tu_-$ is open.
\end{prop}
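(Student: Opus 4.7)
The strategy is to show that the continuous bijection $m$ has continuous inverse by combining the $k_\omega$-structure of both sides with M\"uhlherr's finite prolongation property (Proposition \ref{lemma:finiteprolongation}); the latter is available here because $G$ is two-spherical and $\F$-locally split over the infinite field $\F$, and Kac--Moody diagrams never feature generalized octagons. The source $U_+\times T\times U_-$ is $k_\omega$ by Proposition \ref{uclosed} together with Proposition \ref{prop:komega-properties}, while the target $B_+B_-$, in its subspace topology from $G$, inherits a $k_\omega$-structure through the filtration $K_{\overline{\gamma}}:=B_+B_-\cap TG_{\overline{\gamma}}$: each $K_{\overline{\gamma}}$ is closed (by Proposition \ref{prop:B-closed}) in the compact $k_\omega$-piece $TG_{\overline{\gamma}}$ and hence compact. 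It therefore suffices to verify that $m^{-1}$ restricts to a continuous map on each $K_{\overline{\gamma}}$.

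First I would reduce the problem to a single finite-dimensional algebraic piece. Applying the Bruhat decomposition to each rank-one factor $G_{\gamma_i}$ places $TG_{\overline{\gamma}}$ inside $T\cdot V_{\overline{\alpha}}$, where $V_{\overline{\alpha}}:=U_{\alpha_1}\cdots U_{\alpha_N}$ is a finite product of real root groups indexed by simple-or-anti-simple roots. Combining the identity $TV_{\overline{\alpha}}\cap B_+B_-=T(V_{\overline{\alpha}}\cap B_+B_-)$ with the finite prolongation property applied to $\overline{\alpha}$, one finds a prolongation $\overline{\alpha}'$ so that
\[K_{\overline{\gamma}}\;\subset\;T\cdot(U_+\cap V_{\overline{\alpha}'})\cdot(T\cap V_{\overline{\alpha}'})\cdot(U_-\cap V_{\overline{\alpha}'}).\]
Since $T$ normalizes each real root group, and hence $V_{\overline{\alpha}'}$ itself, one concludes that $m^{-1}(K_{\overline{\gamma}})\subset U_+^{\mathrm{fin}}\times T\times U_-^{\mathrm{fin}}$ with $U_\pm^{\mathrm{fin}}:=U_\pm\cap V_{\overline{\alpha}'}$ closed subsets of $U_\pm$ lying in a fixed finite product of real root groups. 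The decisive point is that finite prolongation confines the entire $U_+TU_-$-decomposition of every element of the compact set $K_{\overline{\gamma}}$ to a single finite-dimensional $\F$-algebraic piece.

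Second, continuity of $m^{-1}$ on this finite-dimensional piece can be established by an algebraic Gauss-decomposition argument. Composing with the adjoint representation of $G$ on a sufficiently large finite-dimensional $\mathrm{Ad}$-stable subspace $V\subset\calU_\F$ (provided by Proposition \ref{prop:ad-finite-dimensional-submodule}) turns $m|_{U_+^{\mathrm{fin}}\times T\times U_-^{\mathrm{fin}}}$ into a polynomial matrix product, and Gauss elimination on $\mathrm{Ad}(m(u_+,t,u_-))$ expresses the three $\mathrm{Ad}$-factors as rational---hence, on the appropriate open cell, continuous---functions of the product. The closed embeddings furnished by Proposition \ref{prop:ad-finite-dimensional-submodule} then lift these continuous inverses back from $\mathrm{GL}(V)$ to $U_+^{\mathrm{fin}}\times T\times U_-^{\mathrm{fin}}$, yielding the required continuity of $m^{-1}$ on $K_{\overline{\gamma}}$. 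Gluing across $\overline{\gamma}$ via the $k_\omega$-structure of $B_+B_-$ promotes this local continuity to global continuity of $m^{-1}$ and completes the argument.

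The main obstacle is this final algebraic step: once finite prolongation has cut the problem down to a bounded algebraic piece, one must still certify that the abstract $U_+TU_-$-decomposition is continuous at the level of $\F$-points. All the substantive combinatorial content has been absorbed into M\"uhlherr's finite prolongation (whose proof in the appendix is precisely what forces the two-sphericity hypothesis here); what remains is essentially a concrete Gauss--Jordan argument on polynomial matrices over $\F$.
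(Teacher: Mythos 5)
Your overall strategy matches the paper's: reduce to finite-dimensional pieces via M\"uhlherr's finite prolongation property, establish openness there, and glue via a direct limit. Your Gauss-elimination argument for the finite-dimensional piece is a useful attempt to substantiate a point the paper only asserts, namely that each piecewise multiplication map $(U_+\cap U_{\overline\alpha})\times(T\cap U_{\overline\alpha})\times(U_-\cap U_{\overline\alpha}) \to (U_+\cap U_{\overline\alpha})(T\cap U_{\overline\alpha})(U_-\cap U_{\overline\alpha})$ is open.

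There is, however, a step that fails as written: the claim that the filtration $K_{\overline\gamma}:=B_+B_-\cap TG_{\overline\gamma}$ gives a $k_\omega$-exhaustion of $B_+B_-$ because each $K_{\overline\gamma}$ is ``closed (by Proposition~\ref{prop:B-closed}) in the compact $k_\omega$-piece $TG_{\overline\gamma}$ and hence compact.'' Neither assertion holds. Proposition~\ref{prop:B-closed} concerns $B_+$ and $B_-$ individually and says nothing about the product $B_+B_-$; the latter, being the big Birkhoff cell, is dense in $G$ and therefore not closed. Moreover $TG_{\overline\gamma}$ is not compact --- it contains $T\cong(\F^\times)^{\mathrm{rk}(\Lambda)}$ and each rank-one subgroup $G_{\gamma_i}\cong\mathrm{SL}_2(\F)$, all non-compact over a local field. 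So $K_{\overline\gamma}$ is neither closed in $TG_{\overline\gamma}$ nor compact, and the gluing framework collapses as stated. The obstacle is substantive: at this point of the paper $B_+B_-$ is known to be neither open nor closed in $G$, so one cannot assume its subspace topology is a direct limit over a naive exhaustion. The paper therefore glues differently: finite prolongation exhibits the two filtrations $\{B_+B_-\cap U_{\overline\alpha}\}$ and $\{(U_+\cap U_{\overline\alpha})(T\cap U_{\overline\alpha})(U_-\cap U_{\overline\alpha})\}$ as mutually cofinal, identifies $B_+B_-$ with the direct limit of the latter, and uses commutation of direct limits with finite products of $k_\omega$ spaces (\cite[Proposition~4.7]{final-group-topologies}) on the source side. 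You should reroute the gluing step through this interleaving; the finite-dimensional analysis you supply then slots in as the openness of the individual maps $m_{\overline\alpha}$.
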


This result has been announced in \cite[Theorem~4]{kac-peterson-regular-functions}, actually without the restriction to the two-spherical case. This more general version would in fact allow one to remove the requirement that $G$ be two-spherical in the statement of Theorem~\ref{HartnickConjecture2}.

\begin{proof}[Proof of Proposition \ref{prop:multiplication-open}]
Let $\alpha_1$, $\alpha_2$, ..... be a cofinal sequence with respect to the sub-tuple relation.
Proposition~\ref{lemma:finiteprolongation} implies that for each $n \in \mathbb{N}$ there exists an $m \in \mathbb{N}$ with
$$B_+B_- \cap U_{\overline{\alpha'}} \subseteq (U_+ \cap
U_{\overline{\alpha''}})(T \cap U_{\overline{\alpha''}})(U_- \cap
U_{\overline{\alpha''}}),$$ where $\overline{\alpha'} = (\alpha_1,...,\alpha_n)$ and
$\overline{\alpha''} = (\alpha_1,...,\alpha_m)$. Conversely, for each $m \in \mathbb{N}$ there exists an $n \in \mathbb{N}$ with $$(U_+ \cap
U_{\overline{\alpha''}})(T \cap U_{\overline{\alpha''}})(U_- \cap
U_{\overline{\alpha''}}) \subseteq B_+B_- \cap U_{\overline{\alpha'}}.$$
We conclude that $$B_+B_- = \lim_\rightarrow (U_+ \cap
U_{\overline{\alpha}})(T \cap U_{\overline{\alpha}})(U_- \cap
U_{\overline{\alpha}}).$$
Moreover, since $U_\pm = \lim_\rightarrow
U_\pm \cap U_{\overline{\beta}}$ and $T = \lim_\rightarrow T \cap
U_{\overline{\alpha}}$, by \cite[Proposition~4.7]{final-group-topologies} one has
$$U_+ \times T \times U_- = \lim_\rightarrow \left((U_+ \cap U_{\overline{\alpha}}) \times (T \cap
U_{\overline{\alpha}}) \times (U_- \cap U_{\overline{\alpha}})\right).$$
As each
\begin{eqnarray*}
m_{\overline{\alpha}} : (U_+ \cap U_{\overline{\alpha}}) \times (T \cap
U_{\overline{\alpha}}) \times (U_- \cap U_{\overline{\alpha}}) & \to&  (U_+ \cap
U_{\overline{\alpha}})(T \cap U_{\overline{\alpha}})(U_- \cap
U_{\overline{\alpha}}) \\ (u_+,t,u_-) & \mapsto &
u_+tu_-
\end{eqnarray*}
 is open, this implies that $m : U_+ \times T \times
U_- \to B_+B_- : (u_+,t,u_-) \mapsto
u_+tu_-$ is open.
\end{proof}
Now Theorem \ref{HartnickConjecture2} is immediate:

\begin{proof}[Proof of Theorem  \ref{HartnickConjecture2}]
Concerning the first part of Theorem \ref{HartnickConjecture2} it suffices to establish the hypotheses
of Theorem \ref{thm:top-twin-building}. Condition (iii) is precisely Proposition \ref{prop:multiplication-open}. Since we have already established
Condition (i) in Proposition \ref{prop:B-closed},
Condition (ii) in Corollary \ref{cor:equivalent-description} and Condition (iv)
in Corollary \ref{cor-panels-cpt}, the first part of Theorem \ref{HartnickConjecture2} follows. 

Concerning the second part we first note that the underlying roots groups and the torus are connected if and only if $k$ is archimedean. In the archimedean case it then follows that the pieces $TG_{\overline \alpha}$, and hence their limit $G$ are connected. In the non-archimedean case we see from Proposition~\ref{uclosed} and Proposition~\ref{prop:multiplication-open} that the open subset $B_+B_-$ is totally disconnected. This implies that $G$ itself is totally disconnected, finishing the proof.
\end{proof}

Note that the underlying topological foundation of the topological twin building of $G$ is precisely the corresponding topological $k$-split foundation. This shows:
\begin{cor}\label{DynkinTreeIntegrable}
Let $k$ be a local field. Then every Dynkin tree is topologically $k$-integrable.
\end{cor}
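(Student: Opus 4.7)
The plan is to construct, for an arbitrary Dynkin tree, an explicit topological twin building whose associated collapse foundation is the $k$-split topological Moufang foundation encoded by the tree, thereby showing integrability. The construction will proceed via split Kac--Moody groups and invoke Theorem~\ref{HartnickConjecture2} to promote them to strong topological twin buildings.

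First I would translate the Dynkin tree $\mathcal{T}$ into algebraic data. The vertex set of $\mathcal{T}$ provides an index set $I$, and the labelling (with orientations on edges of label $4$ or $6$) determines a generalized Cartan matrix $A$ in the sense of Definition~\ref{cartanmatrix}: distinct vertices $i\neq j$ joined by an unlabelled edge give $a_{ij}=a_{ji}=-1$; an edge labelled $4$ oriented from $i$ to $j$ gives $a_{ij}=-2$, $a_{ji}=-1$; an edge labelled $6$ oriented from $i$ to $j$ gives $a_{ij}=-3$, $a_{ji}=-1$; non-adjacent vertices give $a_{ij}=a_{ji}=0$. Since $\mathcal{T}$ is a tree, the associated Coxeter system $(W,S)$ is two-spherical and has no loops or isolated vertices. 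Choose any simply connected Kac--Moody root datum $\mathcal D$ with this Cartan matrix and form $G:=G_{\mathcal D}(k)$ equipped with the Kac--Peterson topology $\KP$.

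Next I would apply Theorem~\ref{HartnickConjecture2} to conclude that the associated twin building $\Delta=\Delta(G,\{U_\alpha\},T)$, equipped with the quotient topology from $(G,\KP)$, is a strong topological twin building of type $(W,S)$. Since every rank two residue of $\Delta$ is the spherical building of a fundamental rank two subgroup $G_{\alpha,\beta}$ of type $A_2$, $B_2=C_2$ or $G_2$ over $k$, it is a Moufang polygon of the appropriate gonality ($3$, $4$ or $6$). Thus $\Delta$ is a Moufang topological twin building of tree type in the sense of Section~\ref{SecClass}.

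It then remains to verify that $\Delta$ is actually $k$-split, i.e.\ that every rank one residue, as a topological Moufang set, is isomorphic to $\mathbb P^1(k)$ with its natural topology. This is where Corollary~\ref{cor-panels-cpt} enters: the rank one subgroups $G_\alpha\cong\mathrm{SL}_2(k)$ or $\mathrm{PSL}_2(k)$ inherit their natural Lie group topology from $\KP$, so the corresponding panels, being quotients $G_\alpha/B_\alpha$, are homeomorphic to the natural compact $\mathbb P^1(k)$; the Moufang set structure is induced by $G_\alpha$ acting on this quotient, which is the canonical one. Hence, by Definition~\ref{DefSplitFoundation}, any collapse $\mathcal F(\Delta,c)$ is a $k$-split topological Moufang foundation of tree type, and by Lemma~\ref{LemmaMvM} (together with its topological upgrade via Theorem~\ref{ClassificationMain}) its isomorphism class depends only on the Dynkin tree of $\mathcal T$, which by construction is $\mathcal T$ itself. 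Therefore $\mathcal T$ is topologically $k$-integrable, as required. The main (minor) obstacle is pinning down that the prescribed orientation on multiply-laced edges is matched by the choice of short/long roots in $A$; this is a bookkeeping matter handled by the correspondence between $k$-split foundations and Dynkin diagrams explained in Section~\ref{SecDynkin}.
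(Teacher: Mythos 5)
Your proposal follows essentially the same route as the paper's proof: construct a simply connected split Kac--Moody group over $k$, equip it with the Kac--Peterson topology, invoke Theorem~\ref{HartnickConjecture2} to obtain a strong topological twin building, and use Corollary~\ref{cor-panels-cpt}(iii) to see that the rank one residues carry the standard topology of $\mathbb P^1(k)$. The one small difference is how abstract integrability of the foundation is secured: the paper cites the main result of \cite{muehlherr-locally-split} to produce a Kac--Moody group whose twin building globalizes the given abstract foundation, whereas you build the generalized Cartan matrix directly from the tree and then conclude by uniqueness of $k$-split foundations of tree type (Lemma~\ref{LemmaMvM} with the topological upgrade from Theorem~\ref{ClassificationMain}) that the resulting foundation is the one you started with; both are valid ways to close the same gap.
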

\begin{proof}
Let $\mathcal F$ be a topological $k$-split Moufang foundation corresponding to a given Dynkin tree. By the main
result of \cite{muehlherr-locally-split} there exists a (two-spherical) split
Kac--Moody group $G = G_{\calD}(k)$ such that the associated twin building
globalizes the abstract foundation underlying $\mathcal F$. Equip $G$ with the
Kac--Petersen topology and its twin building with the associated quotient
topology. In view of Theorem~\ref{HartnickConjecture} this is a topological twin
building. Since the topology on the root subgroups $U_\alpha$ is the standard one by Corollary~\ref{cor-panels-cpt}(iii), this
topological twin building realizes the given topological foundation.
\end{proof}
A combination of Corollary~\ref{DynkinTreeIntegrable} and Corollary~\ref{CorTreeSuff} now yields the second main result of the present article:
\begin{main}\label{ClassificationIntro}
Let $k$ be a local field. The maps $[\Delta]\mapsto [\mathcal D(\Delta)]$ induces a bijection
between isomorphism classes of $k$-split topological twin buildings of tree type and
isomorphism classes of simply connected simple $\{3,4,6\}$-labelled graphs, where edges
labelled $4$ or $6$ are directed.  
\end{main}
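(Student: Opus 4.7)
The plan is to derive Theorem \ref{ClassificationIntro} essentially as a formal corollary of the two classification results already assembled in the excerpt, namely Corollary \ref{CorTreeSuff} and Corollary \ref{DynkinTreeIntegrable}. The first step is to identify the combinatorial data in the statement of the theorem with Dynkin trees in the sense of Section \ref{SecDynkin}: a simply connected simple graph is precisely a tree, and the edge labels $3,4,6$ together with an orientation on the edges of label $>3$ are exactly the data of a Dynkin tree. Thus the map $[\Delta]\mapsto [\mathcal D(\Delta)]$ between isomorphism classes of $k$-split topological twin buildings of tree type and isomorphism classes of $\{3,4,6\}$-labelled trees (with edges labelled $4$ or $6$ oriented) agrees, after this identification, with the map constructed in Section \ref{SecDynkin}.

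Next I would invoke Corollary \ref{CorTreeSuff}, which provides a bijection between isomorphism classes of $k$-split Moufang topological twin buildings of tree type and topologically $k$-integrable Dynkin trees. This step encapsulates the uniqueness of a twin building globalizing a given topological Moufang foundation of tree type: uniqueness of the underlying abstract twin building follows from the classical Moufang foundation machinery (Lemma \ref{LemmaMvM}), while the topological refinement is provided by Theorem \ref{ClassificationMain}, itself a consequence of the local-to-global principle of Theorem \ref{LTG}.

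The remaining step is to show that every Dynkin tree is topologically $k$-integrable, which is exactly the content of Corollary \ref{DynkinTreeIntegrable}. Here the hard work has already been carried out: starting from the Moufang foundation one uses \cite{muehlherr-locally-split} to integrate it to an abstract twin building of the corresponding two-spherical simply connected split Kac--Moody group over $k$; one then endows this group with the Kac--Peterson topology and applies Theorem \ref{HartnickConjecture} to conclude that the quotient topology on the twin building is a (strong) topological twin building; finally, by Corollary \ref{cor-panels-cpt}(iii) the induced topology on each root subgroup coincides with the one coming from $k$, so the underlying topological foundation is the given $k$-split one.

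Combining these three steps yields the bijection claimed in the theorem. The main conceptual obstacle has already been overcome in the preceding sections: it is not the combinatorial bookkeeping but the construction of a topological twin building from a split Kac--Moody group over an arbitrary local field, together with the local-to-global uniqueness principle, which powers the argument. Hence the proof itself is a short bookkeeping exercise assembling Corollaries \ref{CorTreeSuff} and \ref{DynkinTreeIntegrable}.
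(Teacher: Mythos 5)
Your proposal is correct and follows essentially the same route as the paper: the paper states in Section~\ref{toptwinbuilding} that Theorem~\ref{ClassificationIntro} is precisely "a combination of Corollary~\ref{DynkinTreeIntegrable} and Corollary~\ref{CorTreeSuff}," which is exactly the argument you describe (identify simply connected simple $\{3,4,6\}$-labelled graphs with Dynkin trees, apply Corollary~\ref{CorTreeSuff} for the bijection with $k$-integrable Dynkin trees, and Corollary~\ref{DynkinTreeIntegrable} to see that all Dynkin trees are $k$-integrable). Your additional remarks tracing Corollary~\ref{CorTreeSuff} back to Lemma~\ref{LemmaMvM} and Theorems~\ref{ClassificationMain}/\ref{LTG}, and Corollary~\ref{DynkinTreeIntegrable} back to \cite{muehlherr-locally-split}, Theorem~\ref{HartnickConjecture} and Corollary~\ref{cor-panels-cpt}(iii), match the paper's own development.
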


\subsection{Kac--Moody symmetric spaces}

We conclude with a couple of observations related to 
Kac--Moody symmetric spaces using the theory of flips introduced in
\cite{max-diss} and \cite{max}.

\begin{lem} \label{prop:theta-codistances}
Let $\mathbb{F}$ be a field, let $\mathcal{D}$ be a centred Kac--Moody root
datum, let $G_\mathcal{D}(\mathcal{F})$ be the corresponding split Kac--Moody
group, let
$\theta$ be a quasi-flip of the Kac--Moody group such that
$\theta(B_+) = B_-$, let $\tau_\theta\colon
G_\mathcal{D}(\mathbb{F}) \to G_\mathcal{D}(\mathbb{F}) : g \mapsto
\theta(g)^{-1}g$ be the corresponding Lang map, let $W$ be its Weyl group,
let $w \in W$, and let $x\in G_\mathcal{D}(\mathbb{F})$. 
Then $\delta^*(\theta(xB_+),xB_+)= w$ if and only if 
$x \in \tau_\theta^{-1}(B_-wB_+)$.
\end{lem}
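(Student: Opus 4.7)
The plan is to simply unwind definitions; this lemma should reduce to a direct translation between the group-theoretic Birkhoff decomposition and the combinatorial codistance function.

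First I would use the hypothesis $\theta(B_+) = B_-$ to rewrite $\theta(xB_+) = \theta(x)B_-$, so that the left-hand side becomes $\delta^*(\theta(x)B_-, xB_+)$. Then I would invoke the definition of the codistance function associated with a twin Tits system recalled in Section~\ref{RGD system}: for $gB_- \in \Delta_-$ and $hB_+ \in \Delta_+$ one has $\delta^*(gB_-, hB_+) = w$ if and only if $B_- g^{-1} h B_+ = B_- w B_+$. Applied to $g = \theta(x)$ and $h = x$, this gives $\delta^*(\theta(x)B_-, xB_+) = w \iff B_- \theta(x)^{-1} x B_+ = B_- w B_+$.

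Next I would observe that $\theta(x)^{-1} x = \tau_\theta(x)$ by definition of the Lang map. By the Birkhoff decomposition $G = \bigsqcup_{v \in W} B_- v B_+$, the element $\tau_\theta(x)$ lies in exactly one double coset, so $B_- \tau_\theta(x) B_+ = B_- w B_+$ is equivalent to $\tau_\theta(x) \in B_- w B_+$, i.e.\ $x \in \tau_\theta^{-1}(B_- w B_+)$. Concatenating these equivalences yields the claim.

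There is no real obstacle; the only thing to check carefully is that $\theta$ being a quasi-flip indeed satisfies $\theta(xB_+) = \theta(x)\theta(B_+)$, which follows from the fact that $\theta$ is a group homomorphism (quasi-flips are defined as automorphisms in the sense of \cite{max-diss,max}), together with the hypothesis $\theta(B_+) = B_-$. With this in place the argument is a one-line chain of equivalences.
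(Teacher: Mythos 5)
Your argument is exactly the paper's proof: the same chain of equivalences, using $\theta(xB_+)=\theta(x)B_-$, the definition of $\delta^*$ via the Birkhoff decomposition, and the definition of the Lang map. The paper presents it as a four-line chain of $\Longleftrightarrow$'s without the extra (correct) remark about $\theta$ being a group automorphism, but the content is identical.
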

\begin{proof}
One has the following chain of equivalences:
\begin{eqnarray*}
\delta^*(\theta(xB_+),xB_+) = w & \Longleftrightarrow & B_-\theta(x)^{-1}xB_+ =
B_-wB_+\\
& \Longleftrightarrow & \theta(x)^{-1}x \in B_-wB_+\\
	& \Longleftrightarrow & \tau_\theta(x) \in B_-wB_+\\
	& \Longleftrightarrow & x \in \tau_\theta^{-1}(B_-wB_+). \hfill \qedhere
\end{eqnarray*}
\end{proof}

\begin{thm} \label{thm:top-rel-wrt-theta-codistance} \index{topological twin
building!orbit structure!of $G_\theta$}
Let $\mathbb{F}$ be a local field, let $G$ be the adjoint form of 
a simply connected split Kac--Moody group endowed
with the Kac--Peterson topology $\KP$, let $W$
be its Weyl group, let
$\theta$ be a continuous quasi-flip of $G$ such that
$\theta(B_+) = B_-$, and let $G_\theta := 
C_{G}(\theta) = \mathrm{Fix}_G(\theta)$.
Moreover, for $w \in W$, let $\Delta_w := 
\{ c \in \Delta_+ \mid \delta^*(\theta(c),c) = w \}$.
Furthermore, let $\cod := \{ w \in W \mid \Delta_w \neq \emptyset \}$.
Then 
the following hold:
\begin{enumerate}
\item \label{item:g-theta-3}For $w \in \cod$ one has 
\[
\overline{\Delta_w} = \bigcup_{w' \geq w} \Delta_{w'}.
\]
\item \label{item:g-theta-4}For $w \in \cod$ the smallest open 
$G_\theta$-invariant subset of $\Delta_+$ containing $\Delta_w$ is 
\[
\bigcup_{w' \leq w} \Delta_{w'}.
\]
\end{enumerate}
\end{thm}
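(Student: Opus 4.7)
The plan is to translate the theorem, via Lemma~\ref{prop:theta-codistances}, into a statement about closures of $\tau_\theta$-preimages of $B_-$-$B_+$ double cosets in $G$, and to transfer this to $\Delta_+$ along the canonical quotient $\pi\colon G \to G/B_+$. Two observations underpin this: the map $\pi$ is a continuous open surjection (being a coset projection in a topological group), and the Lang map $\tau_\theta(x) = \theta(x)^{-1}x$ is continuous, as $\theta$ and the group operations are continuous with respect to $\KP$. Moreover, because $\delta^*(\theta(xB_+), xB_+)$ depends only on the coset $xB_+$, the preimage $\pi^{-1}(\Delta_w)$ coincides with $\tau_\theta^{-1}(B_-wB_+)$ and is right $B_+$-invariant, hence saturated under $\pi$. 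Saturation together with openness of $\pi$ yields $\pi^{-1}(\overline{\Delta_w}) = \overline{\pi^{-1}(\Delta_w)} = \overline{\tau_\theta^{-1}(B_-wB_+)}$; continuity of $\tau_\theta$ combined with Theorem~\ref{thm:closure-relation-wrt-borel-subgroup}\ref{item:borel-orbits-1} then bounds this above by $\tau_\theta^{-1}\bigl(\bigsqcup_{w' \geq w} B_-w'B_+\bigr)$, and applying $\pi$ gives the forward inclusion $\overline{\Delta_w} \subseteq \bigcup_{w' \geq w} \Delta_{w'}$ in \ref{item:g-theta-3}.

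For the reverse inclusion $\bigcup_{w' \geq w}\Delta_{w'} \subseteq \overline{\Delta_w}$, I would induct on Bruhat length and reduce to the case of a Bruhat neighbour $w' = ws$ (or $sw$) with $l(w') = l(w) + 1$, and there adapt the panel argument underlying Lemma~\ref{lemma:b-closure-first-inclusion} to the $\theta$-twisted setting. For $c' \in \Delta_{ws}$ and $d' := \theta(c')$, axiom \reftw{item:tw-2} forces $\delta^*(d', c'') = w$ for every $c'' \in P_s(c') \setminus \{c'\}$; continuity and injectivity of $\theta\colon \Delta_+ \to \Delta_-$ place $\theta(c'')$ in $P_s(d') \setminus \{d'\}$ close to $d'$, and Lemma~\ref{lemma:building-combinatorics} combined with the non-discreteness of panels (Corollary~\ref{cor-panels-cpt}) should, after a further combinatorial argument to pin down the codistance, produce a net of such $c''$ lying in $\Delta_w$ and converging to $c'$. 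Part~\ref{item:g-theta-4} is then largely a formal consequence of \ref{item:g-theta-3}: the complement of $\bigcup_{w'\leq w}\Delta_{w'}$ equals $\bigcup_{v \in X_w}\overline{\Delta_v}$, with $X_w$ the finite ``Bruhat border'' set from the proof of Theorem~\ref{thm:closure-relation-wrt-borel-subgroup}\ref{item:borel-orbits-2}, hence is closed as a finite union of closed sets; the set $\bigcup_{w' \leq w} \Delta_{w'}$ is visibly $G_\theta$-invariant and contains $\Delta_w$; minimality then follows from \ref{item:g-theta-3} together with density of the $G_\theta$-orbits on each stratum, since if an open $G_\theta$-invariant $U \supseteq \Delta_w$ missed some $\Delta_{w'}$ with $w' \leq w$, then $U^c$ would contain $\overline{\Delta_{w'}} \supseteq \Delta_w$, contradicting $\Delta_w \subseteq U$.

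The technical heart of the argument is the Bruhat-neighbour case of the reverse inclusion in \ref{item:g-theta-3}: Lemma~\ref{lemma:building-combinatorics} only restricts $\delta^*(\theta(c''), c'')$ to the two-element set $\{w, sw\}$, so the panel move within $P_s(c')$ may a priori land in $\Delta_{sw}$ rather than in $\Delta_w$, and separating the two possibilities will require either a careful use of the type permutation that $\theta$ induces on the Coxeter diagram or a submersion-type openness property for $\tau_\theta$ that would permit one to lift convergent nets from $B_-wB_+$ directly to $G$. A parallel delicacy in the minimality part of \ref{item:g-theta-4} is that it implicitly invokes density of each $G_\theta$-orbit in its stratum $\Delta_v$, a statement beyond the scope of Theorem~\ref{thm:closure-relation-wrt-borel-subgroup} which should be accessible via a $\theta$-twisted Iwasawa-type decomposition adapted to the topological Kac--Moody setting.
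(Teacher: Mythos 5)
Your first paragraph reproduces the paper's entire argument. The paper observes via Lemma~\ref{prop:theta-codistances} that $\Delta_w = q(\tau_\theta^{-1}(B_-wB_+))$, notes that continuity of $\theta$ forces continuity of the Lang map $\tau_\theta$, and then asserts that ``all claims follow immediately from Theorem~\ref{thm:closure-relation-wrt-borel-subgroup}''. The chain you spell out --- saturation of $\tau_\theta^{-1}(B_-wB_+)$ under $q$, openness of $q$, and continuity of $\tau_\theta$ giving $\overline{\Delta_w}\subseteq\bigcup_{w'\geq w}\Delta_{w'}$ --- is exactly the content the paper compresses into that one sentence, and the same device gives openness of $\bigcup_{w'\leq w}\Delta_{w'}$ in~(ii).

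The paper goes no further, and the two ``delicacies'' you flag are precisely the steps it does not supply. You are right that continuity of $\tau_\theta$ only yields $\overline{\tau_\theta^{-1}(B_-wB_+)}\subseteq\tau_\theta^{-1}\bigl(\overline{B_-wB_+}\bigr)$; the reverse inclusion $\bigcup_{w'\geq w}\Delta_{w'}\subseteq\overline{\Delta_w}$ requires some openness of $\tau_\theta$ onto its image together with control over how the Birkhoff strata meet that image, none of which is furnished by Theorem~\ref{thm:closure-relation-wrt-borel-subgroup}. Similarly, minimality in~(ii) needs to know that a non-empty open $G_\theta$-invariant subset of the stratum $\Delta_{w'}$ is all of $\Delta_{w'}$ --- a homogeneity statement the paper never establishes; the formal consequence of the forward inclusion alone is only that one cannot drop a whole $\Delta_{w'}$ from $\bigcup_{w'\leq w}\Delta_{w'}$, not that every open invariant $U\supseteq\Delta_w$ swallows each $\Delta_{w'}$ in its entirety. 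The panel argument you sketch is the natural substitute, and the ambiguity you isolate --- moving $c''$ inside $P_s(c')$ only pins $\delta^*(\theta(c''),c'')$ down to $\{w,\sigma(s)w\}$, with $\sigma$ the type permutation of $\theta$ --- is genuine and is not resolved anywhere in the text. (One small structural caveat: a Bruhat cover $w < w'$ has $w' = wt$ for a reflection $t$, not necessarily a simple one, so your planned induction must be routed through the lifting property of the Bruhat order rather than directly through panels of simple type.) In short, your reduction coincides with the paper's, and the gaps you identify in the reverse inclusion and in the minimality clause are exactly the steps the paper declares immediate without argument.
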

\begin{proof}
Using the quotient map $q \colon
G \to \Delta_+ = G/B_+$,
Lemma~\ref{prop:theta-codistances} states that for each $w \in W$ the
set $q(\tau^{-1}_\theta(B_-wB_+))$ equals $\Delta_w$. 
As $\theta$ is continuous, so is the Lang map $\tau_\theta \colon
G
\to G : g \mapsto \theta(g)^{-1}g$, whence all claims follow immediately from 
Theorem~\ref{thm:closure-relation-wrt-borel-subgroup}.
\end{proof}


\end{document}